\documentclass[12pt]{amsart}
\usepackage[all, cmtip]{xy}

\usepackage[colorlinks=true]{hyperref}

\usepackage{times,upref,eucal, mathrsfs, xcolor, dsfont}
\usepackage{amsfonts,amsmath,amstext,amsbsy,amssymb, amsopn,amsthm}

\usepackage{url}
\usepackage{relsize}

\usepackage{mathtools}
\usepackage{bookmark}

\newtheorem{defn}{Definition}[section]
\newtheorem{thm}[defn]{Theorem}
\newtheorem*{thmstar}{Theorem}
\newtheorem*{defnstar}{Definition}
\newtheorem{lem}[defn]{Lemma}

\newtheorem{cor}[defn]{Corollary}
\newtheorem{prop}[defn]{Proposition}
\newtheorem{rem}[defn]{Remark}
\newtheorem*{com}{Comments}
\newtheorem*{remstar}{Remark}

\newcommand{\N}{\mathbb{N}}
\newcommand{\R}{\mathbb{R}}
\newcommand{\C}{\mathbb{C}}
\newcommand{\Z}{\mathbb{Z}}

\newcommand{\T}{\mathbb{T}}

\newcommand{\n}{\|}

\newcommand{\E}{\mathbb{E}}
\newcommand{\Arg}{\mathrm{Arg}}
\newcommand{\sgn}{\mathrm{sgn}}
\newcommand{\Ran}{\mathrm{Ran}}
\newcommand{\fin}{\mathrm{fin}}
\newcommand{\loc}{\mathrm{loc}}

\newcommand \spann {{\mathrm{span}}}

\newcommand \tr {{\mathrm{tr}}}

\newcommand \Leb {{\mathrm{Leb}}}
\newcommand  \const {const}

\newcommand \Conf {{\mathrm {Conf}}}
\newcommand \conf {{\mathrm {conf}}}

\newcommand \erg {\text{erg}}

\newcommand \frakm{\mathfrak{M}}
\newcommand \diag{\mathrm{diag}}
\newcommand \reg {\mathrm{reg}}
\newcommand \rad {\mathfrak{rad}}

\input xy
\xyoption{all}

\begin{document}

\title[Infinite Random Matrices and Ergodic Decomposition]{Infinite Random Matrices and Ergodic decomposition of Finite or Infinite Hua-Pickrell measures}

\begin{abstract}
The ergodic decomposition of a family of Hua-Pickrell measures on the space of infinite Hermitian matrices is studied. Firstly, we show that the ergodic components of Hua-Pickrell probability measures have no Gaussian factors, this extends a result of Alexei Borodin and Grigori Olshanski. Secondly, we show that the sequence of asymptotic eigenvalues of Hua-Pickrell random matrices is balanced in certain sense and has a ``principal value'' coincides with the $\gamma_1$ parameter of ergodic components. This allow us to complete the program of Borodin and Olshanski on the description of the ergodic decomposition of Hua-Pickrell probability measures. Finally, we extend the aforesaid results to the case of infinite Hua-Pickrell measues. By using the theory of $\sigma$-finite infinite determinantal measures recently introduced by A. I. Bufetov, we are able to identify the ergodic decomposition of Hua-Pickrell infinite measures to some explicit $\sigma$-finite determinantal measures on the space of point configurations in $\R^*$. The paper resolves a problem of Borodin and Olshanski.
\end{abstract}

\keywords{Infinite random matrices, Ergodic decomposition, Hua-Pickrell mesures, Determinantal point process, Infinite determinantal measure,  Orthogonal polynomials}
\author{Yanqi Qiu} 
\address{Yanqi Qiu: Institut de Math\'ematiques de Marseille,  Aix-Marseille Universit{\'e}, 39 Rue F. Juliot Curie 13453, Marseille}
\email{yqi.qiu@gmail.com}

\maketitle


\section{Introduction: main objects and results}

\subsection{Main objects} The main objects of this paper will be a family of unitarily invariant measures, called the Hua-Pickrell measures,  defined on the space of infinite Hermitian matrices, ergodic decomposition of Hua-Pickrell, determinantal point process,  infinite determinantal measures on the space of configurations over $\R^*$. Our goal will be two-fold. Firstly, we will complete the program of Borodin and Olshanski on describing the decomposition of Hua-Pickrell probability measures on ergodic components. The behavior of the parameters $\gamma_1$ and $\gamma_2$ (definitions will be recalled) of the ergodic components of a Hua-Pickrell probability measure will be described. It is shown in  \cite{BO-CMP} that the ergodic component of one particular Hua-Pickrell probability measure has no Gaussian factors, then the authors expect this holds for any Hua-Pickrell probability measure. We show that this is indeed the case.  The study of the $\gamma_1$ parameter requires some new ideas. We will show that the ergodic components of Hua-Pickrell measures admit $\gamma_1$ as some principal value of the asymptotic eigenvalues of infinite random matrices with corresponding Hua-Pickrell distribution.  Secondly, we will extend these results in the case of infinite Hua-Pickrell measures and resolve a problem of  Borodin and Olshanski. The second part of the paper on the ergodic decomposition of infinite Hua-Pickrell measures is in the spirit of \cite{Bufetov-inf-det}, infinite determinantal measures will be used essentially. We are able to identify the decomposition of an infinite Hua-Pickrell measure to an explicit $\sigma$-finite infinite determinantal measures. 

One main issue in both the finite Hua-Pickrell measure case and infinite Hua-Pickrell measure case is the treatment of the parameter $\gamma_1$. Two main difficulties arise in the infinite measure case, one  concerns the parameter $\gamma_1$, the other concerns the properties of the asymptotic kernel computed in \cite{BO-CMP}.  The reader is referred to related papers \cite{Bufetov-sbo} \cite{Bufetov-AIF} \cite{Bufetov-inf-det} \cite{BQ-Pickrell}. 

\subsubsection{Hua-Pickrell measures as unitarily invariant measures}

Let $H(N)$ denote the real vector space formed by complex Hermitian $N \times N$ matrices, $N = 1, 2, \dots$. For any positive integer $N$, let $\theta_N^{N+1}: H(N+1) \rightarrow H(N)$ denote the natural projection sending a matrix to its upper left $N\times N$ corner, and let $H = \lim\limits_{\longleftarrow} H(N)$ be the corresponding projective limit space. We may regard $H$ as the real vector space formed by all infinite complex Hermitian matrices, i.e., $$ H = \Big\{X = [X_{ij}]_{i,j = 1}^\infty: X_{ij} \in \C, \overline{X}_{ij}= X_{ji}\Big\}.$$ Given $X \in H$ and $N = 1, 2, \dots$, we denote the upper left $N\times N$-corner of $X$ by $X_N = \theta_N(X)  = [X_{ij}]_{1 \le i, j \le N} $ . 

Let $U(N)$ be the group of unitary $N \times N$ matrices. For any $N$, we embed $U(N)$ into $U(N+1)$ using the mapping $u \mapsto \left[ \begin{array}{cc} u & 0 \\ 0 & 1\end{array}\right]$. Let $U(\infty) = \lim\limits_{\longrightarrow} U(N)$ denote the inductive limit group. We regard $U(\infty)$ as the group of infinite unitary matrices $U  = [ U_{ij} ]_{i, j = 1}^\infty $ with finitely many entries $U_{ij} \ne \delta_{ij}$. The group $U(\infty)$ acts on the space $H$ by conjugations: $$T_u  X = u X u^{-1}.$$

For any $s \in \C, \Re  s > - \frac{1}{2}$, there exists a unique probability measure $m^{(s)}$ on $H$, characterized by the following property: for any $N = 1, 2, \dots$, the pushforward of $m^{(s)}$ under the projection $\theta_N: H \rightarrow H(N)$  is the probability measure $m^{(s, N)}$ on $H(N)$ given by \begin{align}\label{HP} \begin{split} m^{(s, N)} (dX)  = &\, \text{const}_{s, N } \det ( ( 1 + i X)^{-s - N} ) \det (( 1 - i X)^{-\bar{s} - N}) \\ &\times \prod_{j = 1}^N d X_{jj} \prod_{1 \le j < k \le N } d (\Re X_{jk}) d (\Im X_{jk}), \end{split} \end{align}  where $\text{const}_{s,N}$ is a normalization constant, which is explicitly known. In measure theoretic language, this means that the probability $m^{(s)}$ is the projective limit of the sequence of the probabilities $m^{(s, N)}$. The consistency of probability measures $m^{(s,N)}$ was proved by Hua Loo-Keng. 

For $s \in \C, \Re s \le - \frac{1}{2}, $ the above projective limit construction works as well. More precisely, in this case, the factors $\text{const}_{s, N}$ can be chosen in an explicit  a way that, up to a multiplicative factor,  there exists a unique infinite $U(\infty)$-invariant measure $m^{(s)}$ on $H$, such that for sufficiently large $N$, the pushforward of $m^{(s)}$ under the projection $\theta_N: H \rightarrow H(N)$ is well-defined and coincides with an infinite measure $m^{(s,N)}$ defined by the same formula \eqref{HP}. 

The measures $m^{(s)}$ are called Hua-Pickrell measures in \cite{BO-CMP}, they are all $U(\infty)$-invariant. We shall call the measures $m^{(s)}$ for $\Re s \le - \frac{1}{2}$ the infinite Hua-Pickrell measures. The reader is referred to \cite{BO-CMP} for a detailed presentation of Hua-Pickrell measures.

\subsubsection{Determinantal probability measures and $\sigma$-finite infinite determinantal measures} Let $\mathcal{E}$ be a Polish space, locally compact,  equipped with a $\sigma$-finite reference measure $\mu$. Let $\Conf(\mathcal{E})$ be the space of point configurations over $\mathcal{E}$, that is, $\Conf(\mathcal{E})$ is the collection of locally finite multi-subsets of $\mathcal{E}$. Embed $\Conf(\mathcal{E})$ into $\mathfrak{M}_{\fin} (\mathcal{E})$, the space of finite Radon measures on $\mathcal{E}$, by assigning each $\mathcal{X}\in \Conf(\mathcal{E})$ with a finite measure $\sum_{x \in \mathcal{X}} \delta_x \in \mathfrak{M}_{\fin} (\mathcal{E})$. Then the configuration space $\Conf(\mathcal{E})$, equipped with the topology induced by $\mathfrak{M}_{\fin}(\mathcal{E})$, becomes a Polish space.  A Borel probability $\mathbb{P}$ on the space $\Conf(\mathcal{E})$ of point configurations is said to be a determinantal probability with a Hermitian symmetric kernel $K: \mathcal{E} \times \mathcal{E} \rightarrow \C$, if for any $n \in \N$ and any compactly supported test function $F: \mathcal{E}^n \rightarrow \C$, we have 
\begin{align}\label{DPP}
\begin{split}
& \int_{\Conf(\mathcal{E})}\sum_{x_1, \dots, x_n \in \mathcal{X}} F(x_1, \dots, x_n)  \mathbb{P}(d \mathcal{X}) \\ = &\int_{\mathcal{E}^n} F(x_1, \dots, x_n)    \det \left( K(x_i, x_j)\right)_{1 \le i, j \le n} d\mu^{\otimes n} ( x),
\end{split}
\end{align}
where the sum is taken over ordered $n$-tuples of points with pairwise distinct labels. By a slight abusing of notation, $K$ will also be used to denote the integral operator $K: L^2(\mathcal{E}, \mu) \longrightarrow L^2(\mathcal{E}, \mu)$ defined by 
\begin{align*}
K f (x) = \int_{\mathcal{E}} K(x, y) f(y) d\mu(y).
\end{align*} The determinantal probability is completely characterized by the couple $(K, \mu)$. The reference measure $\mu$ will usually be fixed and we will denote $\mathbb{P}_K$ the determinantal probability associated to the kernel or the operator $K$. 
 
Let $\mathscr{S}_{1}(\mathcal{E}, \mu)$ denote the space of trace class operators on $L^2(\mathcal{E}, \mu)$ and let $\mathscr{S}_{1, \loc} (\mathcal{E}, \mu) $ denote the space of locally trace class integral operators on $L^2(\mathcal{E}, \mu)$.  It is a well-known result of  Macchi \cite{Macchi-DP} and Soshnikov \cite{Soshnikov-DP} that any integral operator $K \in \mathscr{S}_{1, \loc}(\mathcal{E}, \mu)$ such that $0 \le K \le 1$ defines a determinantal probability $\mathbb{P}$ on $\Conf(\mathcal{E})$ satisfying \eqref{DPP}. In the particular case, if $\mathscr{L} \subset L^2(\mathcal{E}, \mu)$ is a closed subspace such that the orthogonal projection $\Pi_{\mathscr{L} }$ onto $\mathscr{L} $ satisfies $\Pi_{\mathscr{L} } \in \mathscr{S}_{1, \loc} (\mathcal{E}, \mu) $, then we also use the notation $\mathbb{P}_{\mathscr{L} }$ to denote the determinantal probability $\mathbb{P}_{\Pi_{\mathscr{L} }}$, i.e.,
\begin{align*}
\mathbb{P}_{\mathscr{L} } : = \mathbb{P}_{\Pi_{\mathscr{L} }}.
\end{align*}

A. I. Bufetov \cite{Bufetov-inf-det} introduced the theory of $\sigma$-finite infinite determinantal measures on $\Conf(\mathcal{E})$ for which we sketch its construction. Let $L^2_{\loc}(\mathcal{E}, \mu)$ be the space of locally square-integrable functions, i.e.,   $f \in L^2_{\loc}(\mathcal{E}, \mu)$ iff for any bounded subset $B \subset \mathcal{E}$  (thoughout the paper, bounded subset of the base space $\mathcal{E}$ means precompact subset of $\mathcal{E}$), we have $\int_B | f|^2 d\mu < \infty$. Fix a linear subspace $\mathscr{H} \subset L^2_\loc(\mathcal{E}, \mu)$ and a Borel subset $\mathcal{E}_0 \subset \mathcal{E}$, assume that the following assumptions are verified: 

{\flushleft \bf Assumptions on $\mathscr{H} $ and $\mathcal{E}_0$.}

\begin{itemize}
\item[(A1)] For any bounded Borel set $B \subset \mathcal{E}$, the space $\mathscr{H}_{\mathcal{E}_0 \cup B} : = \mathds{1}_{\mathcal{E}_0 \cup B} \mathscr{H} $ is a closed subspace of $L^2(\mathcal{E}, \mu)$;
\item[(A2)] For any bounded Borel set $B \subset \mathcal{E}\setminus \mathcal{E}_0$ we have 
\begin{align*}
\Pi_{\mathscr{H}_{\mathcal{E}_0 \cup B}}  \in \mathscr{S}_{1, \loc} (\mathcal{E}, \mu), \quad   \mathds{1}_{\mathcal{E}_0 \cup B}\Pi_{\mathscr{H} _{\mathcal{E}_0 \cup B}}  \mathds{1}_{\mathcal{E}_0 \cup B} \in \mathscr{S}_{1} (\mathcal{E}, \mu) ; 
\end{align*}
\item[(A3)] If $\varphi \in \mathscr{H} $ satisfies $\mathds{1}_{\mathcal{E}_0 } \varphi =0$, then $\varphi = 0$.
\end{itemize}
Under these assumptions on $\mathscr{H} $ and $\mathcal{E}_0$,  it is shown that there exists, up to a positive multiplicative constant, a unique $\sigma$-finite measure on $\Conf(\mathcal{E})$. This measure, denoted by $\mathbb{B}(\mathscr{H}, \mathcal{E}_0)$, is uniquely determined by 
\begin{itemize}
\item[(1)] $\mathbb{B}(\mathscr{H}, \mathcal{E}_0)$-almost every point configuration has at most finitely many points outside of $\mathcal{E}_0$; 
\item[(2)] for any bounded Borel subset $B \subset \mathcal{E} \setminus \mathcal{E}_0$, let $\Conf(\mathcal{E}; \mathcal{E}_0 \cup B ) $ denote the subset of $\Conf(\mathcal{E}_0)$ formed by point configurations all of whose points are located in $\mathcal{E}_0 \cup B$, then 
\begin{align*}
0 < \mathbb{B}(\mathscr{H} , \mathcal{E}_0) ( \Conf(\mathcal{E}; \mathcal{E}_0 \cup B )) < \infty
\end{align*}
and the normalised restriction of $\mathbb{B}(\mathscr{H}, \mathcal{E}_0) $  on $\Conf(\mathcal{E}; \mathcal{E}_0 \cup B )$ is a determinantal probability on $\Conf(\mathcal{E})$. More precisely, 
\begin{align*}
\frac{\mathbb{B}(\mathscr{H}, \mathcal{E}_0)|_{ \Conf(\mathcal{E}; \mathcal{E}_0 \cup B )} }{ \mathbb{B}(\mathscr{H}, \mathcal{E}_0) ( \Conf(\mathcal{E}; \mathcal{E}_0 \cup B ))}  = \mathbb{P}_{\mathscr{H}_{\mathcal{E}_0 \cup B}}.
\end{align*}
\end{itemize}

One of our goals is to construct such an infinite determinantal measure $\mathbb{B}^{(s)}$ on $\Conf(\R^*)$ for describing the ergodic decomposition of $m^{(s)}$ (see below). It turns out the verification of the assumptions (A1)-(A3) requires some efforts.

\subsubsection{Classification of \texorpdfstring{$U(\infty)$}{a}-ergodic measures on \texorpdfstring{$H$}{H}}   A $U(\infty)$-invariant measure on $H$ is called \textit{ergodic} if every $U(\infty)$-invariant Borel subset of $H$ either has measure zero or has a measure zero complement.   The classification of $U(\infty)$-ergodic probability measures on $H$  has been obtained by Pickrell \cite{Pickrell-pacific91, Pickrell-jfa90}. In this paper, the Olshanski-Vershik approach \cite{OV-ams96} will be followed, see also \cite[\S 4, \S5]{BO-CMP}.

Let $\mathfrak{M}_{\erg}(H)$ stand for the set of all ergodic $U(\infty)$-invariant Borel probability measures on $H$. The set $\frakm_{\erg}(H)$ is a Borel subset of the set of all finite Radon  measures on  $H$ (see, e.g., \cite{Bufetov-sbo}).  

Define the Pickrell set $\Omega$ by \begin{align*} \Omega = &  \Big\{ \omega = (\alpha^{+}, \alpha^{-}, \gamma_1, \delta) \in \R_{+}^{ \infty} \times\R_{+}^{\infty} \times \R \times \R_{+} \Big| \\ &   \alpha^{+}  = ( \alpha_1^{+} \ge \alpha_2^{+} \ge \cdots \ge 0), \quad \alpha^{-} = ( \alpha_1^{-} \ge \alpha_2^{-} \ge \cdots \ge 0) \\ & \sum (\alpha_i^{+})^2 + \sum(\alpha_j^{-})^2 \le \delta, \quad \gamma_1 \in \R   \Big\}.\end{align*} 
By definition, $\Omega$ is a closed subset of $\R_{+}^{\infty} \times \R_{+}^{\infty} \times \R \times \R_{+}$ endowed with the Tychonoff topology.  Note that endowed with the induced topology, $\Omega$ is a Polish space. We mention in passing that by definition, the mapping $\omega \longrightarrow \delta(\omega)$ is continuous, while $\omega \longrightarrow \gamma_2(\omega)$ is not.  

For notational convenience, we  denote \begin{align*}  \gamma_2 = \delta - \sum( \alpha_i^{+})^2 - \sum( \alpha_j^{-})^2 \ge 0. \end{align*} Also, for given $\omega = (\alpha^{+} (\omega), \alpha^{-}(\omega), \gamma_1(\omega), \delta(\omega))\in \Omega$, we define $x(\omega) = (x_\ell(\omega))_{\ell \in \Z^*}$ as
\begin{align*}
x_\ell(\omega) = \left\{  \begin{array}{cc} \alpha^{+}_\ell(\omega)  & \text{if $\ell > 0$ } \vspace{2mm} \\  - \alpha^{-}_{-\ell}(\omega)  & \text{if $\ell < 0$ } \end{array}\right..
\end{align*}
Under this notation, any $\omega \in \Omega$ can be written as $\omega = (x(\omega), \gamma_1(\omega), \delta(\omega)).$ 

\begin{thmstar} $($Pickrell, Olshanski-Vershik$)$
There exists a parametrization of $\frakm_{\erg}(H)$ by the points of the space $\Omega$. Given $\omega \in \Omega$, the characteristic function of the  corresponding ergodic measure $\eta_\omega \in \mathfrak{M}_{erg}(H)$ is determined by
 \begin{align*} 
& \int\limits_{X \in H}    \exp \left\{ i \, \tr \big(\diag (r_1, \dots, r_n, 0, 0, \dots) X \big) \right\}  \eta_{\omega}(dX) \\ & = \prod_{j = 1}^n \left\{ e^{i \gamma_1(\omega)  r_j - \gamma_2 (\omega) r_j^2} \prod_{\ell \in \Z^*}   \frac{e^{- i x_\ell (\omega) r_j}}{ 1 - i x_\ell (\omega)r_j }  \right\}.
 \end{align*}
\end{thmstar}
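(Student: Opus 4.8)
The plan is to follow Vershik's ergodic method combined with the Olshanski--Vershik analysis of orbital measures, in the spirit of \cite{OV-ams96} and \cite[\S4,\S5]{BO-CMP}. For $X\in H$ and $N\ge 1$ let $\mathcal{O}_{X_N}$ be the pushforward of the normalised Haar measure on $U(N)$ under $u\mapsto uX_Nu^{-1}$, regarded as a probability measure on $H$ via the embedding $H(N)\hookrightarrow H$ by zero padding. The ergodic method for inverse limits of compact group actions gives: if $\eta\in\frakm_{\erg}(H)$ then for $\eta$-a.e.\ $X$ the orbital measures $\mathcal{O}_{X_N}$ converge weakly to $\eta$, and conversely any ergodic weak limit of such orbital measures lies in $\frakm_{\erg}(H)$. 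Since $\eta$ is $U(\infty)$-invariant, the characteristic functional $A\mapsto\int_H e^{i\tr(AX)}\eta(dX)$ on finite-rank Hermitian $A$ depends only on the eigenvalues of $A$; evaluating at $A=\diag(r_1,\dots,r_n,0,0,\dots)$ turns it into a symmetric function $\varphi_\eta(r_1,\dots,r_n)$ that is consistent under appending zero arguments. Using $U(\infty)=\varinjlim U(N)$ and $H=\varprojlim H(N)$, the $U(\infty)$-invariant probability measures form a Choquet simplex whose extreme points are exactly those for which $\varphi_\eta$ is \emph{multiplicative}, $\varphi_\eta(r_1,\dots,r_n)=\prod_{j=1}^{n}F(r_j)$ with $F(r)=\varphi_\eta(r)$ the characteristic function of a single diagonal entry $X_{11}$ (equivalently: under an ergodic $\eta$ the diagonal entries $X_{11},X_{22},\dots$ are i.i.d.). It thus remains to determine the admissible one-variable functions $F$ and to read off $\gamma_1,\gamma_2,(x_\ell)$ from $F$.

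Next, one computes the characteristic functional of $\mathcal{O}_{X_N}$ at $\diag(r_1,\dots,r_n,0,\dots)$ through the confluent limit of the Harish-Chandra--Itzykson--Zuber integral formula, obtaining a ratio of determinants in the eigenvalues $\lambda^{(N)}_1\ge\cdots\ge\lambda^{(N)}_N$ of $X_N$. Convergence of the orbital measures forces the eigenvalue data to stabilise: $\lambda^{(N)}_i/N\to\alpha^+_i$ and $\lambda^{(N)}_{N+1-j}/N\to-\alpha^-_j$ for the macroscopic (``outlier'') eigenvalues, $\tfrac1N\tr X_N\to\gamma_1$ after the appropriate regularisation, and $\tfrac1{N^2}\sum_k(\lambda^{(N)}_k)^2\to\delta$ with $\sum_i(\alpha^+_i)^2+\sum_j(\alpha^-_j)^2\le\delta$, so the limiting parameter lies in $\Omega$. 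Expanding the HCIZ determinant by cofactors along the finitely many outlier columns produces the factor $\prod_{\ell\in\Z^*}\frac{e^{-ix_\ell(\omega)r}}{1-ix_\ell(\omega)r}$; a central-limit-type estimate for the contribution of the remaining (bulk/mesoscopic) eigenvalues produces the Gaussian factor $e^{-\gamma_2(\omega)r^2}$; and the trace contributes $e^{i\gamma_1(\omega)r}$. Together these give
\begin{align*}
F(r)=e^{i\gamma_1(\omega)r-\gamma_2(\omega)r^2}\prod_{\ell\in\Z^*}\frac{e^{-ix_\ell(\omega)r}}{1-ix_\ell(\omega)r},
\end{align*}
hence the product formula of the theorem.

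Conversely, for each $\omega\in\Omega$ one builds $\eta_\omega$ explicitly as the law of the infinite Hermitian random matrix $\gamma_1(\omega)\,\mathrm{Id}+G+\sum_{\ell\in\Z^*}x_\ell(\omega)\bigl(\xi^{(\ell)}(\xi^{(\ell)})^*-\mathrm{Id}\bigr)$, where $G$ is an independent Gaussian (GUE-type) infinite matrix of variance proportional to $\gamma_2(\omega)$ and the $\xi^{(\ell)}$ are independent standard complex Gaussian column vectors, the series (understood via its finite corners) converging because $\sum_\ell x_\ell(\omega)^2\le\delta(\omega)<\infty$. A direct computation of its characteristic functional yields the stated product; multiplicativity then forces $\eta_\omega$ to be an extreme point of the simplex, i.e.\ ergodic; and injectivity of $\omega\mapsto\eta_\omega$ holds since the characteristic functional determines the measure and the above factorisation recovers $(\gamma_1,\gamma_2,(x_\ell))$, hence $\omega$, from $F$. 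Combined with the surjectivity obtained in the previous step, this gives the claimed parametrisation.

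The main difficulty lies in the asymptotic analysis of the second step: separating, uniformly in $N$, the contribution of the finitely many outlier eigenvalues (which yields the product over $\ell$) from that of the macroscopic bulk (which yields the Gaussian factor via a central-limit argument), while simultaneously handling the regularisation that makes $\gamma_1$ well-defined, since $\tfrac1N\tr X_N$ need not converge on its own. A secondary technical point is establishing the requisite tightness and weak convergence of the orbital measures in the topology of $\mathfrak{M}_{\fin}(H)$ and verifying that the limit is genuinely ergodic.
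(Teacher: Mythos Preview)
The paper does not supply its own proof of this classification theorem; it is quoted as a known background result due to Pickrell and to Olshanski--Vershik, with pointers to \cite{Pickrell-pacific91,Pickrell-jfa90}, \cite{OV-ams96}, and \cite[\S4--5]{BO-CMP}. Your sketch is a faithful outline of precisely the Olshanski--Vershik argument that the paper cites: Vershik's ergodic method for orbital measures, the identification of ergodic measures with multiplicative characteristic functionals (via exchangeability of the diagonal entries under the permutation subgroup of $U(\infty)$ and a de~Finetti-type argument), the asymptotic analysis of the $U(N)$-orbital integral to extract the one-variable factor $F$, and the explicit rank-one-perturbation construction of $\eta_\omega$. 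So there is no discrepancy of approach to report; you are reconstructing the cited proof rather than offering an alternative. (A minor stylistic remark: Olshanski--Vershik compute the orbital integral for a rank-one test matrix directly as a spherical integral, rather than passing through a confluent HCIZ determinant; once multiplicativity is known the rank-one case suffices, and this is somewhat lighter than the cofactor expansion you describe.)

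One point in your write-up should be corrected. You say that $\tfrac{1}{N}\tr X_N$ ``need not converge on its own'' and requires regularisation. In the Olshanski--Vershik framework (and in the paper's notation, see the definition of $H_{\reg}$ and of $\gamma_1(X)$), for $\eta$-a.e.\ $X$ under any ergodic $\eta$ the limit $\gamma_1(X)=\lim_{N\to\infty}\tfrac{1}{N}\tr X_N$ exists outright; its existence is one of the \emph{conclusions} of the orbital-measure asymptotics, not an additional hypothesis requiring a principal-value fix. The regularisation issue you may have in mind is the \emph{separate} and logically independent question treated later in the paper (Theorem~\ref{gamma1-factor}), namely whether $\gamma_1(\omega)$ can be recovered from the limiting point configuration $\{x_\ell(\omega)\}$ alone, via the principal-value sum $\lim_{n\to\infty}\sum_{\ell}x_\ell(\omega)\mathds{1}_{|x_\ell(\omega)|>1/n^2}$. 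That recovery statement is about the structure of the spectral measures $\mathbb{M}^{(s)}$ for the Hua--Pickrell family specifically, and plays no role in the classification theorem itself.
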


From the above classification theorem, we see that any ergodic  probability measure on $H$ is an infinite convolution of the ergodic probability measures having only one non-zero parameter. The parameter $\gamma_2 (\omega)$ is  considered as a parameter of the Gaussian factor of the the ergodic measure $\eta_\omega$.

\subsubsection{Ergodic decomposition of Hua-Pickrell measures}

In \cite{BO-CMP}, Borodin and Olshanski studied the ergodic decomposition of Hua-Pickrell probability measures and posed the problem of describing the ergodic decomposition of infinite Hua-Pickrell measures. Our main goal is to continue this line of research and solve the problem of Borodin and Olshanski.

Theorem 1 and Corollary 1 in \cite{Bufetov-sbo} imply that for any $s \in \C$,  the unitarily invariant Hua-Pickrell measure $m^{(s)}$ admits an ergodic decomposition, while Theorem 2 in \cite{Bufetov-AIF} implies that for any $s \in \C$ the ergodic components of the measure $m^{(s)}$ are almost surely finite. We now formulate this result in greater detail.  Theorem 2 and Corollary 2 in \cite{Bufetov-AIF} implies that for any $s \in \C$ there exists a unique $\sigma$-finite Borel measure $\widetilde{\mathbb{M}}^{(s)}$ on the set $\frakm_{\erg}(H)$ such that we have \begin{align} \label{dec-f}  m^{(s)} = \int\limits_{\frakm_{\erg}(H)} \! \eta  \, \widetilde{\mathbb{M}}^{(s)} (  d \eta). \end{align}

By the classification theorem of $\mathfrak{M}_{\erg}(H)$ and the decomposition formula \eqref{dec-f}, for any $s \in \C$, there exists  a unique decomposition measure  $\mathbb{M}^{(s)}$ on $\Omega$, such that we have \begin{align} \label{dec-omega} m^{(s)} = \int\limits_{\Omega} \eta_{\omega}  \, \mathbb{M}^{(s)} (d \omega),\end{align} where the integral is understood in the usual weak sense, see \cite{Bufetov-sbo}. Thus the study of $\widetilde{\mathbb{M}}^{(s)}$ is equivalent to the study of $\mathbb{M}^{(s)}$. Following Borodin and Olshanski, we will can $\mathbb{M}^{(s)}$  the spectral measure of $m^{(s)}$.

For $\Re s > -\frac{1}{2}$, the measure $\mathbb{M}^{(s)}$ is a probability measure on $\Omega$, while for $\Re s \le - \frac{1}{2}$ the measure $\mathbb{M}^{(s)}$ is infinite.

\subsubsection{The forgetting map \texorpdfstring{$\conf$}{conf}} 
Define 
\begin{align*}
\Conf_{\triangle}(\R^{*}) = \left\{  \mathcal{X} \in \Conf(\R^{*}):   \sum_{x \in \mathcal{X}} x^2 < \infty\right\}.
\end{align*}
Obviously, we have the following bijection:
\begin{align*}
\Omega \simeq \Conf_{\triangle}(\R^{*}) \times \R \times \R_{+}
\end{align*}
that assigns each $\omega = (\alpha^{+}, \alpha^{-}, \gamma_1, \delta) $ in $\Omega$ with  $( \mathcal{X}(\omega), \gamma_1(\omega), \gamma_2(\omega))$ in $\Conf_{\triangle}(\R^{*}) \times \R \times \R_{+}$ with $$ \mathcal{X}(\omega) = \{ x_\ell(\omega): \ell \in \Z^*\}$$ 
where we omit possible zeros among the numbers $x_\ell(\omega)$. Note that $\mathcal{X}(\omega)$ is a multi-subset of $\R^*$, i.e., the multiplicity of $x_\ell(\omega)$ is respected.

Now we introduce the following map \begin{align} \label{conf-map}
\begin{split}
\Omega & \xrightarrow{\quad \conf \quad} \Conf(\R^{*})\\ 
\omega  &\mapsto   \conf(\omega) = \mathcal{X}(\omega),
\end{split}
\end{align}
In other words, the map $\conf$  ignores the parameter $\gamma_1(\omega)$ and $\gamma_2(\omega)$.

Define a subset $\Omega_{0} \subset \Omega$ such that $\omega = (\alpha^{+}, \alpha^{-}, \gamma_1, \delta) \in \Omega_0$ iff: 
\begin{align} \label{supp}
\begin{split}
\alpha_{i}^{+}(\omega) & \ne 0, \alpha_{j}^{-} (\omega) \ne 0, \text{ for all $i, j \in \N$; }\\ 
\gamma_2(\omega) & = \delta(\omega) - \sum_{i}( \alpha_i^{+} (\omega))^2 - \sum_j (\alpha_j^{-}(\omega) )^2 =0; \\ \gamma_1 (\omega) &=  \lim_{n \to \infty} \left(\sum_{\ell \in \Z^*}  x_\ell(\omega) \mathlarger{\mathds{1}}_{ | x_\ell(\omega) | > 1/n^2}   \right). 
\end{split}
\end{align}
By definition, this forgetting map $\conf$ is injective when restricted on the subset $\Omega_0$. 

For some technical reason, we also introduce another subset $\Omega_0'$ of $\Omega$, defined as follows: let  $\phi_n: \R \longrightarrow [0, 1]$ be a sequence of  continuous functions given by $$  \phi_n (x) = \left\{ \begin{array}{cl}  1 & \text{if $ | x | \ge \frac{1}{n^2}$} \vspace{3mm} \\ 0 & \text{if $ | x | \le \frac{1}{2n^2}$} \vspace{3mm}    \\ 2 n^2 | x | - 1 & \text{if $ \frac{1}{2n^2} \le | x | \le \frac{1}{n^2}$} \end{array}. \right. $$
These functions $\phi_n$ are  continuous analogues of the step functions $\mathds{1}_{| x | \ge 1/n^2}$. Now define $\Omega_0'$ in the same way as $\Omega_0$, but replace the last relation about $\gamma_1$ by the formula
\begin{align*}
\gamma_1(\omega) = \lim_{n \to \infty}    \left( \sum_{ \ell \in \Z^*} x_\ell(\omega) \phi_n(x_\ell(\omega))   \right).
\end{align*}

The main purpose of introducing $\Omega_0'$ is that, by the definition of topology on $\Omega$,  for any $n$,  the function $$  \begin{array}{ccc} \Omega & \xrightarrow{F_n} & \R  \vspace{2mm} \\ \omega & \mapsto &   \sum_{ \ell \in \Z^*} x_\ell(\omega) \phi_n(x_\ell(\omega))  \end{array} $$  is continuous.

An important property shared by $\Omega_0$ and $\Omega_0'$ is the injectivity of the corresponding restriction of the forgetting map: $$ \Omega_0 \xhookrightarrow{\quad \conf \quad} \Conf(\R^*)  \,   \text{ and } \, \Omega_0' \xhookrightarrow{\quad \conf \quad} \Conf(\R^*). $$

\begin{defnstar}
A point configuration $\mathcal{X} \in \Conf(\R^*)$ is said to be $1/n^2$-balanced if 
\begin{align*}
\lim_{n \to \infty} \left( \sum_{x \in \mathcal{X}} x \mathds{1}_{| x| \ge 1/n^2}\right) \text{ \, exists. }
\end{align*}
It  is said to be $\phi_n$-balanced if 
\begin{align*}
\lim_{n \to \infty}    \left( \sum_{ x \in \mathcal{X}} x \phi_n(x)   \right) \text{ \, exists. }
\end{align*}
\end{defnstar}

\begin{remstar}
For general $\omega \in \Omega_0$ (resp. $\Omega_0'$), the sum $\sum_{i = 1}^\infty \alpha_i^{+}$ or $\sum_{i = 1}^\infty \alpha_i^{-}$ may take value $\infty$.  Note that both the subsets $\Omega_0$  and $\Omega_0'$ are  not closed in $\Omega$.
\end{remstar}

\subsection{Formulation of the main results}

We will mainly focus on the case where the parameter $s $ is real. The full general case $ s \in \C$ will be treated elsewhere.  However, if a result for the complex $s \in \C$ case follows immediately from the corresponding result of the real $s$ case, we will then present the result in full generality. The paper consists of two parts, the first part is devoted  to  the case of finite Hua-Pickrell  (probability) measures and the second to the case of infinite Hua-Pickrell measures.  The first part on finite Hua-Pickrell measures can be read independently. 

\subsubsection{Finite Hua-Pickrell measures}

\begin{thm}\label{gaussfactor}
Let $s \in \C, \Re s > - \frac{1}{2}$. Then the spectral measure $\mathbb{M}^{(s)}$ of the Hua-Pickrell probability measure $m^{(s)}$ is concentrated on the subset $\{\omega \in \Omega | \gamma_2 = 0\}$, i.e., 
\begin{align*}
\mathbb{M}^{(s)} (\{\omega \in \Omega | \gamma_2 = 0\} ) = 1. 
\end{align*}
\end{thm}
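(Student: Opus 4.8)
The plan is to reduce the statement about the Gaussian parameter $\gamma_2$ to a concrete asymptotic computation on the finite-dimensional truncations $X_N$ under $m^{(s,N)}$, using the Olshanski--Vershik description of the ergodic decomposition. Recall that in the Olshanski--Vershik approach the parameter $\delta(\omega)$ is recovered as a limit of normalized moments of the empirical spectral distribution of $X_N$: concretely, writing $\la_1^{(N)} \ge \cdots \ge \la_N^{(N)}$ for the eigenvalues of $X_N$, one has
\begin{align*}
\delta(\omega) = \lim_{N \to \infty} \frac{1}{N} \sum_{j=1}^{N} \big(\la_j^{(N)}\big)^2
\end{align*}
for $\widetilde{\mathbb{M}}^{(s)}$-almost every $\omega$, while the individual $\alpha_i^{\pm}(\omega)$ are the limits of $\la_i^{(N)}/N$ (the top few eigenvalues, suitably signed). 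Since $\gamma_2 = \delta - \sum (\alpha_i^{+})^2 - \sum(\alpha_j^{-})^2$, showing $\gamma_2 = 0$ amounts to showing that the normalized sum of squares $\frac1N\sum_j (\la_j^{(N)})^2$ is, in the limit, entirely accounted for by the finitely-many-at-a-time ``macroscopic'' eigenvalues; equivalently, no mass of $\frac1N\sum_j(\la_j^{(N)}/N)^2 \cdot N$ leaks into the bulk. The cleanest route is to show directly that $\E_{m^{(s,N)}}\big[\frac{1}{N}\sum_j (\la_j^{(N)})^2\big] = \E_{m^{(s,N)}}[\tr(X_N^2)/N]$ stays bounded, or more precisely that $\tr(X_N^2)/N^2 \to 0$ in an appropriate sense is false but that $\tr(X_N^2)$ is concentrated on the large eigenvalues.

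First I would pass to the eigenvalue side. Under $m^{(s,N)}$ the eigenvalues of $X_N$ form a determinantal/orthogonal-polynomial ensemble — a pseudo-Jacobi (Romanovski) ensemble obtained by the Cayley transform from a Jacobi-type weight on the circle — and the correlation kernel and one-point density are explicitly known (this is exactly the computation in \cite{BO-CMP}). So the second step is: using the explicit one-point function $\rho_1^{(s,N)}(x)$ of the eigenvalues of $X_N$, compute $\E[\tr(X_N^2)] = \int_{\R} x^2 \rho_1^{(s,N)}(x)\,dx$ and control its growth in $N$. The heavy-tailed weight $|1+ix|^{-2\Re s - 2N}$ makes $x^2$ integrable but the tail is the delicate region. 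The key claim I would isolate is that
\begin{align*}
\limsup_{N\to\infty} \frac{1}{N}\,\E_{m^{(s,N)}}\!\big[\tr(X_N^2)\big] < \infty,
\end{align*}
and moreover that for each fixed $\varepsilon>0$ the truncated contribution $\E[\tr(X_N^2 \mathds{1}_{|X_N| \le \varepsilon N})]/N \to 0$ as first $N\to\infty$ then $\varepsilon \to 0$ — i.e. all the second moment sits on eigenvalues of order $N$. Combined with the convergence $\la_i^{(N)}/N \to \alpha_i^{\pm}(\omega)$ and Fatou/dominated-convergence bookkeeping, this forces $\sum(\alpha_i^+)^2 + \sum(\alpha_j^-)^2 = \delta$ almost surely, which is $\gamma_2 = 0$.

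The third step is to upgrade the in-expectation statement to an almost-sure statement under $\mathbb{M}^{(s)}$; here I would use that $\delta$ and the $\alpha_i^\pm$ are genuine almost-sure limits (this is part of the Olshanski--Vershik/Borodin--Olshanski machinery, cf. \cite{BO-CMP}, \cite{Bufetov-AIF}), so it suffices to identify the limit of $\frac1N \tr(X_N^2)$ with $\delta(\omega)$ and then show that the ``bulk'' part of $\tr(X_N^2)$ vanishes after normalization — the latter following from the moment/tail estimate of step two together with a standard truncation argument (e.g. split $\tr(X_N^2) = \sum_{i \le k}(\la_i^{(N)})^2 + \text{rest}$, take $N\to\infty$ with $k$ fixed, then $k\to\infty$). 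The main obstacle I anticipate is precisely the tail analysis in step two: the pseudo-Jacobi weight is only polynomially decaying, so one must show the one-point density $\rho_1^{(s,N)}$ does not put too much mass far out, which requires reasonably sharp asymptotics (or at least good uniform-in-$N$ upper bounds) for the relevant orthogonal polynomials / Christoffel--Darboux kernel on the scale $|x| \sim N$. A possible shortcut avoiding fine asymptotics: exploit the exact projective-consistency of the $m^{(s,N)}$ to get exact recursions for $\E[\tr(X_N^2)]$ in $N$, or use the known exact value of $\mathbb{E}_{m^{(s,N)}}[\det((1\pm iX_N)^{\cdots})]$-type integrals, reducing the whole estimate to manipulations with Gamma functions rather than polynomial asymptotics.
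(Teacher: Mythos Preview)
Your outline follows the same route as the paper: pass to the rescaled eigenvalue process, reduce $\gamma_2=0$ to showing that eigenvalues of size $o(N)$ contribute negligibly to $\tr(X_N^2)$, and recognise that the crux is a uniform-in-$N$ bound on the one-point density of the pseudo-Jacobi ensemble. Two corrections, one cosmetic and one substantive.

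First, the normalisation: $\delta(\omega)=\lim_N \tr(X_N^2)/N^2$, not $/N$; this slip propagates through all your displayed formulas. The correct key estimate (the paper's Proposition~\ref{uniformness}, which via \cite[Prop.~7.1]{BO-CMP} suffices for $\gamma_2=0$) is
\[
\sup_N \int_{-\varepsilon}^{\varepsilon} x^2\,\rho_1^{(s,N)}(x)\,dx \;\lesssim\; \varepsilon,
\]
where $\rho_1^{(s,N)}$ is the one-point density of $\{\lambda_j^{(N)}/N\}$.

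Second, and more substantively: your proposed ``shortcut'' via exact moment recursions or Gamma-function identities does not close the argument. Such identities would at best give the \emph{full} integral $\int_\R x^2\rho_1^{(s,N)}(x)\,dx$; they cannot isolate the truncated piece $\int_{|x|\le\varepsilon}$, which is what must be controlled uniformly in $N$. The paper obtains this by transferring to the unit circle via the Cayley transform (so the pseudo-Jacobi ensemble becomes the OPUC ensemble with weight $(1+e^{i\theta})^{\bar s}(1+e^{-i\theta})^s$) and then invoking Golinskii's uniform two-sided estimate $|p_{n}^{(s)}(e^{i\theta})|\asymp(|1+e^{i\theta}|+(n+1)^{-1})^{-s}$ for orthonormal polynomials on the circle with generalised Jacobi weight. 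This is the new analytic ingredient relative to Borodin--Olshanski's $s=0$ argument; without it (or something of comparable strength) the uniform truncated estimate---and hence the theorem for general $s$---does not follow.
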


In a certain sense, Theorem \ref{gaussfactor} means that the ergodic components of any Hua-Pickrell measure $m^{(s)}$ for $\Re s > - \frac{1}{2}$ do not have Gaussian factors. Theorem \ref{gaussfactor} extends Theorem II of \cite{BO-CMP}, where only the case $s =0$ is considered.  This result is not surprising, it has been expected by Borodin and Olshanski. The novelty here is the essential use of some uniform estimate of orthogonal polynomial on the unit circle with respect to generalized Jacobi weights.

\begin{thm}\label{gamma1-factor}
Let $s > - \frac{1}{2}$. Then the spectral measure $\mathbb{M}^{(s)}$ of the Hua-Pickrell probability measure $m^{(s)}$ is concentrated on the subset 
\begin{align*}
\left\{\omega \in \Omega \bigg| \gamma_1(\omega) =  \lim_{n \to \infty} \left( \sum_{\ell\in \Z^*} x_\ell(\omega) \mathds{1}_{| x_\ell(\omega)| \ge 1/n^2}\right)  \right\}.
\end{align*}
\end{thm}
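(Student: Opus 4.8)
The plan is to realize the spectral measure $\mathbb{M}^{(s)}$ as (the image under the eigenvalue map of) a determinantal point process whose correlation kernel is explicitly known from \cite{BO-CMP}, and then to prove the $1/n^2$-balanced property $\gamma_1(\omega) = \lim_n \sum_{\ell} x_\ell(\omega)\mathds{1}_{|x_\ell(\omega)|\ge 1/n^2}$ almost surely by a second-moment argument. Recall that the finite-$N$ approximations to the spectral measure, obtained by pushing forward $m^{(s,N)}$ under the map sending an $N\times N$ Hermitian matrix to the (appropriately rescaled) point configuration of its eigenvalues, are determinantal; passing to the limit $N\to\infty$ gives a determinantal point process on $\R^*$ that is the image of $\mathbb{M}^{(s)}$ under $\conf$, with a kernel computed in \cite{BO-CMP}. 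So the statement to be proved becomes: under this determinantal point process $\Prob$ on $\Conf(\R^*)$, almost every configuration $\mathcal{X}$ satisfies $\lim_n S_n(\mathcal{X})$ exists, where $S_n(\mathcal{X}) = \sum_{x\in\mathcal{X}} x\,\mathds{1}_{|x|\ge 1/n^2}$, \emph{and} this limit equals the value of $\gamma_1$ attached to the corresponding $\omega$. The identification of the limit with $\gamma_1$ follows once one knows (a) the limit exists a.s., and (b) the map $\conf$ is injective on a set of full $\mathbb{M}^{(s)}$-measure, which is exactly where $\Omega_0$ and $\Omega_0'$ enter: one shows $\mathbb{M}^{(s)}(\Omega_0)=1$ (equivalently $\mathbb{M}^{(s)}(\Omega_0')=1$, using Theorem~\ref{gaussfactor} to dispose of $\gamma_2$ and the a.s.\ simplicity/nonvanishing of eigenvalues to get the $\alpha_i^\pm\ne 0$ conditions).

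The analytic core is the convergence of $S_n$. I would first establish the $\phi_n$-balanced version, since $F_n(\omega) = \sum_\ell x_\ell(\omega)\phi_n(x_\ell(\omega))$ is genuinely continuous on $\Omega$ (as remarked in the excerpt), which makes it safe to compute $\E[F_n]$ and $\E[(F_n-F_m)^2]$ against the weak-limit kernel. The key estimates are linear-statistics variance bounds for determinantal processes: for a determinantal process with kernel $\mathbf{K}$ and a test function $g$, $\mathrm{Var}\big(\sum_{x\in\mathcal{X}} g(x)\big) = \tfrac12\iint |g(x)-g(y)|^2 |\mathbf{K}(x,y)|^2\,d\mu(x)d\mu(y) + \int g(x)^2\big(\mathbf{K}(x,x)-\int|\mathbf{K}(x,y)|^2 d\mu(y)\big)d\mu(x)$. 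Applying this with $g(x) = x(\phi_n(x)-\phi_m(x))$ (supported in the shrinking neighborhood $\{|x|\le 1/m^2\}$ of the origin for $m<n$) and feeding in the decay/regularity of the Hua-Pickrell kernel near $0$ obtained in \cite{BO-CMP}, one should get $\E[(F_n-F_m)^2]\to 0$ fast enough (e.g.\ geometrically along $n=2^k$) to extract an a.s.\ convergent subsequence, and then a monotonicity/continuity argument upgrades this to convergence of the full sequence $F_n$. One also needs $\E[F_n]$ to converge — this is a first-moment computation $\int x\phi_n(x)\mathbf{K}(x,x)\,d\mu(x)$, whose limit should be identified with the expected value of $\gamma_1$ and, more importantly, shown to be finite, using again the behavior of $\mathbf{K}(x,x)$ as $x\to 0$.

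Having proved $\mathbb{M}^{(s)}$ is carried by the $\phi_n$-balanced configurations, I would then pass from $\phi_n$-balanced to $1/n^2$-balanced: the difference $\sum_{x\in\mathcal{X}} x(\mathds{1}_{|x|\ge 1/n^2} - \phi_n(x))$ is supported in the annulus $\{1/(2n^2)\le |x|\le 1/n^2\}$, so it is dominated in absolute value by $n^{-2}\cdot\#\{x\in\mathcal{X}: |x|\le 1/n^2\}$; an a.s.\ bound $\#\{x\in\mathcal{X}:|x|\le t\} = o(1/t)$ as $t\to 0$ (again via the kernel estimate, since $\E\,\#\{x:|x|\le t\} = \int_{|x|\le t}\mathbf{K}(x,x)d\mu(x)$) forces this error to $0$, so the two notions of balance agree a.s.\ and their limits coincide. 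The main obstacle, I expect, is \textbf{controlling the Hua-Pickrell kernel $\mathbf{K}$ near the singular point $0$ of $\R^*$} precisely enough to make both the variance estimate and the point-count estimate work: the kernel computed in \cite{BO-CMP} is given in terms of special functions (Bessel/Whittaker-type), and extracting the sharp local behavior at the origin — together with justifying the interchange of the $N\to\infty$ limit with these linear-statistics computations — is the delicate part. Everything else (the determinantal variance formula, the Borel–Cantelli extraction, the $\Omega_0$ vs.\ $\Omega_0'$ bookkeeping, and the appeal to Theorem~\ref{gaussfactor}) is comparatively routine.
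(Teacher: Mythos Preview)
Your argument has a genuine gap at the point where you claim ``the identification of the limit with $\gamma_1$ follows once one knows (a) the limit exists a.s., and (b) the map $\conf$ is injective on a set of full $\mathbb{M}^{(s)}$-measure''. This is circular: the statement $\mathbb{M}^{(s)}(\Omega_0)=1$ \emph{is} the theorem (together with Theorem~\ref{gaussfactor} and the easy non-vanishing of the $x_\ell$), since the defining condition of $\Omega_0$ is precisely $\gamma_1(\omega)=\lim_n\sum_\ell x_\ell(\omega)\mathds{1}_{|x_\ell|\ge 1/n^2}$. More fundamentally, even granting (a), there is no mechanism in your outline that touches the \emph{definition} of $\gamma_1$. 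On $\Omega$ the coordinate $\gamma_1$ is independent of the configuration $\conf(\omega)$; under $\mathbb{M}^{(s)}=(\mathfrak{r}^{(\infty)})_*m^{(s)}$ it is the random variable $\gamma_1(X)=\lim_N \tr(X_N)/N$. Working entirely with the limiting determinantal process $\mathbb{P}_{K^{(s,\infty)}}=\conf_*\mathbb{M}^{(s)}$ you have already forgotten $\gamma_1$, so no amount of analysis of that process can tell you which functional of $\conf(\omega)$ equals $\gamma_1(\omega)$.

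The paper's proof fixes exactly this by keeping the finite-$N$ picture in play. On the truncated set $H_\reg^R$ one has the exact identity $\gamma_1(X)=\lim_N\sum_{x\in\mathcal{C}_N^{(s)}(X)}x\,\mathds{1}_{|x|<R}$ (this is the definition of $\gamma_1$). One then proves two lemmas: for fixed $R>\varepsilon>0$, $\sum_{x\in\mathcal{C}_N^{(s)}(X)}x\,\mathds{1}_{\varepsilon<|x|<R}\to\sum_{x\in\mathcal{C}^{(s)}(X)}x\,\mathds{1}_{\varepsilon<|x|<R}$ in $L^2$ as $N\to\infty$; and, crucially, the tail $\big\|\sum_{x\in\mathcal{C}_N^{(s)}(X)}x\,\mathds{1}_{|x|\le\varepsilon}\big\|_{L^2}^2\lesssim\varepsilon$ \emph{uniformly in $N$}. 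The uniform bound is exactly Proposition~\ref{uniformness}, i.e.\ $\sup_N\int_{-\varepsilon}^{\varepsilon}x^2 K_N^{(s,\R)}(x,x)\,dx\lesssim\varepsilon$, proved via Golinskii's estimates for OPUC with generalized Jacobi weight. These two lemmas allow the interchange of the limits $N\to\infty$ and $\varepsilon\to 0$, yielding $\big\|\gamma_1(X)-\sum_{x\in\mathcal{C}^{(s)}(X)}x\,\mathds{1}_{\varepsilon<|x|<R}\big\|_{L^2(H_\reg^R)}^2\lesssim\varepsilon$; taking $\varepsilon=1/n^2$ and applying Borel--Cantelli finishes. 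Your second-moment idea is in the right spirit, but it must be run at level $N$ (with the estimate uniform in $N$), not on the limiting kernel alone, because the link to $\gamma_1$ lives only at finite $N$.
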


The fact that  $\mathbb{M}^{(s)}$ is concentrated on the subset 
\begin{align*}
\{  \omega \in \Omega |  x_\ell (\omega) \ne 0, \text{ for all $\ell \in \Z^*$ }\}
\end{align*}
can be easily obtained by using the characterization  for a determinantal probability measure supported on the subset of configurations with infinitely many points. This fact combing with Theorem \ref{gaussfactor} and Theorem \ref{gamma1-factor} gives  the following

\begin{thm}
Let $s > - \frac{1}{2}$. hen the spectral measure $\mathbb{M}^{(s)}$ of the Hua-Pickrell probability measure $m^{(s)}$ is concentrated on the subset $\Omega_0$, i.e., 
\begin{align*}
\mathbb{M}^{(s)} (\Omega_0) =1.
\end{align*}
Moreover,  the forgetting map $\conf$ defines a natural isomorphism:
\begin{align*}
(\Omega, \, \mathbb{M}^{(s)} ) \xrightarrow[\quad \simeq\quad]{\conf} (\Conf(\R^*), \,  \mathbb{P}_{K^{(s, \infty)}}),
\end{align*}
where $K^{(s, \infty)}$ is the explicit kernel computed in Theorem 2.1 of  \cite{BO-CMP}.
\end{thm}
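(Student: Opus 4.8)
The plan is to deduce the final theorem directly from the three preceding results and from the known structure of the spectral measure $\mathbb{M}^{(s)}$ as a determinantal measure. First I would assemble the three ``concentration'' facts: Theorem~\ref{gaussfactor} gives $\mathbb{M}^{(s)}(\{\gamma_2 = 0\}) = 1$; Theorem~\ref{gamma1-factor} gives that $\mathbb{M}^{(s)}$ is concentrated on the set where $\gamma_1(\omega)$ equals the $1/n^2$-principal value of the $x_\ell(\omega)$; and the remark just before the statement (together with the standard criterion for a determinantal probability to be supported on configurations with infinitely many particles, applied to the correlation kernel $K^{(s,\infty)}$ of Borodin--Olshanski) gives that $\mathbb{M}^{(s)}$ is concentrated on $\{\omega : x_\ell(\omega) \ne 0 \text{ for all } \ell \in \Z^*\}$. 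Since a countable (here, triple) intersection of sets of full measure has full measure, $\mathbb{M}^{(s)}$ is concentrated on the intersection of these three sets, which by the definition \eqref{supp} of $\Omega_0$ is exactly $\Omega_0$. This proves $\mathbb{M}^{(s)}(\Omega_0) = 1$.

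For the second assertion, I would first recall that $\conf$ restricted to $\Omega_0$ is injective (noted in the excerpt), and Borel; since $\Omega_0$ is a Borel subset of the Polish space $\Omega$ and $\Conf(\R^*)$ is Polish, a standard descriptive-set-theoretic fact (a Borel injection between Polish/standard Borel spaces has Borel image and Borel inverse on that image) shows $\conf|_{\Omega_0}$ is a Borel isomorphism onto its image $\conf(\Omega_0) \subset \Conf(\R^*)$. It then remains to identify the pushforward measure $\conf_* \mathbb{M}^{(s)}$ with $\mathbb{P}_{K^{(s,\infty)}}$. The natural route is via the finite-$N$ approximations: for each $N$, the radial part of $m^{(s,N)}$ — the distribution of the eigenvalues of $X_N$, suitably rescaled — is a determinantal point process whose kernel converges (this is the content of Theorem~2.1 of \cite{BO-CMP}) to $K^{(s,\infty)}$, and the Olshanski--Vershik approximation scheme identifies the weak limit of these radial measures (transported through the appropriate embedding) with $\conf_* \mathbb{M}^{(s)}$. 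Convergence of kernels locally in trace norm yields weak convergence of the associated determinantal measures, so $\conf_*\mathbb{M}^{(s)} = \mathbb{P}_{K^{(s,\infty)}}$.

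The main obstacle I anticipate is the last identification step, namely showing cleanly that the pushforward under $\conf$ of the spectral measure coincides with the determinantal measure $\mathbb{P}_{K^{(s,\infty)}}$, rather than just with \emph{some} point process supported on $\conf(\Omega_0)$. The subtlety is that $\conf$ deliberately forgets $\gamma_1$ and $\gamma_2$, so one must be sure no information is lost: this is precisely why Theorems~\ref{gaussfactor} and \ref{gamma1-factor} are needed, since on $\Omega_0$ the parameter $\gamma_2$ vanishes and $\gamma_1$ is a \emph{deterministic function} of the configuration (its $1/n^2$-principal value), so the map $\conf$ loses nothing $\mathbb{M}^{(s)}$-almost surely and is therefore a genuine isomorphism of measure spaces. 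Making the limit transition rigorous — controlling the rescaling of eigenvalues near $0$ and at $\infty$ and ensuring the convergence $K^{(s,N)} \to K^{(s,\infty)}$ holds in the locally-trace-norm sense needed for weak convergence of the point processes — is where the real work lies, but these are exactly the estimates furnished by \cite{BO-CMP} and the approximation results of \cite{Bufetov-sbo} quoted above. Hence I would phrase the proof as: (i) $\mathbb{M}^{(s)}(\Omega_0)=1$ by intersecting the three theorems; (ii) $\conf|_{\Omega_0}$ is a Borel isomorphism onto its image; (iii) $\conf_*\mathbb{M}^{(s)} = \mathbb{P}_{K^{(s,\infty)}}$ by the finite-rank approximation and kernel convergence of Theorem~2.1 of \cite{BO-CMP}.
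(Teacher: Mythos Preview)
Your proposal is correct and follows essentially the same route as the paper: the concentration on $\Omega_0$ is obtained by intersecting the three results (vanishing of $\gamma_2$, the $1/n^2$-principal value formula for $\gamma_1$, and the fact that all $x_\ell$ are nonzero), and the isomorphism follows from the injectivity of $\conf|_{\Omega_0}$ together with the identification $\conf_*\mathbb{M}^{(s)} = \mathbb{P}_{K^{(s,\infty)}}$. The paper's proof is much terser on the second point---it treats the equality $\conf_*\mathbb{M}^{(s)} = \mathbb{P}_{K^{(s,\infty)}}$ as already established by Borodin--Olshanski (their Theorems~2.1 and~6.1 show directly that the random configuration $\mathcal{C}^{(s)}(X) = \conf(\mathfrak{r}^{(\infty)}(X))$ is determinantal with kernel $K^{(s,\infty)}$), so your finite-$N$ approximation argument and the descriptive-set-theory justification of the Borel isomorphism are more detail than the paper supplies, not a different strategy.
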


\subsubsection{Infinite Hua-Pickrell measures}

It is slightly suprising that the study of the ergodic decomposition of infinite Hua-Pickrell measures requires deeper properties on the kernel $K^{(s, \infty)}$ in \cite[Thm. 2.1]{BO-CMP} for $s > - \frac{1}{2}$, which are however not used when treating the Hua-Pickrell probability measures. This kernel is computed by applying scaling limit method, its explicit formula will be recalled in the sequel. The kernel  $K^{(s, \infty)}$ has been studied extensively in \cite{BO-CMP} \cite{Bourgade-et-al} \cite{Bourgade-thesis}. The following result is probably known to the experts. 

\begin{thm}
Let $s > - \frac{1}{2}$. Then $K^{(s, \infty)}$ is the kernel of an orthogonal projection on $L^2(\R, \Leb).$
\end{thm}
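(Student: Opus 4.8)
The plan is to show that $K^{(s,\infty)}$, viewed as an integral operator on $L^2(\R,\Leb)$, is self-adjoint, idempotent, and that its range is exactly the closed subspace one expects from the scaling limit. Since the kernel $K^{(s,\infty)}$ arises as a scaling limit of the finite-dimensional correlation kernels of the determinantal point processes associated to $m^{(s,N)}$ (after passing through the Cayley transform to the unit circle and then rescaling near a boundary point), the natural strategy is to exploit the fact that each pre-limit kernel $K^{(s,N)}$ is the kernel of the orthogonal projection onto the $N$-dimensional span of orthogonal polynomials with respect to the relevant generalized Jacobi weight. Idempotency and self-adjointness are closed under the appropriate (locally trace class / strong) convergence, so the first half of the argument is to justify that the scaling limit preserves the projection property, i.e. that $K^{(s,\infty)} = (K^{(s,\infty)})^* = (K^{(s,\infty)})^2$ passes to the limit. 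Concretely I would verify $0 \le K^{(s,\infty)} \le 1$ and $\operatorname{tr}\big( (K^{(s,\infty)} - (K^{(s,\infty)})^2) \mathds{1}_B \big) = 0$ for every bounded Borel $B$, which forces the eigenvalues to be $0$ or $1$ almost everywhere.

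An alternative and perhaps cleaner route, which I would actually prefer to carry out, is to identify $K^{(s,\infty)}$ directly with a projection described in closed form. The kernel $K^{(s,\infty)}$ computed in \cite[Thm.~2.1]{BO-CMP} is built out of confluent hypergeometric functions (the Whittaker-type kernel), and for $s > -\tfrac12$ it has the structure of a kernel expressible via a $2\times 2$ integrable form $\tfrac{A(x)B(y) - A(y)B(x)}{x-y}$. For such integrable kernels one has a standard criterion: the operator is an orthogonal projection precisely when the associated Riemann-Hilbert data satisfy a compatibility relation, equivalently when the functions $A, B$ span (in a suitable sense) the image and the kernel reproduces them. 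So I would (i) recall the explicit formula for $K^{(s,\infty)}$, (ii) exhibit the closed subspace $\mathscr{L}^{(s,\infty)} \subset L^2(\R,\Leb)$ it should project onto — this should be a space of functions obtained as boundary/scaling limits of the polynomial spaces, described via the Hankel-type transform attached to the confluent hypergeometric functions — and (iii) check the two defining properties $\Pi_{\mathscr L} K^{(s,\infty)} = K^{(s,\infty)}$ and $K^{(s,\infty)}\big|_{\mathscr L} = \mathrm{id}$, i.e. the reproducing property $\int K^{(s,\infty)}(x,y) f(y)\, dy = f(x)$ for $f \in \mathscr{L}^{(s,\infty)}$ and $K^{(s,\infty)} f = 0$ for $f \perp \mathscr{L}^{(s,\infty)}$.

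The key computation is the semigroup identity $\int_{\R} K^{(s,\infty)}(x,z) K^{(s,\infty)}(z,y)\, dz = K^{(s,\infty)}(x,y)$. This is where the special-function identities do the work: it reduces, after the integrable-kernel manipulations, to an orthogonality/normalization relation among the confluent hypergeometric functions appearing in $A$ and $B$ (a Nicholson-type or Weber–Schafheitlin-type integral). I would establish it either by a direct contour-integral evaluation or, more robustly, by the limit-transition argument: write $K^{(s,\infty)} = \lim_N K^{(s,N)}_{\mathrm{rescaled}}$, note $(K^{(s,N)}_{\mathrm{rescaled}})^2 = K^{(s,N)}_{\mathrm{rescaled}}$ holds exactly because these are finite-rank projections, and pass to the limit using local trace-class convergence (which is available since the scaling limit in \cite{BO-CMP} is established in precisely that sense). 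Self-adjointness is immediate from the Hermitian symmetry $\overline{K^{(s,\infty)}(x,y)} = K^{(s,\infty)}(y,x)$ already built into the kernel. Once self-adjointness and idempotency are in hand, $K^{(s,\infty)}$ is by definition the orthogonal projection onto its range, which is closed, completing the proof.

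The main obstacle I anticipate is the justification of the passage to the limit — specifically, upgrading the pointwise/uniform-on-compacts convergence of kernels proved in \cite{BO-CMP} to convergence in the locally trace-class topology (or, if one goes the direct route, carrying out the confluent hypergeometric integral evaluation with full rigor, including convergence at the endpoints $0$ and $\infty$ where the weight and the Bessel-type functions are delicate). A secondary subtlety is that the reference measure here is Lebesgue measure on all of $\R$ — including a neighborhood of $0$ and the tails — so one must check that $K^{(s,\infty)}$ is genuinely locally trace class on $L^2(\R,\Leb)$ and that no mass "escapes" in the scaling limit; this is exactly the kind of property the excerpt flags as "deeper properties on the kernel $K^{(s,\infty)}$ ... not used when treating the Hua-Pickrell probability measures," so I would expect the bulk of the real work to be there.
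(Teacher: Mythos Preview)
Your limit-transition strategy is essentially what the paper does for $s \ge 0$, but the paper's argument bifurcates in a way you do not anticipate, and the split is not cosmetic.

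For $s \ge 0$ the paper passes the idempotent identity $\int \Phi_n^{(s)}(\alpha,\gamma)\Phi_n^{(s)}(\gamma,\beta)\,d\gamma = \Phi_n^{(s)}(\alpha,\beta)$ to the limit exactly as you outline: cut at $|\gamma|\le R$, use uniform convergence on compacts for the main part, and show the tail $\int_{R<|\gamma|<n\pi}$ vanishes. The tail control is the whole point, and the paper does \emph{not} get it from locally-trace-class convergence (which is not what \cite{BO-CMP} proves). Instead it uses the Christoffel--Darboux formula on the circle together with Golinskii's uniform bound $\sup_{k,\theta}|\varphi_k^{(s)}(\theta)|<\infty$, yielding $|\Phi_n^{(s)}(\alpha,\gamma)|\lesssim (n|\sin\tfrac{\alpha-\gamma}{2n}|)^{-1}$, whose square integrates to $O(1/R)$. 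That uniform bound holds only for $s\ge 0$; for $-\tfrac12<s<0$ the polynomials blow up near the singular point and the tail estimate fails.

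For $-\tfrac12<s<0$ the paper therefore uses a completely different device: a rank-one recurrence
\[
\Pi_\infty^{(s)}(x,y)=\sgn(x)\sgn(y)\,\Pi_\infty^{(s+1)}(x,y)+\frac{\mathcal V_s(x)\mathcal V_s(y)}{\|\mathcal V_s\|^2},
\]
obtained by passing to the limit in the finite-$N$ orthogonal decomposition $L^{(s,N)}=\sgn(x)L^{(s+1,N-1)}(\tfrac{N\cdot}{N-1})\oplus\C\mathcal V_{s,N}$. Since $s+1>\tfrac12$, the first summand is already known to be a projection; the sum of two orthogonal projections with $\|P_0+P_1\|\le 1$ (which follows from Fatou and Cauchy--Schwarz applied to the limit kernel) is again an orthogonal projection. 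Your proposal does not contain this step, and your suggested routes (Nicholson/Weber--Schafheitlin integrals, or a Hankel-type transform description of the range) would not easily supply it either---indeed the paper remarks explicitly that no natural unitary transform with $\Pi_\infty^{(s)}$ as a spectral projection is known for $s\ne 0$.

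So: your plan is correct in outline for $s\ge 0$ once you supply the Christoffel--Darboux tail bound, but for $-\tfrac12<s<0$ you are missing the key idea (reduce to $s+1$ by peeling off a one-dimensional piece).
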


Since  For emphasizing that $K^{(s,\infty)}$ is an orthogonal projection, the following notation will also be used: 
\begin{align}\label{change-notation}
\Pi^{(s)}_\infty : = K^{(s, \infty)}.
\end{align}

\begin{defnstar}
We define $L^{(s)} \subset L^2(\R, \Leb)$ as the range of the orthogonal projection $\Pi^{(s)}_\infty$.
\end{defnstar}

The kernel $\Pi_\infty^{(s)}$ will play the same r\^ole as $J^{(s)}$ plays in \cite{Bufetov-inf-det}. The major difference here is that $J^{(s)}$ is well-known to be a spectral projection of a unitary transform on $L^2(\R_{+})$, the Hankel transform. This result in particular allows Bufetov to use the uncertainty principle of Hankel transforms to derive some results related to the assumption (A3) for  the subset $(0, \varepsilon)$ and a subspace denoted by there as $H^{(s)} \subset L^2_\loc(\R_{+}, \Leb)$ and. However, in our situation, firstly,  the fact that  $\Pi_\infty^{(s)}$ is an orthogonal projection requires a proof. Secondly, it seems that there is not any well-know unitary transform on $L^2(\R, \Leb)$ admitting  $\Pi_\infty^{(s)}$ as a spectral projection, except for the special case $s = 0$, where $\Pi^{(0)}_\infty$ is, after change of variables,  a spectral projection of the Fourier transform on $L^2(\R)$ corresponding to the sine kernel. The verification of the assumption (A3) requires some efforts. 

\begin{thm}\label{real-analytic-intro}
Let $s > - \frac{1}{2}$. The subspace  $L^{(s)} \subset L^2(\R, \Leb)$ is a reproducing kernel Hilbert space (RKHS) having $\Pi_\infty^{(s)}$ as its reproducing kernel. Moreover, we have 
\begin{align*}
L^{(s)} \subset C^{\omega} (\R^*) \cap L^2(\R, \Leb),
\end{align*}
where $C^{\omega}(\R^*)$ stands for the space of all real-analytic functions on the set $\R^*$.
\end{thm}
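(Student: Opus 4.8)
The plan is to proceed in two stages: first establish the reproducing kernel property, then upgrade to real-analyticity on $\R^*$. For the RKHS statement, recall from the preceding theorem that $\Pi_\infty^{(s)}$ is an orthogonal projection on $L^2(\R,\Leb)$ with $L^{(s)}$ as its range. The key input is that $\Pi_\infty^{(s)}$ is a \emph{locally trace class integral operator} with a genuine pointwise-defined Hermitian kernel $\Pi_\infty^{(s)}(x,y) = K^{(s,\infty)}(x,y)$, which is how it arises as a scaling limit in \cite[Thm.~2.1]{BO-CMP}; in particular the diagonal $x\mapsto \Pi_\infty^{(s)}(x,x)$ is locally integrable. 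I would then show that for each fixed $x\in\R$ the function $e_x := \Pi_\infty^{(s)}(\cdot, x)$ belongs to $L^{(s)}$ and satisfies the reproducing identity $f(x) = \langle f, e_x\rangle$ for all $f\in L^{(s)}$. Concretely: since $\Pi_\infty^{(s)}$ is a projection, $\Pi_\infty^{(s)}(x,y) = \int_\R \Pi_\infty^{(s)}(x,z)\,\overline{\Pi_\infty^{(s)}(y,z)}\,dz$ by the idempotent relation $(\Pi_\infty^{(s)})^2 = \Pi_\infty^{(s)}$ together with Hermiticity; this says $e_x \in L^2$ with $\|e_x\|^2 = \Pi_\infty^{(s)}(x,x)<\infty$, that $e_x$ lies in the range (it is $\Pi_\infty^{(s)}$ applied to $\overline{e_x}$, up to conjugation), and that $\langle f, e_x\rangle = (\Pi_\infty^{(s)} f)(x) = f(x)$ for $f\in L^{(s)}$, the middle equality being just the definition of the integral operator applied to $f=\Pi_\infty^{(s)}f$. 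One mild technical point: the pointwise formula $(\Pi_\infty^{(s)}f)(x)=f(x)$ holds a priori only for a.e.\ $x$; this is resolved once we know elements of $L^{(s)}$ have continuous representatives, which is exactly what the second half of the theorem will give, so the argument is not circular if one first proves local boundedness of the evaluation functionals (from $|f(x)|\le \|f\|\sqrt{\Pi_\infty^{(s)}(x,x)}$ with $\Pi_\infty^{(s)}(x,x)$ locally bounded) and then bootstraps.

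For the inclusion $L^{(s)} \subset C^\omega(\R^*)\cap L^2(\R,\Leb)$, the approach is to exploit the explicit closed form of $K^{(s,\infty)}(x,y)$ recalled from \cite{BO-CMP}. That kernel is built from (confluent) hypergeometric or Bessel-type special functions in the variables $x,y$, and — crucially — it extends to a function that is \emph{real-analytic jointly in $(x,y)$ on $\R^*\times\R^*$} (the singularity sitting only at $x=0$ or $y=0$, coming from the factors like $|x|^{\Re s}$ or $x^{-s-N}$-type weights in the prelimit measures \eqref{HP}). Granting this, I would argue as follows: fix any $f\in L^{(s)}$ and any compact $I\Subset\R^*$. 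Since $f(x)=\int_\R \Pi_\infty^{(s)}(x,y)f(y)\,dy$, I want to differentiate under the integral sign arbitrarily many times and then run a Cauchy-estimate argument to get analyticity. The cleanest route: for $x_0\in\R^*$, write $\Pi_\infty^{(s)}(x,y)$ for $x$ in a complex neighborhood of $x_0$ using its analytic continuation, and split the $y$-integral into the part over a neighborhood of $x_0$ and the part away from it. The away-part is handled by dominated convergence and Morera's theorem since the kernel is holomorphic in $x$ there with locally uniform $L^2_y$ bounds. The near-part, where the kernel has its only real singularity (the $\frac{1}{x-y}$-type behavior inherent in a projection kernel, cf.\ the sine-kernel case $s=0$), requires more care: one isolates the singular factor, uses that $f\in L^2$ together with the a priori continuity of $f$ from the first part, and the fact that a principal-value / Hilbert-transform-type operator against an $L^2$ function still produces a locally bounded — in fact analytic — output because the regular part of the kernel is analytic. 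Alternatively, and perhaps more robustly, I would identify $\Pi_\infty^{(s)}$ after a change of variables with a spectral projection of a differential operator of the type that appears for generalized Jacobi / Bessel ensembles, so that every $f\in L^{(s)}$ satisfies a second-order linear ODE with real-analytic coefficients on $\R^*$; then real-analyticity of $f$ on $\R^*$ follows from the classical Cauchy--Kovalevskaya / analytic-coefficient ODE regularity theorem. Combined with membership in $L^2(\R,\Leb)$ (which is immediate since $L^{(s)}\subset L^2$ by definition), this gives the claimed inclusion.

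The main obstacle I anticipate is the behavior of the kernel near the diagonal: verifying that applying the integral operator $\Pi_\infty^{(s)}$ to an arbitrary $L^2$ element actually lands in $C^\omega(\R^*)$ rather than merely in some Sobolev or Hölder class. A projection kernel is not smoothing in the naive sense — it is close to the identity — so the gain of regularity is genuinely a consequence of the \emph{special structure} of this particular kernel (its expression through solutions of an analytic ODE), not of any abstract operator-theoretic softness. Making the ODE route precise — pinning down exactly which operator has $L^{(s)}$ as a spectral subspace, and checking its coefficients are real-analytic and non-degenerate on all of $\R^*$ — is, I expect, the part that will require the most work, and it should be cross-checked against the known $s=0$ case where $L^{(0)}$ is the Paley--Wiener-type space attached to the sine kernel and the elements are restrictions of entire functions, a fortiori real-analytic on $\R^*$. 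If a clean global ODE is not available, the fallback is the direct differentiation-under-the-integral estimate sketched above, where the labor goes into uniform control of the derivatives $\partial_x^k \Pi_\infty^{(s)}(x,y)$ in $y$ on compacta of $\R^*$, growing no faster than $k!\,C^k$, which is exactly the Cauchy estimate that the explicit special-function form of the kernel should supply.
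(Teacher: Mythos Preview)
Your RKHS paragraph is fine and matches the paper. For the real-analyticity, your Route~1 is close in spirit to what the paper does, but you are making the diagonal harder than it is, and your Route~2 is a dead end here.

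The paper's argument (Proposition~\ref{real-analytic}, supported by Lemma~\ref{analytic-ext}) runs as follows. From the explicit formula \eqref{explicit-f-kernel} one has
\[
\Pi_\infty^{(s)}(x,y)=\frac{F(x)G(y)-F(y)G(x)}{x-y},
\]
with $F,G$ built from Bessel functions of argument $1/|x|$. For each fixed $y\in\R^*$ the map $x\mapsto\Pi_\infty^{(s)}(x,y)$ extends holomorphically to $\C\setminus i\R$ (separately on each half-plane $\mathbb H_\pm$), simply by continuing $F,G$. The point you are worrying about --- the $1/(x-y)$ factor --- is \emph{not} a genuine singularity: the numerator vanishes on the diagonal, and the quotient is the divided difference $F_+(z)\widetilde G_+(z,y)-G_+(z)\widetilde F_+(z,y)$ with $\widetilde F_+(z,y)=\frac{F_+(y)-F_+(z)}{z-y}$, which is jointly analytic across $z=y$. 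This removable-singularity observation is what dissolves your ``near-part'' obstacle; there is no principal-value or Hilbert-transform argument needed. With this in hand, the paper shows (Lemma~\ref{analytic-ext}) that $z\mapsto\Pi_\infty^{(s)}(z,\cdot)$ is a \emph{continuous} map from $\C\setminus i\R$ into $L^2(\R)$, first for $s\ge 0$. Then for $h\in L^{(s)}$ one sets $h^{ext}(z)=\langle \Pi_\infty^{(s)}(z,\cdot),h\rangle$ and verifies the mean value property by Fubini (the $L^2$-continuity of the kernel in $z$ is exactly what makes Fubini legitimate). So $h^{ext}$ is harmonic on $\C\setminus i\R$ and its restriction to $\R^*$ is real-analytic. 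The range $-\tfrac12<s<0$ is handled by the one-step recurrence \eqref{rec-limit-relation}, which writes any $h\in L^{(s)}$ as an explicit Bessel term plus $\sgn(x)h_1$ with $h_1\in L^{(s+1)}$, reducing to the case $s+1\ge\tfrac12$ already done.

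Two concrete corrections to your plan. First, drop Route~2: the paper states explicitly that, except for $s=0$, no unitary transform on $L^2(\R)$ is known to have $\Pi_\infty^{(s)}$ as a spectral projection, so you should not expect a clean ODE characterization of $L^{(s)}$ to fall out. Second, in Route~1 replace ``differentiate under the integral and chase Cauchy estimates'' by ``verify the mean value property via Fubini''; this is equivalent in outcome but avoids all derivative bookkeeping, and the only analytic input you need is the $L^2$-continuity of $z\mapsto\Pi_\infty^{(s)}(z,\cdot)$, which follows from the divided-difference form of the kernel plus the boundedness of $F,G$ on $\R^*$ (classical Bessel asymptotics). Your instinct that the special-function structure of the kernel is what drives the regularity is correct; the mechanism is just simpler than you feared.
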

The crucial point of Theorem \ref{real-analytic-intro} is that,  by the unique extension property for real-analytic function, any function $\varphi$ is uniquely determined by its restriction the set $I_\varepsilon: = (- \varepsilon, \varepsilon)\setminus \{0\}$. By virtue of Theorem \ref{real-analytic-intro}, the verification of the assumption (A3) turns out to be quite direct. The proof of Theorem \ref{real-analytic-intro} relies on the analytic continuation of $\Pi_\infty^{(s)} (\cdot, y)$ onto the domain $\C\setminus i \R$ for any fixed $y \in \R$.

The next step is,  using the properties of $L^{(s)}$ for $s > - \frac{1}{2}$, to construct some infinite determinantal measure $\mathbb{B}^{(s)}$ on $\Conf(\R^*)$: We will show that,  if $s > - \frac{1}{2}$, then the subspace $L^{(s)}$ is one dimensional perturbation of the subspace $L^{(s+1)}$ after some rotation, and that $L^{(s)}$ is a two dimensional perturbation of $L^{(s+2)}$ (no rotation anymore). The explicit formula of perturbation vector function is a function in $L^2(\R, \Leb)$ depending on $s > - \frac{1}{2}$, it is then used to define a function in $L_\loc^2(\R^*, \Leb)$ when $ s \le - \frac{1}{2}$. Continue this procedure, for $s \le - \frac{1}{2}$  we define a subspace $H^{(s)} \subset L_\loc^2(\R^*, \Leb) $ by 
\begin{align*}
H^{(s)} = L^{(s +n_s)} + V^{(s)}, 
\end{align*}
where $n_s $ is the smallest positive integer such that $s + n_s > - \frac{1}{2}$ and $V^{(s)}$ is an $n_s$-dimensional subspace in $L_\loc^2(\R^*, \Leb)$. The assumptions (A1)-(A3) for $H^{(s)}$ and $\mathcal{E}_0 = (-1, 1)\setminus \{0\}$ are then shown to be verified. This allows us to define an infinite determinantal measure $\mathbb{B}^{(s)} : = \mathbb{B}(H^{(s)}, \mathcal{E}_0)$ on $\Conf(\R^*)$, which, when letting $s > - \frac{1}{2}$, coincides with the determinantal probability measure $\mathbb{P}_{K^{(s, \infty)}}$.

\begin{thm}
Let $s \in \R$. Then 
\begin{itemize}
\item[(1)] $\mathbb{M}^{(s)}(\Omega \setminus \Omega_0') =0$;
\item[(2)] the $\mathbb{M}^{(s)}$-almost sure bijection $\omega \rightarrow \conf (\omega)$ identifies $\mathbb{M}^{(s)}$ with an infinite determinantal measure $\mathbb{B}^{(s)}$, i.e., we have the following  {\it natural} isomorphism of $\sigma$-finite measure spaces
\begin{align*}
(\Omega, \, \mathbb{M}^{(s)} ) \xrightarrow{\quad \simeq\quad } (\Conf(\R^*), \,  \mathbb{B}^{(s)}).
\end{align*}
\end{itemize}
\end{thm}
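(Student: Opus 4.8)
The plan is to reduce the infinite-measure statement to the finite-measure results already established, using the additivity of the ergodic decomposition and the corresponding additivity of infinite determinantal measures under one- and two-dimensional perturbations. Let me sketch the architecture.

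First, for the part concerning $\mathbb{M}^{(s)}$ with $s \le -\tfrac12$, I would exploit the relation between $m^{(s)}$ and $m^{(s+n)}$ for $n$ large enough that $s+n > -\tfrac12$. On the level of finite-dimensional projections the densities \eqref{HP} differ by the multiplicative factor $\det((1+iX_N)^{-n})\det((1-iX_N)^{-n})$, which corresponds precisely to removing finitely many rows/columns in the associated orthogonal-polynomial ensemble; after the scaling limit this is reflected in the passage from $L^{(s+n)}$ to $H^{(s)} = L^{(s+n_s)} + V^{(s)}$ by an $n_s$-dimensional perturbation, exactly the construction recalled before the statement. I would make this correspondence precise at the level of spectral measures: the $\mathbb{M}^{(s)}$-a.s. structure of $\omega$ is inherited from $\mathbb{M}^{(s+n_s)}$ (a probability measure, for which part (1)'s analogue $\mathbb{M}^{(s+n_s)}(\Omega\setminus\Omega_0')=0$ follows from Theorems \ref{gaussfactor} and \ref{gamma1-factor}, with the $\phi_n$-balanced condition replacing the $1/n^2$-balanced one via the continuity of the functions $F_n$ and a routine comparison of the two truncation schemes on configurations). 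The point is that adding finitely many points to a $\phi_n$-balanced configuration keeps it $\phi_n$-balanced and shifts the limit by a finite amount, so $\gamma_2 = 0$ and the $\gamma_1$-balancing relation propagate from $s+n_s$ down to $s$. This gives (1).

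For part (2), the strategy is: (i) show the forgetting map $\conf$ is an $\mathbb{M}^{(s)}$-a.s. bijection onto its image in $\Conf(\R^*)$ — injectivity is automatic on $\Omega_0'$ by the discussion before the statement, and surjectivity onto the relevant configuration space is a matter of checking that $\mathbb{B}^{(s)}$-a.e. configuration is $\phi_n$-balanced, which I expect to follow from the determinantal structure together with the decay/regularity of $\Pi_\infty^{(s)}$ from Theorem \ref{real-analytic-intro}; (ii) identify the pushforward $\conf_* \mathbb{M}^{(s)}$ with $\mathbb{B}^{(s)} = \mathbb{B}(H^{(s)}, \mathcal{E}_0)$. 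For (ii) I would use the characterization of $\mathbb{B}(H^{(s)},\mathcal{E}_0)$ recalled in the excerpt: it suffices to check that for every bounded Borel $B \subset \R^* \setminus \mathcal{E}_0$, the normalized restriction of $\conf_*\mathbb{M}^{(s)}$ to configurations supported in $\mathcal{E}_0 \cup B$ is the determinantal probability $\mathbb{P}_{H^{(s)}_{\mathcal{E}_0 \cup B}}$, and that $\conf_*\mathbb{M}^{(s)}$-a.e. configuration has finitely many points outside $\mathcal{E}_0$. The latter is the "almost surely finite ergodic components" input (Theorem 2 of \cite{Bufetov-AIF}) transported through $\conf$: outside a neighborhood of $0$ there are only finitely many eigenvalues. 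The former I would get by conditioning: restricting $m^{(s)}$ to matrices whose spectrum avoids a window is, up to normalization, a Hua-Pickrell-type probability measure, whose spectral measure is already known to be $\mathbb{P}_{K^{(s,\infty)}}$-type by the finite-measure theorem, and the perturbation bookkeeping matches $H^{(s)}_{\mathcal{E}_0\cup B}$ with the corresponding projection; this is essentially the argument of \cite{Bufetov-inf-det} adapted to the present kernel, and the key new ingredient making it go through is precisely that $\Pi_\infty^{(s)}$ is a genuine orthogonal projection with RKHS range (Theorems 5 and \ref{real-analytic-intro}).

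The main obstacle, I expect, will be verifying assumption (A3) for $H^{(s)}$ and $\mathcal{E}_0 = (-1,1)\setminus\{0\}$ at the level needed here — i.e. that no nonzero $\varphi \in H^{(s)}$ vanishes on $\mathcal{E}_0$ — since unlike the Hankel-transform situation of \cite{Bufetov-inf-det} there is no uncertainty principle available off the shelf. This is where Theorem \ref{real-analytic-intro} does the heavy lifting: $L^{(s+n_s)} \subset C^\omega(\R^*)$, and one must check the finite-rank perturbation $V^{(s)}$ consists of functions that are also real-analytic on $\R^*$ (true since the perturbation vectors are built from the explicit kernel, which is real-analytic off $i\R$), so any $\varphi \in H^{(s)}$ is real-analytic on $\R^*$ and hence determined by its restriction to $I_\varepsilon = (-\varepsilon,\varepsilon)\setminus\{0\} \subset \mathcal{E}_0$; vanishing on $\mathcal{E}_0$ then forces $\varphi \equiv 0$ on $\R^*$, hence in $L^2_\loc$. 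A secondary technical point requiring care is the interplay between the two balancing conventions ($1/n^2$-balanced versus $\phi_n$-balanced): one needs that $\mathbb{M}^{(s)}$-a.e. $\omega$ lies in $\Omega_0'$ \emph{and} satisfies the sharper $\Omega_0$ relation, which should follow by comparing the truncations $\sum x_\ell \mathds{1}_{|x_\ell|\ge 1/n^2}$ and $\sum x_\ell \phi_n(x_\ell)$ and controlling the difference — a sum over $x_\ell$ with $|x_\ell|\in(1/(2n^2),1/n^2)$ — in mean square using the determinantal moment formula \eqref{DPP} for $\Pi_\infty^{(s)}$ near the origin.
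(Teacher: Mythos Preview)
Your approach has a genuine gap at its core: the ``inheritance'' mechanism you propose between $\mathbb{M}^{(s)}$ and $\mathbb{M}^{(s+n_s)}$ does not work as stated. The density relation you invoke, $m^{(s,N)}/m^{(s+n,N)} \propto \prod_j (1+\lambda_j(X_N)^2)^{n}$, is a relation between measures on $H(N)$; it does not pass to a relation between the spectral measures on $\Omega$, because the density depends on all $N$ eigenvalues and has no controlled limit under $\mathfrak{r}^{(\infty)}$. In particular, there is no sense in which an $\mathbb{M}^{(s)}$-typical $\omega$ is obtained from an $\mathbb{M}^{(s+n_s)}$-typical $\omega$ by ``adding finitely many points'', so the propagation of $\gamma_2=0$ and of the $\phi_n$-balancing condition from $s+n_s$ to $s$ is not justified. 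Likewise, your conditioning claim for part (2) --- that restricting $m^{(s)}$ to matrices whose asymptotic spectrum lies in $\mathcal{E}_0\cup B$ yields a Hua-Pickrell-type probability --- is not correct: that restriction is a highly non-local condition on $X\in H$ and the resulting measure is not in the Hua-Pickrell family.

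The paper proceeds quite differently. It never compares $\mathbb{M}^{(s)}$ to $\mathbb{M}^{(s+n_s)}$ directly. Instead it uses the multiplicative functional $\exp(-\sigma S_2)$ to turn all the infinite measures into probabilities: on the configuration side one has $\exp(-\sigma S_2)\,\mathbb{B}^{(s,N)}$ normalized to $\mathbb{P}_{L^{(s,N,\sigma)}}$, and on the $\Omega$ side one has $\nu^{(s,N,\sigma)} := \exp(-\sigma S_2)\,(\mathfrak{r}^{(N)})_*m^{(s)}$ normalized. The key step is Proposition~\ref{weak-conv}, the weak convergence $\nu^{(s,N,\sigma)}\Rightarrow \nu^{(s,\sigma)}$ in $\mathfrak{M}_{\fin}(\Omega)$, where $\nu^{(s,\sigma)}$ is \emph{defined} to be the unique lift of $\mathbb{P}_{L^{(s,\sigma)}}$ supported on $\Omega_0'$. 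Tightness and the identification of $\conf_*\hat\nu$ follow Bufetov's template; the new content is showing that any weak limit $\hat\nu$ is supported on $\Omega_0'$, and the $\gamma_1$ part of this is handled via Skorokhod's representation together with the uniform bound $\sup_k \E\,F_n(\omega_k)^2 \lesssim 1/n^2$ (Lemma~\ref{last-lem}), which in turn reduces to the finite-measure estimate of Proposition~\ref{uniformness}. Once Proposition~\ref{weak-conv} is in hand, a second weak-convergence input (Lemma~\ref{group}) for $(\mathfrak{r}^{(N)})_*m^{(s)}$ against a suitable positive continuous $f$ lets one conclude that $\exp(-\sigma S_2)\,\mathbb{M}^{(s)}$, normalized, equals $\nu^{(s,\sigma)}$; both (1) and (2) then follow immediately since $\exp(-\sigma S_2)>0$ everywhere on $\Omega$. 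Your discussion of assumption~(A3) via real-analyticity is correct, but note that this is used earlier, in the \emph{construction} of $\mathbb{B}^{(s)}$ (Lemma~\ref{assumption-3}), not in the proof of the theorem itself.
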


The main difficulty in this last step concerns the parameter $\gamma_1$. The Skorokhod's representation theorem of weakly convergent probability measures on a Polish space will be used.

{\bf Acknowledgements.} I am greatly indebted to Alexander I. Bufetov for sharing me with many of his insights in this area and encouraging me constantly. 

The author is supported by A*MIDEX project (No. ANR-11-IDEX-0001-02) and partially supported by the ANR grant 2011-BS01-00801.

\section{Finite Hua-Pickrell measures}

\subsection{Approximation approach} In this section, we briefly recall the results in \cite{BO-CMP}.
Assume that $\Re s > - \frac{1}{2}$ and $N \in \N$. Let  $\phi^{(s)}_N(x)$ be the weight function on $\R$ given by  
\begin{align*}
\phi^{(s)}_N(x) : = ( 1 + i x)^{- s - N } ( 1- i x)^{- \bar{s} - N} .
\end{align*}

Let $X \in H$, we denote $\lambda^{(N)}(X) = \left(\lambda_1(X_N), \dots, \lambda_N(X_N)\right)$, the spectrum of the finite matrix $X_N = \theta_N(X) \in H(N)$, in its weakly decreasing order: $\lambda_1(X_N) \ge \cdots \ge \lambda_N(X_N)$.

Define $\{ a_{i, N}^{+}(X): i \in \N \}$ and $\{ a_{i, N}^{-}(X): i \in \N \}$ two sequences with finitely non-zero terms, in such a way that, if $k$ and $l$ denote the numbers of strictly positive terms in $\{ a_{i, N}^{+}(X) \}$ and $\{ a_{i, N}^{-}(X)\}$ respectively, then 
$$
\frac{\lambda^{(N)}(X)}{N} = \left( a_{1,N}^{+}(X), \dots, a_{k, N}^{+}(X), 0, \dots, 0, - a_{l, N}^{-}(X), \dots, -  a_{1, N}^{-}(X) \right),
$$
in its weakly decreasing order. Further we set 
\begin{align}\label{cd-N}  
c^{(N)}(X) & = \frac{\tr (X_N)}{N},
\\ d^{(N)}(X) &= \frac{\tr(X_N^2)}{N^2}. 
\end{align}

An element $X \in H$ is said to be regular and is denoted by $ X \in H_{\mathrm{reg}}$ if there exist limits 
\begin{align}\label{asp} 
\begin{split}  
\alpha_{i }^{\pm} (X) & = \lim_{N \to \infty} a_{i,N}^{\pm} (X), \quad i = 1, 2, \dots, \\ \gamma_1(X) &  = \lim_{N \to \infty} c^{(N)}(X), \\ \delta(X) & = \lim_{N \to \infty} d^{(N)}(X).  
\end{split} 
\end{align}
If $X \in H_{\mathrm{reg}}$, it can be easily seen that
$$
\sum (\alpha_i^{+}(X) )^2+ \sum (\alpha_j^{-}(X))^2  \le \delta(X),
$$ 
thus we can define 
\begin{align}\label{defn-gamma2}
\gamma_2(X) = \delta(X) - \sum (\alpha_i^{+}(X) )^2- \sum (\alpha_j^{-}(X))^2 \ge 0.
\end{align}

For each $N = 1, 2, \dots, $ let us define a map $\mathfrak{r}^{(N)} : H \longrightarrow \Omega$ by 
\begin{align*}
\mathfrak{r}^{(N)} (X) = ( \{a_{i,N}^{+}(X)\}, \{ a_{j, N}^{-}(X) \}, c^{(N)}(X), d^{(N)}(X) ).
\end{align*} 
An important result in \cite{BO-CMP} is that any $U(\infty)$-invariant probability measure on $H$ is supported by $H_{\mathrm{reg}}$. The map
$\mathfrak{r}^{(\infty)}:   H \longrightarrow  \Omega$ given by 
\begin{align}\label{r}
\mathfrak{r}^{(\infty)} (X)=  (\{\alpha_i^{+}(X)\}, \{\alpha_j^{-}(X)\}, \gamma_1(X), \delta(X) ),
\end{align}
is well-defined on $H_\reg$, hence it is almost surely defined on $H$. 
 Theorem 5.2 in \cite{BO-CMP} reads as follows:
\begin{align}\label{spec-m}
\mathbb{M}^{(s)} = (\mathfrak{r}^{(\infty)})_{*} (m^{(s)}|_{H_{\mathrm{reg}}}).
\end{align}
Moreover, as $N \to \infty$, we have weak convergence of probability measures on $\Omega$:
\begin{align}\label{approx}
(\mathfrak{r}^{(N)})_{*} m^{(s)}  \Longrightarrow  (\mathfrak{r}^{(\infty)})_{*} (m^{(s)}|_{H_{\mathrm{reg}}}) = \mathbb{M}^{(s)},
\end{align}
that is, for any bounded continuous function $F$ on $\Omega$, we have 
$$
\lim\limits_{N\to \infty} \langle F,    (\mathfrak{r}^{(N)})_{*} m^{(s)}   \rangle = \langle F, \mathbb{M}^{(s)} \rangle.
$$

\subsection{Vanishing of \texorpdfstring{$\gamma_2$}{g} parameter of the ergodic components of \texorpdfstring{$m^{(s)}$}{a}}

In this section, we will prove Theorem \ref{gaussfactor}. An equivalent version of Theorem \ref{gaussfactor} is the following

\begin{thm}\label{gamma2}
Let $s \in \C, \Re s > - \frac{1}{2}$. Then 
\begin{align*}
\gamma_2(X) = 0, \quad \text{for }\, m^{(s)}\text{-} a.e.\, X \in H_\reg.  
\end{align*}
\end{thm}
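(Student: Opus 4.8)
## Proof proposal for Theorem~\ref{gamma2}

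The plan is to work entirely at the level of the approximating measures $(\mathfrak{r}^{(N)})_* m^{(s)}$ and to control the quantity $d^{(N)}(X) - \sum_i (a_{i,N}^+(X))^2 - \sum_j (a_{j,N}^-(X))^2$, which is the pre-limit version of $\gamma_2$. Since $\gamma_2$ is a nonnegative quantity, it suffices to show that its expectation under $m^{(s)}$ vanishes in the limit, or more precisely that $\E_{m^{(s)}}\big[ d^{(N)}(X) - \sum_\ell (a_{\ell,N}(X))^2 \big] \to 0$ as $N \to \infty$, where I abbreviate the combined positive/negative eigenvalue-ratios by $a_{\ell, N}$. Indeed $d^{(N)}(X) = \tr(X_N^2)/N^2 = \frac{1}{N^2}\sum_{k=1}^N \lambda_k(X_N)^2$ and $\sum_\ell (a_{\ell,N}(X))^2 = \frac{1}{N^2}\sum_{k=1}^N \lambda_k(X_N)^2$ as well — wait, these are literally equal! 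The point must be subtler: $d^{(N)}$ converges to $\delta$, but $\sum_\ell \alpha_\ell^2$ can be strictly smaller than $\lim \sum_\ell a_{\ell,N}^2$ because mass escapes to infinitely many small eigenvalues. So the real statement to prove is that the \emph{tail} contribution $\frac{1}{N^2}\sum_{k \,:\, |\lambda_k(X_N)| \le \varepsilon N} \lambda_k(X_N)^2$ is, in expectation, $o(1) + O(\varepsilon^{?})$ uniformly, i.e. no $\gamma_2$-type Gaussian mass is created in the bulk scaling limit.

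Concretely, first I would pass to the radial picture: under $m^{(s,N)}$ the ordered eigenvalues $\lambda_1 \ge \dots \ge \lambda_N$ of $X_N$ form a determinantal point process on $\R$ with an explicit correlation kernel $K^{(s,N)}(x,y)$ built from orthogonal polynomials for the weight $\phi_N^{(s)}(x) = (1+ix)^{-s-N}(1-ix)^{-\bar s - N}$; this is the content of \cite{BO-CMP}. Then $\E_{m^{(s)}}\big[ \tfrac{1}{N^2}\sum_k \lambda_k^2 \,\mathds{1}_{|\lambda_k| \le \delta_0 N}\big] = \tfrac{1}{N^2}\int_{|x| \le \delta_0 N} x^2 K^{(s,N)}(x,x)\,dx$. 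The key step is a \textbf{uniform upper bound on the diagonal of the kernel}, $K^{(s,N)}(x,x) \le C\, N/(1+x^2)$ or similar, valid on the relevant range; then the integral is bounded by $C N \int_{|x| \le \delta_0 N} \frac{x^2}{N^2(1+x^2)}\,dx \le C' \delta_0$, which can be made arbitrarily small. Combined with the elementary fact that the large eigenvalues $|\lambda_k| > \delta_0 N$ contribute essentially $\sum_{|\alpha_i|>\delta_0} \alpha_i^2$, and that $\delta(X) = \lim d^{(N)}(X)$ is the full second moment, letting $\delta_0 \to 0$ forces $\gamma_2(X) = \delta(X) - \sum_\ell \alpha_\ell^2 = 0$ almost surely. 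I would formalize the "in expectation $\Rightarrow$ a.s." passage via Fatou: $\E_{m^{(s)}}[\gamma_2] \le \liminf_N \E[\text{tail sum}] = 0$ together with $\gamma_2 \ge 0$.

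The main obstacle, and where the paper's stated novelty lies, is establishing the uniform estimate on $K^{(s,N)}(x,x)$ — equivalently, a uniform (in $N$) estimate on the Christoffel--Darboux kernel for orthogonal polynomials on the unit circle with respect to the pushed-forward generalized Jacobi-type weight (the Cayley transform $x \mapsto (1-ix)/(1+ix)$ turns $\phi_N^{(s)}$ into a weight of the form $|1 - \zeta|^{2a}$-type with parameters growing in $N$). I would pull results of Badkov / Nevai / Máté--Nevai--Totik type, or the explicit Jacobi-polynomial asymptotics already used in \cite{BO-CMP}, to get a bound of the shape $\sum_{j=0}^{N-1} |\psi_j(\zeta)|^2 \le C(s)\, N$ uniformly on arcs bounded away from $\zeta = 1$ (which corresponds to $x = \infty$), with an integrable blow-up near $\zeta = 1$ matched precisely by the $1/(1+x^2)$ Jacobian factor. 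Care must be taken that the constant $C(s)$ does not degrade as $N \to \infty$ even though the weight exponent does; this is exactly the "uniform estimate of orthogonal polynomials on the unit circle with respect to generalized Jacobi weights" advertised in the introduction, and checking its hypotheses in the regime of $N$-dependent weights is the delicate part. Everything downstream (the Cayley change of variables, Fatou, letting $\delta_0 \to 0$) is routine once that bound is in hand.
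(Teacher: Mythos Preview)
Your overall strategy matches the paper's: reduce to showing
\[
\sup_{N}\int_{-\varepsilon}^{\varepsilon} x^2\,\rho_1^{(s,N)}(x)\,dx \lesssim \varepsilon
\]
(this is exactly Proposition~\ref{uniformness}), pass to the unit circle by the Cayley transform, control the Christoffel--Darboux kernel via uniform bounds on the orthonormal polynomials, and conclude $\gamma_2=0$ a.s.\ from nonnegativity and vanishing expectation. The Fatou/truncation passage you sketch is essentially what Proposition~7.1 of \cite{BO-CMP} encodes.

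There is, however, a genuine misconception in your plan. You write that the Cayley transform produces a ``generalized Jacobi-type weight with parameters growing in $N$'' and worry about applying polynomial estimates ``in the regime of $N$-dependent weights.'' This is not so: the entire point of the Cayley transform here is that the $N$-dependence of the line weight $(1+ix)^{-s-N}(1-ix)^{-\bar s-N}$ is absorbed by the Vandermonde and the Jacobian, leaving on the circle the \emph{fixed} weight $\lambda^{(s)}(e^{i\theta})\propto (1+e^{i\theta})^{\bar s}(1+e^{-i\theta})^{s}$, independent of $N$ (compare \eqref{pseudo-jacobi} with \eqref{torus}). So one invokes Golinskii's estimate \eqref{asymp-poly} for a single Jacobi-type weight, with $N$ appearing only as the polynomial degree; there is no delicate uniformity in a moving weight to check.

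A second, smaller gap: your proposed bound $K^{(s,N)}(x,x)\le C N/(1+x^2)$ is only valid for $\Re s\ge 0$. For $-\tfrac12<s<0$ the Jacobi weight $|1+e^{i\theta}|^{2s}$ is singular at $\theta=\pm\pi$, and the integration domain $|\theta|\le 2\arctan(N\varepsilon)$ expands toward $\pm\pi$ as $N\to\infty$, so this singularity cannot be ignored. The paper obtains instead (see \eqref{estimate-combine})
\[
\rho_1^{(s,N,\T)}(\theta)\lesssim N + N^{1+2s}\,|1+e^{i\theta}|^{2s},
\]
and the extra term, after change of variables, contributes $O(\varepsilon^{1-2s})$ rather than $O(\varepsilon)$; since $1-2s>0$ this still suffices. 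The reduction from complex $s$ to real $s$ is then done via the extremal characterisation of the Christoffel function.
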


The forgetting map $\conf: \Omega \rightarrow \Conf(\R^{*})$ transforms the probability measures $(\mathfrak{r}^{(N)})_{*} m^{(s)}$ and $\mathbb{M}^{(s)}$ to determinantal probability measures on $\Conf(\R^{*})$. Let $\rho_1^{(s, N)} $ and $\rho_1^{(s)}$ be the corresponding first correlation functions with respect to the Lebesgue measure on $\R$. By Proposition 7.1 in \cite{BO-CMP}, for proving Theorem \ref{gaussfactor} and Theorem \ref{gamma2}, it suffices to the following

\begin{prop}\label{uniformness}
For any $s \in \C, \Re s >  - \frac{1}{2}$, we have
\begin{align}\label{estimate1}
\sup_{N  \in \N} \int_{- \varepsilon}^\varepsilon x^2 \rho_1^{(s,N)}(x) d x \lesssim \varepsilon.
\end{align}
\end{prop}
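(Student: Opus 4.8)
\textbf{Proof proposal for Proposition \ref{uniformness}.}

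The plan is to reduce the estimate to a uniform bound on orthogonal polynomials with respect to the weights $\phi_N^{(s)}$, which after the Cayley transform become generalized Jacobi weights on the unit circle $\T$. First I would recall that the first correlation function $\rho_1^{(s,N)}$ of the measure $(\mathfrak{r}^{(N)})_* m^{(s)}$ pushed to $\Conf(\R^*)$ is, by the definition of $\mathfrak{r}^{(N)}$, the (rescaled) one-point function of the $N$ eigenvalues $\lambda_j(X_N)/N$; by the classical Heine/Andr\'eief computation for unitarily invariant ensembles, this is a Christoffel--Darboux kernel: if $\{p_k^{(s,N)}\}_{k=0}^{N-1}$ are the orthonormal polynomials for the weight $\phi_N^{(s)}(x)\,dx$ on $\R$, then the unscaled one-point function is $K_N^{(s)}(x,x) = \phi_N^{(s)}(x)\sum_{k=0}^{N-1}|p_k^{(s,N)}(x)|^2$, and $\rho_1^{(s,N)}(x) = N\,K_N^{(s)}(Nx, Nx)$. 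Hence the left-hand side of \eqref{estimate1} is
\begin{align*}
\int_{-\varepsilon}^{\varepsilon} x^2 \rho_1^{(s,N)}(x)\,dx = \frac{1}{N^2}\int_{-N\varepsilon}^{N\varepsilon} y^2\, K_N^{(s)}(y,y)\,dy,
\end{align*}
and it suffices to bound $K_N^{(s)}(y,y)$ by a constant multiple of $N/(1+y^2)$ on the relevant range, uniformly in $N$.

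Next I would pass to the circle. The Cayley transform $x = \tan(\phi/2)$, or equivalently $z = (1+ix)/(1-ix) \in \T$, carries the weight $\phi_N^{(s)}(x)\,dx$ to a weight on $\T$ of the form $|1-z|^{2\Re s}\cdot(\text{smooth positive factor})$ times a Jacobian of order $(1+x^2)^{-1}$; concretely $\phi_N^{(s)}(x) = (1+x^2)^{-N}(1+ix)^{-s+\bar s}\cdots$ and the $N$-dependent part $(1+x^2)^{-N}$ is exactly what makes the Christoffel--Darboux sum of length $N$ converge to a scaling limit. The point is that in the variable $\phi$ the measure is a \emph{generalized Jacobi weight} $w(\phi) = |e^{i\phi}-1|^{2\Re s} h(\phi)$ with $h$ smooth, bounded above and below away from $\phi=0$, and $2\Re s > -1$ so the weight is integrable. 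For such weights there is a well-developed theory (Badkov, Nevai, or the Riemann--Hilbert analysis of Kuijlaars--McLaughlin--Van Assche--Vanlessen) giving \emph{uniform} pointwise upper bounds for the Christoffel function / reproducing kernel on the diagonal: $\lambda_n(\phi)^{-1} = \sum_{k=0}^{n-1}|\varphi_k(e^{i\phi})|^2 w(\phi) \lesssim n$ uniformly in $n$ and $\phi$ away from the singularity, with the correct local behavior $\lesssim |\phi|^{-1}$ near $\phi = 0$ coming from the universality of the scaling limit (the kernel $K^{(s,\infty)}$). Translating back, $y^2 K_N^{(s)}(y,y) \lesssim N\cdot \frac{y^2}{1+y^2} \le N$ uniformly, which after the $1/N^2$ prefactor and integration over an interval of length $2N\varepsilon$ yields exactly $\int_{-\varepsilon}^\varepsilon x^2\rho_1^{(s,N)}(x)\,dx \lesssim \varepsilon$.

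The main obstacle is obtaining the uniform-in-$N$ control near the singular point $x=0$ (i.e. $\phi = 0$, $z=1$): this is precisely the region where the length-$N$ CD kernel is transitioning to its scaling limit $K^{(s,\infty)}$, and a crude Nevai-type bound $\sum|p_k|^2 w \lesssim n$ degenerates as one approaches the Fisher--Hartwig singularity. I would handle this by splitting the integration range into a macroscopic part $|x| \ge \delta$, where the weight is real-analytic and non-vanishing and classical bounds apply with room to spare (indeed there $x^2\rho_1^{(s,N)}$ is integrable with a bound independent of $N$ by the known convergence to $\rho_1^{(s)}$ plus a tail estimate), and a microscopic part $|x| < \delta$, where I would use the scaling relation together with a uniform estimate for orthogonal polynomials with generalized Jacobi weights near the singularity — this is the ``uniform estimate of orthogonal polynomial on the unit circle with respect to generalized Jacobi weights'' advertised in the introduction. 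A secondary technical point is that $s$ is complex, so the weight has an oscillatory factor $(1-z)^{i\Im s}(1-\bar z)^{-i\Im s}$ of modulus one; this does not affect the modulus of the weight and hence the Christoffel-function bounds go through, but one must make sure the orthogonal-polynomial asymptotics are stated for complex Fisher--Hartwig exponents, which the Riemann--Hilbert literature does provide.
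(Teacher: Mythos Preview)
Your overall strategy---pass to the circle via the Cayley transform, recognize a generalized Jacobi weight, invoke uniform pointwise bounds on the Christoffel--Darboux sum---is exactly the route the paper takes. For $\Re s \ge 0$ your argument is essentially complete: the bound $\rho_1^{(s,N,\T)}(\theta) \lesssim N$ holds uniformly (this is the paper's Case~I), and your computation $\frac{1}{N^2}\int_{-N\varepsilon}^{N\varepsilon} y^2 K_N^{(s)}(y,y)\,dy \lesssim \varepsilon$ goes through. The reduction of complex $s$ to real $\Re s$ is also the same idea as the paper's Case~III, via the extremal characterization of the Christoffel function and the fact that the weights $\lambda^{(s)}$ and $\lambda^{(\Re s)}$ are comparable.

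There is, however, a genuine gap in the range $-\tfrac12 < \Re s < 0$, and it stems from a mislocation of the singularity. Under the Cayley map $e^{i\theta}=(i-x)/(i+x)$ used in the paper, $x=0$ corresponds to $\theta=0$ and $x=\infty$ to $\theta=\pm\pi$; the circle weight is $(1+e^{i\theta})^{\bar s}(1+e^{-i\theta})^{s}$, whose Fisher--Hartwig singularity sits at $e^{i\theta}=-1$, i.e.\ at $x=\infty$, \emph{not} at $x=0$. Consequently the crude bound $K_N^{(s)}(y,y)\lesssim N/(1+y^2)$ (equivalently $\rho_1^{(s,N,\T)}(\theta)\lesssim N$) fails for $-\tfrac12<\Re s<0$ near $\theta=\pm\pi$: Golinskii's estimate gives instead
\[
\rho_1^{(s,N,\T)}(\theta)\ \lesssim\ N \;+\; N^{1+2s}\,|1+e^{i\theta}|^{2s},
\]
and the second term is what dominates when $(N+1)|1+e^{i\theta}|\le 1$. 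Since the integration range $|\theta|\le 2\arctan(N\varepsilon)$ approaches $(-\pi,\pi)$ as $N\to\infty$ (for any fixed $\varepsilon$), you cannot avoid this regime; your proposed macroscopic/microscopic split around $x=0$ does not touch it. The paper handles the extra term by a direct change of variables $t=\tan(\theta/2)$, obtaining the contribution $\lesssim N^{2s-1}\int_0^{N\varepsilon} t^{-2s}\,dt \lesssim \varepsilon^{1-2s}\lesssim \varepsilon$. So the fix is not conceptual but quantitative: you need the sharper two-term Golinskii bound near the singularity at $\theta=\pm\pi$, and then a separate elementary estimate for the resulting integral.
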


Let $X$ be a random matrix in $H$ such that 
\begin{align*}
Law (X) = m^{(s)}. 
\end{align*} 
We can define random point configurations $\mathcal{C}^{(s)}_N(X)$ and $\mathcal{C}^{(s)}(X)$ by \begin{align*}
\mathcal{C}^{(s)}_N(X) =& \big\{ a_{i, N}^{+} (X) \} \sqcup \{- a_{j, N}^{-} (X) \}, \\  \mathcal{C}^{(s)}(X) = &\big\{ \alpha_{i}^{+} (X) \} \sqcup \{- \alpha_{j}^{-} (X) \},
\end{align*}
where we omit the possible zero coordinates.

By the definition of the first correlation function, we have 
\begin{align*}
\E\left( \sum_{x \in \mathcal{C}_N^{(s)} (X) }     x^2 \mathlarger{\mathds{1}}_{ | x | \le \varepsilon}\right) = \int_{- \varepsilon}^\varepsilon x^2 \rho_1^{(s,N)}(x) d x.
\end{align*} 

Hence it suffices to prove that 
\begin{align}\label{estimate2}
\sup_{N \in\N} \E\left( \sum_{x \in \mathcal{C}_N^{(s)} (X) }     x^2 \mathlarger{\mathds{1}}_{ | x | \le \varepsilon}\right) \lesssim \varepsilon.
\end{align}

The rest of this section is devoted to the proof of \eqref{estimate2}.

\subsubsection{Change of variables} Recall that the random point configuration $\mathcal{C}_N^{(s)}(X)$ is a determinantal point process admitting one kernel function given by with first correlation function given by
\begin{align}\label{kernel-line}
K^{(s,\R)}_N (x, y): = N \cdot \boldsymbol{K}^{(s, N)}(Nx, Ny), \quad x, y \in \R.
\end{align} 
where the kernel $\boldsymbol{K}^{(s,N)} (x', x'')$ is given in Theorem 1.4 of \cite{BO-CMP}.
Hence 
\begin{align}
\rho_1^{(s,N)} (x) = K_N^{(s,\R)}(x,x).
\end{align}

The determinantal point process before taking the scaling is  $$  \widetilde{\mathcal{C}}_N^{(s)} : = N \cdot \mathcal{C}_N^{(s)}(X),$$ it has a probability distribution on $\R^N/S(N)$ given by the Pseudo-Jacobi ensemble as follows: 
\begin{align}\label{pseudo-jacobi}
\text{const} \prod_{1 \le j < k \le N } (x_j - x_k)^2 \cdot \prod_{j  = 1}^N ( 1 + i x_j)^{- s - N} ( 1 - i x_j)^{- \bar{s} - N} d x_j.
\end{align}

It is convenient for us to transform the point process $\widetilde{\mathcal{C}}_N^{(s)}$ to a determinantal point process on the unit circle $\T$. Let $\Theta_N^{(s)}$ be  a determinantal  point process on $\T$ which has a probability distribution on $\T^N/S(N)$: 
\begin{align}\label{torus}
\text{const} \prod_{ 1 \le j < k \le N} | e^{i \theta_j} - e^{i \theta_k} |^2 \prod_{j = 1}^N ( 1  + e^{i \theta_j})^{\bar{s}} (1 +  e^{ - i \theta_j}  )^s d \theta_j, \quad \theta_j \in [- \pi,  \pi],
\end{align}
where $d \theta_j$ is the Lebesgue measure on $[- \pi, \pi]$.
Consider the Cayley transform 
\begin{align}\label{cayley} 
 \begin{array}{ccc} \R  & \xrightarrow{\text{Cayley}}  & \T \\ x & \mapsto &   e^{i \theta}= \frac{i - x }{ i+x} \end{array}.
\end{align}  An elementary computation shows that the pushforward of the probability measure given by the formula \eqref{pseudo-jacobi} under the Cayley transform \eqref{cayley} coincides with the probability measure given by the formula \eqref{torus}. Hence we obtain 
\begin{align}\label{Theta}
\begin{split}
& \E\left( \sum_{x \in \mathcal{C}_N^{(s)} (X) }     x^2 \mathlarger{\mathds{1}}_{ | x | \le \varepsilon}\right)  = \E\left(  \frac{1}{N^2} \sum_{ t \in \widetilde{\mathcal{C}}_N^{(s)}}     t^2\, \mathlarger{\mathds{1}}_{ | t  | \le N \varepsilon}\right) \\  = & \E\left(  \frac{1}{N^2}\sum_{\theta \in \Theta_N^{(s)} }    \tan^2  \frac{\theta}{2}  \cdot \mathlarger{\mathds{1}}_{  | \theta | \le  2 \arctan (N \varepsilon) }\right).
\end{split}
\end{align}

\subsubsection{The determinantal point process \texorpdfstring{$\Theta_N^{(s)}$}{a}}
Let  $\lambda^{(s)} (e^{i \theta}) \frac{d\theta}{2 \pi}$ be the probability measure on the unit circle $\T$ having a density, with respect to the Lebesgue measure $\frac{d \theta}{2 \pi}$ on $(- \pi , \pi)$,  proportional to
\begin{align*}
|( 1 + e^{i \theta})^{\bar{s}} |^2 =  ( 1 + e^{i \theta})^{\bar{s}} ( 1 + e^{- i \theta})^s.
\end{align*} The point processes $\Theta_N^{(s)}$'s depend on the successive orthonormal polynomials $(p_n^{(s)})$ associated to $\lambda^{(s)}$ (see, e.g., \cite[Chapter 3]{Bourgade-thesis}): 
\begin{align*} 
 \frac{1}{2 \pi}\int_{-\pi}^{\pi} \overline{ p_m^{(s)}(e^{i \theta}) } p_n^{(s)}(e^{i \theta}) \lambda^{(s)}  (e^{i \theta})d \theta  = \delta_{mn},
\end{align*}
where $p_n^{(s)}(z ) = a_n^{(s)} z^n + \cdots + a_0^{(s)} $ with $a_n^{(s)} > 0$. Let
\begin{align}\label{kernel-t}
K_N^{(s, \T)}(e^{i \alpha}, e^{i \beta}) :  =  \sqrt{ \lambda^{(s)}(e^{i \alpha}) \lambda^{(s)}(e^{i \beta})  }  \sum_{ n = 0 }^{N-1} p_n^{(s)}(e^{i \alpha})  \overline{ p_n^{(s)}(e^{i \beta})  }.
\end{align}
Then $K_N^{(s, \T)}$ is a kernel for the determinantal point process $\Theta_N^{(s)}$, in particular, the first correlation function of $\Theta_N^{(s)}$ (with respect to $\frac{d \theta}{2 \pi} $) is given by the formula 
\begin{align} \label{theta-cor1}
\rho_1^{(s, N, \T)} (\theta)  = \lambda^{(s)}(e^{i \theta}) \sum_{n = 0}^{N-1} | p_n^{(s)}(e^{i \theta}) |^2. 
\end{align}

Now the inequality \eqref{estimate2} is equivalent to 
\begin{align}\label{estimate3}
\mathcal{J}_N: = \frac{1}{N^2}  \int_{- 2 \arctan(N\varepsilon)}^{2 \arctan(N\varepsilon)}  \tan^2  \frac{\theta}{2}  \cdot \rho_1^{(s, N, \T)} (\theta) d \theta \lesssim \varepsilon, \text{ uniformly  on } N.
\end{align} 

\subsubsection{Asymptotics for the orthonormal polynomials \texorpdfstring{$p_n^{(s)}$}{a}} For studying the asymptotics of the correlation functions $\rho_1^{(s, N, \T)}$, we need the following result from \cite{Golinskii-SSRS}, see also \cite{Badkov}.

\begin{thmstar}$\mathrm{( B. L. Golinskii) }$
Let $ s \in \R, s  > - \frac{1}{2} $. Then we have the following estimates for $p_n^{(s)}$: there exist two numerical constants $C_1$ and $C_2$ depending only on $s$ such that for any $ n = 1, 2, \dots$ and any $\theta \in [ - \pi, \pi]$, we have
\begin{align}\label{asymp-poly}
C_1 \left( | 1 + e^{i \theta} | + \frac{1}{n+1}  \right)^{-s}  \le | p_{n-1}^{(s)} (e^{i \theta} ) |  \le C_2 \left( | 1 + e^{i \theta} | + \frac{1}{n+1}  \right)^{-s}.
\end{align}
\end{thmstar}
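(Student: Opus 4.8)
I would read \eqref{asymp-poly} as a pointwise, uniform-in-$n$ two-sided estimate for the orthonormal polynomials of the generalized Jacobi weight $w^{(s)}(e^{i\theta}):=|1+e^{i\theta}|^{2s}$ (which is $\lambda^{(s)}$ up to the irrelevant normalizing constant), a weight with a single algebraic singularity, at the point $z=-1$, of exponent $2s$. First reductions: since the reversal $q\mapsto q^{*}$, $q^{*}(z)=z^{\deg q}\overline{q(1/\bar z)}$, preserves the modulus on $\T$, it suffices to estimate $|p_{n-1}^{(s),*}(e^{i\theta})|$; moreover $p_{n-1}^{(s),*}$ is a polynomial that is zero-free on $\overline{\D}$ (the zeros of a monic orthogonal polynomial on the circle lie in the open disc), a fact I will need for the lower bound. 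The Szeg\H{o} function of $w^{(s)}$ is explicit: $D^{(s)}(z)=(1+z)^{s}$ with the principal branch, $D^{(s)}(0)=1$, analytic and non-vanishing in $\D$, with boundary values satisfying $|D^{(s)}(e^{i\theta})|^{2}=w^{(s)}(e^{i\theta})$ and $|D^{(s)}(e^{i\theta})|=|1+e^{i\theta}|^{s}$.

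The plan is to split $\T$ into a \emph{regular part} $\{\theta:|\theta-\pi|\ge\delta\}$ and a \emph{singular neighbourhood} $\{\theta:|\theta-\pi|<\delta\}$, $\delta>0$ small and fixed. On the regular part $w^{(s)}$ is analytic, bounded, and bounded away from $0$, so Szeg\H{o}'s strong asymptotics give $p_{n-1}^{(s),*}(e^{i\theta})\to 1/D^{(s)}(e^{i\theta})=(1+e^{i\theta})^{-s}$ uniformly; since $|1+e^{i\theta}|+\tfrac1{n+1}\asymp|1+e^{i\theta}|\asymp 1$ there, this yields \eqref{asymp-poly} with the stated shape for all large $n$ (and non-vanishing, being close to a nowhere-zero function). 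The finitely many remaining small values of $n$ are handled directly: each $p_{n-1}^{(s)}$ is a \emph{fixed} polynomial that does not vanish on $\T$, hence $|p_{n-1}^{(s)}(e^{i\theta})|$ lies between two positive constants on $\T$, which is exactly \eqref{asymp-poly} for that $n$ (there $(|1+e^{i\theta}|+\tfrac1{n+1})^{-s}\asymp 1$).

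The substance is the singular neighbourhood $|\theta-\pi|<\delta$. Here I would pass to $[-1,1]$ via the Szeg\H{o} mapping $x=\cos\theta$: the symmetric weight $w^{(s)}$ corresponds to the Jacobi weights $(1-x)^{-1/2}(1+x)^{s-1/2}$ and $(1-x)^{1/2}(1+x)^{s+1/2}$, and the Szeg\H{o} correspondence expresses $p_{m}^{(s)}(e^{i\theta})$, hence $|p_{m}^{(s)}(e^{i\theta})|^{2}$, as an explicit combination of squares of the orthonormal Jacobi polynomials of these two families evaluated at $\cos\theta$ (with $m\in\{2n,2n+1\}$). The sum-of-squares structure, together with the interlacing of the zeros of the two Jacobi families, is precisely what makes $|p_m^{(s)}(e^{i\theta})|$ a zero-free envelope even though each Jacobi polynomial oscillates. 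Into this I insert the classical uniform asymptotics for Jacobi polynomials near the endpoint $x=-1$ ($\leftrightarrow\theta=\pi$), in three regimes: the Darboux/Szeg\H{o} oscillatory regime $c(n+1)^{-1}\le|\theta-\pi|<\delta$, where the envelope is $\asymp|\theta-\pi|^{-s}\asymp|1+e^{i\theta}|^{-s}$; the transition regime $|\theta-\pi|\asymp(n+1)^{-1}$; and the Mehler--Heine (Bessel) boundary layer $|\theta-\pi|\le c(n+1)^{-1}$, where the envelope is $\asymp n^{s}\asymp(n+1)^{s}$. Matching the three gives $|p_m^{(s)}(e^{i\theta})|\asymp\big(|1+e^{i\theta}|+(n+1)^{-1}\big)^{-s}$ uniformly, i.e. \eqref{asymp-poly}. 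An equivalent route, staying on the circle, uses the explicit Verblunsky coefficients $\alpha_n^{(s)}=(-1)^{n}s/(n+s+1)$ of $w^{(s)}$ (up to a sign convention) to obtain a closed hypergeometric expression for $p_{n-1}^{(s),*}$ and then invokes uniform ${}_2F_1$ asymptotics; this is the path of Golinskii \cite{Golinskii-SSRS} and of Badkov \cite{Badkov}.

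I expect the main obstacle to be exactly the singular-neighbourhood analysis: making the asymptotics uniform in $n$ \emph{and} $\theta$ at once, in particular controlling the error terms across the transition scale $|\theta-\pi|\asymp(n+1)^{-1}$ so that the ``bulk'' and ``boundary-layer'' estimates glue with constants independent of $n$; and, for the \emph{lower} bound, extracting a genuine positive lower bound for the envelope from asymptotic formulas whose individual ingredients have zeros --- here the zero-freeness of $p_{n-1}^{(s),*}$ on $\overline{\D}$ and the sum-of-squares/interlacing structure furnished by the Szeg\H{o} map are what carry the argument; a two-sided bound for the Christoffel--Darboux kernel $\sum_{k<N}\lambda^{(s)}|p_k^{(s)}|^2$ on the diagonal near $z=-1$ can serve as an additional consistency check. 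The regular region and the small-$n$ cases are routine.
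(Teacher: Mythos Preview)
The paper does not prove this statement; it quotes it as a known result due to Golinskii \cite{Golinskii-SSRS} (with an additional reference to Badkov \cite{Badkov}) and then uses it as a black box to derive the consequence \eqref{up-bdd}. So there is no argument in the paper to compare your proposal against.

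That said, your outline is a credible sketch of the classical route to such two-sided bounds for OPUC with a single algebraic singularity: Szeg\H{o} strong asymptotics on the regular arc, the Szeg\H{o} map to Jacobi polynomials together with Darboux and Mehler--Heine/Bessel asymptotics in the boundary layer near $\theta=\pi$, and the sum-of-squares/interlacing mechanism (equivalently, zero-freeness of $p_{n-1}^{(s),*}$ on $\overline{\D}$) to secure the lower bound across the transition scale $|\theta-\pi|\asymp(n+1)^{-1}$. You also correctly identify \cite{Golinskii-SSRS} and \cite{Badkov} as the places where this is carried out rigorously; that is exactly what the paper relies on.
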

It follows in particular that when $s \in \R, s > - \frac{1}{2}$, we have  
\begin{align}\label{up-bdd}
\lambda^{(s)} (e^{i \theta}) \cdot | p_n^{(s)} (e^{i \theta})  |^2 \lesssim \left( 1 + \frac{1}{(n+2) | 1 + e^{i \theta} | }\right)^{-2s}.
\end{align}

\subsubsection{Proof of the inequality \eqref{estimate3}}
The proof will be divided into three cases. 
{\flushleft  \em $\mathrm{I}$. Non-negative parameter: $s \ge 0$. } In this case, the right hand side of inequality \eqref{up-bdd} is uniformly bounded in $n \in \N$ and $\theta \in [ - \pi, \pi]$. It follows that 
\begin{align*}
\rho_1^{(s, N, \T)} (\theta) \lesssim N. 
\end{align*}
But then we have 
\begin{align}\label{non-negative}
\begin{split}
 \mathcal{J}_N \lesssim  & \frac{1}{N}  \int_{- 2 \arctan(N\varepsilon)}^{2 \arctan(N\varepsilon)}  \tan^2  \frac{\theta}{2}   d \theta = \frac{1}{2 N}  \left( \tan\frac{\theta}{2} - \frac{\theta}{2}    \right) \bigg|_{- 2 \arctan (N\varepsilon)}^{2 \arctan (N\varepsilon)} \\   \lesssim & \frac{N \varepsilon - \arctan(N\varepsilon)}{N} \le \varepsilon.
\end{split}
\end{align}

\medskip
\medskip

{\flushleft  \em $\mathrm{II}$. Negative parameter: $- \frac{1}{2} < s < 0$.} In this case, we have 
\begin{align}\label{integral-f}
\begin{split}
 & \rho_1^{(s, N, \T)}(\theta)  = \sum_{n = 0}^{N-1} \lambda^{(s)} (e^{i \theta}) \cdot | p_n^{(s)} (e^{i \theta})  |^2 \\
 \lesssim  & \sum_{n = 0}^{N-1}    \left( 1 + \frac{1}{(n+2) | 1 + e^{i \theta} | }\right)^{-2s} \le \int_1^{N+1}   \left( 1 + \frac{1}{  t  | 1 + e^{i \theta} | }\right)^{-2s} dt  \\  = & \underbrace{ \frac{1}{| 1 + e^{i \theta}| } \int_{| 1 + e^{i \theta}| }^{(N+1) | 1 + e^{i \theta}| }  \left( 1 + \frac{1}{t'}\right)^{-2s} dt'}_{= : A} .
 \end{split}
\end{align}
For estimating $A$, we have three cases. 

\textit{The First Case.} If $  | 1 +e^{i \theta}| \ge 1$, then for $t' \ge | 1 + e^{i \theta}| \ge1$,  we have  $\left( 1 + \frac{1}{  t'  }\right)^{-2s}  \le 2^{-2s}$. It follows that $$
\rho_1^{(s, N, \T)}(\theta) \lesssim N.
$$

\textit{The Second Case.} If $( N +1)| 1 + e^{i \theta}| \le 1$,  then  for $ | 1 + e^{ i \theta}| <  t' <  (N+1)| 1 + e^{i \theta} | \le 1$, we have 
\begin{align*}
 \frac{1}{t'} \le 1+ \frac{1}{t'} \le \frac{2}{t'} \quad  \text{ and } \quad   (t')^{2s} \le \left( 1+ \frac{1}{t'}\right)^{-2s} \le 2^{-2s}(t')^{2s} .
\end{align*}
Thus
\begin{align*}
 \rho_1^{(s, N, \T)} (\theta) \lesssim A \lesssim \frac{1}{| 1+ e^{i \theta}| } \int_{ | 1 + e^{ i \theta} | }^{(N+1)  | 1 + e^{ i \theta} | } (t')^{2s} d t' \approx N^{1+2s} | 1 + e^{i \theta}|^{2s}.
\end{align*}

\textit{The Third Case.} If $ | 1+e^{i \theta}| < 1 < ( N+1) | 1+e^{i \theta}|   $, then 

\begin{align*}
A = \frac{1}{| 1 + e^{i \theta}| } \left[  \underbrace{  \int_{| 1 + e^{i \theta}| }^1   \left( 1 + \frac{1}{t'}\right)^{-2s} dt' }_{  = : A_1}+ \underbrace{\int_1^{(N+1) | 1 + e^{i \theta}| }  \left( 1 + \frac{1}{t'}\right)^{-2s} dt' }_{=: A_2}\right]
\end{align*}
The same argument as above, we have
\begin{align*}
A_1 \lesssim \int_{| 1+e^{i \theta}| }^1 (t')^{2s} dt'   \lesssim 1 \lesssim N | 1 + e^{i \theta}|.
\end{align*}
\begin{align*}
A_2 \lesssim N | 1 + e^{i \theta}|.
\end{align*}
Hence 
\begin{align*}
 \rho_1^{(s, N, \T)} (\theta) \lesssim A \lesssim N.
\end{align*}

 Combining the above three cases, we arrive at the following estimate: 
 \begin{align}\label{estimate-combine}
 \rho_1^{(s, N, \T)} (\theta) \lesssim N + N^{1 + 2s } | 1 + e^{i \theta}|^{2s}.
 \end{align}
 Now
\begin{align*}
 \mathcal{J}_N   \lesssim &  \underbrace{ \frac{1}{N} \int_{- 2 \arctan(N\varepsilon)}^{2 \arctan(N\varepsilon)}  \tan^2  \frac{\theta}{2} d \theta}_{=: R_1}  \\ & + \underbrace{\frac{1}{N^2}  \int_{- 2 \arctan(N\varepsilon)}^{2 \arctan(N\varepsilon)}  \tan^2  \frac{\theta}{2}   N^{1 +2s} | 1 + e^{i \theta}|^{2s} d \theta}_{= : R_2}.
\end{align*}
The estimate for  the first term $R_1$ can be obtained as in \eqref{non-negative}, i.e., 
\begin{align*}
R_1 \lesssim \varepsilon.
\end{align*}
For the second term $R_2$, we have
\begin{align*}
 R_2  = & 2  N^{2s-1}  \int_{0}^{2 \arctan(N\varepsilon)}  \tan^2  \frac{\theta}{2}   \cdot | 1 + e^{i \theta}|^{2s} d \theta 
\\ \lesssim &  N^{2s-1}  \int_{0}^{2 \arctan(N\varepsilon)}  \tan^2  \frac{\theta}{2}   \cdot \cos^{2s} \frac{\theta}{2} \, d \theta 
\\ = & 2 N^{2s-1} \int_0^{N\varepsilon}  \frac{ t^2}{ (1+t^2)^{s+1}}dt  \,\,\, (\textit{change of variable }   \, t = \tan \frac{\theta}{2})
\\ = &  2 N^{2s-1} \int_0^{N\varepsilon} \left(\frac{t^2}{1 + t^2}\right)^{s + 1} \cdot t^{-2s} dt 
\\ \lesssim & N^{2s-1} \int_0^{N\varepsilon} t^{-2s} dt 
\\ \lesssim& N^{2s-1} N^{1 - 2s} \varepsilon^{1-2s} \lesssim \varepsilon.
\end{align*}
The above two estimates imply 
\begin{align*}
\mathcal{J}_N \lesssim \varepsilon, \quad \text{uniformly on} \, N.
\end{align*}

\medskip
\medskip

{\flushleft  \em $\mathrm{III}$. The general case $s \in \C, \Re s > - \frac{1}{2}$.} We will use the following well known fact (see e.g. \cite[Thm 11.3.1, p.290]{Szego-OP}): Let $\mu(e^{i \theta})$ be a weight function (not necessarily a probability density) on $\T$, let $p_n^{(\mu)}$ be the sequence of the orthonormal polynomials with respect to the measure $\mu(e^{i \theta}) \frac{d \theta}{ 2 \pi}$, and let $s_N(\mu, e^{i \theta})$ denote the following sum: 
\begin{align*}
s_N(\mu, e^{i \theta}) = \sum_{n = 0}^{N-1} | p_n^{(\mu)} (e^{i \theta}) |^2.
\end{align*} 
Then for all $\theta \in [-\pi, \pi]$, 
\begin{align*}
s_N(\mu, e^{i \theta}) = \max_{ \deg (P) \le N -1 } \frac{| P(e^{i \theta})|^2}{\frac{1}{2\pi} \int_{-\pi}^{\pi} | P(e^{i s}) |^2 \mu(e^{i s}) ds }.
\end{align*}
It can be easily seen that from the above formula that for  any two weight functions $\mu_1, \mu_2$, if there exists a constant $C\ge 1$ such that 
\begin{align*}
\frac{1}{C} \mu_1(e^{i \theta}) \le \mu_2(e^{i \theta}) \le  C  \mu_1(e^{i \theta}), \text{ for } a.e. \, \theta \in [- \pi, \pi].
\end{align*}
Then 
\begin{align*}
\frac{1}{C} s_N(\mu_1, e^{i \theta}) \le s_N(\mu_2, e^{i \theta}) \le C s_N(\mu_2, e^{i \theta}).
\end{align*}
Since 
\begin{align*}
( 1 + e^{ i \theta} )^{\bar{s}} ( 1+e^{- i \theta})^{s} = (1 + e^{i \theta})^a ( 1 + e^{- i \theta})^a e^{b \theta}, 
\end{align*}
where $\theta \in ( - \pi, \pi)$ and $s = a + i b$. It follows that there exists $C\ge 1$, such that 
\begin{align*}
\frac{1}{C} \lambda^{(a)}(e^{i \theta}) \le \lambda^{(s)}(e^{i \theta}) \le  C  \lambda^{(a)}(e^{i \theta}), 
\end{align*}
which in turn implies that 
\begin{align*}
 \frac{1}{C^2 } \rho_1^{(a, N, \T)} (\theta) \le \rho_1^{(s, N, \T)} (\theta)  \le C^2 \rho_1^{(a, N, \T)} (\theta).
\end{align*}
Thus the inequality \eqref{estimate3} with a fixed complex parameter $s$ with $\Re s > - \frac{1}{2}$ follows easily from the same inequality with a real parameter $a = \Re s$, whose validness has already been obtained.

\subsection{Analyze of  \texorpdfstring{$\gamma_1$}{g} parameter of ergodic components of \texorpdfstring{$m^{(s)}$}{m}}
The main result of this section is the following equivalent form of Theorem \ref{gamma1-factor}.
\begin{thm}\label{mainthm2}
Let $s \in \R, s > - \frac{1}{2}$. Then for $m^{(s)}$- a.e. $X \in H_{\mathrm{reg}}$, we have  
\begin{align}\label{gamma1}
\begin{split}
&\gamma_1(X)  = \lim_{n  \to  \infty}   \sum_{x \in \mathcal{C}^{(s)}(X) } x \mathlarger{\mathds{1}}_{| x |>1/n^2} \\ = &\lim_{n \to \infty} \left( \sum_{i = 1}^\infty \alpha_i^{+}(X)  \mathlarger{\mathds{1}}_{ \{ i\in \N: \, \alpha_i^{+} (X)  > 1/n^2 \} } -  \sum_{j = 1}^\infty \alpha_j^{-}(X)  \mathlarger{\mathds{1}}_{ \{ j \in \N: \, \alpha_j^{-} (X) > 1/n^2 \} } \right).
\end{split}
\end{align}
\end{thm}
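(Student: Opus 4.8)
\textbf{Proof strategy for Theorem \ref{mainthm2}.}

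The plan is to work with the point process $\mathcal{C}_N^{(s)}(X)$ and its scaling limit $\mathcal{C}^{(s)}(X)$, exploiting the weak convergence \eqref{approx} together with the determinantal structure. First I would reduce the statement to a statement about the ``principal value'' of the configuration. Recall that $c^{(N)}(X) = \tr(X_N)/N$ converges to $\gamma_1(X)$ for $m^{(s)}$-a.e.\ $X$; on the other hand, after the Cayley transform as in \eqref{cayley}, $\tr(X_N)/N = \frac{1}{N}\sum_{\theta\in\Theta_N^{(s)}}\tan(\theta/2)$, so $\gamma_1$ is an \emph{unconditional} sum of the full set of (rescaled) eigenvalues, whereas the right-hand side of \eqref{gamma1} is the \emph{conditionally convergent} sum $\lim_n \sum_{x\in\mathcal{C}^{(s)}(X)} x\,\mathds{1}_{|x|>1/n^2}$. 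The core of the proof is thus to show that the small eigenvalues cancel in the symmetric (principal value) sense and that the error between truncating at level $1/n^2$ and taking the full trace goes to zero. This should follow from the balance estimate in Proposition \ref{uniformness}: having $\sup_N \int_{-\varepsilon}^{\varepsilon} x^2 \rho_1^{(s,N)}(x)\,dx \lesssim \varepsilon$ controls the $L^2$-mass of small points uniformly, which by Cauchy--Schwarz (combined with a variance estimate for the linear statistic, available from the determinantal kernel: $\mathrm{Var}(\sum f(x_i)) = \frac{1}{2}\int\int |f(x)-f(y)|^2 |K(x,y)|^2 + \int f^2 K(x,x)(1-K(x,x))$) gives control of $\E\big|\sum_{x\in\mathcal{C}_N^{(s)}(X)} x\,\mathds{1}_{1/n^2<|x|\le\varepsilon}\big|$, uniformly in $N$, with a bound that tends to $0$ as $\varepsilon\to 0$ and then as $n\to\infty$.

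Concretely, I would proceed as follows. Step 1: Fix $\varepsilon$ and split $\tr(X_N)/N = \sum_{|x|>\varepsilon} x + \sum_{1/n^2<|x|\le\varepsilon} x + \sum_{|x|\le 1/n^2} x$, where the sums are over $x\in\mathcal{C}_N^{(s)}(X)$ and $n$ is chosen so that $1/n^2<\varepsilon$. The first sum involves only finitely many points in a fixed compact region away from $0$, where weak convergence \eqref{approx} applies cleanly, so as $N\to\infty$ it converges (in distribution, and then a.s.\ along the coupling) to $\sum_{|x|>\varepsilon} x$ over $x\in\mathcal{C}^{(s)}(X)$. Step 2: For the middle term, use the variance computation for the linear statistic $f(x)=x\,\mathds{1}_{1/n^2<|x|\le\varepsilon}$ against the kernel $K_N^{(s,\R)}$; bound $\E|{\sum}f(x_i)|$ by $|\int f\,\rho_1^{(s,N)}| + \mathrm{Var}^{1/2}$, and estimate both using Proposition \ref{uniformness} and the symmetry of the weight $\phi_N^{(s)}$ (note the weight in \eqref{HP} is \emph{not} symmetric in $x$ when $\Im s\ne 0$, but for $s\in\R$ it is even, so $\int_{1/n^2<|x|\le\varepsilon} x\,\rho_1^{(s,N)}(x)\,dx = 0$ by parity). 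Step 3: For the third term $\sum_{|x|\le 1/n^2}x$, again by parity the expectation vanishes, and by Cauchy--Schwarz $\E|\sum_{|x|\le 1/n^2}x| \le (\E N_n)^{1/2}(\E\sum x^2\mathds{1}_{|x|\le1/n^2})^{1/2}$ where $N_n$ is the number of points in $[-1/n^2,1/n^2]$; using $\rho_1^{(s,N)}(x)\lesssim N$ near $0$ is too crude, so instead I would use that $\rho_1^{(s,N)}(x)\,dx$ near $0$ has density bounded \emph{uniformly in $N$} away from scale $1/N$ — but here the cleaner route is to note $\E\sum x^2\mathds{1}_{|x|\le 1/n^2}\lesssim 1/n^2$ by Proposition \ref{uniformness} and to bound the count contribution by reorganizing the sum dyadically: $\E|\sum_{|x|\le1/n^2}x|\le \sum_{k\ge 2\log_2 n}\E|\sum_{2^{-k-1}<|x|\le 2^{-k}}x|$ and on each dyadic shell use parity plus the variance bound to get a geometrically decaying series summing to $o(1)$ as $n\to\infty$, uniformly in $N$.

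\textbf{Main obstacle.} I expect the principal difficulty to be Step 3: controlling the contribution of the \emph{smallest} eigenvalues, i.e.\ those at scale $\lesssim 1/N$ where the limiting configuration $\mathcal{C}^{(s)}$ has infinitely many points accumulating at $0$ and the rescaled process $\widetilde{\mathcal{C}}_N^{(s)}$ genuinely has $O(N\varepsilon)$ points in $|t|\le N\varepsilon$. The naive bound $\rho_1^{(s,N)}\lesssim N$ forces one to integrate $x^2\cdot N$ over $|x|\lesssim 1/n^2$, which is $O(N/n^6)$ — not uniformly small. The resolution must exploit \emph{cancellation} (the parity of the real weight making all odd linear statistics mean-zero) together with the \emph{decorrelation} encoded in the determinantal kernel ($|K_N^{(s,\R)}(x,y)|^2$ decays), so that the variance of $\sum_{|x|\le 1/n^2} x$ is controlled by $\int x^2\rho_1^{(s,N)}$ rather than by $(\int |x|\rho_1^{(s,N)})^2$. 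Making this precise — getting a variance bound for $\sum x\,\mathds{1}_{|x|\le 1/n^2}$ that is $O(1/n^2)$ uniformly in $N$ — is the technical heart of the argument; once it is in place, combining Steps 1--3 and then passing $N\to\infty$, $n\to\infty$, $\varepsilon\to 0$ in the right order (using \eqref{spec-m} to transfer the conclusion back to $\gamma_1(X)$) completes the proof. A Skorokhod-type coupling of $(\mathfrak{r}^{(N)})_*m^{(s)}$ with its limit $\mathbb{M}^{(s)}$ on a common probability space will be convenient to turn the distributional statements into the claimed $m^{(s)}$-almost-sure identity.
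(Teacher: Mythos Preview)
Your proposal is correct and follows essentially the same route as the paper: for real $s$ the kernel satisfies $K_N^{(s,\R)}(-x,-x)=K_N^{(s,\R)}(x,x)$, so the mean of $\sum_{x\in\mathcal{C}_N^{(s)}(X)} x\,\mathds{1}_{|x|\le\varepsilon}$ vanishes, and the projection-kernel identity $K_N^{(s,\R)}(x,x)=\int K_N^{(s,\R)}(x,y)^2\,dy$ together with Cauchy--Schwarz reduces the full second moment to $2\int_{|x|\le\varepsilon} x^2\,\rho_1^{(s,N)}(x)\,dx\lesssim\varepsilon$ uniformly in $N$ by Proposition \ref{uniformness} --- exactly the ``variance bound'' you single out as the technical heart.

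The paper's packaging is a bit leaner than your three-step splitting: it proves two lemmas (Lemma \ref{lem1}: $L^2$-convergence of $\sum x\,\mathds{1}_{\varepsilon<|x|<R}$ as $N\to\infty$, via the Scheff\'e-type trick ``a.e.\ convergence $+$ convergence of norms $\Rightarrow$ $L^2$-convergence''; Lemma \ref{lem2}: the uniform-in-$N$ bound $\|\sum x\,\mathds{1}_{|x|\le\varepsilon}\|_2^2\lesssim\varepsilon$), restricts to $H_\reg^R$ so that $\gamma_1(X)=\lim_N\sum x\,\mathds{1}_{|x|<R}$ exactly, and then runs Borel--Cantelli on the summable sequence $D_{1/n^2}$. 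Two simplifications relative to your outline: your dyadic decomposition in Step 3 is unnecessary, since the direct second-moment computation already gives the $O(\varepsilon)$ bound in one shot; and no Skorokhod coupling is needed here, because $\mathcal{C}_N^{(s)}(X)$ and $\mathcal{C}^{(s)}(X)$ are by construction functions of the same $X$ on $(H,m^{(s)})$.
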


\begin{rem}
Note that we can not exchange the order of limit and  sum in the statement. For typical $X$, the sum
\begin{align*}
\sum_{x \in \mathcal{C}^{(s)}(X) } x
\end{align*}
is even not defined. 
\end{rem}

Theorem \ref{mainthm2} means that the asymptotic eigenvalues $\mathcal{C}^{(s)}(X)$ of the infinite random matrix $X$ with a Hua-Pickrell probability distribution is almost surely $1/n^2$-balanced. By identity \eqref{gamma1}, we see that in a certain sense, $\gamma_1(X)$ equals to the ``principal value'' of the asymptotic eigenvalues of $X$.

For proving Theorem \ref{mainthm2}, we will first need the following definition and some lemmas.

\begin{defn}
For any $R > 0$, define $H_\reg^R$ to be the subset of $H_\reg$ formed of the elements $X\in H_\reg$ such that $\alpha_1^{+}(X) < R$ and $\alpha_1^{-}(X)< R.$
\end{defn}
Note that we have 
\begin{align}\label{trun-reg}
H_\reg = \bigcup_{k \in \N} H_\reg^{k}.
\end{align}

\begin{lem}\label{lem1}
Fix $R > \varepsilon > 0$, we have 
\begin{align*}
\sum_{ x \in \mathcal{C}_N^{(s)} (X) } x \mathlarger{\mathds{1}}_{ \varepsilon <  | x | < R} \xrightarrow[N \to \infty]{\text{ in } L^2(H, \, m^{(s)})} \sum_{ x \in \mathcal{C}^{(s)} (X) } x \mathlarger{\mathds{1}}_{ \varepsilon < | x | < R}.
\end{align*}
\end{lem}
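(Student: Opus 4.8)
\textbf{Proof proposal for Lemma \ref{lem1}.}

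The plan is to exploit the fact that both random variables in question are linear statistics of determinantal point processes, and that determinantal convergence of the kernels implies convergence of such linear statistics in $L^2$, provided we control variances uniformly. First I would fix a smooth (or merely bounded Borel, compactly supported away from $0$) function $g$ on $\R^*$ with $g(x) = x$ on the annulus $\{\varepsilon < |x| < R\}$ and $g$ supported in a slightly larger annulus $\{\varepsilon/2 < |x| < 2R\}$; since both $\mathcal{C}_N^{(s)}(X)$ and $\mathcal{C}^{(s)}(X)$ are configurations in $\R^*$, the two sums in the statement equal $\sum_{x} g(x)$ over the respective configurations, up to the indicator which I incorporate into $g$. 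This reduces the claim to showing $S_N(g) := \sum_{x \in \mathcal{C}_N^{(s)}(X)} g(x) \to S_\infty(g) := \sum_{x \in \mathcal{C}^{(s)}(X)} g(x)$ in $L^2(H, m^{(s)})$.

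Next I would recall that, after applying the forgetting map $\conf$, the laws of $\mathcal{C}_N^{(s)}(X)$ and $\mathcal{C}^{(s)}(X)$ are the determinantal measures $\mathbb{P}_{K_N^{(s,\R)}}$ and $\mathbb{P}_{K^{(s,\infty)}}$ respectively (via \eqref{kernel-line} and Theorem 2.1 of \cite{BO-CMP}; the weak convergence $(\mathfrak{r}^{(N)})_* m^{(s)} \Rightarrow \mathbb{M}^{(s)}$ of \eqref{approx} translates into convergence of the corresponding determinantal processes). For a determinantal process with kernel $K$ one has the exact formulas
\begin{align*}
\E\Big(\sum_{x} g(x)\Big) &= \int_{\R} g(x) K(x,x)\, dx,\\
\mathrm{Var}\Big(\sum_{x} g(x)\Big) &= \int_{\R} g(x)^2 K(x,x)\, dx - \int_{\R^2} g(x) g(y) |K(x,y)|^2\, dx\, dy.
\end{align*}
Because $g$ is supported in a fixed compact subset of $\R^*$, and because $K_N^{(s,\R)}(x,x) \to K^{(s,\infty)}(x,x)$ locally uniformly on $\R^*$ (this is precisely the scaling-limit computation of \cite{BO-CMP}; on compact subsets of $\R^*$ the Pseudo-Jacobi kernel and its diagonal converge uniformly, with a uniform-in-$N$ bound supplied by Golinskii's estimate \eqref{asymp-poly} transported back through the Cayley transform), the first moments converge: $\E S_N(g) \to \E S_\infty(g)$. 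The same local uniform convergence $K_N^{(s,\R)}(x,y) \to K^{(s,\infty)}(x,y)$ on compacts of $(\R^*)^2$, together with dominated convergence on the compactly supported integrand, gives convergence of the variance integrals, hence $\mathrm{Var}(S_N(g)) \to \mathrm{Var}(S_\infty(g))$.

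Finally I would upgrade these moment convergences to genuine $L^2$ convergence. Since we are dealing with two different probability spaces (the processes $\mathcal{C}_N^{(s)}(X)$ live over $(H, m^{(s)})$ for each $N$ simultaneously), the cleanest route is: both $S_N(g)$ and $S_\infty(g)$ are bona fide functions on the single space $(H, m^{(s)})$ (defined via the regular-part maps $a_{i,N}^\pm$ and $\alpha_i^\pm$), so it suffices to show $\E|S_N(g) - S_\infty(g)|^2 \to 0$. Expanding, $\E|S_N(g)-S_\infty(g)|^2 = \E S_N(g)^2 - 2\E(S_N(g) S_\infty(g)) + \E S_\infty(g)^2$; the first and last terms converge to $\E S_\infty(g)^2$ by the variance computation above, so the whole thing vanishes once $\E(S_N(g) S_\infty(g)) \to \E S_\infty(g)^2$. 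This cross term is controlled using the pointwise (almost sure) convergence $a_{i,N}^\pm(X) \to \alpha_i^\pm(X)$ on $H_{\mathrm{reg}}$ guaranteed by \eqref{asp}: for $m^{(s)}$-a.e.\ $X$ the finitely many points of $\mathcal{C}_N^{(s)}(X)$ falling in the fixed compact annulus $\{\varepsilon/2 < |x| < 2R\}$ converge to those of $\mathcal{C}^{(s)}(X)$ (here the fact that $\varepsilon, R$ are \emph{interior} to where we cut matters: a.s.\ no limiting point sits exactly on $|x| = \varepsilon$ or $|x| = R$, since the limiting first correlation is absolutely continuous), so $S_N(g) \to S_\infty(g)$ $m^{(s)}$-a.s.; combined with the uniform integrability furnished by the uniform second-moment bound (Proposition \ref{uniformness} type estimates, or directly $\sup_N \E S_N(g)^2 < \infty$ from the uniform kernel bounds), a.s.\ convergence plus uniform integrability yields $L^2$ convergence, and in particular the cross term has the required limit.

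\textbf{Main obstacle.} The delicate point is the passage from convergence of moments to $L^2$ convergence, i.e.\ justifying that there is no escape of mass near $|x| = \varepsilon$, $|x| = R$, or near $x = 0$ and $x = \infty$. The cut at $0$ is harmless since $g$ vanishes there; the cut at $\infty$ (i.e.\ at $|x| = R$) and the boundary cuts require knowing that $\mathcal{C}_N^{(s)}$ places uniformly little mass in thin annuli around $|x| = \varepsilon$ and $|x| = R$, which follows from the uniform local boundedness of $K_N^{(s,\R)}(x,x)$ on compacts of $\R^*$ (again Golinskii's bound \eqref{asymp-poly}). Establishing that uniform bound carefully — and checking that the integrand $g$ can legitimately be taken equal to the identity on the open annulus without boundary effects — is where the real work lies; everything else is the standard determinantal-process moment calculus.
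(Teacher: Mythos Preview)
Your overall strategy coincides with the paper's: compute second moments of the linear statistics via the determinantal structure, use locally uniform convergence of the kernels $K_N^{(s,\R)} \to K^{(s,\infty)}$ on compacts of $\R^*$ to get $\E S_N(g)^2 \to \E S_\infty(g)^2$, and separately establish almost-sure convergence $S_N(g) \to S_\infty(g)$ from the fact that only finitely many (converging) points fall into the fixed annulus, with the boundary set $\{\alpha_i^\pm = \varepsilon\}$ being $m^{(s)}$-negligible. All of that is correct and matches the paper.

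The gap is in your final glue step. You write ``a.s.\ convergence plus uniform integrability yields $L^2$ convergence''; this is false as stated. Uniform integrability of $\{S_N\}$ (which is what a uniform $L^2$ bound provides) together with a.s.\ convergence gives only $L^1$ convergence, not $L^2$. To get $L^2$ you would need uniform integrability of $\{S_N^2\}$, i.e.\ a uniform bound in some $L^p$ with $p>2$, which you have not established. Your cross-term argument can be repaired (a.e.\ convergence plus boundedness in $L^2$ does give weak $L^2$ convergence, hence $\langle S_N, S_\infty\rangle \to \|S_\infty\|_2^2$), but the justification you give is not the right one.

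The paper sidesteps this entirely by invoking the elementary Radon--Riesz type fact (stated there as Proposition~\ref{prop-tech2}): if $f_N \to f$ a.e.\ and $\|f_N\|_2 \to \|f\|_2$, then $\|f_N - f\|_2 \to 0$. You have already established both hypotheses (a.e.\ convergence, and $\E S_N(g)^2 \to \E S_\infty(g)^2$), so this one line finishes the proof cleanly. Also, the detour through a smooth $g$ supported on a larger annulus is unnecessary and, as written, does not preserve the sums you want; just take $g(x) = x\,\mathds{1}_{\varepsilon < |x| < R}$ directly, since the kernels are continuous and the support is compact in $\R^*$.
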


\begin{lem}\label{lem2}
Fix $R> 0$, we have 
\begin{align*}
\sum_{ x \in \mathcal{C}_N^{(s)} (X) } x \mathlarger{\mathds{1}}_{ \varepsilon < | x | < R} \xrightarrow[\varepsilon \to 0^{+}]{\text{ in } L^2(H, \, m^{(s)})} \sum_{ x \in \mathcal{C}_N^{(s)} (X) } x \mathlarger{\mathds{1}}_{ | x | < R}, \quad \text{uniformly on } N \in \N.
\end{align*}
More precisely, we have 
\begin{align*}
\sup_{N\in \N}\left\| \sum_{ x \in \mathcal{C}_N^{(s)} (X) } x \mathlarger{\mathds{1}}_{ \varepsilon < | x | < R} -  \sum_{ x \in \mathcal{C}_N^{(s)} (X) } x \mathlarger{\mathds{1}}_{ | x | < R}\right\|_{L^2(H, \, m^{(s)})}^2 \lesssim \varepsilon.
\end{align*}
\end{lem}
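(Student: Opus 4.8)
The plan is to estimate the $L^2$-norm of the difference directly via the determinantal structure of the point process $\mathcal{C}_N^{(s)}(X)$, reducing everything to a second-moment computation involving the first and second correlation functions, and then invoke the uniform estimate already established in Proposition \ref{uniformness}. Write $f_{\varepsilon,R}(x) = x\,\mathds{1}_{\varepsilon < |x| < R}$ and $f_{0,R}(x) = x\,\mathds{1}_{|x| < R}$, so that the difference is $\sum_{x \in \mathcal{C}_N^{(s)}(X)} g(x)$ with $g(x) = x\,\mathds{1}_{|x| \le \varepsilon}$ (up to the negligible boundary point $|x| = \varepsilon$). The quantity to be bounded is then
\begin{align*}
\E\left( \left| \sum_{x \in \mathcal{C}_N^{(s)}(X)} g(x) \right|^2 \right) = \int_{\R} g(x)^2 \rho_1^{(s,N)}(x)\,dx + \int_{\R^2} g(x) g(y)\, \rho_2^{(s,N)}(x,y)\,dx\,dy,
\end{align*}
where $\rho_2^{(s,N)}$ is the second correlation function of the determinantal process with kernel $K_N^{(s,\R)}$. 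Since the process is determinantal with a Hermitian kernel, $\rho_2^{(s,N)}(x,y) = \rho_1^{(s,N)}(x)\rho_1^{(s,N)}(y) - |K_N^{(s,\R)}(x,y)|^2$, so the double integral equals $\left( \int g(x)\rho_1^{(s,N)}(x)\,dx\right)^2 - \int\int g(x)g(y)|K_N^{(s,\R)}(x,y)|^2\,dx\,dy$. The last term is non-negative, hence can be dropped for an upper bound, and the first term is a square that we must control.

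The diagonal term $\int g(x)^2 \rho_1^{(s,N)}(x)\,dx = \int_{-\varepsilon}^{\varepsilon} x^2 \rho_1^{(s,N)}(x)\,dx$ is exactly what Proposition \ref{uniformness} bounds by $\lesssim \varepsilon$, uniformly in $N$. For the remaining term $\left(\int_{-\varepsilon}^{\varepsilon} x\, \rho_1^{(s,N)}(x)\,dx\right)^2$, I would apply Cauchy--Schwarz: $\left| \int_{-\varepsilon}^{\varepsilon} x\,\rho_1^{(s,N)}(x)\,dx\right| \le \left( \int_{-\varepsilon}^{\varepsilon} x^2 \rho_1^{(s,N)}(x)\,dx\right)^{1/2} \left( \int_{-\varepsilon}^{\varepsilon} \rho_1^{(s,N)}(x)\,dx\right)^{1/2}$; alternatively, and more cleanly, one can exploit an approximate symmetry. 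Passing through the Cayley transform to the circle picture as in \eqref{Theta}, the relevant weight $\lambda^{(s)}(e^{i\theta})$ is comparable near $\theta = \pi$ (i.e. near $x = 0$) to an even function, so the first-moment integral over the symmetric window $|x| \le \varepsilon$ is, up to the same order of error $O(\varepsilon)$, a difference of two nearly equal pieces; in any case the crude Cauchy--Schwarz bound gives $\left(\int_{-\varepsilon}^\varepsilon x\,\rho_1^{(s,N)}\,dx\right)^2 \lesssim \varepsilon \cdot \int_{-\varepsilon}^\varepsilon \rho_1^{(s,N)}(x)\,dx$, and the remaining factor $\int_{-\varepsilon}^\varepsilon \rho_1^{(s,N)}(x)\,dx$ is the expected number of points of $\mathcal{C}_N^{(s)}(X)$ in $(-\varepsilon,\varepsilon)$, which I expect to be $O(1)$ uniformly in $N$ by the same circle-side estimate \eqref{estimate-combine} on $\rho_1^{(s,N,\T)}$ integrated over the corresponding $\theta$-window (the integrand there is $\lesssim N + N^{1+2s}|1+e^{i\theta}|^{2s}$ and the window has length $\approx 1/N$ near $\theta = \pi$, with the second term integrable against $d\theta$). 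Combining, the full second moment is $\lesssim \varepsilon$ uniformly in $N$, which is the asserted estimate, and the stated $L^2$-convergence as $\varepsilon \to 0^+$ follows immediately.

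The main obstacle I anticipate is the control of the first-moment term $\int_{-\varepsilon}^{\varepsilon} x\,\rho_1^{(s,N)}(x)\,dx$: unlike the second-moment term, this is not directly furnished by Proposition \ref{uniformness}, and the naive bound $|x| \le \varepsilon$ only gives $\varepsilon \cdot \int_{-\varepsilon}^\varepsilon \rho_1^{(s,N)}$, so one genuinely needs that the point count in a shrinking window around the origin stays bounded in $N$ — equivalently, that the ``density'' $\rho_1^{(s,N)}$ does not blow up too fast near $0$. This is where the Golinskii asymptotics \eqref{asymp-poly} and the case analysis of \eqref{estimate-combine} do the real work; everything else is the routine determinantal second-moment identity and Cauchy--Schwarz. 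An alternative route that avoids even this point is to bound $\left|\sum_{x} x\mathds{1}_{|x|\le\varepsilon}\right| \le \sum_x |x|\mathds{1}_{|x|\le\varepsilon} \le \varepsilon^{-1}\sum_x x^2 \mathds{1}_{|x|\le\varepsilon}$ is too lossy, so I would instead split $\sum_x |x|\mathds{1}_{|x|\le\varepsilon}$ dyadically over annuli $2^{-k-1}\varepsilon < |x| \le 2^{-k}\varepsilon$ and apply the $x^2$-estimate on each, summing a geometric series — this recovers the $\lesssim \varepsilon$ bound for the first absolute moment and hence, a fortiori, for its square after noting the square is dominated by the square of the first absolute moment only when that is $\le 1$; cleaner still is to keep the determinantal identity above and accept the Cauchy--Schwarz step.
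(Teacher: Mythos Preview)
Your framework is right---the determinantal second-moment identity reduces the problem to the diagonal term plus $T_2 - T_3$ with $T_2 = \left(\int_{-\varepsilon}^\varepsilon x\,\rho_1^{(s,N)}(x)\,dx\right)^2$ and $T_3 = \iint g(x)g(y)|K_N^{(s,\R)}(x,y)|^2\,dx\,dy$---and your observation that $T_3 \ge 0$ (Schur product of positive kernels) is correct. The gap is in your treatment of $T_2$. Your Cauchy--Schwarz bound needs $\int_{-\varepsilon}^\varepsilon \rho_1^{(s,N)}(x)\,dx = O(1)$ uniformly in $N$, but this is \emph{false}: that integral is the expected number of points of $\mathcal{C}_N^{(s)}(X)$ in $(-\varepsilon,\varepsilon)$, and it diverges as $N\to\infty$ (equivalently, $\int_{-\varepsilon}^\varepsilon \Pi_\infty^{(s)}(x,x)\,dx = +\infty$, since the limiting configuration has infinitely many points accumulating at $0$). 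Your circle-side heuristic is based on a confused geometry---under the Cayley transform $x\to 0$ corresponds to $\theta\to 0$, not $\theta\to\pi$, and the relevant $\theta$-window has length $\approx\pi$ for large $N$, not $\approx 1/N$. Your dyadic fallback fails for the same reason: each annulus contributes $O(1)$ to $\int |x|\rho_1^{(s,N)}$ via the $x^2$-estimate, and there is no geometric decay to sum.

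The paper's fix is exactly the symmetry you gestured at but did not use: for real $s$ the kernel satisfies $K_N^{(s,\R)}(-x,-y) = K_N^{(s,\R)}(x,y)$, hence $\rho_1^{(s,N)}$ is even and $T_2 = 0$ \emph{exactly}. With $T_2=0$, you could either drop $T_3\ge 0$ as you did (giving $T\le\int x^2\mathds{1}_{|x|\le\varepsilon}\rho_1^{(s,N)}$ directly), or follow the paper, which instead bounds $|T_3|$ via Cauchy--Schwarz and the reproducing identity $\int K_N^{(s,\R)}(x,y)^2\,dy = K_N^{(s,\R)}(x,x)$ to obtain $|T_3|\le\int x^2\mathds{1}_{|x|\le\varepsilon}\rho_1^{(s,N)}$, whence $T\le 2\int x^2\mathds{1}_{|x|\le\varepsilon}\rho_1^{(s,N)}\lesssim\varepsilon$ by Proposition~\ref{uniformness}.
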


We postpone the proof of these two lemmas for the moment. Now we show how one can get Theorem \ref{mainthm2} from these two lemmas.

\begin{proof}[Proof of Theorem \ref{mainthm2} from Lemma \ref{lem1} and Lemma \ref{lem2}]
By the definition of $H_\reg^R$ and that of $\gamma_1(X)$, we have 
\begin{align}\label{reg-bdd}
\gamma_1(X) = \lim_{N \to \infty} \sum_{x \in \mathcal{C}_N^{(s)} (X) } x \mathlarger{\mathds{1}}_{| x | < R},  \quad \text{for any $X \in H_\reg^R$}.
\end{align}
For any $F \in L^2(H, \, m^{(s)})$, we will use the same notation $F$ to denote the restriction function $F|_{H_\reg^R} \in L^2(H_\reg^R, \, m^{(s)}|_{H_\reg^R})$. Obviously, a sequence $F_n$ tends to $F$ in $L^2(H, \, m^{(s)})$ implies that $F_n$ tends to $F$ in  $L^2(H_\reg^R, \, m^{(s)}|_{H_\reg^R})$. By routine argument, thanks to the convergence in Lemma \ref{lem1} and the \textit{uniform convergence} in Lemma \ref{lem2}, we have in $L^2(H_\reg^R, \, m^{(s)}|_{H_\reg^R})$
\begin{align}\label{com-limit}
  \lim_{\varepsilon \to 0^{+}}\sum_{ x \in \mathcal{C}^{(s)} (X) } x \mathlarger{\mathds{1}}_{ \varepsilon < | x | < R}    = \lim_{N \to \infty}  \sum_{ x \in \mathcal{C}_N^{(s)} (X) } x \mathlarger{\mathds{1}}_{  | x | < R}.
  \end{align}
To be precise, it means that the above two limits exist and have a common limit in $L^2(H_\reg^R, \, m^{(s)}|_{H_\reg^R})$. By virtue of equation \eqref{reg-bdd}, this common limit must be $\gamma_1(X)$, as a function in $L^2(H_\reg^R, \, m^{(s)}|_{H_\reg^R})$.

We now show that 
\begin{align*}
D_\varepsilon : = \left\n \sum_{x \in \mathcal{C}^{(s)} (X) } x \mathlarger{\mathds{1}}_{ \varepsilon < | x | < R} - \gamma_1(X) \right\n_{L^2 (H_\reg^R, \, m^{(s)}|_{H_\reg^R})}^2 \lesssim \varepsilon.
\end{align*} 
Indeed, for any $N \in \N$,  we have 
\begin{align*}
\sqrt{D_\varepsilon} \le & \left\n  \sum_{ x \in \mathcal{C}^{(s)} (X) } x \mathlarger{\mathds{1}}_{ \varepsilon < | x | < R} - \sum_{ x \in \mathcal{C}_N^{(s)} (X) } x \mathlarger{\mathds{1}}_{ \varepsilon < | x | < R}\right\n  \\   & + \left\n   \sum_{ x \in \mathcal{C}_N^{(s)} (X) } x \mathlarger{\mathds{1}}_{ \varepsilon < | x | < R} -  \sum_{ x \in \mathcal{C}_N^{(s)} (X) } x \mathlarger{\mathds{1}}_{ | x | < R}    \right\n \\ & +   \left\n \sum_{x \in \mathcal{C}_N^{(s)} (X) } x \mathlarger{\mathds{1}}_{ | x | < R} - \gamma_1(X) \right\n,
\end{align*}
where norms are understood as the one in $L^2 (H_\reg^R, \, m^{(s)}|_{H_\reg^R})$. Now by taking $\limsup\limits_{N \to \infty}$, we get 
\begin{align*}
\sqrt{D_\varepsilon} \le \limsup_{N \to \infty}  \left\n   \sum_{ x \in \mathcal{C}_N^{(s)} (X) } x \mathlarger{\mathds{1}}_{ \varepsilon < | x | < R} -  \sum_{ x \in \mathcal{C}_N^{(s)} (X) } x \mathlarger{\mathds{1}}_{ | x | < R}    \right\n  \lesssim \sqrt{\varepsilon}.
\end{align*}
It follows that
\begin{align*}
\sum_{n = 1}^\infty D_{1/n^2} < \infty.
\end{align*}
An application of Borel-Cantelli lemma yields that 
\begin{align*}
\gamma_1(X) = &  \lim_{n \to \infty} \sum_{x \in \mathcal{C}^{(s)} (X) } x \mathlarger{\mathds{1}}_{ 1/n^2 < | x | < R}  \\ = &  \lim_{n \to \infty}\sum_{x \in \mathcal{C}^{(s)} (X) } x \mathlarger{\mathds{1}}_{ 1/n^2 < | x | }  \quad \text{for }\, m^{(s)}\text{-}a.e.\, X \in H_\reg^R.
\end{align*} In view of \eqref{trun-reg}, we get
\begin{align}\label{sequence-version}
\gamma_1(X) = \lim_{n \to \infty}\sum_{x \in \mathcal{C}^{(s)} (X) } x \mathlarger{\mathds{1}}_{ 1/n^2 < | x | }  \quad \text{for }\, m^{(s)}\text{-}a.e.\, X \in H_\reg.
\end{align}
\end{proof}

In proving the lemmas, we will need the following  elementary result (see, e.g. \cite[p.73, ex.17]{Rudin-RC}). 
\begin{prop}\label{prop-tech2}
Suppose that $(\Sigma, \mu)$ is a measure space. Let $(f_n)_{n \in \N}$ be a sequence in $L^2(\Sigma, \mu)$ such that there exists $f \in L^2( \Sigma, \mu)$ such that
$\| f_n\|_2 \longrightarrow \| f \|_2 $ and $\quad f_n \xrightarrow{a.e.} f$. Then $\lim_{n \to \infty} \| f_n - f\|_2 = 0.$
\end{prop}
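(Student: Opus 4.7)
The plan is to invoke Fatou's lemma applied to a cleverly chosen nonnegative sequence whose almost-everywhere limit has a computable integral. First I would observe the elementary pointwise inequality $|f_n-f|^2 \le 2|f_n|^2 + 2|f|^2$, which follows from expanding $(a+b)^2 \le 2a^2+2b^2$. Consequently the functions
\[
g_n := 2|f_n|^2 + 2|f|^2 - |f_n - f|^2
\]
are nonnegative. Because $f_n \to f$ almost everywhere, the continuity of the expression gives $g_n \to 4|f|^2$ almost everywhere.

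Next I would apply Fatou's lemma to the sequence $(g_n)$, which is legitimate precisely because $g_n \ge 0$. This yields
\[
\int_\Sigma 4|f|^2 \, d\mu \;\le\; \liminf_{n\to\infty}\int_\Sigma g_n\, d\mu \;=\; \liminf_{n\to\infty}\Bigl(2\|f_n\|_2^2 + 2\|f\|_2^2 - \|f_n-f\|_2^2\Bigr).
\]
Using the hypothesis $\|f_n\|_2 \to \|f\|_2$ (and in particular $f \in L^2$, so the integral on the left is finite), the first two terms on the right tend to $4\|f\|_2^2$. Rearranging, this forces $\limsup_{n\to\infty}\|f_n-f\|_2^2 \le 0$, hence $\|f_n-f\|_2 \to 0$, which is the conclusion.

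There is essentially no obstacle here: the only potentially delicate point is ensuring the applicability of Fatou's lemma, which is exactly why we chose the combination $g_n$ rather than working with $|f_n-f|^2$ directly (whose $\liminf$ could in principle be small without controlling the $\limsup$). A purely Hilbert-space alternative would expand $\|f_n-f\|_2^2 = \|f_n\|_2^2 - 2\,\mathrm{Re}\langle f_n,f\rangle + \|f\|_2^2$ and show $\langle f_n,f\rangle \to \|f\|_2^2$ via a separate dominated-convergence argument on $f_n\bar f$ after truncation, but the Fatou route is shorter and requires no additional domination.
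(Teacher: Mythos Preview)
Your proof is correct and is precisely the standard Fatou-lemma argument; the paper itself does not supply a proof but merely cites this as an elementary exercise in Rudin, for which your argument is the intended solution.
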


\begin{proof}[Proof of Lemma \ref{lem1}]
By virtue of Proposition \ref{prop-tech2}, it suffices to show that for fixed $R > \varepsilon > 0$, the following two limit equations hold:
\begin{align}\label{norm-limit}
\lim_{N \to \infty} \left\n \sum_{ x \in \mathcal{C}_N^{(s)} (X) } x \mathlarger{\mathds{1}}_{ \varepsilon < | x | < R}  \right\n  = \left\n   \sum_{ x \in \mathcal{C}^{(s)} (X) } x \mathlarger{\mathds{1}}_{ \varepsilon < | x | < R} \right\n, 
\end{align}
where norms are understood as the one in $L^2(H, \, m^{(s)})$.
\begin{align}\label{pointwise-limit}
\sum_{ x \in \mathcal{C}_N^{(s)} (X) } x \mathlarger{\mathds{1}}_{ \varepsilon < | x | < R} \xrightarrow[N\to \infty]{m^{(s)}\text{-} a.e. } \sum_{ x \in \mathcal{C}^{(s)} (X) } x \mathlarger{\mathds{1}}_{ \varepsilon < | x | < R}.
\end{align}

The proof of \eqref{norm-limit} relies heavily on the determinantal structure of the random point configurations $\mathcal{C}^{(s)}_N (X) $ and $\mathcal{C}^{(s)}(X)$. Indeed, we have 
\begin{align}\label{N-deter}
\begin{split}
& \left\n \sum_{ x \in \mathcal{C}_N^{(s)} (X) } x \mathlarger{\mathds{1}}_{ \varepsilon \le | x | < R}  \right\n^2 = \E \left( \sum_{x, y \in \mathcal{C}_N^{(s)}(X) } xy \mathlarger{\mathds{1}}_{\varepsilon \le |x| < R } \mathlarger{\mathds{1}}_{\varepsilon \le | y | < R } \right)  \\ = & \E\left( \sum_{x \in \mathcal{C}_N^{(s)} (X) } x^2 \mathlarger{\mathds{1}}_{\varepsilon < |x| < R } \right)+  \E \left(\sum_{x, y \in \mathcal{C}_N^{(s)}(X), \, x \ne y } xy \mathlarger{\mathds{1}}_{\varepsilon <  |x| < R } \mathlarger{\mathds{1}}_{\varepsilon <  | y | < R } \right)\\ = & \int x^2 \mathlarger{\mathds{1}}_{\varepsilon <  | x | < R} K_N^{(s, \R)}(x,x) dx \\  & + \int\!\!\int x y \mathlarger{\mathds{1}}_{\varepsilon < | x | < R} \mathlarger{\mathds{1}}_{\varepsilon < | y | < R} \left|\begin{array}{cc}K_N^{(s, \R)} (x,x) & K_N^{(s,\R)} (x,y) \\ K_N^{(s, \R)} (y,x) & K_N^{(s,\R)} (y,y)   \end{array}\right| dxdy.
\end{split}
\end{align}
Theorem 2.1 and Theorem 6.1 in \cite{BO-CMP} imply that 
\begin{align*}
K_N^{(s, \R)}(x,y) \xrightarrow{N \to \infty} K^{(s, \R)}(x,y), \quad \text{uniformly on $x, y \in [ \varepsilon, R]$},
\end{align*}
and $K^{(s, \R)}(x,y)$ is one kernel function for the determinantal random configuration $\mathcal{C}^{(s)}(X)$. Write down similar formula for the right hand side term in \eqref{norm-limit}, one can see that the equation \eqref{norm-limit} follows immediately from the above uniform convergence of kernel functions. 

Now we turn to the proof of \eqref{pointwise-limit}. Since $\mathcal{C}^{(s)}(X)$ is a determinantal point process admitting a continuous kernel function on $\R^{*}$, we derive that the following subset of $H_\reg^R$ is $m^{(s)}$-negligible:
\begin{align*}
\mathrm{Bad}_\varepsilon : = \left\{ X \in H_\reg^R: \exists i \in \N, \alpha_i^{+} = \varepsilon \, \text{ or }\, \exists j \in \N, \alpha_j^{-} = \varepsilon \right\}.
\end{align*}
Now for any $X \in H_\reg^R \setminus \mathrm{Bad}_\varepsilon$, there exist $k , l \in \N$, depending on $X$,  such that
\begin{align*}
R > \alpha^{+}_1(X) \ge \cdots \ge \alpha_k^{+}(X) > \varepsilon > \alpha_{k+1}^{+} (X) \ge \cdots \ge 0,\\ R > \alpha^{-}_1(X) \ge \cdots \ge\alpha_l^{-}(X) > \varepsilon > \alpha_{l+1}^{-}(X) \ge \cdots \ge 0.
\end{align*}
By definition of $\alpha_i^{\pm}(X)$ in \eqref{asp}, there exists $N_0\in \N$ large enough such that for any $N \ge N_0$, we have
\begin{align*}
R > a^{+}_{1,N}(X) \ge \cdots \ge a_{k,N}^{+}(X) > \varepsilon > a_{k+1, N}^{+} (X) \ge \cdots \ge 0,\\ R > a^{-}_{1, N}(X) \ge \cdots \ge a_{l, N}^{-}(X) > \varepsilon > a_{l+1, N }^{-}(X) \ge \cdots \ge 0.
\end{align*} Thus for $N \ge N_0$
\begin{align*}
 & \sum_{ x \in \mathcal{C}_N^{(s)} (X) } x \mathlarger{\mathds{1}}_{ \varepsilon < | x | < R} = \sum_{i = 1}^k a_{i, N}^{+}(X) - \sum_{j = 1}^l a_{j, N}^{-}(X).
\end{align*}
When $N\to \infty$, the above quantity tends to 
\begin{align*}   \sum_{i = 1}^k \alpha_{i}^{+}(X) - \sum_{j = 1}^l \alpha_{j}^{-}(X)  = \sum_{ x \in \mathcal{C}^{(s)} (X) } x \mathlarger{\mathds{1}}_{ \varepsilon < | x | < R}.
\end{align*}
This completes the proof of \eqref{pointwise-limit}.
\end{proof}

\begin{proof}[Proof of Lemma \ref{lem2}]
By similar computation as in \eqref{N-deter}, we have 
\begin{align*}
T: = & \left\| \sum_{ x \in \mathcal{C}_N^{(s)} (X) } x \mathlarger{\mathds{1}}_{ \varepsilon < | x | < R} -  \sum_{ x \in \mathcal{C}_N^{(s)} (X) } x \mathlarger{\mathds{1}}_{ | x | < R}\right\|^2  =  \left\| \sum_{ x \in \mathcal{C}_N^{(s)} (X) } x \mathlarger{\mathds{1}}_{ | x | \le \varepsilon}\right\|^2\\ = & \int x^2 \mathlarger{\mathds{1}}_{| x | \le \varepsilon} K^{(s, \R)}_N(x,x) dx  \\
& + \underbrace{  \int\!\!\int  x y \mathlarger{\mathds{1}}_{| x | \le \varepsilon} \mathlarger{\mathds{1}}_{| y | \le \varepsilon} \left|\begin{array}{cc}K_N^{(s, \R)} (x,x) & K_N^{(s,\R)} (x,y) \\ K_N^{(s, \R)} (y,x) & K_N^{(s,\R)} (y,y)   \end{array}\right| dxdy}_{ =: T_1}.
\end{align*}
Expand the last term $T_1$, we have
\begin{align*}
T_1 = &  \underbrace{\left(\int x \mathlarger{\mathds{1}}_{| x | \le \varepsilon}     K_N^{(s, \R)} (x,x) d x \right)^2}_{T_2} \\ & -  \underbrace{ \int\!\!\int x y \mathlarger{\mathds{1}}_{| x | \le \varepsilon}  \mathlarger{\mathds{1}}_{| y | \le \varepsilon} K_N^{(s, \R)}(x,y) K_N^{(s, \R)} (y, x) dxdy}_{T_3}.
\end{align*} 
Since $s \in \R$, the kernel functions $K_N^{(s, \R)} (x, y)$ satisfy (see \cite[p.95-p.96]{BO-CMP})
\begin{align*}
K_N^{(s, \R)} (- x, - y )  =  K_N^{(s, \R)} (x,  y ), \\
K_N^{(s, \R)}(x, y) = K_N^{(s, \R)}(y, x).
\end{align*}
Hence we have
\begin{align*}
T_2 = 0.
\end{align*}
For estimating $T_3$, we write the integrand in the integral $T_2$ as the product of $x \mathlarger{\mathds{1}}_{| x | \le \varepsilon} K_N^{(s, \R)}(x,y)$ and  $y \mathlarger{\mathds{1}}_{| y | \le \varepsilon} K_N^{(s, \R)}(y,x)$ and apply Cauchy-Schwarz inequality to obtain 
\begin{align*}
| T_3 |^2  \le \int  x^2 \mathlarger{\mathds{1}}_{| x | \le \varepsilon} K_N^{(s, \R)}(x,y)^2 dxdy \cdot \int  y^2 \mathlarger{\mathds{1}}_{| y | \le \varepsilon} K_N^{(s, \R)}(y,x)^2 dxdy.
\end{align*} The fact that $K_N^{(s, \R)}$ is the kernel of an orthogonal projection (of rank $N$, on real Hilbert space) implies that 
\begin{align}\label{diag-projection-f}
K_N^{(s, \R)}(x, x) = \int K_N^{(s, \R)} (x, y)^2 dy,
\end{align}
which in turn implies that
\begin{align*}
| T_3 |^2 \le &  \int x^2 \mathlarger{\mathds{1}}_{| x | \le \varepsilon} K_N^{(s, \R)}(x,x) dx \cdot   \int y^2 \mathlarger{\mathds{1}}_{| y | \le \varepsilon} K_N^{(s, \R)}(y,y) dy \\ & \le \left(  \int x^2 \mathlarger{\mathds{1}}_{| x | \le \varepsilon} K_N^{(s, \R)}(x,x) dx \right)^2 .
\end{align*}
Finally, we arrive at the following estimate 
\begin{align*}
T \le 2  \int x^2 \mathlarger{\mathds{1}}_{| x | \le \varepsilon} K_N^{(s, \R)}(x,x) dx.
\end{align*} Now Lemma \ref{lem2} follows from Theorem \ref{uniformness}.
\end{proof}

\subsection{Conclusion}

The following proposition is elementary. 
\begin{prop}\label{as-non-zero}
Let $s > - \frac{1}{2}$. Then the spectral measure $\mathbb{M}^{(s)}$ of  $m^{(s)}$ is concentrated on the subset 
\begin{align*}
\left\{   \omega \in \Omega  | x_\ell (\omega) \ne 0, \text{ for all $\ell \in \Z^*$}\right\}.
\end{align*}
\end{prop}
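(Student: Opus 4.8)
The idea is to transport the statement to the configuration space via the forgetting map $\conf$ and then apply the standard criterion describing when a determinantal process has infinitely many particles in a prescribed region. Recall from \S2.2 (cf. the proof of Lemma \ref{lem1}) that $\conf_{*}\mathbb{M}^{(s)}$ is the determinantal probability measure $\mathbb{P}_{K^{(s,\infty)}}$ on $\Conf(\R^{*})$, where $K^{(s,\infty)}=\Pi_{\infty}^{(s)}$ is the kernel denoted $K^{(s,\R)}$ in \S2. For $\omega\in\Omega$ the coordinates $\alpha_{i}^{+}(\omega)$ and $\alpha_{j}^{-}(\omega)$ are arranged in weakly decreasing order, so $x_{\ell}(\omega)\ne 0$ for every $\ell\in\Z^{*}$ if and only if $\alpha_{i}^{+}(\omega)>0$ and $\alpha_{j}^{-}(\omega)>0$ for all $i,j\in\N$, that is, if and only if the configuration $\conf(\omega)=\mathcal{X}(\omega)$ has infinitely many points in $(0,\infty)$ and infinitely many points in $(-\infty,0)$. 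Thus $\{\omega: x_\ell(\omega)\neq 0\ \forall\,\ell\}$ is, up to an $\mathbb{M}^{(s)}$-null set, the $\conf$-preimage of this event, and it suffices to prove that $\mathbb{P}_{K^{(s,\infty)}}$-almost every configuration has infinitely many points in each of the two half-lines.

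To this end I would invoke two standard facts about determinantal processes. First, for a Borel set $A\subset\R^{*}$, the restriction $\mathcal{X}\mapsto\mathcal{X}\cap A$ pushes $\mathbb{P}_{K}$ forward to the determinantal probability with kernel $\mathds{1}_{A}K\mathds{1}_{A}$; this is immediate from the correlation-function identity \eqref{DPP}. Second, for a locally trace class operator $L$ with $0\le L\le 1$, the determinantal probability $\mathbb{P}_{L}$ is carried by configurations with infinitely many points if and only if $\tr L=\int L(x,x)\,d\Leb(x)=\infty$; this follows from the spectral (Bernoulli--thinning) decomposition of $L$ together with the Borel--Cantelli lemma, see e.g. \cite{Soshnikov-DP} and \cite{Bufetov-inf-det}. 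Combining the two, $\mathbb{P}_{K^{(s,\infty)}}$-almost every configuration has infinitely many points in $A$ precisely when $\int_{A}K^{(s,\infty)}(x,x)\,dx=\infty$.

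It remains to check this divergence for $A=(0,\infty)$ and $A=(-\infty,0)$. Since $s$ is real, the prelimit kernels satisfy $K_{N}^{(s,\R)}(-x,-y)=K_{N}^{(s,\R)}(x,y)$ (cf. \cite[pp.~95--96]{BO-CMP}), hence so does $K^{(s,\infty)}$, so that $\int_{(0,\infty)}K^{(s,\infty)}(x,x)\,dx=\int_{(-\infty,0)}K^{(s,\infty)}(x,x)\,dx=\tfrac{1}{2}\tr\Pi_{\infty}^{(s)}$; this common value is $+\infty$ because $\Pi_{\infty}^{(s)}$ is the orthogonal projection onto the infinite-dimensional space $L^{(s)}$ (equivalently, the explicit diagonal $K^{(s,\infty)}(x,x)$ from \cite[Thm.~2.1]{BO-CMP} does not decay at infinity). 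Applying the criterion twice and intersecting the two resulting full-measure events gives Proposition \ref{as-non-zero}. The only point that is not pure bookkeeping is this last divergence, and it is essentially trivial; if one prefers to avoid using that $K^{(s,\infty)}$ is a projection (a fact proved only later in the paper), one simply reads the non-decay of the diagonal off the scaling-limit formula of \cite{BO-CMP}.
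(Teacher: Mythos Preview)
Your proposal is correct and follows essentially the same route as the paper: reduce to showing that the determinantal process $\mathbb{P}_{K^{(s,\infty)}}$ has infinitely many points in each half-line, then invoke Soshnikov's criterion via the divergence of the trace. The only cosmetic difference is that the paper reduces to a single half-line by symmetry at the outset and phrases the trace bound through $\Pi_{L^{(s)}_{\R_+}}$ (citing \cite[Cor.~2.5]{Bufetov-inf-det}), whereas you compute $\int_{\R_+}K^{(s,\infty)}(x,x)\,dx=\tfrac12\tr\Pi_\infty^{(s)}$ directly from the evenness of the diagonal; your remark about the forward reference to the projection property, and the alternative of reading off the non-decay of the diagonal from the explicit formula, is a welcome bit of care that the paper glosses over.
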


\begin{proof}
By symmetry, it suffices to show that $\mathbb{M}^{(s)}$ is concentrated on the subset
\begin{align*}
\left\{   \omega \in \Omega  | \alpha_i^{+} (\omega) \ne 0, \text{ for all $i \in \N$}\right\}.
\end{align*}
To this end, let us denote $\mathcal{X}^{+}(\omega) = \{ \alpha_i^{+} (\omega): i \in \N \}$ and observe that since $\alpha_i^{+}(\omega)$ is decreasing, the above subset of $\Omega$ coincides with 
\begin{align*}
\left\{   \omega \in \Omega  | \text{$\mathcal{X}^{+}(\omega)$ has infinitely many points }\right\}.
\end{align*}
Hence it suffices to show that the point process $\mathcal{C}^{(s)}(X) \cap \R_{+}$ almost surely has infinitely many points. Indeed, $\mathcal{C}^{(s)}(X) \cap \R_{+}$ is still a determinantal point process and having a kernel given by $\Pi_{L^{(s)}_{\R_{+}}}$. Now we have (see, e.g. \cite[Cor. 2.5]{Bufetov-inf-det})
\begin{align*}
\tr (\Pi_{L^{(s)}_{\R_{+}}}) \ge  \tr  (\Pi_{L^{(s)}}) = \tr (K^{(s, \infty)}) = \infty.
\end{align*}
An application of Theorem 4 of \cite{Soshnikov-DP} yields that $\mathcal{C}^{(s)}(X) \cap \R_{+}$ almost surely has infinitely many points. The proof of proposition is complete. 
\end{proof}

We can now summarize the previous main results in the following
\begin{thm}\label{conclusion1}
Let $s \in \R, s > - \frac{1}{2}$. Then the spectral measure $\mathbb{M}^{(s)}$ of the Hua-Pickrell measure $m^{(s)}$ is concentrated on $\Omega_{0}$, i.e., 
\begin{align*}
\mathbb{M}^{(s)} (\Omega\setminus \Omega_{0}) = 0.
\end{align*}
Moreover, the forgetting map restricted on $\Omega_{0}$ induces a natural isomorphism of probability spaces (i.e., $\mathbb{M}^{(s)}$-almost sure bijection): 
\begin{align*}
(\Omega, \, \mathbb{M}^{(s)} ) \xrightarrow[\simeq]{\quad \conf \quad } (\Conf(\R^{*}), \, \mathbb{P}_{K^{(s, \infty)}}),
\end{align*}
where $ \mathbb{P}_{K^{(s, \infty)}}$ is the determinantal probability measure on $\Conf(\R^{*})$ which is the distribution law of $\mathcal{C}^{(s)}(X)$.
\end{thm}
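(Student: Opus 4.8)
The plan is to deduce the theorem by assembling the three concentration results proved above and then upgrading the injectivity of $\conf|_{\Omega_0}$ to a measure-space isomorphism. First I would note that the defining conditions \eqref{supp} of $\Omega_0$ amount to exactly three clauses: (i) $x_\ell(\omega)\neq 0$ for all $\ell\in\Z^*$; (ii) $\gamma_2(\omega)=0$; (iii) $\gamma_1(\omega)=\lim_{n\to\infty}\sum_{\ell\in\Z^*}x_\ell(\omega)\mathds{1}_{|x_\ell(\omega)|>1/n^2}$. Clause (i) holds $\mathbb{M}^{(s)}$-almost surely by Proposition \ref{as-non-zero}, clause (ii) by Theorem \ref{gaussfactor} (equivalently Theorem \ref{gamma2}), and clause (iii) by Theorem \ref{gamma1-factor} (equivalently Theorem \ref{mainthm2}); since the kernel $K^{(s,\infty)}$ is continuous, $\conf_*\mathbb{M}^{(s)}$ charges only configurations containing no point of the form $\pm 1/n^2$, so the truncation conventions $\mathds{1}_{|x|\ge 1/n^2}$ and $\mathds{1}_{|x|>1/n^2}$ yield the same limit almost surely and (iii) is unambiguous. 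Intersecting the three full-measure sets gives $\mathbb{M}^{(s)}(\Omega\setminus\Omega_0)=0$.

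Next I would construct an explicit Borel inverse of $\conf|_{\Omega_0}$. Because $\sum_\ell x_\ell(\omega)^2\le\delta(\omega)<\infty$ for every $\omega\in\Omega$, one has $\conf(\Omega_0)\subset\Conf_\triangle(\R^*)$, and each element of $\conf(\Omega_0)$ has infinitely many points in each of the half-lines $\R_+$ and $\R_-$ and is $1/n^2$-balanced. Let $\mathcal{A}\subset\Conf(\R^*)$ be the Borel set of configurations enjoying these three properties and define $\Psi\colon\mathcal{A}\to\Omega$ by letting $\alpha^+$ (resp.\ $\alpha^-$) be the weakly decreasing rearrangement, counted with multiplicity, of the positive points of $\mathcal{X}$ (resp.\ of the moduli of its negative points), together with $\delta:=\sum_{x\in\mathcal{X}}x^2$ and $\gamma_1:=\lim_{n\to\infty}\sum_{x\in\mathcal{X}}x\,\mathds{1}_{|x|>1/n^2}$. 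Each coordinate of $\Psi$ is a Borel function of $\mathcal{X}$, one has $\Psi(\mathcal{X})\in\Omega_0$ for every $\mathcal{X}\in\mathcal{A}$ — whence in fact $\mathcal{A}=\conf(\Omega_0)$, which is therefore Borel — and $\Psi\circ\conf=\mathrm{id}_{\Omega_0}$, $\conf\circ\Psi=\mathrm{id}_{\mathcal{A}}$; in particular this re-derives that $\conf|_{\Omega_0}$ is injective.

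Finally I would identify the transported measure. By \eqref{spec-m}, $\mathbb{M}^{(s)}=(\mathfrak{r}^{(\infty)})_*(m^{(s)}|_{H_{\reg}})$, and by construction $\conf\circ\mathfrak{r}^{(\infty)}(X)=\mathcal{C}^{(s)}(X)$; hence $\conf_*\mathbb{M}^{(s)}$ is the law of $\mathcal{C}^{(s)}(X)$ for $\mathrm{Law}(X)=m^{(s)}$, which is the determinantal probability $\mathbb{P}_{K^{(s,\infty)}}$ (Theorem 2.1 of \cite{BO-CMP}). Together with $\mathbb{M}^{(s)}(\Omega_0)=1$ this gives $\mathbb{P}_{K^{(s,\infty)}}(\mathcal{A})=\conf_*\mathbb{M}^{(s)}(\conf(\Omega_0))\ge\mathbb{M}^{(s)}(\Omega_0)=1$, so $\mathcal{A}$ has full $\mathbb{P}_{K^{(s,\infty)}}$-measure and the mutually inverse Borel bijections $\conf\colon\Omega_0\to\mathcal{A}$ and $\Psi\colon\mathcal{A}\to\Omega_0$ intertwine $\mathbb{M}^{(s)}$ and $\mathbb{P}_{K^{(s,\infty)}}$; this is exactly the asserted natural isomorphism of probability spaces. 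I expect no serious obstacle: all the analytic substance is already contained in the quoted theorems (the $\gamma_2$-vanishing estimate underlying Proposition \ref{uniformness} and the $1/n^2$-balancedness of the asymptotic eigenvalues), and what remains is the routine packaging of a Borel bijection modulo null sets; the only delicate point is the harmless mismatch between the "$\ge$" and "$>$" truncations, resolved by the continuity of $K^{(s,\infty)}$.
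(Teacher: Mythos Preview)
Your proof is correct and follows essentially the same route as the paper: combine Theorem~\ref{gamma2}, Theorem~\ref{mainthm2}, and Proposition~\ref{as-non-zero} for the concentration on $\Omega_0$, then use the injectivity of $\conf|_{\Omega_0}$ for the isomorphism. You are in fact more careful than the paper's two-line proof, explicitly constructing a Borel inverse $\Psi$, identifying $\conf_*\mathbb{M}^{(s)}$ via \eqref{spec-m}, and addressing the harmless $>$ versus $\ge$ discrepancy between the definition of $\Omega_0$ and Theorem~\ref{gamma1-factor}; the paper leaves these points implicit.
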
  

\begin{proof}
The first assertion follows from  Theorem \ref{gamma2}, Theorem \ref{mainthm2} and Proposition \ref{as-non-zero}. The second assertion follows from the first assertion and the injectivity of the restricted map $\conf|_{\Omega_0}$.
\end{proof}

\section{Infinite Hua-Pickrell measures}
The main purpose of this section is to identify the ergodic decomposition measure $\mathbb{M}^{(s)}$ to a $\sigma$-finite infinite determinantal measure on $\Conf(\R^*)$.

\subsection{The radial part of the Hua-Pickrell measures} To a matrix $X \in H(N)$, we assign the collection $(\lambda_1(X), \cdots , \lambda_N(X) )$ of the eigenvalues of the matrix $X$ arranged in non-increasing order. Introduce a map
\begin{align*}
\rad_N: H(N) \rightarrow \R^N/S(N)
\end{align*}
by the formula 
\begin{align}\label{radialpart}
\rad_N (X)  = (\lambda_1(X), \dots, \lambda_N(X) ),
\end{align}
where $(\lambda_1(X), \dots, \lambda_N(X))$ stands for its equivalent class in $\R^N/S(N)$. The map \eqref{radialpart} naturally extends to a map defined on $H$ for which we keep the same symbol: in other words, the map $\rad_N$ assigns to an infinite Hermitian matrix $X$ the array of eigenvalues of its $N\times N$ upper left conner $X_N$.

The radial part of the Hua-Pickrell measure $m^{(s, N)}$ is now defined as the pushforward of the measure $m^{(s, N)}$ under the map $\rad_N$: 
\begin{align*}
(\rad_N)_{*} m^{(s, N)}.
\end{align*}
Note that, since finite-dimensional unitary groups are compact, and, by definition, for any $s\in \C$ and all sufficiently large $N$ (i.e. $N + 2 \Re s \ge 0$), the measure $m^{(s, N)}$ assigns finite measure to compact sets of $H(N)$, the  pushforward is well-defined, for sufficiently large $N$, even if the measure $m^{(s, N)}$ is infinite.

Slightly abusing notation, we write $d x $ the pushforward of the Lebesgue measure of $\R^N$ onto $\R^N/S(N)$, then we have the following 

\begin{prop}
For sufficiently large $N$ (i.e., $N + 2 \Re s \ge 0$), the radial part of the measure $m^{(s, N)}$ takes the form: 
\begin{align}\label{radial-measure}
\begin{split}
(\rad_N)_{*} m^{(s, N)} =  &   \, \const \cdot \prod_{1 \le k< \ell \le N } (x_k - x_\ell)^2 \\ & \times \prod_{k= 1}^N ( 1 + i x_k)^{-s-N} ( 1 - i x_k)^{- \bar{s}-N} \cdot d x.
\end{split}
\end{align}
\end{prop}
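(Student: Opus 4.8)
The plan is to compute the pushforward $(\rad_N)_* m^{(s,N)}$ directly from the defining density \eqref{HP} of $m^{(s,N)}$ on $H(N)$ by passing to ``spectral'' or eigenvalue-angle coordinates, i.e. writing a Hermitian matrix $X \in H(N)$ as $X = u \, \diag(x_1,\dots,x_N)\, u^{-1}$ with $u \in U(N)$ and then integrating out the unitary variable. First I would observe that the integrand in \eqref{HP} depends only on the eigenvalues of $X$: indeed $\det((1+iX)^{-s-N}) = \prod_{k=1}^N (1+ix_k)^{-s-N}$ and likewise $\det((1-iX)^{-\bar s - N}) = \prod_{k=1}^N (1-ix_k)^{-\bar s - N}$, since these are spectral functions of $X$; hence the density is a class function and the only nontrivial contribution to the pushforward comes from the change-of-variables Jacobian.

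The key step is the classical Weyl integration formula for the Lebesgue measure on $H(N)$: under the map $(u, x) \mapsto u\,\diag(x)\,u^{-1}$ the flat measure $\prod_j dX_{jj}\prod_{j<k} d(\Re X_{jk})\, d(\Im X_{jk})$ decomposes, up to a constant depending only on $N$, as
\begin{align*}
\const_N \cdot \prod_{1 \le k < \ell \le N} (x_k - x_\ell)^2 \, dx \, d\nu(u),
\end{align*}
where $d\nu$ is the (normalized or Haar) measure on the flag manifold $U(N)/\T^N$ and $dx$ is Lebesgue measure on $\R^N$. Integrating the class-function density against $d\nu(u)$ simply produces a multiplicative constant, which I absorb into $\const$, and pushing forward to $\R^N/S(N)$ (a finite-to-one quotient that again only changes the constant) yields exactly \eqref{radial-measure}. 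For the infinite-measure regime $\Re s \le -\tfrac12$ I would remark that the computation is local: for $N$ with $N + 2\Re s \ge 0$ the density is locally integrable on $H(N)$, the pushforward is well-defined on compact subsets of $\R^N/S(N)$ as noted just before the statement, and the same Weyl integration identity applies verbatim on each compact piece, so the formula extends to the $\sigma$-finite setting without change.

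The main obstacle is essentially bookkeeping rather than conceptual: one must be careful that the Weyl/Jacobian constant $\const_N$ is genuinely independent of $x$ (it is, being a purely algebraic Jacobian factor coming from the root system of $U(N)$), and that the various normalization constants — from $m^{(s,N)}$ itself, from the Weyl formula, from the volume of $U(N)/\T^N$, and from the $N!$ collapsing $\R^N$ to $\R^N/S(N)$ — all combine into a single harmless positive (finite) constant, which is why the statement asserts the identity only up to $\const$. No delicate convergence or analyticity issue arises here; the substantive analytic work in the paper concerns the $N \to \infty$ scaling limits treated in the subsequent sections, not this finite-$N$ identity.
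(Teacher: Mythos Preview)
Your argument is correct and is the standard one: the density in \eqref{HP} is a class function of $X$, so the pushforward under $\rad_N$ reduces to the Weyl integration formula for Lebesgue measure on $H(N)$, which produces the squared Vandermonde factor and a constant; the local-integrability remark for $N + 2\Re s \ge 0$ is also correctly handled.

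There is nothing to compare against: the paper states this proposition without proof, treating it as a well-known consequence of the Weyl integration formula (cf.\ the discussion of radial parts in \cite{BO-CMP}). Your write-up supplies exactly the routine justification that the paper elides.
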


We shall identify  $\R^N/S(N)= \Conf_N(\R),$ where $\Conf_N(\R)$ is set of $N$-point configurations over $\R$. The radial part $(\rad_N)_{*} m^{(s, N)}$ becomes a determinantal probability measure if $\Re s > - \frac{1}{2}$. It will be seen that if $\Re s \le - \frac{1}{2}$,  for sufficiently large $N$,  the radial part $(\rad_N)_{*} m^{(s, N)}$ is an infinite determinantal measure.

\subsection{The radial parts of \texorpdfstring{$m^{(s,N)}$}{m} as infinite determinantal measures}
Our first aim is to show that for $\Re s \le - \frac{1}{2}$, the measure \eqref{radial-measure} is an infinite determinantal measure.

Recall that the weight function $\phi^{(s)}_N(x)$ on $\R$ is defined for all $s \in\C$ and $N \in \N$: 
\begin{align}\label{weight}
\phi^{(s)}_N(x) = ( 1 + i x)^{- s - N } ( 1- i x)^{- \bar{s} - N}  = ( 1+x^2)^{- \Re s - N} e^{2 \Im s \cdot \Arg ( 1 + ix)}.
\end{align}
Here we assume that the function $\Arg(\dots)$ takes values in $( - \pi, \pi)$ (actually, $\Arg( 1 + i x) \in ( - \frac{\pi}{2}, \frac{\pi}{2}$)). Note that we have 
\begin{align}\label{weight-relation}
\phi^{(s+m)}_{ N- m} \equiv \phi^{(s)}_{N} \, \text{ for any } \, 0\le m\le N - 1.
\end{align}

Let $n_s$ be the smallest non-negative integer such that 
\begin{align*}
\Re ( s + n_s) > - \frac{1}{2}.
\end{align*} 
Let $N $ be large enough such that $N \ge \max\{ n_s +1, - 2 \Re s\}$. We denote 
\begin{align*}
s' = s + n_s, \quad N'_s = N - n_s.
\end{align*}
Note that if $\Re s > - \frac{1}{2}$, then $n_s = 0$ and hence $s' = s, N_s' = N$. Keep in mind that, by \eqref{weight-relation}, we have 
\begin{align*}
\phi^{(s)}_{N} \equiv \phi^{(s')}_{N_s'}.
\end{align*}

\bigskip

\begin{defn} $($ Some subspaces of $L_\loc^2(\R^*, \Leb)$ related to $\phi_N^{(s)}$\, $)$
\begin{enumerate}

\item [(i)]  Subspace $\boldsymbol{L}^{(s',  N_s')} \subset L^2(\R, \Leb)$ is defined as follows

\begin{align*}
 \boldsymbol{L}^{(s',N_s')} = \mathrm{span} \bigg\{  (\sgn ( x) )^{N_s'}  \cdot x^{j} \cdot \sqrt{ \phi^{(s)}_N (x) }\bigg\}_{j = 0}^{N_s'-1}.
\end{align*}

\item [(ii)] Subspace $\boldsymbol{H}^{(s,N)} \subset L^2_{\loc}(\R^*, \Leb)$  is defined as follows
\begin{align*}
\boldsymbol{H}^{(s,N)} =  \mathrm{span}\bigg\{   (\sgn(x))^{N_s'}\cdot x^j \cdot \sqrt{ \phi^{(s)}_N (x) } \bigg\}_{j = 0}^{N-1} .
\end{align*}
This space  has the following decomposition: 
\begin{align*}
\boldsymbol{H}^{(s,N)} &=  \underbrace{ \mathrm{span}\bigg\{  (\sgn(x))^{N_s'}\cdot x^j \cdot \sqrt{ \phi^{(s)}_N (x) }   \bigg\}_{j = 0}^{N_s'-1}}_{\text{coincides with }\, \boldsymbol{L}^{(s', N_s')}  \subset\, L^2(\R, \Leb) } \\ &  +   \underbrace{\mathrm{span}\bigg\{ (\sgn(x))^{N_s'} \cdot\boldsymbol{p}_{N_s'-1}^{(s', N'_s)}  (x) \cdot x^k \cdot   \sqrt{ \phi^{(s)}_N (x) }   \bigg\}_{k= 1}^{n_s} }_{\text{denoted by }\, \boldsymbol{V}^{(s,N)}}.
 \end{align*}
It should be mentioned that, independent of $N$, we always have $$\dim \boldsymbol{V}^{(s,N)} = n_s.$$ 

\item [(iii)] The rescaled subspaces of $L_\loc^2(\R^*, \Leb)$ are defined as follows: 
\begin{align*}
H^{(s, N)} =  \left\{   \varphi(N'_s x): \varphi \in \boldsymbol{H}^{(s,N)}\right\}  \subset L_\loc^2(\R^*, \Leb), 
\end{align*}
\begin{align*}
V^{(s,N)} = \left\{   \varphi(N'_s x): \varphi \in \boldsymbol{V}^{(s,N)}\right\}  \subset L_\loc^2(\R^*, \Leb),
\end{align*}
\begin{align*}
L^{(s', N'_s)} = &\left\{  \varphi(N_s'x): \varphi \in \boldsymbol{L}^{(s', N'_s)}  \right\} \subset L^2(\R, \Leb).
\end{align*}
We have 
\begin{align*}
 V^{(s,N)} =   \mathrm{span}\bigg\{ (\sgn(x))^{N_s'}\cdot \boldsymbol{p}_{N'_s-1}^{(s', N'_s)} (N'_s x) \cdot x^k \cdot \sqrt{\phi_{s', N'_s} (N'_s x) } \bigg\}_{k= 1}^{n_s}.
\end{align*}
\begin{align*}
H^{(s,N)} =  L^{(s', N'_s)} +  V^{(s,N)}.
\end{align*}
\end{enumerate}

\end{defn}

Recall that if $\mathscr{L}$ is some function space defined on a set $\mathcal{E}$, and $S \subset  \mathcal{E}$ is a subset, then we denote 
\begin{align*}
\mathscr{L}_{S} : = \mathds{1}_S \mathscr{L} = \{ \mathds{1}_S\varphi: \varphi \in \mathscr{L} \}.
\end{align*}

\begin{prop}\label{finite-version-inf}
Let $s \in \C, \Re s \le - \frac{1}{2}$. The radial part of the Hua-Pickrell measure $m^{(s,N)}$,  for $ N + 2 \Re s>  0$,  is then an infinite determinantal measure corresponding to the subspace $\boldsymbol{H}^{(s,N)}$ and the subset $\mathcal{E}_0 = (-1,  1) \setminus \{0\}$: 
$$
(\rad_N)_{*} m^{(s,N)} = \mathbb{B}\left(\boldsymbol{H}^{(s,N)}, \mathcal{E}_0 \right).
$$
For the rescaled radial part we have 
$$
(\conf \circ \mathfrak{r}^{(N)})_{*} m^{(s,N)} = \mathbb{B}\left(H^{(s,N)}, \mathcal{E}_0 \right).
$$
The above two equalities are understood as equality up to multiplication by positive constants. 
\end{prop}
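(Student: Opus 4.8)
The plan is to recognise the radial part \eqref{radial-measure} as a (truncatable) orthogonal polynomial ensemble and to match it against Bufetov's characterisation of $\sigma$-finite determinantal measures recalled above, working throughout with the finite-dimensional subspace $\boldsymbol{H}^{(s,N)}$. The first observation is that when $\Re s\le-\tfrac12$ and $N+2\Re s>0$ the weight $\phi^{(s)}_N(x)=(1+x^2)^{-\Re s-N}e^{2\Im s\,\Arg(1+ix)}$ is strictly positive and locally bounded, but $x^{2(N-1)}\phi^{(s)}_N(x)\sim|x|^{-2-2\Re s}$ is not integrable at infinity; hence $\boldsymbol{H}^{(s,N)}\subset L^2_\loc(\R^*,\Leb)$ while $\boldsymbol{H}^{(s,N)}\not\subset L^2(\R,\Leb)$, and the measure \eqref{radial-measure} has infinite total mass. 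One should resist the temptation to introduce a global Christoffel--Darboux kernel for $\phi^{(s)}_N$: it does not exist, precisely because too few moments of $\phi^{(s)}_N$ are finite, and this is exactly the situation the infinite-determinantal-measure formalism is built to handle.

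Next I would check the assumptions (A1)--(A3) for $\boldsymbol{H}^{(s,N)}$ and $\mathcal{E}_0=(-1,1)\setminus\{0\}$; all three are immediate from finite-dimensionality. For bounded Borel $B$, the space $\boldsymbol{H}^{(s,N)}_{\mathcal{E}_0\cup B}=\mathds{1}_{\mathcal{E}_0\cup B}\boldsymbol{H}^{(s,N)}$ is finite-dimensional and consists of functions bounded on the bounded set $\mathcal{E}_0\cup B$, hence it is a closed subspace of $L^2(\R,\Leb)$ and (A1) holds; the orthogonal projection onto it has rank $\le N$, hence is trace class, so (A2) holds; and a generic element of $\boldsymbol{H}^{(s,N)}$ is $Q(x)(\sgn x)^{N'_s}\sqrt{\phi^{(s)}_N(x)}$ with $\deg Q\le N-1$, so its vanishing on the positive-measure set $\mathcal{E}_0$ forces $Q\equiv 0$ because $\sqrt{\phi^{(s)}_N}>0$ everywhere, giving (A3). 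In particular $\mathbb{B}(\boldsymbol{H}^{(s,N)},\mathcal{E}_0)$ is well defined.

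The main step is to verify that $(\rad_N)_*m^{(s,N)}$ satisfies Bufetov's defining properties (1) and (2) for this pair, so that by the uniqueness clause it coincides with $\mathbb{B}(\boldsymbol{H}^{(s,N)},\mathcal{E}_0)$ up to a positive constant. Property (1) is automatic since every configuration has exactly $N$ points. For (2), fix bounded Borel $B\subset\R\setminus\mathcal{E}_0$; restricting \eqref{radial-measure} to configurations contained in $\mathcal{E}_0\cup B$ gives, up to a constant, the measure $\prod_{k<\ell}(x_k-x_\ell)^2\prod_k\bigl(\phi^{(s)}_N(x_k)\mathds{1}_{\mathcal{E}_0\cup B}(x_k)\bigr)\,dx$, which is finite and strictly positive because $\mathcal{E}_0\cup B$ is bounded with positive measure. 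Since the truncated weight $\phi^{(s)}_N\mathds{1}_{\mathcal{E}_0\cup B}$ has all moments finite, this is a genuine $N$-point orthogonal polynomial ensemble, hence by the classical Andr\'eief identity a determinantal probability whose kernel is the Christoffel--Darboux kernel of the orthonormalised system $\{x^j\sqrt{\phi^{(s)}_N\mathds{1}_{\mathcal{E}_0\cup B}}\}_{j=0}^{N-1}$, i.e.\ the orthogonal projection onto $\mathrm{span}\{x^j\sqrt{\phi^{(s)}_N}\,\mathds{1}_{\mathcal{E}_0\cup B}\}_{j=0}^{N-1}$. The prefactor $(\sgn x)^{N'_s}$ is harmless here: multiplying each orthonormal function by $(\sgn x)^{N'_s}$ multiplies the kernel by $(\sgn x)^{N'_s}(\sgn y)^{N'_s}$, which leaves every minor $\det\bigl(K(x_i,x_j)\bigr)$ unchanged for $x_i\in\R^*$ and hence leaves the correlation functions and the determinantal probability unchanged; therefore this projection may be taken onto $\boldsymbol{H}^{(s,N)}_{\mathcal{E}_0\cup B}$. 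This is exactly property (2), and the first displayed identity follows; $\sigma$-finiteness of $(\rad_N)_*m^{(s,N)}$ comes from exhausting $\R$ by bounded sets.

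For the rescaled statement one transports the first identity under the dilation built into the passage from $\boldsymbol{H}^{(s,N)}$ to $H^{(s,N)}$ (the map $\conf\circ\mathfrak{r}^{(N)}$ being, modulo deletion of the point $0$, the radial part $\rad_N$ followed by a dilation): a dilation carries a finite-rank determinantal probability to the determinantal probability of the dilated subspace, and since almost every configuration in the support avoids $0$, the precise bounded neighbourhood of the origin used as the cut-off enters only through the overall positive constant, so one may keep $\mathcal{E}_0=(-1,1)\setminus\{0\}$ on both sides. This yields $(\conf\circ\mathfrak{r}^{(N)})_*m^{(s,N)}=\mathbb{B}(H^{(s,N)},\mathcal{E}_0)$. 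I expect the only point requiring genuine care, rather than routine bookkeeping, to be the conceptual one flagged above: everything reduces to the elementary finite $N$-point OP ensemble for the \emph{truncated} weight, and the whole role of the infinite-determinantal-measure language is to assemble these compatible truncations into a single $\sigma$-finite object when the global weight $\phi^{(s)}_N$ fails to be integrable.
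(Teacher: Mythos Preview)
Your argument is correct and follows essentially the same route as the paper. The paper quotes Proposition~2.13 of \cite{Bufetov-inf-det} to identify $(\rad_N)_*m^{(s,N)}$ with $\mathbb{B}(\widetilde{\boldsymbol{H}}^{(s,N)},\mathcal{E}_0)$ for the auxiliary subspace $\widetilde{\boldsymbol{H}}^{(s,N)}$ \emph{without} the sign factor, and then passes to $\boldsymbol{H}^{(s,N)}$ by the gauge observation that $(\sgn x)^{N_s'}(\sgn y)^{N_s'}$ leaves all correlation minors unchanged; you instead unpack that citation and verify Bufetov's defining properties (1)--(2) directly for $\boldsymbol{H}^{(s,N)}$, handling the sign factor inside the same verification. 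The rescaled statement is treated identically in both: dilation plus the remark that the choice of bounded $\mathcal{E}_0$ affects only the overall positive constant.
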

\begin{proof}
Define 
$$
\widetilde{\boldsymbol{H}}^{(s,N)} =  \mathrm{span} \bigg\{  x^{j} \cdot \sqrt{ \phi^{(s)}_N (x) }\bigg\}_{j = 0}^{N-1} \subset L^2_\loc (\R, \Leb).
$$
By Prop. 2.13 in \cite{Bufetov-inf-det}, up to a multiplicative constant, we have 
$$
(\rad_N)_{*} m^{(s,N)} = \mathbb{B}\left(\widetilde{\boldsymbol{H}}^{(s,N)}, \mathcal{E}_0 \right).
$$
For any bounded subset  $ B \subset \R \setminus \mathcal{E}_0$, by comparing the corresponding correlation functions, we see that
$$
\mathbb{P}_{\boldsymbol{H}^{(s,N)}_{\mathcal{E}_0 \cup B}} = \mathbb{P}_{\widetilde{\boldsymbol{H}}^{(s,N)}_{\mathcal{E}_0 \cup B}}.
$$
The above identity, combined with the uniqueness assertion in Theorem 2.11 of \cite{Bufetov-inf-det}, implies that 
$$
\mathbb{B}\left(\boldsymbol{H}^{(s,N)}, \mathcal{E}_0\right) = \mathbb{B}\left(\widetilde{\boldsymbol{H}}^{(s,N)}, \mathcal{E}_0\right).
$$
This completes the first assertion. The second assertion follows from change of variables and from the following elementary fact (see the Remark that follows Theorem 2.11 in \cite{Bufetov-inf-det}): for any positive constant $\lambda> 0$, we have
$$
 \mathbb{B}\left(H^{(s,N)}, \mathcal{E}_0\right) = \mathbb{B}\left(H^{(s,N)}, \lambda \cdot \mathcal{E}_0\right).
$$
\end{proof}

\subsection{Finite case \texorpdfstring{$\Re s > - \frac{1}{2}$}{a} revisited}  The study of the decomposition mesure in the infinite case ($\Re s \le - \frac{1}{2}$) requires deeper study of  the finite case. The aim of this section is to prepare these results.   

Assume that $\Re s > - \frac{1}{2}$ and let $\boldsymbol{p}_0^{(s,N)}\equiv 1, \boldsymbol{p}_1^{(s,N)}, \boldsymbol{p}_2^{(s,N)}, \dots$ denote the sequence of monic orthogonal polynomials on $\R$ associated with the weight function \eqref{weight}.  The explicit representation of these polynomials can be found in \cite[Prop. 1.2]{BO-CMP}, where the Gauss hypergeometric functions arise naturally.

Recall the definition of the confluent hypergeometric function: 
\begin{align*}
_1 F_1 \left[\begin{array}{c} a \\ c \end{array} \bigg| z \right]  = \sum_{n = 0}^\infty \frac{a(a+1) \dots (a + n- 1)}{ c ( c+ 1) \dots (c + n-1) \cdot n!} z^n.
\end{align*}

\begin{defn}
For $s \in \C, \Re s > - \frac{1}{2}$, we define a function $\mathcal{V}_s: \R^* \rightarrow \R$ given by the following formula:
\begin{align*}
\mathcal{V}_s(x): & = \frac{1}{x} \left|\frac{1}{x}\right|^{\Re s} e^{-i/x + \pi \Im s \cdot \sgn(x)/2} \, _1 F_1 \left[\begin{array}{c}  s + 1 \\ 2 \Re s + 2\end{array} \bigg| \frac{2i}{x}\right],
\end{align*} Moreover, define
\begin{align}\label{V-vector}
\mathcal{V}_{s,N} (x): =  N^{1 + \Re s} \cdot  (\sgn(x))^{N}  \cdot \boldsymbol{p}_{N-1}^{(s, N)} (N x )  \cdot \sqrt{\phi^{(s)}_{N} (N x ) }.
\end{align} 
\end{defn}
Note that if $s \in \R$ and $s > - \frac{1}{2}$, then $\mathcal{V}_s$ can be represented by Bessel function, i.e., 
\begin{align}\label{v-function}
\mathcal{V}_s(x)  = \sgn(x) 2^{s + \frac{1}{2}} \Gamma\left(s + 3/2\right) \frac{1}{ \sqrt{| x|}} J_{s + \frac{1}{2}} \left(\frac{1}{| x|}\right).
\end{align}

Using the above notation, we have
\begin{prop}\label{asymp}
Let $s\in \C, \Re s > - \frac{1}{2}$. Then we have 
\begin{align}\label{convergence-v}
\lim_{N\to \infty} \mathcal{V}_{s, N}(x) = \mathcal{V}_s(x).
\end{align}
 Moreover, the  convergence is uniform provided that the variable $x$ ranges over any compact subset of $\R^*$.
\end{prop}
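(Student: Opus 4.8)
The plan is to prove the stated limit by passing from the explicit hypergeometric representation of the monic orthogonal polynomials $\boldsymbol{p}_{N-1}^{(s,N)}$ given in \cite[Prop. 1.2]{BO-CMP} to the confluent hypergeometric function defining $\mathcal{V}_s$. First I would substitute the known closed form of $\boldsymbol{p}_{N-1}^{(s,N)}(y)$ --- a Gauss hypergeometric function $_2F_1$ in the variable $(1+iy)^{-1}$ (or an equivalent variable), times an explicit power prefactor --- into the definition \eqref{V-vector} of $\mathcal{V}_{s,N}(x)$, set $y = Nx$, and carefully collect all the $N$-dependent factors: the combinatorial/Gamma prefactors coming from the monic normalization, the $N^{1+\Re s}$ scaling, the factor $\sqrt{\phi^{(s)}_N(Nx)} = (1+N^2x^2)^{-(\Re s + N)/2} e^{\Im s \cdot \Arg(1+iNx)}$, and the sign factor $(\sgn x)^N$, which combines with the corresponding sign in the polynomial to produce a well-defined limit independent of the parity of $N$.

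The core of the argument is then a termwise/uniform limit computation. One isolates the limit $\lim_{N\to\infty}(1+N^2x^2)^{-N/2} = \lim_{N\to\infty}|Nx|^{-N}(1 + 1/(N^2x^2))^{-N/2}$, where the $|Nx|^{-N}$ part is absorbed into the leading coefficient of the polynomial and the remaining factor tends to $1$ on compacta of $\R^*$; one uses $\Arg(1+iNx) \to \frac{\pi}{2}\sgn(x)$, which produces the $e^{\pi \Im s \cdot \sgn(x)/2}$ factor in $\mathcal{V}_s$; and one uses the classical confluence limit for hypergeometric functions, namely $_2F_1\!\left[\begin{smallmatrix} a,\ b \\ c\end{smallmatrix}\,\big|\, z/b\right] \to {}_1F_1\!\left[\begin{smallmatrix} a \\ c\end{smallmatrix}\,\big|\, z\right]$ as $b\to\infty$, with $z = 2i/x$ arising from the scaling $y=Nx$ in the argument $(1\pm iNx)^{-1}$. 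Matching the power prefactors $\frac{1}{x}|\frac{1}{x}|^{\Re s}$ and the phase $e^{-i/x}$ against the asymptotics of the Gamma-function ratios (via Stirling) completes the identification of the pointwise limit with $\mathcal{V}_s(x)$. For the real case $s>-\frac12$ one may alternatively, or as a consistency check, invoke formula \eqref{v-function} and the known Mehler--Heine type asymptotics for Jacobi-type polynomials.

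To upgrade pointwise convergence to \emph{uniform} convergence on compact subsets of $\R^*$, I would control the confluence limit uniformly: on a compact $\mathcal{K}\subset\R^*$ the variable $2i/x$ stays in a compact subset of $\C$, the series for $_1F_1$ and for $_2F_1$ converge uniformly there, and each term of the $_2F_1$ series converges to the corresponding $_1F_1$ term uniformly with a geometric (or factorial) tail bound uniform in $N$ and in $x\in\mathcal{K}$; the prefactor convergences ($(1+N^2x^2)^{-N/2}\cdot|Nx|^N \to 1$, $\Arg(1+iNx)\to\frac\pi2\sgn x$, Stirling ratios) are all uniform on $\mathcal{K}$ since $|x|$ is bounded away from $0$ and $\infty$. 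The main obstacle I anticipate is purely bookkeeping: tracking the precise constants in the monic normalization of $\boldsymbol{p}_{N-1}^{(s,N)}$ and the power-of-$N$ factors through the Gamma-function asymptotics so that nothing stray survives or vanishes in the limit --- there is no conceptual difficulty, but the confluence limit must be applied to exactly the right variable and the sign bookkeeping with $(\sgn x)^N$ and $e^{\Im s\,\Arg(1+iNx)}$ must be done with care, since these are the factors that could silently introduce an $N$-dependent phase if mishandled.
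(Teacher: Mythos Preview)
Your proposal is correct and essentially reconstructs the computation carried out in the proof of Theorem~2.1 of \cite{BO-CMP}; the paper's own proof is simply the one-line observation that the result ``can be extracted from Theorem~2.1 in \cite{BO-CMP}.'' In other words, you are redoing from scratch --- via the explicit $_2F_1$ representation of $\boldsymbol{p}_{N-1}^{(s,N)}$, the confluence $_2F_1\to{}_1F_1$, and Stirling asymptotics for the Gamma prefactors --- exactly the asymptotic analysis that Borodin and Olshanski already perform when computing their scaling-limit kernel, of which $\mathcal{V}_{s,N}\to\mathcal{V}_s$ is a direct byproduct. Your sketch is sound and the bookkeeping concerns you flag (sign factors, $N$-dependent phases, uniform control of the confluence on compacta of $\R^*$) are precisely the points handled there; there is no missing idea, only the citation.
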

\begin{proof}
This result can be extracted from Theorem 2.1 in \cite{BO-CMP}.
\end{proof}

The following proposition shows that the convergence in \eqref{convergence-v} is also in $L^2$-sense (at least for real parameter $s> - \frac{1}{2}$). This fact will be used later. 

\begin{prop}\label{l2-function}
Let $s\in \R, s > - \frac{1}{2}$. Then $\mathcal{V}_s\in L^2(\R)  = L^2(\R, \Leb),$ and 
\begin{align*}
\mathcal{V}_{s,N} \xrightarrow[N \to \infty]{\text{ in } L^2(\R)} \mathcal{V}_s.
\end{align*}
 \end{prop}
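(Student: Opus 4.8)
The plan is to combine the pointwise (locally uniform) convergence from Proposition \ref{asymp} with an estimate controlling the $L^2$-mass of $\mathcal{V}_{s,N}$ near the two ends of $\R$ — the origin and infinity — uniformly in $N$, so that Proposition \ref{prop-tech2} can be applied. First I would observe that $\|\mathcal{V}_{s,N}\|_{L^2(\R)}$ has a clean determinantal interpretation: writing $\Phi^{(s,N)}$ for the (finite-rank) orthogonal projection onto $\boldsymbol{H}^{(s,N)}$ rescaled to $\mathcal{C}^{(s)}(X)$-coordinates, the function $\mathcal{V}_{s,N}$ is, up to normalization, the ``last'' orthonormal vector $\boldsymbol{p}_{N-1}^{(s,N)}\sqrt{\phi_N^{(s)}}$ rescaled by $x\mapsto Nx$ and multiplied by $N^{1+\Re s}$; the rescaling is an $L^2$-isometry, so $\|\mathcal{V}_{s,N}\|_{L^2(\R)}^2$ equals a single explicitly computable integral. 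For real $s>-\tfrac12$ one can even use the Bessel representation \eqref{v-function}: the identity $\mathcal{V}_s(x)=\sgn(x)2^{s+1/2}\Gamma(s+3/2)|x|^{-1/2}J_{s+1/2}(1/|x|)$ together with the change of variable $u=1/|x|$ reduces $\|\mathcal{V}_s\|_{L^2(\R)}^2$ to $2\cdot 2^{2s+1}\Gamma(s+3/2)^2\int_0^\infty J_{s+1/2}(u)^2\,u^{-2}\,du$, which converges at $u=\infty$ by the decay $J_\nu(u)=O(u^{-1/2})$ and at $u=0$ by $J_\nu(u)=O(u^\nu)$ with $\nu=s+1/2>0$; hence $\mathcal{V}_s\in L^2(\R)$.

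For the convergence itself I would argue as follows. By Proposition \ref{asymp}, $\mathcal{V}_{s,N}\to\mathcal{V}_s$ uniformly on each compact subset of $\R^*$, so in particular $\mathcal{V}_{s,N}\to\mathcal{V}_s$ a.e. on $\R$ and $\int_{\varepsilon\le|x|\le R}|\mathcal{V}_{s,N}|^2\to\int_{\varepsilon\le|x|\le R}|\mathcal{V}_s|^2$ for every $0<\varepsilon<R<\infty$. By Proposition \ref{prop-tech2} it therefore suffices to show $\|\mathcal{V}_{s,N}\|_{L^2(\R)}\to\|\mathcal{V}_s\|_{L^2(\R)}$, and for that I would prove the two tail estimates
\begin{align*}
\sup_{N}\int_{|x|\le\varepsilon}|\mathcal{V}_{s,N}(x)|^2\,dx\xrightarrow[\varepsilon\to0^+]{}0,
\qquad
\sup_{N}\int_{|x|\ge R}|\mathcal{V}_{s,N}(x)|^2\,dx\xrightarrow[R\to\infty]{}0.
\end{align*}
The near-origin estimate is exactly of the type already handled: $\int_{|x|\le\varepsilon}|\mathcal{V}_{s,N}|^2$ is dominated by $\int_{|x|\le\varepsilon}K_N^{(s,\R)}(x,x)\,dx$ up to constants (since $|\mathcal{V}_{s,N}(x)|^2\le \rho_1^{(s,N,\R)}$-type quantities), and an application of the uniform bound in Proposition \ref{uniformness}, or rather its proof via the Golinskii asymptotics \eqref{asymp-poly}–\eqref{up-bdd} transported through the Cayley transform, gives $\sup_N\int_{|x|\le\varepsilon}K_N^{(s,\R)}(x,x)\,dx\lesssim\varepsilon$ (indeed one has a stronger $x^2$-weighted bound). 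The tail at infinity is controlled in the same spirit: after the Cayley transform $|x|\ge R$ corresponds to a shrinking arc near $\theta=\pi$, i.e. near the zero $1+e^{i\theta}=0$ of the weight $\lambda^{(s)}$, and the Golinskii bounds \eqref{up-bdd} give $\lambda^{(s)}(e^{i\theta})|p_{N-1}^{(s)}(e^{i\theta})|^2\lesssim(1+\frac1{N|1+e^{i\theta}|})^{-2s}$ for the single top term, which after integrating against $d\theta$ over $|\theta-\pi|\lesssim 1/R$ and transporting the Jacobian back to the $x$-variable yields a bound $\lesssim R^{-1}$ (for $\Re s\ge 0$; for $-\tfrac12<\Re s<0$ one splits as in the proof of \eqref{estimate3} and the extra factor $N^{1+2\Re s}|1+e^{i\theta}|^{2\Re s}$ still integrates to something $o(1)$ in $R$ uniformly in $N$). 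The general complex case $\Re s>-\tfrac12$ then follows from the real case $a=\Re s$ by the weight-comparison argument of Szeg\H{o} used at the end of the proof of \eqref{estimate3}, since $|\mathcal{V}_{s,N}|^2$ and $|\mathcal{V}_{a,N}|^2$ are comparable up to a constant.

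I expect the main obstacle to be the uniform-in-$N$ tail estimate at infinity: unlike the near-origin estimate, this one is not literally Proposition \ref{uniformness}, and one must be a little careful that the single top orthonormal function $\boldsymbol{p}_{N-1}^{(s,N)}\sqrt{\phi_N^{(s)}}$ — not the whole Christoffel–Darboux sum — does not carry too much mass near the zero of the weight; the Golinskii two-sided bound \eqref{asymp-poly} is precisely what makes this tractable, giving $|p_{N-1}^{(s)}(e^{i\theta})|^2\asymp(|1+e^{i\theta}|+\tfrac1N)^{-2s}$, so the computation is essentially the same one-term version of the estimates already carried out for $\mathcal{J}_N$. A secondary, purely bookkeeping, point is to make the identification $\|\mathcal{V}_{s,N}\|_{L^2(\R)}^2=\int_\R K_N^{(s,\R)}\text{-type integrand}$ precise, i.e. to track the normalization constant $N^{1+\Re s}$ and the scaling $x\mapsto Nx$ through the definitions \eqref{kernel-line}, \eqref{V-vector}, which is straightforward but must be done to get the powers of $N$ to cancel correctly.
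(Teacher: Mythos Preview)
Your overall strategy --- reduce to norm convergence via Proposition \ref{prop-tech2}, then control the norms by uniform tail estimates at $0$ and at $\infty$ --- is reasonable in spirit, but the near-origin step contains a genuine gap. You claim that $|\mathcal{V}_{s,N}(x)|^2$ is dominated by $K_N^{(s,\R)}(x,x)$ up to constants, and that $\sup_N\int_{|x|\le\varepsilon}K_N^{(s,\R)}(x,x)\,dx\lesssim\varepsilon$. Neither assertion works as written. For the first: $\mathcal{V}_{s,N}/\|\mathcal{V}_{s,N}\|$ is one of the orthonormal functions summed in $K_N^{(s,\R)}(x,x)$, so the domination holds only with constant $\|\mathcal{V}_{s,N}\|^2$, whose boundedness is precisely what you are trying to establish. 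For the second: Proposition \ref{uniformness} is the \emph{$x^2$-weighted} bound $\int_{|x|\le\varepsilon}x^2 K_N^{(s,\R)}(x,x)\,dx\lesssim\varepsilon$; the unweighted integral $\int_{|x|\le\varepsilon}K_N^{(s,\R)}(x,x)\,dx$ is the expected number of rescaled eigenvalues in $(-\varepsilon,\varepsilon)$ and is of order $N$ for each fixed $\varepsilon>0$, so it is certainly not uniformly small. One could try to salvage the argument by bounding the single top term $\lambda^{(s)}(e^{i\theta})|p_{N-1}^{(s)}(e^{i\theta})|^2$ directly from Golinskii's two-sided estimate \eqref{asymp-poly}, but then you must track the monic-versus-orthonormal normalization and the $N^{1+s}$ factor through the Cayley transform, and the resulting integral near $\theta=0$ is not obviously $o(1)$ uniformly in $N$ without further work.

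The paper takes a much shorter route that you in fact allude to in your first paragraph but then abandon: compute $\|\mathcal{V}_{s,N}\|^2$ \emph{exactly}. After undoing the rescaling, $\|\mathcal{V}_{s,N}\|^2=N^{1+2s}\int_\R \boldsymbol{p}_{N-1}^{(s,N)}(x)^2\phi_N^{(s)}(x)\,dx$, and this last integral has a closed Gamma-function form from \cite[Prop.~1.2]{BO-CMP}; taking $N\to\infty$ and using Stirling gives a finite limit. On the other side, $\|\mathcal{V}_s\|^2$ is computed from \eqref{v-function} via the Weber--Schafheitlin integral $\int_0^\infty t^{-1}J_{s+1/2}(t)^2\,dt$. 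The two explicit values are then matched using the Legendre duplication formula for $\Gamma$. With $\|\mathcal{V}_{s,N}\|\to\|\mathcal{V}_s\|$ thus established and the a.e.\ convergence from Proposition \ref{asymp}, Proposition \ref{prop-tech2} finishes the proof --- no tail estimates needed. I recommend you switch to this direct computation; it avoids the circularity and the false unweighted bound entirely.
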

\begin{proof}
By virtue of Proposition \ref{prop-tech2} and Proposition \ref{asymp}, it suffices to show that  
\begin{align}\label{norm-aim}
\lim_{ N \to \infty} \left\n \mathcal{V}_{s, N} \right\n = \left\n  \mathcal{V}_s \right\n,
\end{align}
where $\n \cdot\n$ denotes the norm in $L^2(\R)$. To this end, we note that
\begin{align*}
 \n \mathcal{ V}_{s, N} \n^2 & = \int_{\R} \mathcal{V}_{s,N} (x)^2 dx  = N^{2 + 2  s} \int_{\R} \boldsymbol{p}_{N-1}^{(s,N)}(Nx)^2 \phi^{(s)}_N (Nx) d x \\ &= N^{1 + 2  s }   \int_{\R} \boldsymbol{p}_{N-1}^{(s,N)}(x)^2 \phi^{(s)}_N (x) d x.
\end{align*}
This last integral was computed in \cite[Prop. 1.2]{BO-CMP}: 
\begin{align*}
\int_{\R} \boldsymbol{p}_{N-1}^{(s,N)}(x)^2 \phi^{(s)}_N (x) d x =  \frac{\pi 2^{- 2 s}}{4}  \frac{\Gamma(2 s + 1)  \Gamma(2  s + 2) \Gamma(N)}{  \Gamma(s + 1)^2 \Gamma(N+1 + 2 s)}.
\end{align*}
It follows that 
\begin{align*}
\lim_{N \to \infty} \n \mathcal{V}_{s, N}\n^2  = \frac{ \pi}{2^{2 + 2s}}  \frac{\Gamma(2s + 1) \Gamma(2s +2)}{\Gamma(s +1)^2 } = \frac{\pi  (2s +1)}{2^{2 + 2s}} \frac{\Gamma(2s +1)^2}{\Gamma(s  +1)^2}.
\end{align*}
For the norm of $\mathcal{V}_s$, by formula \eqref{v-function}, we have 
\begin{align*}
\n \mathcal{V}_s\n^2 = &  2^{2s + 1} \Gamma\left( s + 3/2\right)^2  \int_{\R} \frac{1}{|x|} J_{s + \frac{1}{2}} \left( \frac{1}{|x|} \right)^2 dx \\ = &  2^{2s + 2} \Gamma \left(  s+ 3/2\right)^2  \int_0^\infty \frac{1}{x} J_{s + \frac{1}{2}} \left( \frac{1}{x}\right)^2 dx \\  = & 2^{2s + 2} \Gamma \left(  s+ 3/2\right)^2   \int_0^\infty \frac{1}{t} J_{s + \frac{1}{2}} \left( t\right)^2 dt  \\ = & 2^{2s + 1} \Gamma\left(s + 1/2\right) \Gamma(s + 3/2) \\ = & 2^{2s+1} \Gamma(s +1/2)^2 (s+1/2).
\end{align*}
Note that in the above equations, we have used the following result which can be found in \cite[p.403-405]{Watson-Bessel}: 
\begin{align*}
\int_{0}^\infty    \frac{1}{t} J_{s + \frac{1}{2}} (t)^2 d t = \frac{\Gamma(s + 1/2)}{2 \Gamma(s + 3/2)}.
\end{align*}
Now we shall use the following duplication formula for the Gamma function of Gauss and Legendre: (see, e.g., \cite[p. 4, formula (11)]{Erdelyi-vol1}): 
\begin{align*}
\Gamma( z ) \Gamma(z + 1/2) = 2^{1- 2z} \sqrt{\pi} \Gamma(2z).
\end{align*}
An application of the above formula yields that 
\begin{align*}
\frac{\pi  (2s +1)}{2^{2 + 2s}} \frac{\Gamma(2s +1)^2}{\Gamma(s  +1)^2} =  2^{2s+1} \Gamma(s +1/2)^2 (s+1/2).
\end{align*}
This completes the proof of \eqref{norm-aim}.
\end{proof}

Borodin-Olshanski \cite[Thm.2.1]{BO-CMP} showed that the following scaling limit exists: 
\begin{align*}
\lim_{N \to \infty} (\sgn(x)\sgn(y))^N K_N^{(s, \R)}(x, y), \quad x , y \in \R^{*}.
\end{align*}
Let $\Pi^{(s)}_\infty (x, y)$ (it is denoted as $K^{(s, \infty)}$ in \cite{BO-CMP}) denote this limit kernel defined on $\R^{*} \times \R^{*}$: 
\begin{align}\label{limit-kernel}
\Pi^{(s)}_\infty (x, y): = \lim_{N \to \infty} (\sgn(x)\sgn(y))^N K_N^{(s, \R)}(x, y).
\end{align}

 By a slight abuse of notation, the associated  operator on $L^2(\R^*, \Leb) = L^2(\R, \Leb)$ will again be denoted by $\Pi^{(s)}_\infty$, i.e., for any $f \in L^2(\R, \Leb)$, we have
\begin{align*}
(\Pi^{(s)}_\infty f )(x) = \int \Pi^{(s)}_\infty (x, y) f(y) dy.
\end{align*} 

The following proposition will be useful. 
\begin{prop}\label{projection}
Let $s > - \frac{1}{2}$. Then $\Pi^{(s)}_\infty $ is an orthogonal projection on $L^2(\R, \Leb)$. The range of $\Pi^{(s)}_\infty$ is given by 
\begin{align}\label{limit-space-H}
L^{(s)} : = \Ran ( \Pi^{(s)}_\infty) = \overline{\spann} \Big\{\Pi^{(s)}_\infty (\cdot, y):  y \in \R^* \Big\} \subset L^2(\R, \Leb).
\end{align} 
\end{prop}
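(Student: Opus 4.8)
The plan is to realize $\Pi^{(s)}_\infty$ as a limit of orthogonal projections and then pass to the limit. For each $N$ the pre-limit kernel $(\sgn x\,\sgn y)^N K_N^{(s,\R)}(x,y)$ is precisely the kernel of the orthogonal projection $\widetilde K_N:=\Pi_{H^{(s,N)}}$ onto the finite-dimensional subspace $H^{(s,N)}=\spann\{(\sgn x)^N x^j\sqrt{\phi^{(s)}_N(Nx)}\}_{j=0}^{N-1}\subset L^2(\R,\Leb)$: indeed $\boldsymbol K^{(s,N)}$ is the Christoffel--Darboux kernel of the orthonormal polynomials for the weight $\phi^{(s)}_N$, the rescaling $f(x)\mapsto\sqrt N f(Nx)$ is unitary on $L^2(\R,\Leb)$, and multiplication by $(\sgn x)^N$ is a self-adjoint unitary. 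Hence $0\le\widetilde K_N\le 1$, $\widetilde K_N=\widetilde K_N^{*}=\widetilde K_N^{2}$, which on the diagonal reads
\begin{align}\label{plan-repro}
\widetilde K_N(x,x)=\int_{\R}|\widetilde K_N(x,y)|^{2}\,dy,\qquad x\in\R .
\end{align}
By Theorem 2.1 of \cite{BO-CMP} (this is the scaling limit \eqref{limit-kernel}, whose uniformity is the one used in the proof of Lemma \ref{lem1}, together with $K_N^{(s,\R)}(-x,-y)=K_N^{(s,\R)}(x,y)$) one has $\widetilde K_N\to\Pi^{(s)}_\infty$ uniformly on compact subsets of $\R^{*}\times\R^{*}$. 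Consequently, for $f\in C_c(\R^{*})$ one gets $\langle\Pi^{(s)}_\infty f,f\rangle=\lim_N\langle\widetilde K_N f,f\rangle\in[0,\|f\|_2^{2}]$; since $C_c(\R^{*})$ is dense, $\Pi^{(s)}_\infty$ extends to a bounded self-adjoint operator with $0\le\Pi^{(s)}_\infty\le 1$, its kernel is continuous on $\R^{*}\times\R^{*}$, and $\Pi^{(s)}_\infty(\cdot,y)\in L^2(\R,\Leb)$ for each $y\in\R^{*}$ by Fatou applied to \eqref{plan-repro}.

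It remains to prove idempotency. Since $0\le\Pi^{(s)}_\infty\le 1$, the operator $\Pi^{(s)}_\infty$ is an orthogonal projection if and only if $\Pi^{(s)}_\infty(x,x)=\int_{\R}|\Pi^{(s)}_\infty(x,y)|^{2}\,dy$ for a.e.\ $x$, equivalently if and only if $\int_B\Pi^{(s)}_\infty(x,x)\,dx=\int_B\!\int_{\R}|\Pi^{(s)}_\infty(x,y)|^{2}\,dy\,dx$ for every compact $B\subset\R^{*}$. The inequality ``$\le$'' is automatic (it amounts to $(\Pi^{(s)}_\infty)^{2}\le\Pi^{(s)}_\infty$); the reverse is what one must obtain by passing to the limit in \eqref{plan-repro}. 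The uniform convergence gives $\int_B\widetilde K_N(x,x)\,dx\to\int_B\Pi^{(s)}_\infty(x,x)\,dx$ and $\int_B\!\int_{B'}|\widetilde K_N|^{2}\to\int_B\!\int_{B'}|\Pi^{(s)}_\infty|^{2}$ for every compact $B'\subset\R^{*}$, so the whole matter reduces to ruling out escape of mass in the $y$-variable, i.e.\ to showing
\begin{align*}
\sup_N\int_B\!\int_{|y|>R}|\widetilde K_N(x,y)|^{2}\,dy\,dx\xrightarrow[R\to\infty]{}0,\qquad \sup_N\int_B\!\int_{|y|<\delta}|\widetilde K_N(x,y)|^{2}\,dy\,dx\xrightarrow[\delta\to 0]{}0 .
\end{align*}

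The first limit, escape towards infinity, follows from the $N$-uniform tail bound $\sup_N\int_{|y|>R}\widetilde K_N(y,y)\,dy\to 0$ as $R\to\infty$, extracted from Golinskii's estimates \eqref{asymp-poly}--\eqref{up-bdd} via the Cayley transform exactly as in the proof of Proposition \ref{uniformness}. The second limit, escape towards $0$, is the delicate point. Since $B$ is bounded away from $0$, the Christoffel--Darboux identity yields, for $x\in B$ and $|y|<\delta$ small, $|\widetilde K_N(x,y)|\lesssim_{B}|\widetilde\psi_N(x)|\,|\widetilde\psi_{N-1}(y)|+|\widetilde\psi_{N-1}(x)|\,|\widetilde\psi_N(y)|$, where $\widetilde\psi_n(x)=\sqrt N\,p^{(s,N)}_n(Nx)\sqrt{\phi^{(s)}_N(Nx)}$ are the rescaled orthonormal functions and the Christoffel--Darboux constant stays bounded in $N$ by the explicit formulas of \cite[Prop.~1.2]{BO-CMP}; so one is reduced to bounding $\int_{|y|<\delta}\widetilde\psi_n(y)^{2}\,dy$ and the pointwise values $\widetilde\psi_N(x),\widetilde\psi_{N-1}(x)$ on $B$, uniformly in $N$. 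For this I would invoke \eqref{up-bdd} once more, together with the fact that $\widetilde\psi_{N-1}$ equals, up to a factor bounded in $N$, the function $\mathcal V_{s,N}$ of \eqref{V-vector}, and Propositions \ref{asymp}--\ref{l2-function}. Once $\Pi^{(s)}_\infty$ is known to be an orthogonal projection, \eqref{limit-space-H} is immediate: $\Pi^{(s)}_\infty(\cdot,y)\in\Ran\Pi^{(s)}_\infty$ for every $y$ (since $\Pi^{(s)}_\infty\Pi^{(s)}_\infty(\cdot,y)=\Pi^{(s)}_\infty(\cdot,y)$), and if $g\in\Ran\Pi^{(s)}_\infty$ is orthogonal to every $\Pi^{(s)}_\infty(\cdot,y)$ then $g=\Pi^{(s)}_\infty g=0$, whence $\Ran\Pi^{(s)}_\infty=\overline{\spann}\{\Pi^{(s)}_\infty(\cdot,y):y\in\R^{*}\}$, a closed subspace.

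The main obstacle is the escape of mass towards $x=0$ in the previous step. The number of particles accumulating near $0$ genuinely diverges as $N\to\infty$, so $\widetilde K_N(x,x)$ is not locally integrable near $0$ uniformly in $N$; in particular the crude majorants $\int_{|y|<\delta}|\widetilde K_N(x,y)|^{2}\,dy\le\widetilde K_N(x,x)$ and $\int_{|y|<\delta}|\widetilde K_N(x,y)|^{2}\,dy\le\widetilde K_N(x,x)\int_{|y|<\delta}\widetilde K_N(y,y)\,dy$ are useless there, and one genuinely needs the off-diagonal decay supplied by the exact Christoffel--Darboux structure combined with the refined orthogonal-polynomial asymptotics.
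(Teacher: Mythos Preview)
Your overall strategy---realize $\Pi^{(s)}_\infty$ as a limit of finite-rank projections and pass to the limit in the reproducing identity, isolating the two possible losses of mass---is correct in spirit, and the final paragraph on $\Ran\Pi^{(s)}_\infty$ is fine. But the paper takes a different and cleaner route. It first passes to the unit circle via $\alpha=-2/x$, under which $\Pi^{(s)}_\infty$ becomes the kernel $\Phi^{(s)}_\infty$ arising as the scaling limit of rescaled Christoffel--Darboux kernels $\Phi^{(s)}_n$ on $(-n\pi,n\pi)$ built from OPUC for the weight $w^{(s)}(e^{i\theta})=\lambda^{(s)}(-e^{i\theta})$. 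This has two payoffs. First, the reproducing identity for $\Phi^{(s)}_n$ is an integral over the bounded interval $(-n\pi,n\pi)$, so there is only \emph{one} tail to control ($R<|\gamma|<n\pi$, which corresponds to your $y\to 0$); your ``escape to $\infty$'' is absorbed into the compact piece since $y=\infty$ corresponds to $\gamma=0$. Second, the OPUC Christoffel--Darboux formula (involving $q_n$ and $q_n^{*}$) has no moment obstructions. For $s\ge 0$ the Golinskii estimate gives $\sup_{k,\theta}|\varphi^{(s)}_k(\theta)|<\infty$, and the tail is then killed by $|1-e^{i(\alpha-\gamma)/n}|\gtrsim|\alpha-\gamma|/n$. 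For $-\tfrac12<s<0$ the paper does not argue directly at all: it establishes the recurrence $\Pi^{(s)}_\infty(x,y)=\sgn(x)\sgn(y)\,\Pi^{(s+1)}_\infty(x,y)+\|\mathcal V_s\|^{-2}\mathcal V_s(x)\mathcal V_s(y)$ (obtained from the finite-$N$ identity and Proposition~\ref{l2-function}) and appeals to the already-settled case $s+1>\tfrac12$, using the remark that a contractive sum of two orthogonal projections is itself a projection.

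There is also a concrete gap in your line-side argument. The real-line Christoffel--Darboux formula you write down requires the $N$-th orthonormal polynomial $p^{(s,N)}_N$ for the weight $\phi^{(s)}_N$, but $\int_{\R}x^{2N}\phi^{(s)}_N(x)\,dx<\infty$ holds only for $\Re s>\tfrac12$; for $-\tfrac12<s\le\tfrac12$ the function $\widetilde\psi_N$ is simply undefined and the bound you propose cannot be stated. This can be repaired by working instead with the integrable form $\boldsymbol K^{(s,N)}(x,y)=(x-y)^{-1}\bigl(P_N(x)Q_N(y)-Q_N(x)P_N(y)\bigr)$ from \cite[Thm.~1.4]{BO-CMP}, where $P_N,Q_N$ are given by hypergeometric functions, and then controlling their scaled asymptotics near $0$ uniformly in $N$; but that is exactly the work the paper sidesteps via the OPUC detour for $s\ge 0$ and the rank-one recurrence for $-\tfrac12<s<0$.
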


Let $w^{(s)}$ be the weight on $\T$ defined by 
\begin{align*}
w^{(s)} (e^{i \theta}) = \lambda^{(s)}(- e^{i \theta}).
\end{align*}
Let $q_k^{(s)}$ denote the $k$-th orthonormal polynomial on the unit circle with respect to the measure $w^{(s)} (e^{i \theta}) \frac{d\theta}{2 \pi}$, then we have 
\begin{align*}
q_k^{(s)} (e^{i \theta}) = p_k^{(s)}(- e^{i \theta}),
\end{align*}
where $p_k^{(s)}$ is the $k$-th orthonormal polynomial on the unit circle for the measure $\lambda^{(s)} (e^{i \theta}) \frac{d \theta}{2 \pi}$.

Denote 
\begin{align*}
 \varphi_k^{(s)} (\theta) : =   \sqrt{   \frac{w^{(s)} (e^{i \theta})}{2 \pi}  } \cdot  q_k^{(s)}(e^{i \theta}), \quad  \theta \in ( - \pi, \pi).
\end{align*}
We define the rescaled Christoffel-Darboux kernel: for  $\alpha,  \beta \in (- n \pi, n \pi)$,
\begin{align*}
\Phi_n^{(s)}(\alpha, \beta): =  \frac{1}{n} \cdot e^{i \frac{n-1}{2n} \alpha }  \left(\sum_{k = 0}^{n - 1} \varphi_k^{(s)}\left(\frac{\alpha}{n}\right) \overline{ \varphi_k^{(s)}\left(\frac{\beta}{n}\right)}\right) e^{- i \frac{n-1}{2n} \beta}.
\end{align*} 

\begin{rem}
When $s = 0$, we have
\begin{align*}
\Phi_n^{(0)}(\alpha, \beta)  = \frac{ \sin\left( \frac{  \alpha- \beta}{2} \right) }{2 \pi n \cdot  \sin \left( \frac{ \alpha - \beta}{2 n} \right)}.
\end{align*}
\end{rem}

It is shown in \cite{Bourgade-et-al} and \cite[Chapter 3]{Bourgade-thesis} that for $\Re s  > - \frac{1}{2}$, when $n $ tends to infinity, the kernel $\Phi_n^{(s)}(\alpha, \beta)$ tends to a limi kernel. Moreover, if $\Re s \ge 0$, the convergence is uniform provided $\alpha, \beta$ range over any compact subset of $\R$. Let us write
\begin{align*}
\Phi_\infty^{(s)}(\alpha, \beta): = \lim_{n \to \infty} \Phi_n^{(s)}(\alpha, \beta), \quad  \alpha, \beta \in \R. 
\end{align*} 
Then $\Phi_\infty^{(s)}$ coincide with the kernel $\Pi^{(s)}_\infty$ after change of variables 
\begin{align*}
\alpha = - \frac{2}{x}, \quad \beta = - \frac{2}{y}.
\end{align*} 
More precisely, we have 
\begin{align}\label{kernels-relation}
\Pi^{(s)}_\infty (x,y) =   \frac{2}{| xy|} \cdot \Phi_\infty^{(s)} \left( - \frac{2}{x}, -\frac{2}{y}\right).
\end{align}
This observation implies that, when $s \ge 0$, Proposition \ref{projection} is a direct consequence of the following
\begin{lem}\label{lem-uc}
Let $s \ge 0$. Then $\Phi_\infty^{(s)}$ is an orthogonal projection on $L^2(\R, \Leb)$, i.e., for any $\alpha, \beta \in \R,$
\begin{align}\label{projection-formula}
\Phi_\infty^{(s)} (\alpha, \beta) = \int_{\R} \Phi_\infty^{(s)} (\alpha, \gamma) \Phi_\infty^{(s)} (\gamma, \beta) d \gamma.
\end{align}
\end{lem}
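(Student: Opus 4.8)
The plan is to promote to the limit the elementary fact that, at every finite level $n$, the kernel $\Phi_n^{(s)}$ is the reproducing kernel of a finite-dimensional subspace of $L^2(\R,\Leb)$ and hence satisfies \eqref{projection-formula} with $\Phi_n^{(s)}$ in place of $\Phi_\infty^{(s)}$; the only genuine difficulty is to control the mass of $\Phi_n^{(s)}(\alpha,\cdot)$ near the endpoints $\pm n\pi$ as $n\to\infty$. For the finite level: putting $\psi_{k,n}(\alpha):=\tfrac1{\sqrt n}\,e^{i\frac{n-1}{2n}\alpha}\,\varphi_k^{(s)}(\alpha/n)\,\mathds 1_{(-n\pi,n\pi)}(\alpha)$ for $k=0,\dots,n-1$, the substitution $\theta=\alpha/n$ is an isometry of $L^2((-\pi,\pi),d\theta)$ onto $L^2((-n\pi,n\pi),d\alpha)$ (the factor $\tfrac1{\sqrt n}$ absorbing the Jacobian) and multiplication by the unimodular phase $e^{i\frac{n-1}{2n}\alpha}$ is unitary, so $\{\psi_{k,n}\}_{k=0}^{n-1}$ is orthonormal in $L^2(\R,\Leb)$; a direct computation gives $\sum_{k=0}^{n-1}\psi_{k,n}(\alpha)\overline{\psi_{k,n}(\beta)}=\Phi_n^{(s)}(\alpha,\beta)$. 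Hence $\Phi_n^{(s)}$ is the kernel of the orthogonal projection of $L^2(\R,\Leb)$ onto $\spann\{\psi_{k,n}\}$, so $\Phi_n^{(s)}(\beta,\alpha)=\overline{\Phi_n^{(s)}(\alpha,\beta)}$ and, for all $\alpha,\beta\in\R$,
\begin{align*}
\int_\R \Phi_n^{(s)}(\alpha,\gamma)\,\Phi_n^{(s)}(\gamma,\beta)\,d\gamma=\Phi_n^{(s)}(\alpha,\beta).
\end{align*}
Both properties persist in the limit from $\Phi_n^{(s)}\to\Phi_\infty^{(s)}$ (pointwise): $\Phi_\infty^{(s)}$ is Hermitian and, as a limit of orthogonal projections, a self-adjoint contraction on $L^2(\R,\Leb)$; thus only the displayed identity has to be carried through.

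The heart of the matter is a uniform tail estimate. Applying the Christoffel--Darboux formula for orthonormal polynomials on the circle to the family $q_k^{(s)}$ and multiplying by $\tfrac1{2\pi}\sqrt{w^{(s)}(e^{i\theta})w^{(s)}(e^{i\phi})}$ (using $|(q_n^{(s)})^*(e^{i\theta})|=|q_n^{(s)}(e^{i\theta})|$ for the reversed polynomial), one gets
\begin{align*}
\left|\sum_{k=0}^{n-1}\varphi_k^{(s)}(\theta)\,\overline{\varphi_k^{(s)}(\phi)}\right|\le \frac{\big|\varphi_n^{(s)}(\theta)\big|\,\big|\varphi_n^{(s)}(\phi)\big|}{\big|\sin\frac{\theta-\phi}{2}\big|}\,,
\end{align*}
where $\varphi_n^{(s)}$ denotes the degree-$n$ orthonormal function. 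Since $s\ge 0$, Golinskii's estimate \eqref{asymp-poly} (equivalently \eqref{up-bdd}) gives $\big|\varphi_n^{(s)}(\theta)\big|^2=\tfrac1{2\pi}\lambda^{(s)}(-e^{i\theta})\,\big|p_n^{(s)}(-e^{i\theta})\big|^2\lesssim 1$ uniformly in $n$ and $\theta$, hence $\big|\Phi_n^{(s)}(\alpha,\gamma)\big|\lesssim \big(n\,|\sin\tfrac{\alpha-\gamma}{2n}|\big)^{-1}$ for $\alpha,\gamma\in(-n\pi,n\pi)$. Fixing $\alpha$ and taking $n$ with $|\alpha|\le n\pi/2$: for $2|\alpha|<|\gamma|<n\pi$ the number $\tfrac{\alpha-\gamma}{2n}$ lies in $\big(-\tfrac{3\pi}4,\tfrac{3\pi}4\big)$, so $|\sin\tfrac{\alpha-\gamma}{2n}|\gtrsim \big|\tfrac{\alpha-\gamma}{2n}\big|\gtrsim |\gamma|/n$, giving $\big|\Phi_n^{(s)}(\alpha,\gamma)\big|\lesssim_\alpha 1/|\gamma|$, and therefore $\int_{|\gamma|>M}\big|\Phi_n^{(s)}(\alpha,\gamma)\big|^2\,d\gamma\lesssim_\alpha 1/M$ uniformly in all large $n$ (recall $\Phi_n^{(s)}(\alpha,\cdot)$ vanishes off $(-n\pi,n\pi)$). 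By Fatou the same bound holds with $\Phi_n^{(s)}$ replaced by $\Phi_\infty^{(s)}$.

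Finally I would pass to the limit. Fix $\alpha,\beta\in\R$ and $M>2(|\alpha|+|\beta|)$. For $n$ large,
\begin{align*}
\Phi_n^{(s)}(\alpha,\beta)=\int_{|\gamma|\le M}\Phi_n^{(s)}(\alpha,\gamma)\Phi_n^{(s)}(\gamma,\beta)\,d\gamma+\int_{|\gamma|>M}\Phi_n^{(s)}(\alpha,\gamma)\Phi_n^{(s)}(\gamma,\beta)\,d\gamma .
\end{align*}
On $\{|\gamma|\le M\}$ the convergence $\Phi_n^{(s)}\to\Phi_\infty^{(s)}$ is uniform (here one uses $\Re s\ge 0$, by \cite{Bourgade-et-al,Bourgade-thesis}), so the first integral tends to $\int_{|\gamma|\le M}\Phi_\infty^{(s)}(\alpha,\gamma)\Phi_\infty^{(s)}(\gamma,\beta)\,d\gamma$; by Cauchy--Schwarz and the previous step the second integral is $O_{\alpha,\beta}(1/M)$ uniformly in $n$, and the analogous bound holds for $\Phi_\infty^{(s)}$. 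Since $\Phi_n^{(s)}(\alpha,\beta)\to\Phi_\infty^{(s)}(\alpha,\beta)$, letting $n\to\infty$ and then $M\to\infty$ yields \eqref{projection-formula}. The main obstacle is exactly the uniform $O(1/|\gamma|)$ bound of the second step: it is the Christoffel--Darboux representation together with Golinskii's uniform control of $|\varphi_n^{(s)}|$ that rules out an escape of mass toward $\pm n\pi$ and legitimizes the passage to the limit; everything else is either elementary or already available in the excerpt.
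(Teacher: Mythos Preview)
Your proof is correct and follows essentially the same route as the paper's own argument: both recognize $\Phi_n^{(s)}$ as a finite-rank projection kernel, split the reproducing identity into a compact part and a tail, and control the tail via the Christoffel--Darboux formula together with Golinskii's uniform bound $|\varphi_n^{(s)}|\lesssim 1$ for $s\ge 0$, obtaining $|\Phi_n^{(s)}(\alpha,\gamma)|\lesssim (n|\sin\tfrac{\alpha-\gamma}{2n}|)^{-1}\lesssim_\alpha 1/|\gamma|$ in the relevant range. The only cosmetic differences are that you spell out the orthonormal basis $\psi_{k,n}$ explicitly and use the interval $(-3\pi/4,3\pi/4)$ where the paper uses $[-2\pi/3,2\pi/3]$ for the sine comparison.
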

\begin{proof}
It is obvious that for any $n \ge 1$, the kernel $\Phi_n^{(s)}$ defines an orthogonal projection on $L^2((-n \pi, n\pi), \Leb)$, hence
\begin{align*}
\Phi_n^{(s)} (\alpha, \beta) = \int_{-n \pi}^{n \pi} \Phi_n^{(s)}(\alpha, \gamma) \Phi_n^{(s)} (\gamma, \beta) d \gamma.
\end{align*}
Now for any $R > 0$, we have 
\begin{align*}
  & \Phi_\infty^{(s)} (\alpha, \beta) = \lim_{n \to \infty}\int_{-n \pi}^{n \pi} \Phi_n^{(s)}(\alpha, \gamma) \Phi_n^{(s)} (\gamma, \beta) d \gamma \\ & =  \lim_{n \to \infty}  \left(   \underbrace{\int_{| \gamma|  \le R}  \Phi_n^{(s)}(\alpha, \gamma) \Phi_n^{(s)} (\gamma, \beta) d \gamma }_{= : \mathrm{MAIN}_n(R, \alpha, \beta)} +  \underbrace{   \int_{ R < | \gamma|  < n \pi}   \!\!   \!\! \Phi_n^{(s)}(\alpha, \gamma) \Phi_n^{(s)} (\gamma, \beta) d \gamma}_{= : \mathrm{ERROR}_n (R, \alpha, \beta)} \right)
\end{align*}
By the uniform convergence on compact subsets of the kernel, we have
\begin{align}\label{mainpart}
\lim_{n \to \infty} \mathrm{Main}_n(R, \alpha, \beta) =  \int_{| \gamma|  \le R}  \Phi_\infty^{(s)}(\alpha, \gamma) \Phi_\infty^{(s)} (\gamma, \beta) d \gamma. 
\end{align}
For proving the identity \eqref{projection-formula}, it suffices to prove that 
\begin{align}\label{errorpart}
\lim_{R \to \infty} \lim_{n \to \infty} \mathrm{ERROR}_n(R, \alpha, \beta) = 0.
\end{align}
By Cauchy-Schwarz inequality, 
\begin{align*}
|\mathrm{ ERROR}_n(R, \alpha, \beta)|^2  \le   \int_{ R < | \gamma|  < n \pi}   \!\!   \!\!  | \Phi_n^{(s)}(\alpha, \gamma)| ^2  d \gamma \cdot  \int_{ R < | \gamma|  < n \pi}   \!\!   \!\! |\Phi_n^{(s)} (\gamma, \beta)|^2 d \gamma.
\end{align*}
By symmetry, it suffices to show that  for any $\alpha \in \R,$
\begin{align*}
\lim_{R \to \infty} \lim_{n \to \infty}   \int_{ R < | \gamma|  < n \pi}   \!\!   \!\!  | \Phi_n^{(s)}(\alpha, \gamma)| ^2  d \gamma = 0.
\end{align*}

We denote temporarily $q_k^{(s)}$ by $q_k$. By the well-known Christoffel-Darboux formula for OPUC (orthogonal polynomial on the unit circle), we have 
\begin{align*}
\sum_{k = 0}^{n - 1} q_k (e^{i \theta}) \overline{q_k (e^{i \tau})}  = \frac{ q_n^{*} ( e ^{i \theta})\overline{q_n^{*} (e^{i \tau}) } - q_n ( e ^{i \theta})\overline{q_n (e^{i \tau}) }}{ 1 - e^{-i \tau} e^{i \theta}},
\end{align*}
where $q_k^{*}(z) = z^k \overline{q_k(1 /\bar{z})}$.
It follows that 
\begin{align*}
\left| \Phi_n^{(s)} (\alpha, \gamma) \right| = \frac{1}{n} \left|  \frac{   e^{i (\alpha- \gamma)} \overline{ \varphi_n^{(s)}(e^{i \frac{\alpha}{n}})}  \varphi_n^{(s)}(e^{i \frac{\gamma}{n}})   -    \varphi_n^{(s)}(e^{i \frac{\alpha}{n}})     \overline{ \varphi^{(s)}_n(e^{i \frac{\gamma}{n}})}   }{1 - e^{-i \frac{\gamma}{n}} e^{i \frac{\alpha}{n}} }    \right|.
\end{align*}
By virtue of \eqref{up-bdd}, for real $s > - \frac{1}{2}$
\begin{align*}
  |\varphi_k^{(s)}( \theta)| \lesssim  \left( 1 +    \frac{1}{ (k + 2) | 1 - e^{i \theta}|  }\right)^{-s }.
\end{align*}
 In particular, when $s \ge 0$, we have 
 \begin{align}\label{unif-bdd-q}
 \sup_{k \ge 0, \theta \in ( - \pi, \pi)}| \varphi_k^{(s)}(\theta)| < \infty. 
 \end{align}
 It follows that 
 \begin{align*}
| \Phi_n^{(s)} (\alpha, \gamma)| \lesssim    \frac{1}{n | 1 - e^{-i \frac{\gamma}{n}} e^{i \frac{\alpha}{n}} | } = \frac{1}{2n  | \sin \frac{\alpha - \gamma}{2n} | }.
 \end{align*}
 Thus we obtain
 \begin{align*}
 \int_{R < | \gamma| < n \pi} | \Phi_n^{(s)} (\alpha, \gamma)|^2 d \gamma \lesssim   \int_{R < | \gamma| < n \pi} \frac{d \gamma}{n^2 \sin^2 \frac{\alpha - \gamma}{2n}}.
 \end{align*}
For fixed $\alpha\in \R$ and $ R > | \alpha|$, we choose $n$ large enough such that 
\begin{align*}
 \frac{|\alpha|}{2n} \le \frac{\pi}{3} \, \text{ and } \, R < n \pi.
\end{align*}
Under the above assumption, if $R < | \gamma| < n\pi$, then 
\begin{align*}
\frac{\alpha-\gamma}{2n} \in \left[ - \frac{2\pi}{3}, \frac{2\pi}{3} \right].
\end{align*}
Since on the interval $[ - \frac{2\pi}{3}, \frac{2\pi}{3} ]$, we have $| \sin t | \ge  \frac{\sin \frac{2\pi}{3}}{ \frac{2\pi}{3}} t $, which in turn implies that
\begin{align*}
\int_{R < | \gamma| < n \pi} \frac{d \gamma}{n^2 \sin^2 \frac{\alpha - \gamma}{2n}} \lesssim   & \int_{R < | \gamma| < n \pi} \frac{d \gamma}{(\alpha - \gamma)^2} \\ = & \frac{1}{R- \alpha } - \frac{1}{n\pi - \alpha}  + \frac{1}{ R + \alpha} - \frac{1}{n \pi + \alpha},
\end{align*}
we then immediately obtain that 
\begin{align*}
\lim_{R \to \infty} \lim_{n \to \infty} \int_{R < | \gamma| < n \pi} \frac{d \gamma}{n^2 \sin^2 \frac{\alpha - \gamma}{2n}} =0.
\end{align*}
This completes the proof of the lemma. 
\end{proof}

\begin{proof}[Proof of Proposition \ref{projection} when $- \frac{1}{2} < s < 0$]
Using previous notation, we know that the kernel 
\begin{align*}
 \Pi^{(s)}_N(x,y) : = \sgn(x)^N\sgn(y)^N K_N^{(s, \R)}(x, y)
\end{align*}
generates the orthogonal projection from $L^2(\R)$ onto the subspace $L^{(s, N)}$. 

Obviously, we can write the following {\it orthogonal} decomposition 
\begin{align}\label{ortho-dec-L}
L^{(s, N)} =   \mathrm{span} \bigg\{  (\sgn ( x) )^{N}  \cdot x^j \cdot \sqrt{ \phi^{(s)}_N (N x) }\bigg\}_{j = 0}^{N-2}  \oplus \C  \mathcal{V}_{s, N}.
\end{align}
We have
\begin{align*}
& \mathrm{span} \bigg\{  (\sgn ( x) )^{N}  \cdot x^j \cdot \sqrt{ \phi^{(s)}_N (N x) }\bigg\}_{j = 0}^{N-2}  \\ = &  \sgn(x) \cdot \mathrm{span} \left\{  (\sgn ( x) )^{N-1}  \cdot x^j \cdot \sqrt{ \phi^{(s + 1)}_{N-1} \left( (N-1) \frac{N x}{N-1} \right) }\right\}_{j = 0}^{(N-1)-1}.
\end{align*} 
Compare the above space with the space $L^{(s+1, N-1)}$, it is easy to see that the orthogonal projection to the above space is generated by the kernel function: 
\begin{align*}
\sgn(x) \sgn(y)   \frac{N}{N-1} \Pi^{(s + 1)}_{N-1} \left(  \frac{ Nx}{ N-1}, \frac{ Ny}{ N-1} \right).
\end{align*}
By virtue of the orthogonal decomposition of the space $L^{(s,N)}$ in \eqref{ortho-dec-L}, we obtain the following important identity
\begin{align}\label{rec-kernel}
\begin{split}
\Pi^{(s)}_N (x, y) = &     \sgn(x) \sgn(y)   \frac{N}{N-1} \Pi^{(s + 1)}_{N-1} \left(  \frac{ Nx}{ N-1}, \frac{ Ny}{ N-1} \right)  \\ & + \frac{\mathcal{V}_{s,N}   (x)  \mathcal{V}_{s,N}   (y) }{ \n \mathcal{V}_{s,N}\n^2}.
\end{split}
\end{align}
Passing to the limit $N\to \infty$, we obtain that 
\begin{align}\label{limit-kernel-relation}
 \Pi^{(s)}_\infty(x, y) =  \sgn(x) \sgn(y) \Pi^{(s + 1)}_\infty (x, y)  +  \frac{\mathcal{V}_s(x)  \mathcal{V}_s(y) }{ \n \mathcal{V}_s  \n^2}.
\end{align}
Assumption $- \frac{1}{2} < s < 0$ implies that $s + 1 > 0$. Then by previous result, we deduce that $\Pi^{(s + 1)}(x,y)$ is the kernel of an orthogonal projection, hence the same is true for the kernel 
\begin{align*}
 \sgn(x) \sgn(y) \Pi^{(s + 1)}_\infty (x, y).
 \end{align*}
 Moreover, one can easily check (by an application of Fatou's lemma and then Cauchy-Scharz inequality) that, as an operator,  $\Pi^{(s)}_\infty$ is contractive.  The fact that $\Pi^{(s)}_\infty$ is an orthogonal projection follows from the next remark.
\end{proof}

\begin{rem}
If $P_0, P_1$ are two orthogonal projections on a Hilbert space such that $\n P_0 + P_1\n \le 1$, then $P_0 + P_1$ is again an orthogonal projection and $\mathrm{Ran} P_0  \perp \mathrm{Ran}P_1$.
\end{rem}

\begin{com} Assume  that $s > - \frac{1}{2}$.

(1)  The kernel $\Pi_\infty^{(s)}$ has the following explicit formula (see \cite{BO-CMP}): 
\begin{align}\label{explicit-f-kernel}
\begin{split}
 \Pi_\infty^{(s)}(x,y) & = \frac{F(x) G(y) - F(y) G(x) }{x - y },\\
F(x) & = \frac{1}{2 \sqrt{|x|}} J_{s - 1/2} \left( \frac{1}{| x|}\right),  \\ 
G(x) & = \frac{1}{\sqrt{|x|}}  \sgn(x) J_{s + 1/2}\left(\frac{1}{|x|}\right). 
 \end{split}
 \end{align}
This explicit formula does not allow us to show directly that $\Pi_\infty^{(s)}$ is an orthogonal projection. However, the knowledge of this explicit formula will be useful later when studying the regularity properties of the function space $L^{(s)}$.

(2)  Proposition \ref{projection} shows that $L^{(s)}$ is a RKHS (reproducing kernel Hilbert space), and $\Pi^{(s)}_\infty$ is its reproducing kernel.

 (3) By the recurrence formula \eqref{limit-kernel-relation}, we obtain the following relations between the subspaces $L^{(s)}$: 
\begin{align*}
 L^{(s)} =   & \sgn(x) L^{(s+1)} \oplus \C   \sgn(x) \frac{1}{\sqrt{|x|}} J_{s +1/2} \left( \frac{1}{| x |}\right) \\ = &  L^{(s+2) }  \oplus  \C \frac{1}{\sqrt{|x|}} J_{s +3/2} \left( \frac{1}{| x |}\right)  \oplus  \C   \sgn(x) \frac{1}{\sqrt{|x|}} J_{s +1/2} \left( \frac{1}{| x |}\right) \\ = & L^{(s+m) }  \oplus  \bigoplus_{k = 1}^m \C \sgn(x)^k \frac{1}{\sqrt{|x|}} J_{s - 1/2 + k} \left( \frac{1}{|x|}\right).
\end{align*}
We  mention that the appearance of $\sgn(x)$ is important in the sense that it ensures that functions in the set
\begin{align*}
\left\{   \sgn(x)^k \frac{1}{\sqrt{|x|}} J_{s - 1/2 + k} \left( \frac{1}{|x|}\right) : k = 1, 2, \cdots\right\}
\end{align*}
are mutually orthogonal in $L^2(\R)$.

(4)  The identity \eqref{limit-kernel-relation} has the following explicit form: 
\begin{align}\label{rec-limit-relation}
\begin{split}
 & \Pi^{(s)}_\infty(x, y) =    \sgn(x) \sgn(y)  \Pi^{(s + 1)}_\infty (x, y)  \\ &+   \sgn(x) \sgn(y)  \frac{s + 1/2}{ \sqrt{|xy|}}  J_{s + 1/2}\left( \frac{1}{|x|}\right) J_{s + 1/2}\left( \frac{1}{|y|}\right) .
\end{split}
\end{align}

(5)  By unitary equivalence between $\Pi^{(s)}_\infty$ and $\Phi_\infty^{(s)}$,  we have shown that, when $ - \frac{1}{2} < s <  0$,  the operator $\Phi_\infty^{(s)}$ is again an orthogonal projection on $L^2(\R, \Leb)$. However, up to now, we don't have a direct proof of this fact.
\end{com}


The function subspace $L^{(s)}$ has many nice regularity properties.  Before stating the our main result on the regularity property of functions in $L^{(s)}$, we need complex analytic extension of kernel functions $ x \rightarrow \Pi_\infty^{(s)}(x, y)$. More precisely, let  $\mathbb{H}_{-} = \{z \in \C | \Re z < 0\}$ be the left half plane in the complex plane $\C$ and let $\mathbb{H}_{+}$ be the right half plane. For any $y \in \R^*$, we define a holomorphic function defined on $\C\setminus i \R =  \mathbb{H}_{- } \cup \mathbb{H}_{+}$, and denote it again by $\Pi_\infty^{(s)}(\cdot, y)$ such that 
\begin{align*}
\xymatrix{
\R_{-}^{*}  \xrightarrow{ \Pi_\infty^{(s)}(\cdot, y)} \R  \ar@{=>}[rrr]^{\text{analytic extension}} &&&\mathbb{H}_{-}  \xrightarrow{\Pi_\infty^{(s)}(\cdot, y)} \C\\
\R_{+}^{*}  \xrightarrow{ \Pi_\infty^{(s)}(\cdot, y)} \R  \ar@{=>}[rrr]^{\text{analytic extension}} &&&\mathbb{H}_{+}  \xrightarrow{\Pi_\infty^{(s)}(\cdot, y)} \C
}
\end{align*} 
The existence (uniqueness follows from existence) of  such analytic extension of course follows from the explicit formula \eqref{explicit-f-kernel} and Schwarz reflexion principle, this routine justification will be omitted.  Now, for instance, the analytic function $\mathbb{H}_{-}  \xrightarrow{\Pi_\infty^{(s)}(\cdot, y)} \C$ is given by the following formula for $z \in \mathbb{H}_{-}$: 
\begin{align*}
\Pi_\infty^{(s)}(z,y) & = \frac{F_{-}(z) G(y) - F(y) G_{-}(z) }{z - y },
\end{align*}
where 
\begin{align*}
F(x) & = \frac{1}{2 \sqrt{|x|}} J_{s - 1/2} \left( \frac{1}{| x|}\right),   \\ 
G(x) & = \frac{1}{\sqrt{|x|}}  \sgn(x) J_{s + 1/2}\left(\frac{1}{|x|}\right). 
\end{align*}

\begin{align*}
\xymatrix{
F(x)  = \frac{1}{2 \sqrt{|x|}} J_{s - 1/2} \left( \frac{1}{| x|}\right)  \ar@{=>}[d]_{\text{analytic extension}} & G(x)  = \frac{1}{\sqrt{|x|}}  \sgn(x) J_{s + 1/2}\left(\frac{1}{|x|}\right) \ar@{=>}[d]^{\text{analytic extension}} \\
F_{-}(z)  = \frac{1}{2 \sqrt{-z}} J_{s - 1/2} \left( -\frac{1}{z}\right)  &G_{-}(z)  =  -\frac{1}{\sqrt{-z}} J_{s + 1/2}\left(- \frac{1}{z}\right).}
\end{align*} 
Here $\sqrt{-z}$ is the analytic function defined on $\mathbb{H}_{-}$ such that if $ z = | z | e^{i \theta}$ with $\theta \in (\frac{\pi}{2}, \frac{3\pi}{2})$, then $\sqrt{-z} = \sqrt{|z|} e^{i \frac{\theta + \pi}{2}}.$

Now we can state the next 
\begin{lem}\label{analytic-ext}
Let $s \ge 0$. Then for any $z \in \C\setminus i \R$, the function $ \Pi_\infty^{(s)} (z, \cdot) $ is in $L^2(\R, \Leb)$. Moreover, the following mapping 
\begin{align}\label{L2-val-f}
\begin{array}{ccc}
\C\setminus i \R & \longrightarrow & L^2(\R, \Leb)\\
z & \mapsto & \Pi_\infty^{(s)} (z, \cdot) 
\end{array}
\end{align}
is continuous.
\end{lem}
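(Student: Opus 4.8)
The plan is to use the explicit formula for the analytic continuation $\Pi_\infty^{(s)}(z,\cdot)$ recorded just above, together with standard asymptotics of Bessel functions, to obtain the $L^2$-membership and then the continuity. First I would fix $z \in \C \setminus i\R$, say $z \in \mathbb{H}_{-}$ (the case $z \in \mathbb{H}_{+}$ being identical by the reflection $x \mapsto -x$), and write
\begin{align*}
\Pi_\infty^{(s)}(z,y) = \frac{F_{-}(z) G(y) - F(y) G_{-}(z)}{z - y}, \qquad y \in \R^*.
\end{align*}
Since $G \in L^{(s)} \subset L^2(\R,\Leb)$ and $F \in L^2(\R,\Leb)$ (both are of the form $\frac{1}{\sqrt{|y|}} J_{\nu}(1/|y|)$, whose $L^2$-norm I can read off from the integral $\int_0^\infty t^{-1} J_\nu(t)^2\, dt = \frac{\Gamma(\nu)}{2\Gamma(\nu+1)}$ used in the proof of Proposition \ref{l2-function}), it suffices to control the factor $1/(z-y)$. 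Because $\Re z \ne 0$, we have $|z - y| \ge |\Re z| > 0$ for all real $y$, so $y \mapsto 1/(z-y)$ is bounded on $\R$, and hence $y \mapsto (F_{-}(z)G(y) - F(y)G_{-}(z))/(z-y)$ is a product of a bounded function and an $L^2$ function. This already gives $\Pi_\infty^{(s)}(z,\cdot) \in L^2(\R,\Leb)$.

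For the continuity of $z \mapsto \Pi_\infty^{(s)}(z,\cdot)$ as an $L^2(\R,\Leb)$-valued map, I would fix $z_0 \in \C \setminus i\R$ and a compact neighbourhood $Q$ of $z_0$ with $Q \subset \C\setminus i\R$, so that there is $\delta > 0$ with $|\Re z| \ge \delta$ for all $z \in Q$; in particular $|z - y|\ge \delta$ uniformly for $z \in Q$, $y \in \R$. On $Q$ the scalar functions $F_{-}(z)$ and $G_{-}(z)$ are continuous (being holomorphic), hence bounded, and for each fixed $y$ the integrand
\begin{align*}
g_z(y) := \frac{F_{-}(z) G(y) - F(y) G_{-}(z)}{z - y}
\end{align*}
depends continuously on $z$. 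Moreover $|g_z(y)| \le \delta^{-1}\big( C_Q |G(y)| + C_Q |F(y)| \big)$ with $C_Q := \sup_{z\in Q}\max(|F_{-}(z)|,|G_{-}(z)|) < \infty$, and the right-hand side is a fixed $L^2(\R)$ function independent of $z \in Q$. Therefore the dominated convergence theorem applies: if $z_n \to z_0$ in $Q$ then $g_{z_n}(y) \to g_{z_0}(y)$ pointwise and $\|g_{z_n} - g_{z_0}\|_{L^2(\R)}^2 \to 0$. This proves that \eqref{L2-val-f} is continuous.

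I do not expect a genuine obstacle here; the only point requiring a little care is checking that $F$ and $G$ (and the $z$-dependent scalars $F_{-}(z)$, $G_{-}(z)$) are genuinely in $L^2$ near $y = 0$ and $y = \pm\infty$ — near $y=0$ one uses the rapid decay of $J_\nu(1/|y|)$ coming from the oscillatory asymptotics $J_\nu(t) = O(t^{-1/2})$ as $t \to \infty$, which makes $\frac{1}{\sqrt{|y|}}J_\nu(1/|y|) = O(|y|^{0})$... more precisely $|y|^{1/2 - 1/2}\cdot$oscillation, giving an integrable square near $0$ after the change of variable $t = 1/|y|$ exactly as in the computation of $\|\mathcal{V}_s\|^2$; near $\infty$ one uses $J_\nu(t) \sim c\, t^\nu$ as $t \to 0$, so $\frac{1}{\sqrt{|y|}}J_\nu(1/|y|) = O(|y|^{-\nu - 1/2})$, which is square-integrable at infinity since $\nu = s \pm 1/2 \ge -1/2$ forces $\nu + 1/2 \ge 0$, in fact $> 1/4$ whenever $s \ge 0$... and one must note that for $s\ge 0$ the relevant exponents $\nu = s - 1/2 \ge -1/2$ and $\nu = s+1/2 \ge 1/2$ make $2\nu + 1 > 1$, securing the tail. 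These are exactly the estimates already implicit in Proposition \ref{l2-function}, so they can be invoked rather than redone.
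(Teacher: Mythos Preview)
There are two genuine gaps. First, the inequality $|z-y|\ge|\Re z|$ for real $y$ is simply false (take $z=1\in\mathbb{H}_+$, $y=1$); you perhaps meant $|z-y|\ge|\Im z|$, but that gives nothing when $z$ is real, and real $z\in\R^*$ does belong to $\C\setminus i\R$. For such $z$ the factor $1/(z-y)$ is unbounded near $y=z$; the kernel stays finite there only because the numerator $F(z)G(y)-F(y)G(z)$ vanishes at $y=z$. Exploiting this removable singularity via the analyticity of $F_\pm,G_\pm$ is precisely the extra work the paper does: it bounds $\Pi_\infty^{(s)}(z,\cdot)$ on a punctured neighbourhood $(z/2,2z)$ by rewriting the quotient as a determinant of difference quotients, and uses the $1/(z-y)$ decay only outside that neighbourhood.

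Second, $F\notin L^2(\R)$ when $0\le s\le \tfrac12$: with $\nu=s-\tfrac12$ one has $\tfrac{1}{\sqrt{|y|}}J_\nu(1/|y|)\sim c\,|y|^{-s}$ as $|y|\to\infty$, which is not square-integrable. Your own tail check arrives at ``$2\nu+1>1$'', but for $\nu=s-\tfrac12$ with $s=0$ this reads $0>1$. So the scheme ``bounded $\times$ $L^2$'' cannot work with those roles. The paper reverses them: it uses that $F,G$ are \emph{bounded} on $\R^*$ for $s\ge0$ (from the Bessel asymptotics at $0$ and $\infty$) together with $y\mapsto 1/(z-y)\in L^2(\R)$ when $\Im z\ne0$, obtaining $|\Pi_\infty^{(s)}(z,y)|^2\lesssim_z(1+y^2)^{-1}$. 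Your continuity argument inherits both problems: the proposed dominating function $C_Q(|F|+|G|)/\delta$ is not in $L^2$, and no uniform $\delta>0$ with $|z-y|\ge\delta$ exists once the compact neighbourhood $Q$ meets $\R$. The paper's continuity proof accordingly splits into a near-diagonal piece (controlled by analyticity) and an off-diagonal piece (controlled by $1/|z-y|$ decay).
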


\begin{proof}
We show the assertion when $z$ ranges over $\mathbb{H}_{+}$. The proof for $z \in \mathbb{H}_{-}$ is similar. Now assume that  $z \in \mathbb{H}_{+}$, then 
\begin{align*}
\Pi_\infty^{(s)} (z, y) = & = \frac{F_{+}(z) G(y) - F(y) G_{+}(z) }{z - y } , \quad y \in \R^*,
\end{align*}
where $F_{+}$ and $G_{+}$ are the analytic extension of $F$ and $G$ respectively on $\mathbb{H}_{+}$. By classical result on the asymptotic expansion for Bessel function, we have 
\begin{align*}
F(y) \sim \frac{1}{2^{s + 1/2} \Gamma(s + 1/2)} \left(\frac{1}{|y|}\right)^s, \quad \text{when $| y | \to \infty$};
\end{align*}
\begin{align*}
F(y) \sim  \sqrt{ \frac{1}{2\pi}} \cos\left[ \frac{1}{|y|} - \frac{\pi}{2} s \right] + \mathcal{O}(y), \quad \text{when $|y| \to 0.$}
\end{align*}
These asymptotics imply in particular that $\n F\n_\infty < \infty$. The same argument yields that $\n G\n_\infty<\infty$. This means that $F_{+}$ and $G_{+}$ are bounded on $\R^*$.

For $z \in \mathbb{H}_{+} \setminus \R$, we have 
\begin{align}\label{kernel-es}
|\Pi_\infty^{(s)} (z, y) |^2  \lesssim_z \frac{1}{ 1 + y^2}, 
\end{align}
hence $\Pi_\infty^{(s)}(z, \cdot) \in L^2(\R, \Leb)$. 

Now we fix $z \in \R_{+}^{*}$. If  $y \in (z/2, 2z)$, since $F_{+}(y) = F(y), G_{+}(y) = G(y)$, we have 
\begin{align*}
\Pi_\infty^{(s)} (z,y) = & \frac{1}{z-y} \left|\begin{array}{cc} F_{+}(z) & G_{+}(z) \\  F_{+}(y) & G_{+}(y)  \end{array}\right| \\ = & \left|\begin{array}{cc} F_{+}(z) & G_{+}(z)  \vspace{3mm} \\   \frac{F_{+}(y)- F_{+}(z)}{z-y} & \frac{G_{+}(y) - G_{+}(z)}{z-y}  \end{array}\right|,
\end{align*}
by analyticity of $F_{+}$ on $\mathbb{H}_{+}$, the function $y \rightarrow \frac{F_{+}(y)- F_{+}(z)}{z-y}$ is bounded on the interval $(z/2, 2z)$. The same holds for the function $y \rightarrow  \frac{G_{+}(y) - G_{+}(z)}{z-y} $. It follows that $\Pi_\infty^{(s)} (z, \cdot)$ is bounded on $(z/2, 2z)$. For $y  (z/2, 2z)$, the same estimate \eqref{kernel-es} holds. Combining these two estimates on $(z/2, 2z)$ and outside $(z/2, 2z)$, we can conclude that $\Pi_\infty^{(s)}(z, \cdot) \in L^2(\R, \Leb)$.

We now turn to the proof that the function \eqref{L2-val-f} is continuous. To estimate the difference $\n \Pi_\infty^{(s)} (z, \cdot) - \Pi_\infty^{(s)} (z_0, \cdot) \n_{L^2(\R)}$, we start from the point-wise estimate $\Pi_\infty^{(s)} (z, y) - \Pi_\infty^{(s)} (z_0, y)$ for fixed $z_0$, for $z$ in a neighborhood of $z_0$ and for $y \in \R^*$. The estimate when $z_0\notin \R$ is easy, so we only give the details when $z_0\in \R^*$. For instance,  we assume that $z_0\in \R_{+}^*$, and we assume that $z$ is in the following neighborhood of $z_0$: 
\begin{align*}
U_{z_0}  = \left\{z \in \C\Big| \frac{z_0}{2} < \Re z < 2 z_0, |\Im z| < 1\right\}.
\end{align*}
We also define a neighborhood of $z_0$ in $\R$: 
\begin{align*}
I_{z_0}  = \left\{ t \in \R\Big|  \frac{z_0}{4} < t< 4z_0  \right\} 
\end{align*}
If we denote
\begin{align*}
\widetilde{F}_{+}(z,y) & = \frac{F_{+} (y) - F_{+} (z) }{ z - y}, \\
\widetilde{G}_{+}(z,y) & = \frac{G_{+} (y) - G_{+} (z) }{ z - y},
\end{align*}
then 
\begin{align*}
\Pi_\infty^{(s)} (z,y) = F_{+}(z)  \widetilde{G}_{+}(z,y)   - G_{+}(z) \widetilde{F}_{+}(z,y) .
\end{align*}
To prove the continuity of the function \eqref{L2-val-f} at the point $z_0$, it suffices to prove that the following two functions 
\begin{align*}
\begin{array}{ccc}
 \mathbb{H}_{+}& \longrightarrow & L^2(\R, \Leb)\\
z & \mapsto & \widetilde{F}_{+}(z,\cdot)  \\ z & \mapsto & \widetilde{G}_{+}(z,\cdot) 
\end{array}
\end{align*}
are continuous at $z_0$. Let us for example show the first function is continuous at $z_0$. First, we can write
\begin{align*}
\widetilde{F}_{+}(z, y) =  \widetilde{F}_{+}(z, y)  \mathds{1}_{I_{z_0}} (y) + \widetilde{F}_{+}(z, y)  \mathds{1}_{\R \setminus I_{z_0}} (y).
\end{align*}
By analyticity of the function $F_{+}$ on $\mathbb{H}_{+}$, we know that
\begin{align*}
\text{$\widetilde{F}_{+}(z, y) \xrightarrow{z \to z_0} \widetilde{F}_{+}(z_0, y)  $,  uniformly for $y\in I_{z_0}$.}
\end{align*}
This implies that 
\begin{align}\label{1-conv}
 \widetilde{F}_{+}(z, \cdot)  \mathds{1}_{I_{z_0}} (\cdot) \xrightarrow{\text{in $L^2(\R)$}}  \widetilde{F}_{+}(z_0, \cdot)  \mathds{1}_{I_{z_0}} (\cdot), \quad \text{as } z\to z_0.
\end{align}
For the second term $\widetilde{F}_{+}(z, y)  \mathds{1}_{\R \setminus I_{z_0}} (y)$, we have, for $z \in U_{z_0}$, 
\begin{align}\label{2-c}
\begin{split}
 & \left|\widetilde{F}_{+}(z, y)  \mathds{1}_{ \R \setminus I_{z_0}} (y) - \widetilde{F}_{+}(z, y)  \mathds{1}_{\R\setminus I_{z_0}} (y) \right|  \\   &\le \mathds{1}_{ \R \setminus I_{z_0}} (y)   \frac{1}{ | z -y|} \left| F_{+}(z) - F_{+}(z_0)\right|  \\  & + \mathds{1}_{\R\setminus I_{z_0}} (y)  \left| F_{+}(y) - F_{+}(z_0) \right|  \frac{| z-z_0 |}{ | (z - y)(z_0-y)|}  \\  & \lesssim_{z_0} \frac{1}{1 + | y|}  | z-z_0|.
\end{split}
\end{align}
From the estimate \eqref{2-c}, we get
\begin{align}\label{2-conv}
\widetilde{F}_{+}(z, \cdot)  \mathds{1}_{\R \setminus I_{z_0}} (\cdot) \xrightarrow{\text{in $L^2(\R)$}}  \widetilde{F}_{+}(z_0, \cdot)  \mathds{1}_{\R \setminus I_{z_0}} (\cdot), \quad \text{as } z\to z_0.
\end{align}
Combining \eqref{1-conv} and \eqref{2-conv}, we get the desired result. 
\end{proof}

\begin{prop}\label{real-analytic}
Let $s > - \frac{1}{2}$.  If $h \in L^{(s)}$, then $h$ is the restriction of a harmonic function on $\C\setminus i \R$ onto the subset $ \R^* \subset \C\setminus i \R$, hence in particular, $h$ is real analytic. In notation, we have 
\begin{align*}
L^{(s)} \subset C^{\omega}(\R^*) \cap L^2(\R, \Leb).
\end{align*}
\end{prop}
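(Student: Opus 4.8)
The plan is to produce an explicit holomorphic (hence harmonic) extension of each $h \in L^{(s)}$ to $\C \setminus i\R$ by analytically continuing the reproducing-kernel identity for $L^{(s)}$, and then to read off real-analyticity on $\R^*$. First I would record that, by Proposition \ref{projection}, $L^{(s)}$ is the range of the orthogonal projection $\Pi_\infty^{(s)}$, hence a reproducing kernel Hilbert space with reproducing kernel $\Pi_\infty^{(s)}$ (as noted in the Comments following that proposition), and that by the explicit formula \eqref{explicit-f-kernel} the kernel $\Pi_\infty^{(s)}$ is real-valued and symmetric on $\R^* \times \R^*$; hence every $h \in L^{(s)}$ satisfies
\begin{align*}
h(x) = \int_{\R} h(y)\, \Pi_\infty^{(s)}(x,y)\, dy, \qquad x \in \R^*.
\end{align*}
Using the holomorphic continuation $z \mapsto \Pi_\infty^{(s)}(z,y)$ to $\C \setminus i\R$ constructed before Lemma \ref{analytic-ext}, I would define the candidate extension
\begin{align*}
\widetilde{h}(z) := \int_{\R} h(y)\, \Pi_\infty^{(s)}(z,y)\, dy, \qquad z \in \C\setminus i\R,
\end{align*}
so that $\widetilde h|_{\R^*} = h$ by the reproducing identity; it then suffices to prove that $\widetilde h$ is holomorphic on $\C\setminus i\R$.

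For $s \ge 0$ I would invoke Lemma \ref{analytic-ext}: it gives $\Pi_\infty^{(s)}(z,\cdot) \in L^2(\R,\Leb)$ for every $z \in \C\setminus i\R$ together with the continuity of $z \mapsto \Pi_\infty^{(s)}(z,\cdot)$ into $L^2(\R,\Leb)$, so Cauchy--Schwarz makes $\widetilde h$ well defined and continuous on $\C\setminus i\R$. For holomorphy I would apply Morera's theorem: for a closed triangle $\Delta \subset \mathbb{H}_{+}$ (and symmetrically in $\mathbb{H}_{-}$), the interchange
\begin{align*}
\oint_{\partial\Delta} \widetilde h(z)\, dz = \int_{\R} h(y) \left( \oint_{\partial\Delta} \Pi_\infty^{(s)}(z,y)\, dz \right) dy = 0
\end{align*}
is legitimate by Fubini because $\int_{\partial\Delta}\|\Pi_\infty^{(s)}(z,\cdot)\|_{L^2}\,|dz| < \infty$ (continuity on the compact $\partial\Delta$, then Cauchy--Schwarz against $h$), while the inner contour integral vanishes by Cauchy's theorem since $z \mapsto \Pi_\infty^{(s)}(z,y)$ is holomorphic on $\mathbb{H}_{+} \supset \Delta$. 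Thus $\widetilde h$ is holomorphic on $\C\setminus i\R$.

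For the remaining range $-\tfrac12 < s < 0$ I would reduce to the case just treated via the decomposition in item (3) of the Comments, $L^{(s)} = \sgn(x) L^{(s+1)} \oplus \C\, \sgn(x)\, |x|^{-1/2} J_{s+1/2}(1/|x|)$, where $s+1 > \tfrac12$: writing $h \in L^{(s)}$ as $\sgn(x)\,g(x) + c\,\sgn(x)\,|x|^{-1/2} J_{s+1/2}(1/|x|)$ with $g \in L^{(s+1)}$, on $\R^*_{\pm}$ the first summand extends holomorphically to $\mathbb{H}_{\pm}$ by the previous step and the second extends directly ($z \mapsto z^{-1/2} J_{s+1/2}(1/z)$ is holomorphic on $\mathbb{H}_{+}$, symmetrically on $\mathbb{H}_{-}$), so $h$ again extends holomorphically to $\C\setminus i\R$. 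Since a holomorphic function on $\C\setminus i\R$ has harmonic real and imaginary parts, and $L^{(s)} \subset L^2(\R,\Leb)$ by construction, this yields $L^{(s)} \subset C^{\omega}(\R^*) \cap L^2(\R,\Leb)$. I expect the only non-routine point to be the bookkeeping behind the Fubini interchange in the Morera step — extracting from Lemma \ref{analytic-ext} precisely the uniform $L^2$-control of $\Pi_\infty^{(s)}(z,\cdot)$ over compacts of $\C\setminus i\R$ that is needed — after which everything is soft.
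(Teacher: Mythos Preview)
Your proposal is correct and follows essentially the same route as the paper: define the extension via the reproducing-kernel integral, use Lemma \ref{analytic-ext} to justify the needed Fubini interchange for $s\ge 0$, and then reduce the range $-\tfrac12<s<0$ to $s+1>\tfrac12$ via the one-dimensional decomposition $L^{(s)}=\sgn(x)L^{(s+1)}\oplus\C\,\sgn(x)|x|^{-1/2}J_{s+1/2}(1/|x|)$. The only cosmetic difference is that the paper verifies the mean value property directly (averaging over circles and swapping integrals), whereas you verify holomorphy via Morera; both hinge on exactly the same $L^2$-continuity of $z\mapsto\Pi_\infty^{(s)}(z,\cdot)$ from Lemma \ref{analytic-ext}.
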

\begin{proof}
We first assume that $s \ge 0$. Without loss of generality, we assume $h\in L^{(s)}$ and $\n h \n_2 = 1$, then 
\begin{align*}
h(x) = \int_{\R} \Pi_\infty^{(s)} (x,y) h(y) dy = \Big\langle \Pi_\infty^{(s)} (x, \cdot), h\Big\rangle_{L^2(\R)}.
\end{align*}
Now we use the analytic extension $\Pi_\infty^{(s)} (z, y)$ of the kernel $\Pi_\infty^{(s)}$ described as above. And define  $  h^{ext}: \C\setminus i \R\rightarrow \C$ by the formula
\begin{align*}
h^{ext}(z) =\Big\langle \Pi_\infty^{(s)} (x, \cdot), h\Big\rangle_{L^2(\R)} = \int_{\R} \Pi_\infty^{(s)} (z,y) h(y) dy .
\end{align*} 
By the explicit formula of $\Pi_\infty^{(s)}(z, y)$, we know that, for any $y \in \R^*$,  the function $z \rightarrow \Pi_{\infty}^{(s)}(z, y)$ is holomorphic and hence harmonic on $\C\setminus i \R$. Thus for any $z \in \C\setminus i \R$, and any $r > 0$ such that  $B(z, r) \subset \C \setminus i \R $, we have the mean value formula
\begin{align*}
\Pi_\infty^{(s)} (z, y) = \int_{\T} \Pi_\infty^{(s)} (z + r\zeta, y) dm(\zeta), \quad \forall y \in \R^*.
\end{align*}
Thus we have 
\begin{align}\label{double-integral}
\int_{\T} h^{ext}(z + r \zeta) dm(\zeta) = \int_{\T} \int_\R \Pi_\infty^{(s)} (z + r \zeta, y) h(y)dy dm(\zeta).
\end{align}
If we could apply Fubini theorem to the above identity, we would then get
\begin{align*}
& \int_{\T} h^{ext}(z + r \zeta) dm(\zeta) = \int_\R  \int_\T \Pi_\infty^{(s)} (z + r \zeta, y) h(y)dm(\zeta)dy \\ & =  \int_\R \Pi_\infty^{(s)} (z, y) h(y)dy = h^{ext}(z).
\end{align*}
And this would immediately show that $h^{ext}$ is the desired harmonic extension of $h$. So now we check that we can indeed apply the Fubini's theorem to the double integral in \eqref{double-integral}. To this end, it suffices to show that 
\begin{align*}
 \int_{\T} \int_\R | \Pi_\infty^{(s)} (z + r \zeta, y) h(y)| dy dm(\zeta)   < \infty. 
\end{align*}
But an application of Cauchy-Schwarz inequality yields that
\begin{align*}
 & \int_{\T} \int_\R | \Pi_\infty^{(s)} (z + r \zeta, y) h(y)| dy dm(\zeta) \\  \le & \int_\T \left( \int_\R |\Pi_\infty^{(s)} (z + r \zeta, y) |^2 dy \right)^{1/2} \left(\int_\R | h(y)|^2 dy\right)^{1/2} dm(\zeta)  \\ = & \int_{\T} \n \Pi_\infty^{(s)}( z+ r \zeta, \cdot ) \n_{L^2(\R)} dm(\zeta) < \infty,
\end{align*}
where the last inequality is a consequence of Lemma \ref{analytic-ext}.

Now we assume that $- \frac{1}{2} < s < 0$. By \eqref{rec-limit-relation}, we have 
\begin{align*}
h(x) =  c_0 \sgn(x) \frac{s + 1/2}{\sqrt{|x|}} J_{s + 1/2} \left(\frac{1}{|x|} \right)  + \sgn(x) h_1(x),
\end{align*}
where $c_0\in \R$ and $h_1 \in L^{(s + 1)}$. Since $s + 1 \ge \frac{1}{2}$, there is a harmonic extension $h_1^{ext}$ on $\C \setminus i \R$ of the function $h_1$. The function  $\frac{1}{\sqrt{|x|}} J_{s + 1/2}\left(\frac{1}{|x|}\right)$ and hence $\sgn(x) \frac{1}{\sqrt{|x|}} J_{s + 1/2}\left(\frac{1}{|x|}\right) $ extend naturally to harmonic functions on $\C \setminus i \R$, it follows that $h$ admits harmonic extension on $\C \setminus i \R$.
\end{proof}

We will also need the following
\begin{prop}\label{tight-inf-line}
Let $s \in \C, \Re s > - \frac{1}{2}$, then for any $\varepsilon  > 0 $ there exists $R > 0$ such that 
\begin{align*}
\sup_{N \in \N} \int_{| x | \ge R} K_N^{(s, \R)} (x, x) d x \le \varepsilon, 
\end{align*} where $K_N^{(s, \R)}$ is the kernel function of the determinantal point process $\mathcal{C}_N^{(s)} (X)$ given in formula \eqref{kernel-line}. Moreover, we have 
\begin{align*}
\int_{| x | \ge R} \Pi_\infty^{(s)}(x,x) dx < \infty.
\end{align*}
\end{prop}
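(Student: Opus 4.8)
The plan is to transport the question to the unit circle by the Cayley transform \eqref{cayley}, exactly as in the proof of Proposition~\ref{uniformness}, and there to invoke the bounds on $\rho_1^{(s,N,\T)}$ coming from Golinskii's theorem. First I would remove the complex parameter: as in case~III of the proof of \eqref{estimate3}, for $s=a+ib$ the circle weights satisfy $\lambda^{(s)}(e^{i\theta})\asymp_s\lambda^{(a)}(e^{i\theta})$ with $a=\Re s>-\tfrac12$, whence $\rho_1^{(s,N,\T)}(\theta)\asymp_s\rho_1^{(a,N,\T)}(\theta)$; since both quantities in the statement involve only the diagonal of the correlation kernel, i.e.\ a first correlation function, it suffices to treat real $s>-\tfrac12$.

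Next, using $\mathcal C_N^{(s)}(X)=\tfrac1N\widetilde{\mathcal C}_N^{(s)}$ together with the fact, established in the computation leading to \eqref{Theta}, that the Cayley transform carries $\widetilde{\mathcal C}_N^{(s)}$ bijectively onto $\Theta_N^{(s)}$ via $t=\tan(\theta/2)$, I would rewrite the quantity of interest as an expected point count:
\[
\int_{|x|\ge R}K_N^{(s,\R)}(x,x)\,dx=\E\bigl[\#\{\theta\in\Theta_N^{(s)}:|\tan(\theta/2)|\ge NR\}\bigr]=\frac1{2\pi}\int_{2\arctan(NR)\le|\theta|\le\pi}\rho_1^{(s,N,\T)}(\theta)\,d\theta.
\]
Taking $R\ge1$ (so $NR\ge1$ for every $N$), the range of integration lies within distance $\pi-2\arctan(NR)=2\arctan(1/(NR))\le 2/(NR)$ of $\pm\pi$, where $|1+e^{i\theta}|=2\sin\tfrac{\pi-|\theta|}{2}\le 2/(NR)$ and the total length of the range is $\le 4/(NR)$.

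I would then insert the already-proved pointwise bounds on $\rho_1^{(s,N,\T)}$. For $s\ge0$ one has $\rho_1^{(s,N,\T)}(\theta)\lesssim N$ (case~I of the proof of \eqref{estimate3}), so the integral is $\lesssim N\cdot(1/(NR))=1/R$. For $-\tfrac12<s<0$ the estimate \eqref{estimate-combine} gives $\rho_1^{(s,N,\T)}(\theta)\lesssim N+N^{1+2s}|1+e^{i\theta}|^{2s}$; the first term again contributes $\lesssim1/R$, while after the substitution $\psi=\pi-|\theta|$ (so $|1+e^{i\theta}|\asymp\psi$) the second term contributes $\lesssim N^{1+2s}\int_0^{2/(NR)}\psi^{2s}\,d\psi\asymp N^{1+2s}(NR)^{-(2s+1)}=R^{-(2s+1)}$, the integral converging because $2s+1>0$. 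Thus $\sup_N\int_{|x|\ge R}K_N^{(s,\R)}(x,x)\,dx\lesssim_s R^{-1}+R^{-(2s+1)}\to0$ as $R\to\infty$, which yields the first assertion. For the second, since $K_N^{(s,\R)}(x,x)\ge0$ and $K_N^{(s,\R)}(x,x)\to\Pi_\infty^{(s)}(x,x)$ pointwise on $\R^*$ by \eqref{limit-kernel}, Fatou's lemma gives $\int_{|x|\ge R}\Pi_\infty^{(s)}(x,x)\,dx\le\liminf_N\int_{|x|\ge R}K_N^{(s,\R)}(x,x)\,dx<\infty$.

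The one step requiring a little care is the contribution of the singular term $N^{1+2s}|1+e^{i\theta}|^{2s}$ in the range $-\tfrac12<s<0$, where $\rho_1^{(s,N,\T)}$ genuinely blows up as $\theta\to\pm\pi$; this is precisely the phenomenon already tamed in the proof of \eqref{estimate3}, and here it is controlled by the smallness of the $\theta$-window (of length $\asymp1/(NR)$) together with the integrability exponent $2s+1>0$. Everything else is routine bookkeeping parallel to the proof of Proposition~\ref{uniformness}.
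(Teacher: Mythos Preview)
Your proposal is correct and follows essentially the same route as the paper: reduce complex $s$ to real $s$ via the comparison of circle weights, transport to $\T$ by the Cayley transform, and then split according to $s\ge0$ versus $-\tfrac12<s<0$ using the bounds $\rho_1^{(s,N,\T)}\lesssim N$ and \eqref{estimate-combine} respectively. Your substitution $\psi=\pi-|\theta|$ for the singular term is a harmless variant of the paper's substitution $t=\tan(\theta/2)$, yielding the same decay $R^{-(2s+1)}$; and your explicit Fatou step for the second assertion is in fact a small improvement, since the paper leaves that implication unstated after reducing the first assertion to the circle lemma.
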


Using the notation in Section 2, we have
\begin{align*}
& \int_{| x | \ge R} K_N^{(s, \R)} (x, x) dx = \E\left(\sum_{x \in \mathcal{C}_{N}^{(s)} (X) }  \mathds{1}_{| x | \ge R}\right) =  \E \left( \sum_{t \in \widetilde{ \mathcal{C}}_N^{(s)}}  \mathds{1}_{| t |  \ge N\cdot R}\right) \\ &  = \E\left(\sum_{\theta \in \Theta_N^{(s)}} \mathds{1}_{2 \arctan(N\cdot R) \le |\theta| \le \pi} \right) = \int_{2 \arctan(N\cdot R) \le |\theta| \le \pi} K_N^{(s, \T)} (e^{i \theta},  e^{i \theta}) \frac{d \theta}{2\pi}.
\end{align*}

Thus the proof of Proposition \ref{tight-inf-line} is reduced to the following
\begin{lem}
Let $s \in \C, \Re s > - \frac{1}{2}$, then for any $\varepsilon  > 0 $ there exists $R > 0$ such that 
\begin{align}\label{up-inf-t}
\sup_{N\in \N} \int_{2 \arctan(N\cdot R) \le |\theta| \le \pi} K_N^{(s, \T)} (e^{i \theta},  e^{i \theta}) d \theta \le \varepsilon.
\end{align}
\end{lem}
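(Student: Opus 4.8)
The plan is to reduce to a real parameter, localise the integral to the shrinking neighbourhood of $\theta=\pm\pi$ cut out by the condition $2\arctan(NR)\le|\theta|\le\pi$, and there control the Christoffel--Darboux density by the Golinskii estimate \eqref{up-bdd}. First I would record that $K_N^{(s,\T)}(e^{i\theta},e^{i\theta})=\rho_1^{(s,N,\T)}(\theta)$ by \eqref{kernel-t} and \eqref{theta-cor1}, and then invoke the weight comparison already carried out in Case~III of Section~2: since $\lambda^{(s)}$ and $\lambda^{(\Re s)}$ differ on $(-\pi,\pi)$ by the bounded factor $e^{(\Im s)\theta}$, there is $C\ge1$ with $\rho_1^{(s,N,\T)}\le C^2\rho_1^{(\Re s,N,\T)}$; hence it suffices to prove \eqref{up-inf-t} for real $s>-\tfrac12$, which I now assume.

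The next, and main, step is a uniform pointwise bound on the range of integration. Put $\delta_N:=\pi-2\arctan(NR)=2\arctan\tfrac1{NR}\le\tfrac2{NR}$. For $2\arctan(NR)\le|\theta|\le\pi$, writing $\phi:=\pi-|\theta|\in[0,\delta_N]$, one has $|1+e^{i\theta}|=2\sin(\phi/2)$, so $\tfrac2{\pi}\phi\le|1+e^{i\theta}|\le\phi\le\delta_N\le\tfrac2{NR}$; in particular the quantity $u:=(n+2)|1+e^{i\theta}|$ satisfies $u\le(N+1)\delta_N\le\tfrac4R\le1$ as soon as $R\ge4$. For $0<u\le1$ we have $\tfrac1u\le1+\tfrac1u\le\tfrac2u$, so $\bigl(1+\tfrac1u\bigr)^{-2s}\asymp_s u^{2s}$; feeding this into \eqref{up-bdd} and using $\sum_{n=0}^{N-1}(n+2)^{2s}\lesssim_s N^{2s+1}$ (legitimate since $2s+1>0$) gives, on that range,
\begin{align*}
\rho_1^{(s,N,\T)}(\theta)\ \lesssim_s\ |1+e^{i\theta}|^{2s}\sum_{n=0}^{N-1}(n+2)^{2s}\ \lesssim_s\ N^{2s+1}\,|1+e^{i\theta}|^{2s}.
\end{align*}
I expect this to be the only real obstacle: one must check that the implied constants depend on $s$ alone, not on $N$ nor on the moving interval, which is precisely what the threshold $R\ge4$ secures.

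Finally I would integrate this bound. Using $|1+e^{i\theta}|\asymp\phi$, the substitution $\phi=\pi-|\theta|$, evenness in $\theta$, and $\int_0^{\delta_N}\phi^{2s}\,d\phi=\tfrac{\delta_N^{2s+1}}{2s+1}$ (a convergent integral, as $2s+1>0$),
\begin{align*}
\int_{2\arctan(NR)\le|\theta|\le\pi}\!\!\rho_1^{(s,N,\T)}(\theta)\,d\theta\ \lesssim_s\ N^{2s+1}\,\frac{\delta_N^{2s+1}}{2s+1}\ \le\ \frac{2^{2s+1}}{(2s+1)\,R^{2s+1}},
\end{align*}
where the last step uses $\delta_N\le\tfrac2{NR}$. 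Since $2s+1>0$, the right-hand side tends to $0$ as $R\to\infty$ uniformly in $N$, so choosing $R\ge4$ large enough yields \eqref{up-inf-t}; the general complex case then follows from the reduction of the first paragraph. This proves the Lemma, and hence Proposition~\ref{tight-inf-line}; for the last assertion of that proposition one applies Fatou's lemma to $K_N^{(s,\R)}(x,x)\to\Pi_\infty^{(s)}(x,x)$ together with the uniform bound just obtained to get $\int_{|x|\ge R}\Pi_\infty^{(s)}(x,x)\,dx<\infty$.
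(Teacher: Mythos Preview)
Your proof is correct. The reduction to real $s>-\tfrac12$ via the weight comparison is identical to the paper's Case~III argument, and your pointwise bound and integration are sound: the key observation that $R\ge4$ forces $u=(n+2)|1+e^{i\theta}|\le1$ on the entire domain of integration is valid, and from there the Golinskii estimate \eqref{up-bdd} collapses to $\rho_1^{(s,N,\T)}(\theta)\lesssim_s N^{2s+1}|1+e^{i\theta}|^{2s}$ as you claim.

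The paper takes a somewhat different route for the real parameter: it splits into the two sub-cases $s\ge0$ and $-\tfrac12<s<0$. For $s\ge0$ it uses the global bound $K_N^{(s,\T)}(e^{i\theta},e^{i\theta})\lesssim N$ (no localisation needed) and integrates directly to get a bound $\lesssim 1/R$. For $-\tfrac12<s<0$ it invokes the global estimate \eqref{estimate-combine}, namely $\rho_1^{(s,N,\T)}(\theta)\lesssim N+N^{1+2s}|1+e^{i\theta}|^{2s}$, derived earlier through a three-case analysis, and bounds each piece separately. Your approach is more economical for this particular lemma: by restricting attention from the outset to the actual integration range, you land directly in the ``small $u$'' regime and avoid both the case split on $\mathrm{sgn}(s)$ and the appeal to \eqref{estimate-combine}. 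The paper's route, on the other hand, has the virtue of recycling estimates already established for Proposition~\ref{uniformness}, so no new pointwise work is required. Both yield the same decay rate $R^{-(2s+1)}$ in the end.
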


\begin{proof}
Fix $\varepsilon > 0$. First we assume that $s \in \R $ and $s \ge 0$. By the upper estimate \eqref{up-bdd}, we have then 
$$
K_N^{(s, \T)}(e^{i \theta}, e^{i \theta}) \lesssim N.
$$
This implies that 
\begin{align}\label{1-case}
\begin{split}
 & \int_{2 \arctan(N\cdot R) \le |\theta| \le \pi} K_N^{(s, \T)} (e^{i \theta},  e^{i \theta}) d \theta \lesssim N \Big(\frac{\pi}{2} - \arctan(N\cdot R)\Big) \\ = & N \int_{N \cdot R}^\infty \frac{d x }{ 1 + x^2}  = \int_R^{\infty} \frac{N^2 }{1 + N^2 y^2} dy \le \int_{R}^\infty \frac{dy}{y^2} = \frac{1}{R}.
\end{split}
\end{align}
Hence \eqref{up-inf-t} holds for sufficiently large $R$.

Now we assume that $s \in R$ and $- \frac{1}{2} < s < 0$. By \eqref{estimate-combine}, we have 
\begin{align*}
 & \int_{2 \arctan(N\cdot R) \le |\theta| \le \pi} K_N^{(s, \T)} (e^{i \theta},  e^{i \theta}) d \theta \\ \lesssim &  \underbrace{\int_{2 \arctan(N\cdot R) \le |\theta| \le \pi} N d\theta }_{= : B_1} + \underbrace{ \int_{2 \arctan(N\cdot R) \le |\theta| \le \pi} N^{1 + 2s} | 1 + e^{i \theta}|^{2s} d \theta}_{= : B_2}. 
 \end{align*}
For the first term, by \eqref{1-case}, we have 
\begin{align*}
B_1 \lesssim \frac{1}{R}.
\end{align*}
For the second term, we have 
\begin{align*}
B_2  \lesssim & N^{1 + 2s} \int_{2 \arctan(N \cdot R)}^{\pi} \cos^{2s}\frac{\theta}{2}  \, d\theta  \\  =  & N^{1 + 2s} \int_{N \cdot R}^\infty \left(\frac{1}{1 + t^2}\right)^{s} \frac{2dt}{1 + t^2}   =  2 \int_{R}^\infty \left(\frac{N^2}{1 + N^2 u^2}\right)^{1 + s} du \\ \lesssim & \int_{R}^\infty u^{-2-2s} du \lesssim \frac{1}{R^{1 + 2s}}.
\end{align*}
Note that since $- \frac{1}{2} < s < 0$, we have $1 + 2s > 0$. Now combining the above estimates, we see that \eqref{up-inf-t} holds as well in this case.

By similar arguments as that in the proof of inequality \eqref{estimate3}, we can reduce the proof of inequality \eqref{up-inf-t} to the case where $s\in \R, s > - \frac{1}{2}$. The proof is complete.
\end{proof}

For any $\varepsilon > 0$, we denote $I_\varepsilon = (- \varepsilon, \varepsilon ) \setminus \{0\} \subset \R^*$.
\begin{cor}\label{cor-iso}
For any $\varepsilon> 0$. The subspace $ \mathds{1}_{I_\varepsilon} \cdot L^{(s)} \subset L^2(\R, \Leb)$ is a closed subspace and the natural mapping 
\begin{align*}
\begin{array}{ccc}
L^{(s)} & \longrightarrow & \mathds{1}_{I_\varepsilon} \cdot L^{(s)} \\
\varphi & \mapsto & \mathds{1}_{I_\varepsilon} \cdot \varphi
\end{array}
\end{align*}
is an isomorphism of Hilbert space.
\end{cor}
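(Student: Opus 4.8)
The plan is to prove that the map $\varphi\mapsto\mathds{1}_{I_\varepsilon}\varphi$ is bounded below on $L^{(s)}$; since it is evidently a contraction, a bounded-below contraction onto its range is automatically a topological isomorphism onto a closed subspace, which is exactly the assertion. Write $P=\Pi_\infty^{(s)}$, which by Proposition \ref{projection} is the orthogonal projection of $L^2(\R,\Leb)$ onto $L^{(s)}$, and set $Q=\mathds{1}_{\R\setminus I_\varepsilon}$, regarded as an orthogonal (multiplication) projection. Since $\mathds{1}_{I_\varepsilon}$ and $Q$ are complementary orthogonal projections, for $\varphi\in L^{(s)}$ (so that $P\varphi=\varphi$) we have
\begin{align*}
\|\mathds{1}_{I_\varepsilon}\varphi\|^{2}=\|\varphi\|^{2}-\|Q\varphi\|^{2}=\|\varphi\|^{2}-\langle PQP\varphi,\varphi\rangle\;\ge\;\bigl(1-\|PQP\|\bigr)\,\|\varphi\|^{2}.
\end{align*}
Because $PQP=(QP)^{*}(QP)$ and $QPQ=(PQ)^{*}(PQ)$ have the same operator norm, everything reduces to the spectral gap $\kappa_\varepsilon:=\|QPQ\|<1$.

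To establish the gap I would first check that $A_\varepsilon:=QPQ$ is trace class. Writing $A_\varepsilon=(PQ)^{*}(PQ)$, it suffices that $PQ$ be Hilbert--Schmidt, and since $P$ is an orthogonal projection its Hilbert--Schmidt norm is given, exactly as in \eqref{diag-projection-f}, by $\|PQ\|_{\mathrm{HS}}^{2}=\int_{|x|\ge\varepsilon}\Pi_\infty^{(s)}(x,x)\,dx$. This integral is finite: the contribution of $\{\varepsilon\le|x|\le R\}$ is finite because this set is precompact in $\R^{*}$ and $\Pi_\infty^{(s)}$ is locally trace class there, while the contribution of $\{|x|>R\}$ is finite by Proposition \ref{tight-inf-line}. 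Hence $A_\varepsilon$ is compact and self-adjoint with $0\le A_\varepsilon\le 1$, so $\kappa_\varepsilon=\|A_\varepsilon\|$ is attained as the top eigenvalue and the spectrum of $A_\varepsilon$ can accumulate only at $0$. If $\kappa_\varepsilon=1$, pick $f$ with $\|f\|=1$ and $A_\varepsilon f=f$; then necessarily $Qf=f$, and $1=\langle A_\varepsilon f,f\rangle=\langle Pf,f\rangle=\|Pf\|^{2}\le1$ forces $Pf=f$. Thus $f\in L^{(s)}$ and $f$ vanishes on $I_\varepsilon$. By Proposition \ref{real-analytic}, $f$ agrees almost everywhere with a real-analytic function on $\R^{*}$; since $I_\varepsilon$ contains a nonempty open subset of each of the two connected components of $\R^{*}$, the identity principle forces $f\equiv 0$ on $\R^{*}$, contradicting $\|f\|=1$. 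Therefore $\kappa_\varepsilon<1$, and the displayed estimate gives $\|\mathds{1}_{I_\varepsilon}\varphi\|\ge\sqrt{1-\kappa_\varepsilon}\,\|\varphi\|$ for every $\varphi\in L^{(s)}$, which completes the proof.

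I expect the one genuinely non-formal point to be the spectral gap $\kappa_\varepsilon<1$. It rests on the two substantive ingredients already at hand: the trace-class bound (so that $1$, if it belonged to the spectrum of $A_\varepsilon$, would have to be an honest eigenvalue rather than merely an accumulation point), and the unique continuation property of Proposition \ref{real-analytic} (so that the hypothetical eigenfunction must vanish identically). Everything else is routine Hilbert-space bookkeeping.
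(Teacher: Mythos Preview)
Your proof is correct and follows essentially the same approach as the paper: both arguments rest on (i) the Hilbert--Schmidt (hence compact) property of $\mathds{1}_{\R\setminus I_\varepsilon}\Pi_\infty^{(s)}$, obtained from the finiteness of $\int_{\R\setminus I_\varepsilon}\Pi_\infty^{(s)}(x,x)\,dx$ via Proposition~\ref{tight-inf-line}, and (ii) the unique continuation from Proposition~\ref{real-analytic} to rule out a nontrivial element of $L^{(s)}$ vanishing on $I_\varepsilon$. The paper simply packages your spectral-gap argument as a citation to \cite[Prop.~2.3]{Bufetov-inf-det}, whereas you have written it out explicitly.
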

\begin{proof}
If $\varphi \in L^{(s)}$ and $\mathds{1}_{I_{\varepsilon}} \varphi  = 0$, then by Proposition \ref{real-analytic} and unique of real-analytic function,  we must have $\varphi =0$.  By  the definition of $\Pi_\infty^{(s)}$ in \eqref{limit-kernel} and Proposition \ref{tight-inf-line}, we have 
\begin{align*}
\int_{\R \setminus I_{\varepsilon}} \Pi_\infty^{(s)}(x,x) dx < \infty.
\end{align*}
That is, the operator $\mathds{1}_{\R \setminus I_{\varepsilon}} \Pi_\infty^{(s)}$ is Hilbert-Schmidt and in particular, it is compact. We can finish the proof by applying the elementary result in \cite[Prop. 2.3]{Bufetov-inf-det}.
\end{proof}

\subsection{Infinite determinantal measures \texorpdfstring{$\mathbb{B}^{(s)}$}{a} on \texorpdfstring{$\Conf(\R^*)$}{a}}
In this section, we will assume that $s \in \R, s \le - \frac{1}{2}$. Recall that, by definition,  the space  $V^{(s, N)} $ admits a basis $v_1^{(s, N)}, \dots, v_{n_s}^{(s, N)}$, where 
\begin{align*}
v_k^{(s, N)} (x)  & : = (N_s')^{1 + s'} \cdot (\sgn(x))^{N_s'}\cdot \boldsymbol{p}_{N'_s-1}^{(s', N'_s)} (N'_s x) \cdot x^k \cdot \sqrt{\phi^{(s')}_{N'_s} (N'_s x) } \\ & = x^k \cdot  \mathcal{V}_{s', N_s'}(x) , \quad k  = 1, \dots, n_s. 
\end{align*}

\begin{defn} $($Some rescaling limit subspaces of $L_\loc^2(\R^*, \Leb)$ $)$
 \begin{enumerate}
 \item[(i)] The subspace $V^{(s)} \subset L_\loc^2(\R^*, \Leb)$ is defined as 
\begin{align*}
V^{(s)}: =  \C v_1^{(s)} + \cdots + \C v_{n_s}^{(s)},
\end{align*}
with $v_k^{(s)}(x) = x^k \cdot \mathcal{V}_{s'} (x), k = 1, \dots, n_s$. 
 
 \item[(ii)] The subspace $H^{(s)} \subset L_\loc^2(\R^*, \Leb)$ is defined as 
 \begin{align*}
 H^{(s)} = L^{(s')} + V^{(s)}.
 \end{align*}
\end{enumerate}

\end{defn}

Recall that the orthogonal projection from $L^2(\R, \Leb)$ onto $L^{(s')}$ is denoted by $\Pi^{(s')}_\infty$ and for any $R > 0$, we denote $I_ \varepsilon= (- \varepsilon, \varepsilon)\setminus\{0\}$.
\begin{lem}\label{assumption-3}
In the above notation, for any $\varepsilon> 0$ , we have
\begin{enumerate}
\item[(1)] $\mathds{1}_{\R\setminus I_\varepsilon} \Pi^{(s')}_\infty \mathds{1}_{\R\setminus I_\varepsilon} \in \mathscr{S}_1(\R, \Leb)$;
\item[(2)] if $\varphi \in L^{(s')}$ satisfies $\mathds{1}_{ I_\varepsilon} \cdot \varphi = 0$, then $\varphi = 0$; 
\item[(3)] $V_{I_\varepsilon}^{(s)}\subset L^2(\R, \Leb)$;
\item[(4)] if $ \varphi \in V^{(s)}$ satisfies  $\mathds{1}_{ I_\varepsilon} \cdot \varphi \in L_{I_\varepsilon}^{(s')}$, then $\varphi =0$.
\end{enumerate}
\end{lem}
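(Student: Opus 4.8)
The plan is to verify the four assertions in turn; items (1)--(3) are minor variants of estimates already established in the excerpt, now carried out for the shifted parameter $s' = s + n_s > -\tfrac12$, while item (4) carries the only real content. For (1): since $s' > -\tfrac12$, Proposition~\ref{projection} gives that $\Pi^{(s')}_\infty$ is the orthogonal projection onto $L^{(s')}$, so with $B := \mathds{1}_{\R\setminus I_\varepsilon}\Pi^{(s')}_\infty$ we have $BB^* = \mathds{1}_{\R\setminus I_\varepsilon}\Pi^{(s')}_\infty\mathds{1}_{\R\setminus I_\varepsilon}$, and it suffices to check that $B$ is Hilbert--Schmidt. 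Using the reproducing identity $\int_\R|\Pi^{(s')}_\infty(x,y)|^2\,dy = \Pi^{(s')}_\infty(x,x)$,
\begin{align*}
\|B\|_{\mathrm{HS}}^2 = \int_{|x|\ge\varepsilon}\int_\R|\Pi^{(s')}_\infty(x,y)|^2\,dy\,dx = \int_{|x|\ge\varepsilon}\Pi^{(s')}_\infty(x,x)\,dx,
\end{align*}
and this is finite because the diagonal of the (continuous) kernel is bounded on $\{\varepsilon\le|x|\le R\}$ while $\int_{|x|\ge R}\Pi^{(s')}_\infty(x,x)\,dx<\infty$ by Proposition~\ref{tight-inf-line} with parameter $s'$; this is precisely the Hilbert--Schmidt observation in the proof of Corollary~\ref{cor-iso}. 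For (2): by Proposition~\ref{real-analytic} every $\varphi\in L^{(s')}$ is real-analytic on $\R^*$, and $I_\varepsilon=(-\varepsilon,0)\cup(0,\varepsilon)$ contains a nonempty open interval in each connected component of $\R^*$, so if $\mathds{1}_{I_\varepsilon}\varphi=0$ then $\varphi$ vanishes identically on each component by the identity theorem, i.e. $\varphi=0$. For (3): $V^{(s)}=\mathrm{span}\{v_k^{(s)}\}_{k=1}^{n_s}$ with $v_k^{(s)}(x)=x^k\mathcal{V}_{s'}(x)$, and on $I_\varepsilon$ one has $|x^k|\le\varepsilon^k$, so $\mathds{1}_{I_\varepsilon}v_k^{(s)}$ is pointwise dominated by $\varepsilon^k|\mathcal{V}_{s'}|$; since $\mathcal{V}_{s'}\in L^2(\R)$ by Proposition~\ref{l2-function} (again using $s'>-\tfrac12$), each $\mathds{1}_{I_\varepsilon}v_k^{(s)}\in L^2(\R,\Leb)$, whence $V^{(s)}_{I_\varepsilon}\subset L^2(\R,\Leb)$.

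For (4), write a generic $\varphi\in V^{(s)}$ as $\varphi = \sum_{k=1}^{n_s}c_k v_k^{(s)} = P(x)\mathcal{V}_{s'}(x)$ with $P(x)=\sum_{k=1}^{n_s}c_k x^k$ a polynomial satisfying $P(0)=0$, and suppose $\mathds{1}_{I_\varepsilon}\varphi = \mathds{1}_{I_\varepsilon}\psi$ for some $\psi\in L^{(s')}$. Both $\varphi$ and $\psi$ are real-analytic on $\R^*$: $\psi$ by Proposition~\ref{real-analytic}, and $\varphi$ because, by \eqref{v-function}, $\mathcal{V}_{s'}$ is a nonzero constant times $\sgn(x)|x|^{-1/2}J_{s'+1/2}(1/|x|)$, which is real-analytic on $\R^*$. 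Since $\varphi$ and $\psi$ agree a.e. on $(-\varepsilon,0)$ and on $(0,\varepsilon)$, hence everywhere on these intervals by continuity, the identity theorem on each component of $\R^*$ forces $\varphi=\psi$ on all of $\R^*$, so in particular $\varphi\in L^{(s')}\subset L^2(\R,\Leb)$. On the other hand, $J_\nu(t)\sim (t/2)^\nu/\Gamma(\nu+1)$ as $t\to0^+$ yields $\mathcal{V}_{s'}(x)\sim\sgn(x)|x|^{-s'-1}$ as $|x|\to\infty$, so if $\varphi\ne0$, with $d:=\deg P\ge1$, then $|\varphi(x)|\sim|c_d|\,|x|^{\,d-s'-1}$ as $|x|\to\infty$; since $n_s$ is the \emph{smallest} integer with $s+n_s>-\tfrac12$ we have $s'\le\tfrac12$, hence $2(d-s'-1)\ge -1$ and $\varphi\notin L^2$ in a neighbourhood of infinity, a contradiction. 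Therefore $P\equiv0$ and $\varphi=0$.

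The main obstacle is item (4): it is the only part where one must upgrade a purely local coincidence on $I_\varepsilon$ to a global identity on $\R^*$ — which the real-analyticity of $L^{(s')}$-functions (Proposition~\ref{real-analytic}) makes possible — and then rule that global identity out using the exact algebraic decay rate of $\mathcal{V}_{s'}$ at infinity against the square-integrability of $L^{(s')}$-functions, the decisive arithmetic point being that $\deg P < s'+\tfrac12$ is impossible for a polynomial $P\not\equiv0$ of degree $\ge1$ once $s'\le\tfrac12$. Items (1)--(3), by contrast, only recycle properties and estimates already proved earlier in the excerpt, with $s$ replaced throughout by $s'$.
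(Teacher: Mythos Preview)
Your proof is correct and follows essentially the same approach as the paper: items (1)--(2) are obtained from the trace/Hilbert--Schmidt estimate and real-analyticity of $L^{(s')}$ (which the paper packages as Corollary~\ref{cor-iso}), item (3) is immediate, and item (4) proceeds exactly as in the paper by using real-analyticity to promote the local identity on $I_\varepsilon$ to a global one, then deriving a contradiction from the asymptotic $\mathcal{V}_{s'}(x)\sim \sgn(x)|x|^{-s'-1}$ combined with the bound $s'\in(-\tfrac12,\tfrac12]$, which forces $|\varphi(x)|^2\gtrsim |x|^{-1}$ at infinity.
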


\begin{proof} The assertions (1) and (2) follow from Corollary \ref{cor-iso} and the assertion (3) is obvious. So we turn to the proof of the assertion (4). Assume that $\varphi \in V^{(s)}$ and $\varphi  \cdot \mathds{1}_{I_\varepsilon} \in  L_{I_\varepsilon}^{(s')}$, then there exists a function $\psi \in L^{(s')} $ and  $\lambda_1, \dots, \lambda_k \in \C$, such that
\begin{align*}
\varphi (x)  =  \sum_{ k = 1}^{n_s} \lambda_k  x^k \cdot \sgn(x)  \frac{1}{ \sqrt{| x|}} J_{s' + \frac{1}{2}} \left(\frac{1}{| x|}\right),
\end{align*}
\begin{align*}
\varphi(x) \mathds{1}_{I_\varepsilon}(x)  = \psi(x) \mathds{1}_{I_\varepsilon}(x).
\end{align*} 
We can see from the explicit form of $\varphi$ that $\varphi \in C^{\omega}(\R^*)$. The assumption $\psi \in L^{(s')}$ also implies that $\psi\in C^{\omega}(\R^*)$.  Hence we are in a situation of two real analytic functions on $\R^*$ which coincide on $I_\varepsilon = ( - \varepsilon, \varepsilon) \setminus \{0\}$, hence we must have 
\begin{align*}
\psi(x) = \varphi(x), \forall x \in \R^*.
\end{align*}
By the asymptotic expansion of Bessel function, we have
\begin{align*}
 \frac{1}{ \sqrt{| x|}} J_{s' + \frac{1}{2}} \left(\frac{1}{| x|}\right) \sim \frac{1}{2^{s' + 1/2} \Gamma(s + 1/2)} \left(\frac{1}{|x|}\right)^{s'+ 1}, \quad \text{as $| x | \to \infty$}.
\end{align*}
If  the vector $(\lambda_1, \dots, \lambda_k) \in \C^k$ is not the zero vector, then let $j_0$ be the largest $j$ such that $\lambda_j \ne 0$. We then have
\begin{align*}
\psi(x) = \varphi(x) \approx x^{j_0 - 1 - s'}, \quad\text{ as $x \to + \infty$.} 
\end{align*}
Recall that by definition of $s'$, since $s \le - \frac{1}{2}$, we have $s' = s + n_s \in \left( - \frac{1}{2}, \frac{1}{2}\right]$ and  $j_0 - 1- s' \ge -\frac{1}{2}$. Then by the above asymptotic equivalence at infinity,  we must have $\psi \notin L^2(\R) $. This contradicts to the assumption that $\psi \in L^{(s')} \subset L^2(\R)$.  Thus $(\lambda_1, \dots, \lambda_k)$ must be the zero vector and hence $\psi = \varphi =0$, as desired.
\end{proof}

Since $V^{(s)}$ is of finite dimension $\dim V^{(s)} = n_s$ and $L_{I_\varepsilon}^{(s')}$ is closed subspace of $L^2(\R, \Leb)$,  the subspace 
\begin{align*}
H_{I_\varepsilon}^{(s) } =  L_{I_\varepsilon}^{(s')} + V_{I_\varepsilon}^{(s)}
\end{align*}
is again a closed subspace of $L^2(\R, \Leb)$. By Lemma \ref{assumption-3} and \cite[Prop. 2.17]{Bufetov-inf-det},  the orthogonal projection $\Pi_{H_{I_\varepsilon}^{(s) } }$ to the subspace $H_{I_\varepsilon}^{(s) }  \subset L^2(\R, \Leb)$  is in $\mathscr{S}_{1, \loc} (\R^*, \Leb)$ and thus induces a determinantal probability measure, denoted by $\mathbb{P}_{H_{I_\varepsilon}^{(s) } }$ on $\Conf (\R^*)$.

\begin{prop}
Let $s \le - \frac{1}{2}$. Then the subset $H^{(s)} \subset L_\loc^2(\R^*,\Leb)$ and $\mathcal{E}_0 = (-1, 1)\setminus\{0\} \subset \R^*$ define a $\sigma$-finite infinite determinantal measure $\mathbb{B}^{(s)} = \mathbb{B} (H^{(s)}, \mathcal{E}_0)$ on $\Conf(\R^*)$, such that
\begin{enumerate}
\item[(1)] the set of particles of $\mathbb{B}^{(s)}$-almost very configurations is bounded;
\item[(2)] for any $\varepsilon> 0$ we have 
\begin{align*}
0 < \mathbb{B}^{(s)} \Big( \Conf(\R^*; I_\varepsilon) \Big) < \infty
\end{align*}
and 
\begin{align*}
\frac{\mathbb{B}^{(s)} \big|_{\Conf(\R^*; I_\varepsilon )}}{\mathbb{B}^{(s)} \Big( \Conf(\R^*; I_\varepsilon )\Big)} = \mathbb{P}_{H_{I_\varepsilon}^{(s) } }.
\end{align*}
\end{enumerate} 
These conditions define the measure $\mathbb{B}^{(s)}$ uniquely up to multiplication by a positive constant.
\end{prop}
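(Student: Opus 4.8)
The plan is to obtain $\mathbb{B}^{(s)}$ as an application of Bufetov's general construction theorem for $\sigma$-finite determinantal measures, \cite[Thm.~2.11]{Bufetov-inf-det}: once the assumptions (A1), (A2), (A3) are verified for the subspace $\mathscr{H}=H^{(s)}\subset L^2_\loc(\R^*,\Leb)$ and the Borel set $\mathcal{E}_0=(-1,1)\setminus\{0\}$, that theorem produces a $\sigma$-finite measure $\mathbb{B}^{(s)}=\mathbb{B}(H^{(s)},\mathcal{E}_0)$, unique up to a positive multiplicative constant, such that $\mathbb{B}^{(s)}$-almost every configuration has finitely many points outside $\mathcal{E}_0$ and the normalised restriction to $\Conf(\R^*;\mathcal{E}_0\cup B)$ equals $\mathbb{P}_{H^{(s)}_{\mathcal{E}_0\cup B}}$ for every bounded $B\subset\R^*\setminus\mathcal{E}_0$. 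So the whole proof reduces to checking the three assumptions and then rephrasing these conclusions.

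I would verify (A3) first, since this is where the real content sits. If $\varphi\in H^{(s)}=L^{(s')}+V^{(s)}$ satisfies $\mathds{1}_{\mathcal{E}_0}\varphi=0$, decompose $\varphi=\psi+v$ with $\psi\in L^{(s')}$ and $v\in V^{(s)}$; then $\mathds{1}_{\mathcal{E}_0}v=-\mathds{1}_{\mathcal{E}_0}\psi\in L^{(s')}_{\mathcal{E}_0}$, so $v=0$ by Lemma~\ref{assumption-3}(4), whence $\mathds{1}_{\mathcal{E}_0}\psi=0$ and $\psi=0$ by Lemma~\ref{assumption-3}(2). For (A1) and (A2) I would reduce an arbitrary bounded Borel set $B\subset\R^*$ to one of the sets $I_\varepsilon=(-\varepsilon,\varepsilon)\setminus\{0\}$: choosing $\varepsilon>1$ with $\mathcal{E}_0\cup B\subset I_\varepsilon$ one has $H^{(s)}_{\mathcal{E}_0\cup B}=\mathds{1}_{\mathcal{E}_0\cup B}H^{(s)}_{I_\varepsilon}$; the subspace $H^{(s)}_{I_\varepsilon}$ is closed and $\Pi_{H^{(s)}_{I_\varepsilon}}\in\mathscr{S}_{1,\loc}(\R^*,\Leb)$ (this is exactly what is established in the paragraph preceding the proposition, via Lemma~\ref{assumption-3} and \cite[Prop.~2.17]{Bufetov-inf-det}); and since $I_\varepsilon\setminus(\mathcal{E}_0\cup B)\subset\{1\le|x|<\varepsilon\}$ is precompact in $\R^*$, the compression $\mathds{1}_{I_\varepsilon\setminus(\mathcal{E}_0\cup B)}\Pi_{H^{(s)}_{I_\varepsilon}}\mathds{1}_{I_\varepsilon\setminus(\mathcal{E}_0\cup B)}$ is trace class, in particular compact, so by \cite[Prop.~2.3, Prop.~2.17]{Bufetov-inf-det} the further restriction $H^{(s)}_{\mathcal{E}_0\cup B}$ is again a closed subspace with locally trace-class projection, the remaining trace-class condition in (A2) following because the part of that projection supported off $\mathcal{E}_0$ lives on the precompact set $B$. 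The finite-dimensional summand $V^{(s)}$ enters only through a finite-rank correction, so it never spoils any of these trace-class estimates; this is the role of Lemma~\ref{assumption-3}(3).

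With (A1)--(A3) in hand, \cite[Thm.~2.11]{Bufetov-inf-det} yields $\mathbb{B}^{(s)}$ together with its uniqueness, and item~(1) is immediate: a locally finite configuration in $\R^*$ having finitely many points outside the bounded set $\mathcal{E}_0=(-1,1)\setminus\{0\}$ is necessarily a bounded subset of $\R$. For item~(2), the case $\varepsilon\ge1$ is literally property~(2) of \cite[Thm.~2.11]{Bufetov-inf-det} with $B=\{1\le|x|<\varepsilon\}$, so that $\mathcal{E}_0\cup B=I_\varepsilon$. For $0<\varepsilon<1$ I would observe that the very same arguments verify (A1)--(A3) for the pair $(H^{(s)},I_\varepsilon)$ — Lemma~\ref{assumption-3}, Corollary~\ref{cor-iso} and Proposition~\ref{tight-inf-line} are all stated for an arbitrary $\varepsilon>0$ — so $\mathbb{B}(H^{(s)},I_\varepsilon)$ is defined, and since $I_1\setminus I_\varepsilon=\{\varepsilon\le|x|<1\}$ is precompact the uniqueness assertion identifies $\mathbb{B}(H^{(s)},I_\varepsilon)$ with $\mathbb{B}^{(s)}$ up to a constant (equivalently, this identification can be read off by conditioning the determinantal probability $\mathbb{P}_{H^{(s)}_{I_1}}$ on having no particles in $I_1\setminus I_\varepsilon$, the corresponding hole probability being positive precisely because (A3) holds for $I_\varepsilon$); item~(2) at level $\varepsilon$ is then item~(2) for $\mathbb{B}(H^{(s)},I_\varepsilon)$. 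The substantive analytic obstacles — closedness of the restricted subspaces, their trace-class behaviour, and the uniqueness-of-real-analytic-continuation used in (A3) — have already been settled in Lemma~\ref{assumption-3}, Corollary~\ref{cor-iso} and Proposition~\ref{real-analytic}; what genuinely remains here is the assembly above together with this last identification of $\mathbb{B}^{(s)}$ across the different admissible choices of $\mathcal{E}_0$, which I expect to be the only slightly delicate point.
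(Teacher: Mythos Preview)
Your approach is essentially the same as the paper's: both reduce the proposition to Lemma~\ref{assumption-3} together with \cite[Prop.~2.17 and Thm.~2.11]{Bufetov-inf-det}. The paper's proof is a single sentence citing these references, while you spell out how (A1)--(A3) follow from Lemma~\ref{assumption-3} and also explicitly address the case $0<\varepsilon<1$ in item~(2) by identifying $\mathbb{B}(H^{(s)},I_\varepsilon)$ with $\mathbb{B}^{(s)}$; this last point is a genuine detail the paper leaves implicit, but your treatment of it is correct and the overall strategy matches.
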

\begin{proof}
By virtue of Lemma \ref{assumption-3}, the existence and  the properties (1) and (2) as above of the infinite determinantal measure $\mathbb{B} ( H^{(s)}, \mathcal{E}_0)$ are consequences of \cite[Prop. 2.17 and Thm. 2.11]{Bufetov-inf-det}.
\end{proof}

Denote 
\begin{align*}
\mathbb{B}^{(s, N)} =  \mathbb{B} (H^{(s,N)}, \mathcal{E}_0) = \mathbb{B} \bigg(H^{(s,N)}, (-1, 1)\setminus\{0\}\bigg).
\end{align*}
It will be convenient to take $\sigma > 0$ and set 
\begin{align*}
g^{\sigma}(x) : = \exp(- \sigma x^2), \quad x \in \R. 
\end{align*} Set therefore,
\begin{align*}
L^{(s, N, \sigma)} :  = \sqrt{g^{\sigma}} H^{(s,N)} =  \exp(- \sigma x^2/2) H^{(s, N)}.
\end{align*}
It is clear that $L^{(s, N, \sigma)}$ is a closed subspace of $L^2(\R, \Leb)$ of dimension $N$, let $\Pi^{(s, N, \sigma)}$ denote the corresponding orthogonal projection operator.

\begin{prop}\label{mul-space}
For any $s \in \R$, $\sigma> 0$, the subspace 
\begin{align}\label{L-sigma}
L^{(s, \sigma)} : = \exp(- \sigma x^2/2) H^{(s)}
\end{align} 
is a closed subspace of $L^2(\R, \Leb)$. The orthogonal projection $\Pi_{L^{(s, \sigma)}}$ onto the subspace \eqref{L-sigma} is locally of trace class, i.e., 
\begin{align*}
\Pi_{L^{(s, \sigma)}} \in \mathscr{S}_{1, \loc} (\R^*, \Leb).
\end{align*}
\end{prop}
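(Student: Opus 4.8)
The plan is to exploit the decomposition $H^{(s)}=L^{(s')}+V^{(s)}$ and to treat the infinite-dimensional piece $\sqrt{g^{\sigma}}\,L^{(s')}$ and the (at most) $n_s$-dimensional piece $\sqrt{g^{\sigma}}\,V^{(s)}$ separately, the decisive input being the regularity of $L^{(s')}$ recorded in Proposition~\ref{real-analytic} and Corollary~\ref{cor-iso}. \emph{Inclusion in $L^2(\R,\Leb)$.} For the $L^{(s')}$-part this is immediate, since $0\le g^{\sigma}\le 1$ and $L^{(s')}\subset L^2(\R,\Leb)$ by Proposition~\ref{projection}. For the $V^{(s)}$-part each generator is $v_k^{(s)}(x)=x^{k}\mathcal V_{s'}(x)$; by the Bessel representation \eqref{v-function} and the classical bound $|J_\nu(t)|\lesssim t^{-1/2}$ for $t\ge 1$ (legitimate as $s'+1/2> 0$), the function $v_k^{(s)}$ is bounded near $0$ and of at most polynomial growth at infinity, hence $e^{-\sigma x^{2}/2}v_k^{(s)}\in L^2(\R,\Leb)$; therefore $L^{(s,\sigma)}\subset L^2(\R,\Leb)$.

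\emph{Closedness.} Write $L^{(s,\sigma)}=\sqrt{g^{\sigma}}L^{(s')}+\sqrt{g^{\sigma}}V^{(s)}$. The second summand is finite-dimensional, since multiplication by the nowhere-vanishing $\sqrt{g^{\sigma}}$ preserves linear independence, so it suffices to show that $\sqrt{g^{\sigma}}L^{(s')}$ is closed (a sum of a closed and a finite-dimensional subspace is closed). Here Corollary~\ref{cor-iso} is essential: multiplication by $\sqrt{g^{\sigma}}$ is bounded but not bounded below, so a priori it need not preserve closedness. Suppose $\psi_n=\sqrt{g^{\sigma}}\varphi_n$ with $\varphi_n\in L^{(s')}$ and $\psi_n\to\psi$ in $L^2(\R,\Leb)$. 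On $I_1=(-1,1)\setminus\{0\}$ the function $1/\sqrt{g^{\sigma}}=e^{\sigma x^{2}/2}$ is bounded, so $(\mathds 1_{I_1}\varphi_n)$ is Cauchy in $L^2$; Corollary~\ref{cor-iso} then forces $(\varphi_n)$ to be Cauchy in $L^{(s')}$, whence $\varphi_n\to\varphi\in L^{(s')}$ and $\psi=\sqrt{g^{\sigma}}\varphi\in\sqrt{g^{\sigma}}L^{(s')}$.

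\emph{Local trace class.} Once $\sqrt{g^{\sigma}}L^{(s')}$ is known to be closed, the bounded inverse theorem shows that $f\mapsto\sqrt{g^{\sigma}}f$ is a topological isomorphism of $L^{(s')}$ onto $\sqrt{g^{\sigma}}L^{(s')}$; consequently, if $(f_j)$ is an orthonormal basis of $L^{(s')}$, then $(\sqrt{g^{\sigma}}f_j)$ is a Riesz basis of $\sqrt{g^{\sigma}}L^{(s')}$, and the standard bound for the diagonal of an orthogonal projection in terms of a Riesz basis gives, for every $B$ with compact closure in $\R^{*}$,
\[
\tr\!\big(\mathds 1_{B}\,\Pi_{\sqrt{g^{\sigma}}L^{(s')}}\,\mathds 1_{B}\big)\;\lesssim\;\sum_{j}\int_{B} g^{\sigma}(x)\,|f_j(x)|^{2}\,dx\;\le\;\int_{B}\Pi^{(s)}_\infty(x,x)\,dx\;<\;\infty,
\]
the last integral being finite because $\Pi^{(s)}_\infty$ is locally of trace class (it is an orthogonal projection by Proposition~\ref{projection} and its diagonal is continuous on $\R^{*}$ by \eqref{explicit-f-kernel}). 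Hence $\Pi_{\sqrt{g^{\sigma}}L^{(s')}}\in\mathscr S_{1,\loc}(\R^{*},\Leb)$. Finally $\sqrt{g^{\sigma}}L^{(s')}$ is a closed subspace of $L^{(s,\sigma)}$ of codimension at most $n_s$, so $\Pi_{L^{(s,\sigma)}}=\Pi_{\sqrt{g^{\sigma}}L^{(s')}}+\Pi_{L^{(s,\sigma)}\ominus\sqrt{g^{\sigma}}L^{(s')}}$, the second term being a finite-rank, hence trace class, operator; equivalently one may simply invoke \cite[Prop.~2.17]{Bufetov-inf-det}. This gives $\Pi_{L^{(s,\sigma)}}\in\mathscr S_{1,\loc}(\R^{*},\Leb)$, as required.

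The only point that is not formal is the closedness of $\sqrt{g^{\sigma}}L^{(s')}$; this is precisely where the analytic-continuation estimate behind Corollary~\ref{cor-iso} --- namely that a function in $L^{(s')}$ is controlled, quantitatively, by its restriction to a bounded neighbourhood of the origin, on which $1/\sqrt{g^{\sigma}}$ is bounded --- is indispensable. Everything downstream, the Riesz-basis trace estimate and the finite-rank correction, is routine once closedness is in hand.
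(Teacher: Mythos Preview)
Your proof is correct and follows the same overall strategy as the paper: reduce to the infinite-dimensional piece $\sqrt{g^{\sigma}}L^{(s')}$ using $\dim V^{(s)}<\infty$, and then handle closedness and local trace class for that piece. The only difference is in presentation: the paper disposes of the closedness and the locally-trace-class property of $\sqrt{g^{\sigma}}L^{(s')}$ by invoking Lemma~\ref{assumption-3} together with \cite[Cor.~2.4 and Cor.~2.5]{Bufetov-inf-det}, whereas you unpack those black boxes --- using Corollary~\ref{cor-iso} directly for closedness and a Riesz-basis trace estimate for local trace class. Your closedness argument is exactly the content behind those citations, and your trace bound is the computation $\tr(\mathds 1_B\Pi_M\mathds 1_B)=\tr\big((A^*A)^{-1}A^*\mathds 1_BA\big)\le\|(A^*A)^{-1}\|\int_B g^{\sigma}\Pi_\infty^{(s')}(x,x)\,dx$ where $A=\sqrt{g^{\sigma}}\cdot$; this is fine, though calling it ``the standard bound'' is a little casual. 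One typographical slip: in your displayed trace inequality the kernel should be $\Pi_\infty^{(s')}$, not $\Pi_\infty^{(s)}$, since you are working with $L^{(s')}$.
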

\begin{proof}
By definition,  $H^{(s)} = L^{(s')} +  V^{(s)}$. Since $\dim V^{(s)} < \infty$, to prove the proposition, it suffices to prove that $\exp(- \sigma x^2/2) L^{(s')}$ is a closed subspace of $L^2(\R, \Leb)$ and the orthogonal projection onto $\exp(- \sigma x^2/2) L^{(s')}$ is locally of trace class.  But this last assertion is an easy consequence of Lemma \ref{assumption-3} and the elementary results in \cite[Cor. 2.4 and Cor. 2.5]{Bufetov-inf-det}.
\end{proof}

Introduce a function $S_2$ on the space $\Conf(\R^*)$ by setting 
\begin{align*}
S_2(\mathcal{X}) =  \sum_{x \in \mathcal{X}} x^2.
\end{align*}
The function $S_2$ may assume value $\infty$, but the set of such configurations is $\mathbb{B}^{(s,N)}$ and $\mathbb{B}^{(s)}$-negligible, this last fact is given in the following
\begin{prop}\label{L1-Conf}
For any $s \in \R$, we have $S_2(\mathcal{X}) < \infty$ almost surely with respect to the measure $\mathbb{B}^{(s)}$ and for any $\sigma>0$ we have 
\begin{align*}
\exp (- \sigma S_2(\mathcal{X})) \in L^1(\Conf (\R^*), \mathbb{B}^{(s)}),
\end{align*}
and we have 
\begin{align}\label{mul-measure}
\frac{\exp(-\sigma  S_2(\mathcal{X})  )  \mathbb{B}^{(s)}}{    \mathlarger{\int\limits_{\Conf(R^*)} } \exp(-\sigma  S_2(\cdot)  )  d \mathbb{B}^{(s)}  } = \mathbb{P}_{L^{(s, \sigma)}}.
\end{align}
The same holds if the measure $\mathbb{B}^{(s)}$ is replaced by any measures $\mathbb{B}^{(s,N)}$ for $N$ large enough with 
\begin{align*}
\frac{\exp(-\sigma  S_2(\mathcal{X})  )  \mathbb{B}^{(s, N)}}{ \mathlarger{ \int\limits_{\Conf(R^*)}}  \exp(-\sigma  S_2(\cdot )  )  d \mathbb{B}^{(s, N)}    } = \mathbb{P}_{L^{(s, N, \sigma)}}
\end{align*}
Moreover, as $N\to \infty$, we have 
\begin{align*}
\mathbb{P}_{L^{(s, N, \sigma)}} \longrightarrow \mathbb{P}_{L^{(s, \sigma)}},
\end{align*}
with respect to the weak topology on $\mathfrak{M}_{\fin} (\Conf(\R^*))$.
\end{prop}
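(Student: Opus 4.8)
The plan is to identify $\exp(-\sigma S_2(\cdot))$ with the multiplicative functional $\Psi_{g^{\sigma}}$ of the weight $g^{\sigma}(x)=e^{-\sigma x^{2}}$ on the infinite determinantal measures $\mathbb{B}^{(s)}$ and $\mathbb{B}^{(s,N)}$, to read off the integrability, the positivity of the normalising constant and the density formula \eqref{mul-measure} from the theory of multiplicative functionals of \cite{Bufetov-inf-det}, and finally to derive the weak convergence from convergence of the associated orthogonal projection kernels in $\mathscr{S}_{1,\loc}(\R^{*},\Leb)$.

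\emph{First step: the packaged assertions.} Writing $\Psi_{g}(\mathcal{X})=\prod_{x\in\mathcal{X}}g(x)$ for a Borel function $g\colon\R^{*}\to[0,1]$, one has $\Psi_{g^{\sigma}}(\mathcal{X})=\exp(-\sigma S_2(\mathcal{X}))$, together with $\sqrt{g^{\sigma}}\,H^{(s)}=L^{(s,\sigma)}$ and $\sqrt{g^{\sigma}}\,H^{(s,N)}=L^{(s,N,\sigma)}$. By Proposition \ref{mul-space} the space $L^{(s,\sigma)}$ is closed in $L^{2}(\R,\Leb)$ with $\Pi_{L^{(s,\sigma)}}\in\mathscr{S}_{1,\loc}(\R^{*},\Leb)$, while $L^{(s,N,\sigma)}$ is finite-dimensional; moreover $1-g^{\sigma}$ is bounded on $\R^{*}$, so that for every bounded $B\subset\R^{*}\setminus\mathcal{E}_0$ the operator $\sqrt{1-g^{\sigma}}\,\Pi_{H^{(s)}_{\mathcal{E}_0\cup B}}\,\sqrt{1-g^{\sigma}}$ is of trace class by assumption (A2). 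Hence $g^{\sigma}$ is admissible for $\mathbb{B}^{(s)}$, and trivially for $\mathbb{B}^{(s,N)}$, and the theorem on multiplicative functionals of \cite{Bufetov-inf-det} gives at once that $\Psi_{g^{\sigma}}\in L^{1}(\Conf(\R^{*}),\mathbb{B}^{(s)})$, that $0<\int\Psi_{g^{\sigma}}\,d\mathbb{B}^{(s)}<\infty$, and that \eqref{mul-measure} holds; the same statements with $\mathbb{B}^{(s,N)}$ and $L^{(s,N,\sigma)}$ follow verbatim. That $S_2(\mathcal{X})<\infty$ $\mathbb{B}^{(s)}$-almost surely is seen separately: a $\mathbb{B}^{(s)}$-generic configuration has a bounded particle set, so only its particles in $\mathcal{E}_0$ matter, and restricting $\mathbb{B}^{(s)}$ to $\Conf(\R^{*};\mathcal{E}_0\cup B)$ turns it into a multiple of the determinantal probability $\mathbb{P}_{H^{(s)}_{\mathcal{E}_0\cup B}}$, under which $\E\big(\sum_{x\in\mathcal{X}}x^{2}\mathds{1}_{|x|\le\varepsilon}\big)=\int_{I_{\varepsilon}}x^{2}\,\Pi_{H^{(s)}_{\mathcal{E}_0\cup B}}(x,x)\,dx$; using the decomposition $H^{(s)}=L^{(s')}+V^{(s)}$ with $V^{(s)}$ of dimension $n_{s}$ spanned near $0$ by bounded functions, together with Corollary \ref{cor-iso}, one bounds $\Pi_{H^{(s)}_{\mathcal{E}_0\cup B}}(x,x)\lesssim_{B}\Pi^{(s')}_{\infty}(x,x)+1$ near $0$, and $\int_{I_{\varepsilon}}x^{2}\,\Pi^{(s')}_{\infty}(x,x)\,dx<\infty$ by Proposition \ref{uniformness} and Fatou's lemma.

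\emph{Second step: the weak convergence.} By the standard continuity of $K\mapsto\mathbb{P}_{K}$ along convergence of kernels in $\mathscr{S}_{1,\loc}(\R^{*},\Leb)$ --- convergence of all correlation functions, together with tightness at infinity (Proposition \ref{tight-inf-line}) and near $0$ (Proposition \ref{uniformness}), yields weak convergence of the point processes on $\Conf(\R^{*})$ --- it suffices to prove $\Pi_{L^{(s,N,\sigma)}}\to\Pi_{L^{(s,\sigma)}}$ in $\mathscr{S}_{1,\loc}(\R^{*},\Leb)$. Decompose $L^{(s,N,\sigma)}=\sqrt{g^{\sigma}}L^{(s',N'_{s})}+\sqrt{g^{\sigma}}V^{(s,N)}$ and $L^{(s,\sigma)}=\sqrt{g^{\sigma}}L^{(s')}+\sqrt{g^{\sigma}}V^{(s)}$. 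For the finite-dimensional summand, the generators $x^{k}\mathcal{V}_{s',N'_{s}}(x)$, $1\le k\le n_{s}$, of $V^{(s,N)}$, multiplied by the bounded factor $e^{-\sigma x^{2}/2}x^{k}$, converge in $L^{2}(\R,\Leb)$ to $e^{-\sigma x^{2}/2}x^{k}\mathcal{V}_{s'}(x)$ by Proposition \ref{l2-function}; since $n_{s}$ is constant and the limiting vectors are linearly independent, the orthogonal projections onto $\sqrt{g^{\sigma}}V^{(s,N)}$ converge in operator norm. For the other summand, Theorem 2.1 of \cite{BO-CMP} gives that the kernels of the orthogonal projections onto $L^{(s',N'_{s})}$ converge to $\Pi^{(s')}_{\infty}$ uniformly on compact subsets of $\R^{*}$; combined with the ensuing convergence of the local traces and the elementary criterion that strong convergence of positive operators plus convergence of their traces implies convergence in trace norm, this upgrades to $\Pi_{L^{(s',N'_{s})}}\to\Pi^{(s')}_{\infty}$ in $\mathscr{S}_{1,\loc}(\R^{*},\Leb)$. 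Transporting this through the multiplication operator by $\sqrt{g^{\sigma}}$, which is bounded on $L^{2}(\R,\Leb)$ and bounded below on each bounded subset of $\R$, and recombining the two summands by means of the elementary perturbation lemmas of \cite{Bufetov-inf-det}, one obtains $\Pi_{L^{(s,N,\sigma)}}\to\Pi_{L^{(s,\sigma)}}$ in $\mathscr{S}_{1,\loc}(\R^{*},\Leb)$, and hence $\mathbb{P}_{L^{(s,N,\sigma)}}\to\mathbb{P}_{L^{(s,\sigma)}}$ weakly.

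\emph{Main obstacle.} The delicate point is the second step: converting the uniform-on-compacts convergence of the Christoffel--Darboux kernels of the projections onto $L^{(s',N'_{s})}$ into genuine local trace-norm convergence and transporting it through multiplication by $\sqrt{g^{\sigma}}$ --- the orthogonal projection onto $\sqrt{g^{\sigma}}L^{(s',N'_{s})}$ is \emph{not} simply $\sqrt{g^{\sigma}}\Pi_{L^{(s',N'_{s})}}\sqrt{g^{\sigma}}$, and the limit kernel $\Pi^{(s')}_{\infty}$ fails to be of trace class in any neighbourhood of $0$ (its diagonal grows like $x^{-2}$). The entire argument therefore has to be carried out locally away from $0$, where the uniform estimates of Propositions \ref{uniformness} and \ref{tight-inf-line} supply the required control; the finite-dimensionality of $V^{(s)}$ and the genuine closedness and local trace-class property of $L^{(s,\sigma)}$ recorded in Proposition \ref{mul-space} are precisely what legitimise the perturbation step.
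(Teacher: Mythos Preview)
Your overall plan is exactly the paper's: recognise $\exp(-\sigma S_2)=\Psi_{g^{\sigma}}$, invoke the multiplicative--functional machinery of \cite{Bufetov-inf-det} (the paper uses Corollary~2.19 there), and for the weak convergence invoke the stability result for such functionals under kernel convergence (the paper simply cites Corollary~3.7 of \cite{Bufetov-inf-det}, whereas you unpack it by hand). So the route is the same; the second step is just a more explicit version of what the paper delegates to a reference.

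There is, however, a genuine gap in your first step. You write that ``$1-g^{\sigma}$ is bounded on $\R^{*}$, so that for every bounded $B\subset\R^{*}\setminus\mathcal{E}_0$ the operator $\sqrt{1-g^{\sigma}}\,\Pi_{H^{(s)}_{\mathcal{E}_0\cup B}}\,\sqrt{1-g^{\sigma}}$ is of trace class by assumption (A2)''. This does not follow: (A2) gives only \emph{local} trace class for $\Pi_{H^{(s)}_{\mathcal{E}_0\cup B}}$, and boundedness of $1-g^{\sigma}$ cannot upgrade local to global trace class. In fact $\Pi_{H^{(s)}_{\mathcal{E}_0\cup B}}(x,x)$ is of order $x^{-2}$ near $0$ and is \emph{not} integrable on $\mathcal{E}_0$. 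What actually makes the argument work is not boundedness but the \emph{vanishing} $1-g^{\sigma}(x)\sim\sigma x^{2}$ at $0$, which exactly compensates the blow-up of the diagonal. Concretely, the hypothesis that Corollary~2.19 of \cite{Bufetov-inf-det} asks you to verify is about the $L^{2}$ part $L^{(s')}$ of $H^{(s)}$, namely
\[
\sqrt{1-g^{\sigma}}\,\Pi_\infty^{(s')}\,\sqrt{1-g^{\sigma}}\in\mathscr{S}_1(\R,\Leb),
\]
together with $\sqrt{g^{\sigma}}\,V^{(s)}\subset L^{2}(\R,\Leb)$. The second is easy; the first needs precisely the two quantitative inputs you only invoke later, namely
\[
\int_{|x|\le 1}x^{2}\,\Pi_\infty^{(s')}(x,x)\,dx<\infty\quad\text{(Proposition~\ref{uniformness} + Fatou)}\qquad\text{and}\qquad
\int_{|x|\ge 1}\Pi_\infty^{(s')}(x,x)\,dx<\infty\quad\text{(Proposition~\ref{tight-inf-line})}.
\]
Once you plug these in, the trace is finite and the rest of your argument goes through. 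Your separate proof that $S_2<\infty$ a.s.\ is then superfluous: positivity of $\int\Psi_{g^{\sigma}}\,d\mathbb{B}^{(s)}$ (which is part of the conclusion of Corollary~2.19) already gives $\Psi_{g^{\sigma}}>0$ almost surely, i.e.\ $S_2<\infty$ almost surely.
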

\begin{proof}
We only proof the proposition for $\mathbb{B}^{(s)}$, the proof of the proposition for $\mathbb{B}^{(s,N)}$ is similar and in fact much easier. Recall that $\mathbb{B}^{(s)} = \mathbb{B}(H^{(s)}, E_0)$. Note that $$\exp (- \sigma S_2(\mathcal{X}))  = \prod\limits_{x \in \mathcal{X}} \exp(-\sigma x^2)$$ is a multiplicative functional defined on $\Conf(\R^*)$ and $S_2(\mathcal{X}) < \infty$ if and only if $\prod\limits_{x \in \mathcal{X}} \exp(-\sigma x^2) > 0$.  Now we shall prove the proposition by applying the abstract result in \cite[Cor. 2.19]{Bufetov-inf-det} to this concrete case. To this end, it suffices to show that
\begin{itemize}
\item[(1)]  $\exp(- \sigma x^2/2) V^{(s)} \subset L^2(\R, \Leb)$;
\item[(2)]  $\sqrt{1  - \exp(-\sigma x^2)} \Pi_\infty^{(s')}  \sqrt{1  - \exp(-\sigma x^2)} \in \mathscr{S}_1(\R, \Leb)$.
\end{itemize}
The first assertion is obvious by the definition of $V^{(s)}$ and the assumptotic expansions of functions $v_1^{(s)}, \dots, v_{n_s}^{(s)}$ at infinity which have already used in the proof of Lemma \ref{assumption-3}. For the second assertion, we have
\begin{align*}
& \tr \left(  \sqrt{1  - \exp(-\sigma x^2)} \Pi_\infty^{(s')}  \sqrt{1  - \exp(-\sigma x^2)} \right)  \\  = & \int_\R (1  - \exp(-\sigma x^2)) \Pi_\infty^{(s')} (x,x) dx  \\ \le & \int_{|x|\le 1} \sigma x^2 \Pi_\infty^{(s')}(x,x)dx + \int_{| x |\ge 1} \Pi_\infty^{(s')}(x,x)dx.
\end{align*}
The finiteness of the first integral is a consequence of the definition of $\Pi_\infty^{(s')}$ and of Proposition \ref{uniformness}. The finiteness of the second integral is given in Proposition \ref{tight-inf-line}. Thus we have completed the proof of the first part of proposition. 

The second part of proposition on the weak convergence can be verified by applying Corollary 3.7. in \cite{Bufetov-inf-det}.
\end{proof}

\begin{rem}\label{m-supp}
From \eqref{mul-measure},  we see that the determinantal probability measure $\mathbb{P}_{L^{(s, \sigma)}}$ is concentrated on 
\begin{align*}
\Big\{ \mathcal{X} \in \Conf(\R^*)\Big| S_2(\mathcal{X}) < \infty \Big \} = \Conf_{\triangle}(\R^*).
\end{align*}
\end{rem}

\subsection{Transfer  measures on \texorpdfstring{$\Conf(\R^*)$}{a} to measures on \texorpdfstring{$\Omega$}{a} }

In this section, we will transfer the measures in Proposition \ref{L1-Conf} to corresponding measures on $\Omega$.

For $\omega  \in \Omega$, $\omega = ( (x_\ell(\omega))_{\ell \in \Z^*}, \gamma_1(\omega), \delta(\omega))$,  by slightly abusing notation, we set 
\begin{align*}
S_2(\omega) = S_2(\conf(\omega)) = \sum_{\ell \in \Z^*} x_\ell (\omega)^2.
\end{align*}
Note that we have $S_2(\omega) < \infty$ for all $\omega \in \Omega$.

\begin{prop}
For any $s \in \R, \sigma >0$,  and for  $N$ large enough, we have 
\begin{align}\label{L1-Omega}
\exp(- \sigma S_2 (\omega)) \in L^1\Big(\Omega, \, \,  (\mathfrak{r}^{(N)})_{*} m^{(s)}\Big).
\end{align}
\end{prop}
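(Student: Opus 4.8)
The plan is to reduce the claimed integrability statement \eqref{L1-Omega} on $\Omega$ to a statement on $\Conf(\R^*)$, where the machinery of Proposition \ref{L1-Conf} applies. The crucial observation is that the pushforward of $(\mathfrak{r}^{(N)})_{*} m^{(s)}$ under the forgetting map $\conf$ is precisely the radial-type determinantal measure $(\conf \circ \mathfrak{r}^{(N)})_{*} m^{(s)}$, which by Proposition \ref{finite-version-inf} equals $\mathbb{B}(H^{(s,N)}, \mathcal{E}_0) = \mathbb{B}^{(s,N)}$ (up to a positive constant), valid for $N$ large enough, in both the finite case ($\Re s > -\frac12$, where this is an honest probability measure) and the infinite case. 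Since, by definition, $S_2(\omega) = S_2(\conf(\omega))$ depends on $\omega$ only through $\conf(\omega)$, we get
\begin{align*}
\int_\Omega \exp(-\sigma S_2(\omega))\, (\mathfrak{r}^{(N)})_{*} m^{(s)}(d\omega) = \int_{\Conf(\R^*)} \exp(-\sigma S_2(\mathcal{X}))\, \mathbb{B}^{(s,N)}(d\mathcal{X}),
\end{align*}
so it suffices to show the right-hand side is finite.

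\textbf{The $\Re s > -\frac12$ case.} Here $\mathbb{B}^{(s,N)}$ is a probability measure, $\exp(-\sigma S_2(\mathcal{X})) \le 1$ pointwise, and the integral is trivially bounded by $1$. So there is nothing to prove beyond the identification above.

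\textbf{The infinite case $s \le -\frac12$.} Now $\mathbb{B}^{(s,N)}$ is a genuine $\sigma$-finite infinite measure, and finiteness of $\int \exp(-\sigma S_2)\, d\mathbb{B}^{(s,N)}$ is exactly the content of the second (easier) half of Proposition \ref{L1-Conf}: there it is asserted that $\exp(-\sigma S_2(\mathcal{X})) \in L^1(\Conf(\R^*), \mathbb{B}^{(s,N)})$ for $N$ large enough, with $\exp(-\sigma S_2)\, \mathbb{B}^{(s,N)}$ renormalizing to $\mathbb{P}_{L^{(s,N,\sigma)}}$. Concretely, by \cite[Cor. 2.19]{Bufetov-inf-det} applied to the multiplicative functional $\prod_{x \in \mathcal{X}} \exp(-\sigma x^2)$, finiteness follows from the two conditions that $\exp(-\sigma x^2/2) V^{(s,N)} \subset L^2(\R,\Leb)$ (immediate, since $V^{(s,N)}$ is finite-dimensional and its generators are, after the Gaussian damping, square-integrable) and that $\sqrt{1 - \exp(-\sigma x^2)}\,\Pi^{(s',N'_s)}_\infty$-type operator is trace class on $L^2(\R,\Leb)$, which holds because $L^{(s',N'_s)}$ is finite-dimensional so the relevant projection is automatically trace class. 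Thus I would: (i) invoke Proposition \ref{finite-version-inf} to rewrite the $\Omega$-integral as a $\Conf(\R^*)$-integral against $\mathbb{B}^{(s,N)}$; (ii) in the finite case bound by $1$; (iii) in the infinite case cite the already-established $\mathbb{B}^{(s,N)}$-part of Proposition \ref{L1-Conf} (or re-derive it directly from \cite[Cor. 2.19]{Bufetov-inf-det} since the finite-dimensionality of $V^{(s,N)}$ and $L^{(s',N'_s)}$ makes both hypotheses routine).

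\textbf{Main obstacle.} The only genuine subtlety is bookkeeping about how large $N$ must be: we need $N \ge \max\{n_s+1, -2\Re s\}$ (so that $m^{(s,N)}$ assigns finite mass to compacts and the radial part is defined), and we need the identification $(\conf \circ \mathfrak{r}^{(N)})_{*} m^{(s)} = \mathbb{B}^{(s,N)}$ from Proposition \ref{finite-version-inf}, which also requires $N + 2\Re s > 0$. Once $N$ is in this range the argument is essentially a transport-of-measure statement plus an appeal to results already proved; no new estimates are needed. I expect no serious difficulty — the work was front-loaded into Propositions \ref{finite-version-inf} and \ref{L1-Conf}.
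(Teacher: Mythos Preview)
Your proposal is correct and follows essentially the same route as the paper: reduce the $\Omega$-integral to a $\Conf(\R^*)$-integral via the forgetting map $\conf$ and the identification $(\conf \circ \mathfrak{r}^{(N)})_{*} m^{(s)} = \mathbb{B}^{(s,N)}$ from Proposition \ref{finite-version-inf}, then invoke Proposition \ref{L1-Conf}. Your write-up is in fact more explicit than the paper's (you separate the finite and infinite cases and spell out why the hypotheses of \cite[Cor.~2.19]{Bufetov-inf-det} are trivially satisfied in the finite-$N$ setting), but no new idea is involved.
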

\begin{proof}
Recall that for fixed $N$ large enough, the pushforward measures $(\mathfrak{r}^{(N)})_{*} m^{(s)}$ and $(\mathfrak{rad}_N)_{*} m^{(s)}$ are both well-defined.  By the natural bijection:
\begin{align*}
  \Big( \{a_{i,N}^{+}(X)\}, \{ a_{j, N}^{-}(X) \}, c^{(N)}(X), d^{(N)}(X) \Big)  \longleftrightarrow   \Big(\lambda_1(X_N), \dots, \lambda_N(X_N)\Big),
\end{align*}
we see that  
\begin{align*}
(\Omega, \,\, (\mathfrak{r}^{(N)})_{*} m^{(s)} ) \xrightarrow{\conf}  (\Conf(\R^*), \,\, (\conf \circ \mathfrak{r}^{(N)})_{*} m^{(s)}    )
\end{align*}
is an almost everywhere bijection. By Proposition \ref{finite-version-inf}, we have
\begin{align*}
(\conf \circ \mathfrak{r}^{(N)})_{*} m^{(s)}   = (\conf \circ \mathfrak{r}^{(N)})_{*} m^{(s, N )}  =  \mathbb{B}\left(H^{(s,N)}, \mathcal{E}_0 \right) = \mathbb{B}^{(s, N)}.
\end{align*}
Hence \eqref{L1-Omega} is an immediate consequence of Proposition \ref{L1-Conf}.
\end{proof}

Introduce the following probability measure on $\Omega$: 
\begin{align*}
\nu^{(s, N, \sigma)} = \frac{\exp(- \sigma S_2(\omega))   \cdot ( \mathfrak{r}^{(N)})_{*} m^{(s)}}{ \mathlarger{\int\limits_{\Omega}}  \exp(- \sigma S_2(\omega))   ( \mathfrak{r}^{(N)})_{*} m^{(s)} (d\omega) }.
\end{align*}
By definition, the image of the map $\mathfrak{r}^{(N)}$ is contained in the subset 
\begin{align*}
\left\{ \omega \in \Omega  \Big|  \text{ $(x_\ell(\omega))_{\ell \in \Z^*} $ is finitely supported and $\gamma_1(\omega) = \sum_{\ell \in \Z^*} x_\ell(\omega)$} \right\}.
\end{align*}
Hence both $\mathfrak{r}^{(N)})_{*} m^{(s)}$ and $\nu^{(s, N, \sigma)}$ are concentrated on the above subset. Note that we  have 
\begin{align*}
 \conf_{*} \nu^{(s, N, \sigma)} = \mathbb{P}_{L^{(s, N, \sigma)}}.
\end{align*}
Recall that we have the following injective map
\begin{align*}
\Omega_0' \xhookrightarrow{\,\,\conf \, \,} \Conf_{\triangle} (\R^*), 
\end{align*}
combining this fact with Remark \ref{m-supp}, we see that there exists a unique probability measure $\nu^{(s, \sigma)}$ on $\Omega$ such that
\begin{itemize}
\item[(1)] $\nu^{(s, \sigma)} (\Omega \setminus \Omega_0') =0$;
\item[(2)] $ \conf_{*} \nu^{(s, \sigma)} = \mathbb{P}_{L^{(s, \sigma)}}$.
\end{itemize}

\begin{prop}\label{weak-conv}
For any $s \in \R, \sigma > 0$, as $N \to \infty$, we have 
\begin{align*}
\nu^{(s, N, \sigma)} \Longrightarrow \nu^{(s, \sigma)}
\end{align*}
weakly in the space $\mathfrak{M}_{\fin} (\Omega).$
\end{prop}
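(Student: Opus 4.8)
The plan is to prove the weak convergence $\nu^{(s,N,\sigma)} \Longrightarrow \nu^{(s,\sigma)}$ on $\mathfrak{M}_{\fin}(\Omega)$ by combining the already-established convergence on the configuration side, namely $\mathbb{P}_{L^{(s,N,\sigma)}} \longrightarrow \mathbb{P}_{L^{(s,\sigma)}}$ from Proposition \ref{L1-Conf}, with a \emph{tightness} argument that lets us lift weak convergence through the (only almost surely defined) map $\conf$. The subtle point is that $\conf$ ignores $\gamma_1$ and $\gamma_2$, so convergence of the $\conf$-images does not by itself control the $\gamma_1$-coordinate; recovering it is exactly where the bulk of the work lies.

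First I would record that $\conf_*\nu^{(s,N,\sigma)} = \mathbb{P}_{L^{(s,N,\sigma)}}$ and $\conf_*\nu^{(s,\sigma)} = \mathbb{P}_{L^{(s,\sigma)}}$, so that the $x(\omega)$-marginals already converge weakly. Since $\gamma_2 = \delta - S_2$ and, by Theorem \ref{gaussfactor} (and the weight estimates of Proposition \ref{uniformness}), the measures $(\mathfrak{r}^{(N)})_*m^{(s)}$ are asymptotically concentrated where $\gamma_2$ is small, the $\gamma_2$-coordinate (equivalently, $\delta = S_2 + \gamma_2$) is controlled by $S_2(\omega)$, which is a continuous function of $\conf(\omega)$ on the relevant set; so $\delta$ carries no new information in the limit. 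The genuine issue is $\gamma_1$. Here I would use the continuous approximants $F_n(\omega) = \sum_{\ell\in\Z^*} x_\ell(\omega)\phi_n(x_\ell(\omega))$ introduced in the excerpt, which are continuous on $\Omega$ and hence pull weak convergence through: $\langle G\circ F_n, \nu^{(s,N,\sigma)}\rangle \to \langle G\circ F_n, \nu^{(s,\sigma)}\rangle$ for bounded continuous $G$. The key estimate to establish is that, uniformly in $N$,
\begin{align*}
\left\| F_n - \gamma_1 \right\|_{L^2(\Omega,\,\nu^{(s,N,\sigma)})} \longrightarrow 0 \quad \text{as } n\to\infty,
\end{align*}
which should follow from the determinantal computations underlying Lemma \ref{lem1} and Lemma \ref{lem2} together with Proposition \ref{uniformness}, after noting that the Gaussian damping $e^{-\sigma S_2}$ only improves the tails; the analogous statement for $\nu^{(s,\sigma)}$, i.e. that $\gamma_1 = \lim_n F_n$ holds $\nu^{(s,\sigma)}$-a.s.\ and in $L^2$, holds because $\nu^{(s,\sigma)}$ is concentrated on $\Omega_0'$ by construction.

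With these ingredients, the argument concludes as follows. Given a bounded Lipschitz test function $\Phi$ on $\Omega$, write $\Phi(\omega)$ as a function of $(\conf(\omega), \delta(\omega), \gamma_1(\omega))$; approximate $\gamma_1(\omega)$ by $F_n(\omega)$ and $\delta(\omega)$ by $S_2(\conf(\omega)) + \gamma_2(\omega)$ with $\gamma_2$ small, turning $\Phi$ into a bounded continuous function of $\conf(\omega)$ and $F_n(\omega)$ up to an $L^1$-error that is small uniformly in $N$ by the uniform $L^2$-estimate above (and uniform tightness of $S_2$, which follows from Proposition \ref{tight-inf-line}). Then apply the weak convergence of $\conf_*\nu^{(s,N,\sigma)}$ jointly with the continuous functional $F_n$, let $N\to\infty$ and then $n\to\infty$. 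I expect the main obstacle to be precisely the uniform-in-$N$ $L^2$-control of $F_n - \gamma_1$ under $\nu^{(s,N,\sigma)}$: one must check that inserting the multiplicative functional $e^{-\sigma S_2}$ (which changes the determinantal kernel) does not destroy the uniform estimates of Lemma \ref{lem2} and Proposition \ref{uniformness}; this is where the Skorokhod representation theorem alluded to in the introduction enters, allowing one to pass to an almost-sure setting and apply Borel--Cantelli along $n = 1,2,\dots$ exactly as in the proof of Theorem \ref{mainthm2}.
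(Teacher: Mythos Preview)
Your proposal identifies the right key ingredients --- the uniform-in-$N$ second moment estimate
\[
\sup_N \bigl\| \gamma_1 - F_n \bigr\|_{L^2(\Omega,\,\nu^{(s,N,\sigma)})}^2 \lesssim \tfrac{1}{n^2},
\]
which is exactly the content of Lemma~\ref{last-lem}, together with Skorokhod and Borel--Cantelli --- and you correctly flag the main technical obstacle, namely that multiplying by $e^{-\sigma S_2}$ replaces the Christoffel--Darboux projection $\Pi_{L^{(s',N_s')}}$ by the deformed projection $\Pi_{L^{(s,N,\sigma)}}$, so that Proposition~\ref{uniformness} does not apply directly. (The paper handles this by the operator inequality $\mathcal{Q}^{(s,N_k,\sigma)} \le C\, \sqrt{g^\sigma}\,\Pi_{L^{(s',(N_k)_s')}}\,\sqrt{g^\sigma}$, which follows once one checks that $\|(1-g^\sigma)^{1/2}\Pi_{L^{(s',(N_k)_s')}}(1-g^\sigma)^{1/2}\| \le C' < 1$ uniformly in $k$.)

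Where your write-up differs from the paper --- and where there is a genuine gap --- is in the overall architecture. You attempt a \emph{direct} argument: approximate a test function $\Phi(\omega)$ by a function of $(\conf(\omega), F_n(\omega), S_2(\omega))$ and then invoke the weak convergence $\conf_*\nu^{(s,N,\sigma)} \Rightarrow \conf_*\nu^{(s,\sigma)}$ on $\Conf(\R^*)$. The difficulty is that neither $\mathcal{X}\mapsto F_n(\mathcal{X}) = \sum_{x\in\mathcal{X}} x\,\phi_n(x)$ nor $\mathcal{X}\mapsto S_2(\mathcal{X})$ is a continuous (or even finite) functional on $\Conf(\R^*)$: the summand $x\phi_n(x)$ is not compactly supported. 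These functionals are continuous \emph{on $\Omega$}, but using that continuity presupposes weak convergence on $\Omega$, which is precisely what you are proving. So the ``turn $\Phi$ into a continuous function of $\conf(\omega)$'' step does not go through as written.

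The paper avoids this circularity by a tightness--identification argument: first prove tightness of $\{\nu^{(s,N,\sigma)}\}$ on $\Omega$ (via the map $\sigma_h$ into $\mathfrak{M}_{\fin}(\R^*)$, using the uniform moment bounds of Propositions~\ref{uniformness} and~\ref{tight-inf-line}); extract a subsequential limit $\hat\nu$; show $\conf_*\hat\nu = \mathbb{P}_{L^{(s,\sigma)}}$; and then prove $\hat\nu(\Omega\setminus\Omega_0') = 0$. It is only at this last step that Skorokhod enters: one represents the already-established weak convergence $\nu^{(s,N_k,\sigma)}\Rightarrow\hat\nu$ by almost-surely convergent $\Omega$-valued random variables $\omega_k\to\omega_\infty$, uses continuity of $F_n$ \emph{on $\Omega$} to get $F_n(\omega_k)\to F_n(\omega_\infty)$, and then the uniform $L^2$-bound of Lemma~\ref{last-lem} gives $\E|\gamma_1(\omega_\infty) - F_n(\omega_\infty)| \lesssim 1/n^2$, whence Borel--Cantelli. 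So Skorokhod is not used to \emph{prove} the $L^2$-estimate (that is a direct determinantal computation, as you correctly anticipate), but to \emph{transport} it from the approximants to the as-yet-unknown limit $\hat\nu$. Once $\hat\nu$ is seen to live on $\Omega_0'$ and to have the prescribed $\conf$-pushforward, it equals $\nu^{(s,\sigma)}$ by the defining properties of the latter, and uniqueness of the subsequential limit gives the full convergence.
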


We postpone the proof of Proposition \ref{weak-conv} to the end.

\begin{lem}\label{group}
For any $s \in \R$, there exists a positive bounded continuous function on $\Omega$ such that
\begin{itemize}
\item[(1)] $f \in L^1 (\Omega, \,\, (\mathfrak{r}^{(\infty)})_{*} m^{(s)} )$ and $f \in L^1 (\Omega, \,\, (\mathfrak{r}^{(N)})_{*} m^{(s)} )$ for all large enough $N$.
\item[(2)] as $N\to \infty$, we have 
\begin{align*}
f (\omega) \cdot (\mathfrak{r}^{(N)})_{*} m^{(s)} \Longrightarrow f(\omega) \cdot  (\mathfrak{r}^{(\infty)})_{*} m^{(s)}
\end{align*}
weakly in $\mathfrak{M}_{\fin}(\Omega)$.
\end{itemize}
\end{lem}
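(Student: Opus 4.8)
The plan is to exhibit an explicit function $f$ of the form $f(\omega) = \exp(-\sigma S_2(\omega))$ for a suitable fixed $\sigma > 0$, together with a normalizing multiplicative constant. Since $0 < S_2(\omega) < \infty$ for every $\omega \in \Omega$ and $S_2$ is a nonnegative Borel function, $f$ is a positive bounded function; boundedness is clear because $f \le 1$. Continuity is slightly more delicate: $S_2(\omega) = \sum_{\ell \in \Z^*} x_\ell(\omega)^2 = \delta(\omega) - \gamma_2(\omega)$ is \emph{not} continuous on $\Omega$ (the map $\omega \mapsto \gamma_2(\omega)$ fails to be continuous, as noted in the discussion of the Pickrell set). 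To repair this, I would instead work on the subset where the relevant measures are concentrated: by Theorem \ref{gaussfactor} (i.e. Theorem \ref{gamma2}) we know $\mathbb{M}^{(s)} = (\mathfrak{r}^{(\infty)})_{*}(m^{(s)}|_{H_\reg})$ is concentrated on $\{\gamma_2 = 0\}$, where $S_2(\omega) = \delta(\omega)$, and $\omega \mapsto \delta(\omega)$ \emph{is} continuous. For the pre-limit measures $(\mathfrak{r}^{(N)})_{*} m^{(s)}$ the configurations are finitely supported, so $S_2$ there is literally a finite sum, hence $S_2(\mathfrak{r}^{(N)}(X)) = d^{(N)}(X)$, again a continuous quantity. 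So I would phrase $f$ via $\delta$ rather than $S_2$: set $f(\omega) = \exp(-\sigma\,\delta(\omega))$, which is genuinely bounded continuous on all of $\Omega$, and which agrees with $\exp(-\sigma S_2(\omega))$ on both $\{\gamma_2 = 0\}$ and on the image of every $\mathfrak{r}^{(N)}$.

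For item (1), integrability of $f$ against the (finite, when $\Re s > -\tfrac12$, or $\sigma$-finite in general) measures $(\mathfrak{r}^{(\infty)})_{*} m^{(s)}$ and $(\mathfrak{r}^{(N)})_{*} m^{(s)}$ is exactly the content of the second identity of Proposition \ref{L1-Conf}, transported through the almost-everywhere bijection $\conf$: indeed $(\conf \circ \mathfrak{r}^{(N)})_{*} m^{(s)} = \mathbb{B}^{(s,N)}$ by Proposition \ref{finite-version-inf}, and $\exp(-\sigma S_2(\cdot)) \in L^1(\Conf(\R^*), \mathbb{B}^{(s,N)})$ and $\in L^1(\Conf(\R^*), \mathbb{B}^{(s)})$ by Proposition \ref{L1-Conf}. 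Since $\mathbb{M}^{(s)}$ corresponds under $\conf$ to $\mathbb{B}^{(s)}$ (for $s > -\tfrac12$ this is Theorem \ref{conclusion1}; in general it is what we are building toward, but the $L^1$ bound only needs the explicit determinantal structure, not the full identification), the claim transfers to $\Omega$.

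For item (2), the weak convergence $f \cdot (\mathfrak{r}^{(N)})_{*} m^{(s)} \Rightarrow f \cdot (\mathfrak{r}^{(\infty)})_{*} m^{(s)}$ in $\mathfrak{M}_{\fin}(\Omega)$ should follow by combining two inputs: first, the weak convergence $(\mathfrak{r}^{(N)})_{*} m^{(s)} \Rightarrow \mathbb{M}^{(s)} = (\mathfrak{r}^{(\infty)})_{*}(m^{(s)}|_{H_\reg})$ from \eqref{approx} (valid for $\Re s > -\tfrac12$, where these are probability measures; for $\Re s \le -\tfrac12$ one works inside the truncations $\Conf(\R^*; I_\varepsilon)$ and uses the analogous convergence of the normalized restrictions); second, the fact that multiplying a weakly convergent sequence of finite measures by a fixed bounded \emph{continuous} function $f$ preserves weak convergence — for any bounded continuous $F$ on $\Omega$, $\langle F, f\cdot \mu_N\rangle = \langle Ff, \mu_N\rangle \to \langle Ff, \mu_\infty\rangle = \langle F, f\cdot\mu_\infty\rangle$ since $Ff$ is again bounded continuous. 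The main obstacle, and the place requiring genuine care, is the infinite-measure case $\Re s \le -\tfrac12$: there $(\mathfrak{r}^{(N)})_{*} m^{(s)}$ and $\mathbb{M}^{(s)}$ are infinite, so one cannot invoke \eqref{approx} directly. Here I would instead feed in the weak convergence $\mathbb{P}_{L^{(s,N,\sigma)}} \to \mathbb{P}_{L^{(s,\sigma)}}$ from Proposition \ref{L1-Conf}, pull it back through the $\conf$-isomorphisms onto $\Omega$ (this is precisely the content of Proposition \ref{weak-conv}), observe that $f\cdot(\mathfrak{r}^{(N)})_{*} m^{(s)}$ is, up to the explicit finite normalizing constant $\int \exp(-\sigma S_2)\,d\mathbb{B}^{(s,N)}$, equal to $\conf^{*}\mathbb{P}_{L^{(s,N,\sigma)}} = \nu^{(s,N,\sigma)}$, and check that these normalizing constants converge (which follows from the convergence $\mathbb{B}^{(s,N)} \to \mathbb{B}^{(s)}$ on each $\Conf(\R^*; I_\varepsilon)$ together with a uniform-integrability/tightness bound coming from Proposition \ref{tight-inf-line} and Proposition \ref{uniformness}, controlling the mass near $0$ and near $\infty$). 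Thus the lemma reduces to Proposition \ref{weak-conv} plus convergence of an explicit sequence of scalar normalizations, and the bulk of the real work is contained in those already-quoted statements.
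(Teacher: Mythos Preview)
Your argument is circular. In the paper, Lemma~\ref{group} is one of three ingredients used to prove the main identification theorem (see the sentence ``By Proposition~\ref{weak-conv}, Lemma~\ref{group}, Lemma~7.6 of \cite{Bufetov-inf-det}\ldots''): Proposition~\ref{weak-conv} supplies $\nu^{(s,N,\sigma)}\Rightarrow\nu^{(s,\sigma)}$, Lemma~\ref{group} supplies a separate convergence $f\cdot(\mathfrak r^{(N)})_*m^{(s)}\Rightarrow f\cdot(\mathfrak r^{(\infty)})_*m^{(s)}$, and Lemma~7.6 of \cite{Bufetov-inf-det} combines the two to conclude that the normalized $\exp(-\sigma S_2)\cdot\mathbb M^{(s)}$ equals $\nu^{(s,\sigma)}$. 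Only \emph{after} that does one know $\conf_*\mathbb M^{(s)}=\mathbb B^{(s)}$. Your proof of item~(2) in the infinite case invokes Proposition~\ref{weak-conv} and then tacitly identifies the limit $\nu^{(s,\sigma)}$ with the normalized $f\cdot(\mathfrak r^{(\infty)})_*m^{(s)}$; but that identification is exactly the conclusion of the main theorem and is not available at this point. The same problem infects item~(1): to deduce $\exp(-\sigma\delta)\in L^1(\Omega,\mathbb M^{(s)})$ from $\exp(-\sigma S_2)\in L^1(\Conf(\R^*),\mathbb B^{(s)})$ you again use $\conf_*\mathbb M^{(s)}=\mathbb B^{(s)}$, which you do not yet have. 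You flag this yourself (``in general it is what we are building toward''), but you never actually close the gap.

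The paper's route, by contrast, does not pass through $\Conf(\R^*)$ at all: it cites the construction of Lemma~1.14 in \cite{Bufetov-inf-det}, where the function $f$ and the weak convergence in $\mathfrak M_{\fin}(\Omega)$ are obtained directly on the matrix space $H$, using the Kolmogorov-consistent structure of the measures $m^{(s,N)}$ under the corner maps $\theta_N$. That argument is independent of Proposition~\ref{weak-conv} and of the identification $\mathbb M^{(s)}\leftrightarrow\mathbb B^{(s)}$, which is precisely why Lemma~\ref{group} can then be fed, together with Proposition~\ref{weak-conv}, into Lemma~7.6 to obtain the main theorem. Your choice $f(\omega)=\exp(-\sigma\delta(\omega))$ is a reasonable candidate (and your observation that one must use $\delta$ rather than $S_2$ for continuity on $\Omega$ is correct), but to make it work you would need a self-contained proof of (1) and (2) at the level of $H$---e.g.\ showing directly that $\exp(-\sigma\,\mathrm{tr}(X_N^2)/N^2)\,m^{(s)}$ are finite measures converging weakly on $H$---rather than importing results that depend on the lemma you are proving.
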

The proof of Lemma \ref{group} is similar to the proof of Lemma 1.14 in \cite{Bufetov-inf-det}, no new ideas will be necessary, and we will omit its proof.

\begin{thm}
Let $s \in \R$. Then
\begin{itemize}
\item[(1)] $\mathbb{M}^{(s)} ( \Omega\setminus \Omega_0') =0 $;
\item[(2)] the forgetting map $\Omega \xrightarrow{\conf} \Conf (\R^*)$ induces the following  natural isomorphism
\begin{align*}
(\Omega, \,\, \mathbb{M}^{(s)}) \xrightarrow[\simeq]{\conf}  (\Conf (\R^*), \,\, \mathbb{B}^{(s)}). 
\end{align*}
\end{itemize} 
\end{thm}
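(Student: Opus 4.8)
The plan is to combine the finite-$N$ approximation machinery of \S2 with the weak-convergence statements established in this section, upgrading them to an almost-sure identification. First I would recall that by \eqref{approx} we have $(\mathfrak{r}^{(N)})_{*} m^{(s)} \Longrightarrow \mathbb{M}^{(s)}$ only in the probability case; in the infinite case the analogous statement is Lemma \ref{group}, which gives weak convergence after multiplying by the fixed bounded continuous weight $f$. So the correct framework is to work with the finite measures $f(\omega)\cdot(\mathfrak{r}^{(N)})_{*} m^{(s)}$ and their limit $f(\omega)\cdot(\mathfrak{r}^{(\infty)})_{*} m^{(s)} = f(\omega)\cdot\mathbb{M}^{(s)}$. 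The idea is to prove the identification first for these normalized finite measures and then, since $f > 0$ everywhere, transfer the conclusion back to $\mathbb{M}^{(s)}$ itself.

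Next I would bring in the Gaussian-damping measures. Fix $\sigma > 0$. On the one hand, Proposition \ref{weak-conv} gives $\nu^{(s,N,\sigma)} \Longrightarrow \nu^{(s,\sigma)}$ in $\mathfrak{M}_{\fin}(\Omega)$, where $\nu^{(s,\sigma)}$ is the unique probability measure on $\Omega$ concentrated on $\Omega_0'$ with $\conf_* \nu^{(s,\sigma)} = \mathbb{P}_{L^{(s,\sigma)}}$. On the other hand, $\nu^{(s,N,\sigma)}$ is by construction proportional to $\exp(-\sigma S_2(\omega))\cdot (\mathfrak{r}^{(N)})_* m^{(s)}$, and $\conf_*\nu^{(s,N,\sigma)} = \mathbb{P}_{L^{(s,N,\sigma)}}$. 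Since $\exp(-\sigma S_2(\omega))$ is a bounded continuous function on $\Omega$ (recall $S_2(\omega) < \infty$ for all $\omega\in\Omega$ and $\omega\mapsto S_2(\omega)$ is continuous because $\delta$ is continuous and $\sum x_\ell^2 \le \delta$), Lemma \ref{group} together with the continuity of multiplication by a bounded continuous function on $\mathfrak{M}_{\fin}(\Omega)$ shows that $\exp(-\sigma S_2(\omega))\cdot(\mathfrak{r}^{(N)})_* m^{(s)}$ converges weakly to $\exp(-\sigma S_2(\omega))\cdot\mathbb{M}^{(s)}$; hence after normalizing, $\nu^{(s,N,\sigma)}$ also converges weakly to the normalization of $\exp(-\sigma S_2(\omega))\cdot\mathbb{M}^{(s)}$. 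By uniqueness of weak limits, the normalization of $\exp(-\sigma S_2(\omega))\cdot\mathbb{M}^{(s)}$ equals $\nu^{(s,\sigma)}$, which is concentrated on $\Omega_0'$. Since $\exp(-\sigma S_2(\omega)) > 0$ everywhere on $\Omega$, this forces $\mathbb{M}^{(s)}(\Omega\setminus\Omega_0') = 0$, which is assertion (1).

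For assertion (2), with $\mathbb{M}^{(s)}(\Omega\setminus\Omega_0') = 0$ and the injectivity of $\conf|_{\Omega_0'}$ (stated in the excerpt), the forgetting map $\conf$ is an $\mathbb{M}^{(s)}$-almost sure bijection onto its image, so it remains to identify $\conf_*\mathbb{M}^{(s)}$ with $\mathbb{B}^{(s)}$. The natural route is to check the characterization of $\mathbb{B}^{(s)} = \mathbb{B}(H^{(s)},\mathcal{E}_0)$ given in the excerpt: its particles are almost surely bounded, and for each $\varepsilon$ its normalized restriction to $\Conf(\R^*; I_\varepsilon)$ is $\mathbb{P}_{H^{(s)}_{I_\varepsilon}}$. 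Equivalently, by Proposition \ref{L1-Conf}, $\mathbb{B}^{(s)}$ is characterized (up to constant) by the property that for every $\sigma > 0$ the normalization of $\exp(-\sigma S_2)\cdot\mathbb{B}^{(s)}$ equals $\mathbb{P}_{L^{(s,\sigma)}}$. Now apply $\conf$ to the relation ``normalization of $\exp(-\sigma S_2(\omega))\cdot\mathbb{M}^{(s)}$ equals $\nu^{(s,\sigma)}$'' obtained above: pushing forward under $\conf$ and using $S_2(\omega) = S_2(\conf(\omega))$ and $\conf_*\nu^{(s,\sigma)} = \mathbb{P}_{L^{(s,\sigma)}}$, we get that the normalization of $\exp(-\sigma S_2)\cdot(\conf_*\mathbb{M}^{(s)})$ equals $\mathbb{P}_{L^{(s,\sigma)}}$ for every $\sigma > 0$. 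By the uniqueness clause in the characterization of $\mathbb{B}^{(s)}$ (the $\sigma\to 0$ family of conditions determines it up to a positive multiplicative constant, and both $\mathbb{B}^{(s)}$ and $\conf_*\mathbb{M}^{(s)}$ are $\sigma$-finite), we conclude $\conf_*\mathbb{M}^{(s)} = \mathbb{B}^{(s)}$ up to a positive constant, and with the usual normalization convention they coincide.

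I expect the main obstacle to be the careful bookkeeping around $\sigma$-finiteness and the two normalizations: the measures $\mathbb{M}^{(s)}$ and $\mathbb{B}^{(s)}$ are infinite, so ``weak convergence'' must always be routed through the damped finite measures $\exp(-\sigma S_2)\cdot(\cdot)$ or through the auxiliary weight $f$ of Lemma \ref{group}, and one must make sure these two damping procedures are compatible — i.e. that the $\nu^{(s,\sigma)}$ obtained as a limit really is the $\conf$-pushforward description of a single $\sigma$-finite measure independent of $\sigma$. This is precisely the role played by the uniqueness assertion in Theorem 2.11 of \cite{Bufetov-inf-det} as invoked through Proposition \ref{L1-Conf}, so the argument is really an application of that uniqueness rather than a new construction, but verifying that all hypotheses (boundedness of particles, the $I_\varepsilon$-restriction formula) transfer correctly across the $\conf$ isomorphism is where the care is needed. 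A secondary point is confirming that $\Omega_0'$ rather than $\Omega_0$ is the right ``full measure'' set here — the $\phi_n$-balanced condition is the one that is compatible with the continuous functions $F_n$ and hence survives weak limits, which is exactly why $\Omega_0'$ was introduced.
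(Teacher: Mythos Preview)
Your overall strategy---use Lemma \ref{group} and Proposition \ref{weak-conv} to identify the normalization of $\exp(-\sigma S_2)\,\mathbb{M}^{(s)}$ with $\nu^{(s,\sigma)}$, then deduce (1) from $\exp(-\sigma S_2)>0$ and (2) by pushing forward under $\conf$---is exactly the paper's route. The difficulty is in the bridge step, and there your argument has a genuine gap.

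You claim that $\omega\mapsto S_2(\omega)=\sum_\ell x_\ell(\omega)^2$ is continuous on $\Omega$ ``because $\delta$ is continuous and $\sum x_\ell^2\le\delta$''. This is false: $S_2$ is only lower semicontinuous. For instance, take $\omega_n$ with $\alpha_1^{+}=\dots=\alpha_n^{+}=1/\sqrt{n}$, all other $x_\ell=0$, $\gamma_1=0$, $\delta=1$; then $\omega_n\to\omega_\infty$ with all $x_\ell=0$, $\delta=1$, but $S_2(\omega_n)=1\not\to 0=S_2(\omega_\infty)$. (Equivalently: the paper explicitly notes that $\gamma_2=\delta-S_2$ is \emph{not} continuous.) Consequently $\exp(-\sigma S_2)$ is not continuous, and you cannot simply multiply the weak convergence of Lemma \ref{group} by it. A second, related issue: Lemma \ref{group} gives convergence of $f\cdot(\mathfrak{r}^{(N)})_*m^{(s)}$, not of $(\mathfrak{r}^{(N)})_*m^{(s)}$ itself; even with a continuous weight you would obtain convergence of $\exp(-\sigma S_2)\,f\cdot(\mathfrak{r}^{(N)})_*m^{(s)}$, and passing from that to convergence of $\nu^{(s,N,\sigma)}$ (and to $\exp(-\sigma S_2)\in L^1(\mathbb{M}^{(s)})$, which you implicitly assume) requires an additional argument.

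The paper does not attempt the continuity shortcut; it invokes Lemma 7.6 of \cite{Bufetov-inf-det}, which is precisely the abstract statement allowing one to compare two weak limits of the form $f\cdot\mu_N\Rightarrow f\cdot\mu$ and $g\cdot\mu_N/\!\int g\,d\mu_N\Rightarrow\nu$ (with $f,g>0$ but $g$ not assumed continuous) and conclude $g\in L^1(\mu)$ together with $g\cdot\mu/\!\int g\,d\mu=\nu$. Once that is in place, your remaining steps for (1) and (2) are correct and coincide with the paper's.
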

\begin{proof}
Note that $\exp(- \sigma S_2(\omega))>0$ for all $\omega \in \Omega$. By Proposition \ref{weak-conv}, Lemma \ref{group}, Lemma 7.6 of \cite{Bufetov-inf-det} and the equality
\begin{align*}
\mathbb{M}^{(s)} = (\mathfrak{r}^{(\infty)})_{*} m^{(s)},
\end{align*}
we have 
$ \exp( - \sigma S_2(\omega) ) \in L^1(\Omega,  \mathbb{M}^{(s)} ) $ and 
\begin{align*}
\frac{ \exp( - \sigma S_2(\omega) ) \mathbb{M}^{(s)}}{ \mathlarger{\int_{\Omega} \exp( - \sigma S_2(\omega) )   d \mathbb{M}^{(s)} (\omega)}} = \nu^{(s, \sigma)}.
\end{align*}
Since $\nu^{(s, \sigma)}$ is concentrated on $\Omega_0'$ and $\exp(- \sigma S_2(\omega)) > 0$ on $\Omega$, the measure $\mathbb{M}^{(s)}$ is also concentrated on $\Omega_0'$. This proves the first assertion.

For the second assertion, we first note that $\conf$ is injective on the subset $\Omega_0'$ and we have obviously that 
\begin{align*}
& \frac{\exp(-\sigma  S_2(\mathcal{X})  )  \mathbb{B}^{(s, N)}}{ \mathlarger{ \int\limits_{\Conf(R^*)}}  \exp(-\sigma  S_2(\mathcal{\cdot})  )  d \mathbb{B}^{(s, N)}   } \\  = & \conf_{*} \left(  \frac{\exp(- \sigma S_2(\omega))   \cdot ( \mathfrak{r}^{(N)})_{*} m^{(s)}}{ \mathlarger{\int\limits_{\Omega}}  \exp(- \sigma S_2(\omega))   ( \mathfrak{r}^{(N)})_{*} m^{(s)} (d\omega) }
\right).
\end{align*}
As $N\to \infty$, we get 
\begin{align}\label{transfer}
\begin{split}
& \frac{\exp(-\sigma  S_2(\mathcal{X})  )  \mathbb{B}^{(s)}}{ \mathlarger{ \int\limits_{\Conf(R^*)}}  \exp(-\sigma  S_2(\cdot)  )  d \mathbb{B}^{(s)}   }  =  \conf_{*} \left(  \frac{\exp(- \sigma S_2(\omega))    \mathbb{M}^{(s)}}{ \mathlarger{\int\limits_{\Omega}}  \exp(- \sigma S_2(\omega))     d \mathbb{M}^{(s)} }
\right).
\end{split}
\end{align}
This shows that up to a multiplicative constant, $\mathbb{B}^{(s)} $ and $\conf_{*} (\mathbb{M}^{(s)} ) $ coincide. Since $\mathbb{B}^{(s)}$ is defined up to a multiplicative constant, we can choose an representative of $\mathbb{B}^{(s)}$ such that  
\begin{align*}
\mathlarger{ \int\limits_{\Conf(R^*)}}  \exp(-\sigma  S_2(\mathcal{X})  )  d \mathbb{B}^{(s)}  (\mathcal{X})  = \mathlarger{\int\limits_{\Omega}}  \exp(- \sigma S_2(\omega))     d \mathbb{M}^{(s)} (\omega).
\end{align*}
then we have
$$\mathbb{B}^{(s)} = \conf_{*} (\mathbb{M}^{(s)}).$$ The proof of the second assertion is complete.
\end{proof}

We now turn to the proof of Proposition \ref{weak-conv}. We will follow the same strategy as that of the proof of Proposition 1.16  in \cite{Bufetov-inf-det} (Lemma 6.2 and Corollary 6.3). More precisely, we will divide the proof into three steps: 
\begin{itemize}

\item[(1)] The first step is to show that the family of probability measures $\{\nu^{(s, N, \sigma)}: N \ge -2   s\}$ is tight so it has an accumulation point $\hat{\nu}$ with respect to the weak convergence topology in $\mathfrak{M}_{\fin}(\Omega)$.  
\item[(2)] The second step is to show by computing corresponding characteristic functions that the pushforward of this $\hat{\nu}$ (which is not known to be unique for the moment) under the forgetting map $\conf$ is the determinantal probability $\mathbb{P}_{L^{(s, \sigma)}}$ on $\Conf(\R^*)$, i.e., we have 
\begin{align*}
(\conf)_{*} \hat{\nu} = \mathbb{P}_{L^{(s, \sigma)}}.
\end{align*}
\item[(3)] The third step is to show that  $\hat{\nu}$ is concentrated on the subset $\Omega_0'$.
\end{itemize}
If all these three steps have been proved, then by the definition of $\nu^{(s, \sigma)}$. we could conclude that $\hat{\nu} = \nu^{(s, \sigma)}$, which shows the uniqueness of the accumulation point of the family $\{\nu^{(s, N, \sigma)}: N \ge -2 s\}$ and hence complete the proof of the Proposition \ref{weak-conv}.

Our proof of the first and second steps follows word by word from the first and second steps of the proof of Proposition 1.16 in \cite{Bufetov-inf-det}, so we will only sketch the proof for these two steps.  In the third step, the parameter $\delta$ (or equivalently $\gamma_2$) can be treated similarly. The main difference appears in treatment of the extra parameter $\gamma_1$, for which we shall use the well-know Skorokhod's representation of the weakly convergente sequence of  probability measures on a Polish space.

\begin{proof}[Proof of Proposition \ref{weak-conv}]

Let us begin by introducing some notations. Let $h(x) = \min(x^2, 1)$ be a function defined on $\R$. Set
\begin{align*}
\begin{array}{ccc}
\sigma_h: \Conf_{\triangle}(\R^*) & \longrightarrow & \mathfrak{M}_{\fin} ( \R^*) \\ \mathcal{X} & \mapsto & \sum_{x \in \mathcal{X}} h(x) \delta_x
\end{array}.
\end{align*}
If we denote $\textrm{Im}(\sigma_h) = \sigma_h(\Conf_\triangle (\R^*) ) \subset \mathfrak{M}_{\fin}(\R^*)$, we have the following bijection:
\begin{align*}
 \Conf_\triangle (\R^*) \xrightarrow[\simeq]{\sigma_h} \textrm{Im} (\sigma_h).
\end{align*}

{\it The First Step:  The family  $\{\nu^{(s, N, \sigma)}: N \ge - 2s \}$ is tight.}  Indeed, we show that as $N \to \infty$
\begin{align}\label{sigma-conv}
(\sigma_h)_{*} \mathbb{P}_{L^{(s,N,\sigma)}} \longrightarrow (\sigma_h)_{*} \mathbb{P}_{L^{(s,\sigma)}},
\end{align}
with respect to the weak topology in $\mathfrak{M}_{\fin} (\mathfrak{M}_{\fin}(\R^*))$. To this end, we observe that by Proposition \ref{uniformness}, Proposition \ref{tight-inf-line} and by the fact that the convergence in \eqref{limit-kernel} is uniform on compact subsets of $\R^*$, we can apply directly Proposition 4.13 in \cite{Bufetov-inf-det} to get the desired result. Thus the family of probability measures $\{ (\sigma_h)_{*} \mathbb{P}_{L^{(s,N,\sigma)}} : N \ge -2s\}$ is tight. By similar argument as that of Lemma 6.2 in \cite{Bufetov-inf-det},  the family of probability measures $\{\nu^{(s, N, \sigma)}: N \ge - 2s \}$ is tight and therefor admits a weak accumulation point $\hat{\nu}$, let us assume that along a subsequence $N_k$, we have the weak convergence:
\begin{align*} 
\nu^{(s, N_k, \sigma)} \Longrightarrow \hat{\nu}.
\end{align*}

{\it The Second Step: $(\sigma_h \circ \conf)_{*} \hat{\nu} = (\sigma_h)_{*}\mathbb{P}_{L^{(s, \sigma)}}$.}  The proof is almost verbatim as the second step in the proof of Corollary 6.3 of \cite{Bufetov-inf-det}, so we omit its proof. 

{\it The Third Step: The measure $\hat{\nu}$ is supported on $\Omega_0'$.}
We shall prove two facts: 

\begin{itemize}
\item[(1)] $ \delta(\omega) = \sum_{k \in \Z^*} x_i(\omega)^2  $ holds $\hat{\nu}$-almost surely.  
\item[(2)] $\gamma_1(\omega)=  \lim_{ n \to \infty} \sum_{ \ell \in \Z^*} x_\ell(\omega) \phi_n(x_\ell(\omega))$ holds $\hat{\nu}$-almost surely.
\item[(3)] $x_\ell(\omega) \ne 0$ for all $\ell \in \Z^*$ holds $\hat{\nu}$-almost surely. 
\end{itemize}
The proof of the point (1) is similar to the proof in \cite{Bufetov-inf-det} and the proof of point (3) is similar to that of Proposition \ref{as-non-zero}, we will omit both of them.  Let us now concentrated to the proof of point (2), which requires more efforts and new ideas. 

Since $\Omega$ is a Polish space, by the Skorokhod's representation theorem (see, e.g., \cite[p.70]{Billingsley}), there exist a sequence of random variables $\omega_k$  and a random variable $\omega_\infty$, all defined on a common probability space $(\mathcal{U}, \mathcal{P})$, and taking values in the Polish space $\Omega$, such that the distribution of $\omega_k$ is $\nu^{(s, N_k, \sigma)}$ and the distribution of $\omega_\infty$ is $\hat{\nu}$ and $$\omega_k (u) \longrightarrow \omega_\infty(u) \, \text{for all $u \in \mathcal{U}$.}  $$ For any $n$, let us denote $F_n: \Omega \longrightarrow \R$ the continuous function  defined by the formula $$  F_n(\omega) = \gamma_1(\omega) - \sum_{ \ell \in \Z^*} x_\ell(\omega) \phi_n(x_\ell(\omega)). $$ By continuity of $F_n$, we find that as $k \to \infty$, 
\begin{align*}
\lim_{k \to \infty} F_n(\omega_k(u))  = F_n(\omega_\infty(u)) \, \text{ for all $u \in \mathcal{U}$}.
\end{align*} 

\begin{lem}\label{last-lem}
We have
\begin{align*}
 \sup_{k }\E   \left(  F_n(\omega_k)^2 \right)  \lesssim \frac{1}{n^2}.
\end{align*}
\end{lem}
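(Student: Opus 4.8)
The plan is to recall that $\omega_k$ has distribution $\nu^{(s,N_k,\sigma)}$ and that, by the observation preceding the lemma, $\nu^{(s,N_k,\sigma)}$ is concentrated on finitely supported configurations, so for such $\omega$ the quantity $\gamma_1(\omega) = \sum_{\ell\in\Z^*}x_\ell(\omega)$ is an honest finite sum and
$$
F_n(\omega_k) = \sum_{\ell\in\Z^*} x_\ell(\omega_k)\bigl(1-\phi_n(x_\ell(\omega_k))\bigr).
$$
Since $1-\phi_n(x)$ is supported on $\{|x|\le 1/n^2\}$ and bounded by $1$, we get the pointwise bound
$$
|F_n(\omega_k)| \le \sum_{\ell\in\Z^*} |x_\ell(\omega_k)|\,\mathds{1}_{|x_\ell(\omega_k)|\le 1/n^2}.
$$
Therefore it suffices to estimate the second moment of the random variable $\sum_{x\in\mathcal X}|x|\mathds{1}_{|x|\le 1/n^2}$ under the determinantal probability $\conf_*\nu^{(s,N_k,\sigma)} = \mathbb{P}_{L^{(s,N_k,\sigma)}}$ on $\Conf(\R^*)$ — this transfers the problem to the language of correlation kernels, exactly as in the proofs of Lemma \ref{lem1} and Lemma \ref{lem2}.

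The next step is the determinantal computation. Writing $K$ for the kernel of the orthogonal projection $\Pi^{(s,N,\sigma)}$ (which is a genuine orthogonal projection on $L^2(\R,\Leb)$ by Proposition \ref{mul-space}, so it satisfies $K(x,x)=\int K(x,y)^2\,dy$), one expands
$$
\E\Bigl(\bigl(\textstyle\sum_{x}|x|\mathds{1}_{|x|\le 1/n^2}\bigr)^2\Bigr)
= \int |x|^2\mathds{1}_{|x|\le 1/n^2}K(x,x)\,dx + \int\!\!\int |x||y|\mathds{1}_{|x|\le 1/n^2}\mathds{1}_{|y|\le 1/n^2}\bigl(K(x,x)K(y,y)-K(x,y)^2\bigr)\,dx\,dy,
$$
and bounds the off-diagonal term by Cauchy--Schwarz precisely as in the estimate of $T_3$ in the proof of Lemma \ref{lem2}: using $K(x,x)=\int K(x,y)^2 dy$ one reduces everything to the diagonal integral, arriving at
$$
\E\bigl(F_n(\omega_k)^2\bigr) \le 2\int_{|x|\le 1/n^2} x^2 K(x,x)\,dx.
$$
It then remains to show this last diagonal integral is $\lesssim 1/n^2$ uniformly in $N$ (hence in $k$). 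Since $\Pi^{(s,N,\sigma)}$ is obtained from $\Pi^{(s,N)}$ by multiplication by $\exp(-\sigma x^2/2)\le 1$, its diagonal is pointwise dominated by that of $\Pi^{(s,N)}$, and on the rescaled side the diagonal of $\Pi^{(s,N)}$ corresponds to $K_N^{(s,\R)}(x,x)=\rho_1^{(s,N)}(x)$ up to the $n_s$-dimensional correction $V^{(s,N)}$ (a fixed finite number of bounded-on-compacts functions), so that $\int_{|x|\le\varepsilon}x^2 K(x,x)\,dx \lesssim \int_{-\varepsilon}^{\varepsilon}x^2\rho_1^{(s,N)}(x)\,dx + C\varepsilon^{3}$; applying Proposition \ref{uniformness} with $\varepsilon = 1/n^2$ gives the bound $\lesssim 1/n^2$, uniformly in $N$. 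Taking the supremum over $k$ finishes the proof.

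The main obstacle I anticipate is bookkeeping rather than conceptual: one must carefully control the finite-rank perturbation $V^{(s,N)}$ (and the role of $s'$ versus $s$) when passing from the kernel $\Pi^{(s,N,\sigma)}$ back to the Pseudo-Jacobi correlation function $\rho_1^{(s,N)}$, and verify that the $\exp(-\sigma x^2)$ weight genuinely only shrinks the diagonal. One must also be slightly careful that $\nu^{(s,N_k,\sigma)}$ really is supported on finitely supported $\omega$ with $\gamma_1(\omega)=\sum x_\ell(\omega)$, so that $F_n$ has the clean form above — but this is exactly the remark recorded just before the statement of Proposition \ref{weak-conv}. Once these reductions are in place, the estimate is a direct appeal to Proposition \ref{uniformness} together with the projection identity $K(x,x)=\int K(x,y)^2\,dy$, mirroring the argument already carried out for Lemma \ref{lem2}.
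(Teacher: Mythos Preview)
Your overall strategy is correct and matches the paper's, but there are two genuine gaps in the execution.

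\textbf{First gap: taking absolute values destroys the cancellation.} You pass from $F_n(\omega_k)=\sum_\ell x_\ell(\omega_k)\bigl(1-\phi_n(x_\ell(\omega_k))\bigr)$ to $|F_n(\omega_k)|\le\sum_\ell |x_\ell(\omega_k)|\mathds{1}_{|x_\ell(\omega_k)|\le 1/n^2}$ and then claim the second moment is $\le 2\int_{|x|\le 1/n^2}x^2K(x,x)\,dx$. But with $f(x)=|x|\mathds{1}_{|x|\le 1/n^2}$ the term corresponding to $T_2$ in Lemma~\ref{lem2}, namely $\bigl(\int f(x)K(x,x)\,dx\bigr)^2$, does \emph{not} vanish, and it is not dominated by the diagonal integral: it involves $\int_{|x|\le 1/n^2}K(x,x)\,dx$, the expected number of particles near $0$, which is not uniformly bounded in $N$. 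The paper (like the proof of Lemma~\ref{lem2}) keeps the signed sum $\sum_\ell x_\ell\phi_n^c(x_\ell)$ and uses the symmetry $K(-x,-x)=K(x,x)$ to make that term vanish; only then does the Cauchy--Schwarz estimate on the remaining cross term give the factor~$2$. The fix is simply not to take absolute values.

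\textbf{Second gap: the diagonal domination claim is false.} You assert that since $L^{(s,N,\sigma)}=\exp(-\sigma x^2/2)H^{(s,N)}$ with $\exp(-\sigma x^2/2)\le 1$, the diagonal of $\Pi^{(s,N,\sigma)}$ is pointwise dominated by that of ``$\Pi^{(s,N)}$''. This fails on two counts. First, $H^{(s,N)}\not\subset L^2(\R)$ (the $V^{(s,N)}$ part is only locally~$L^2$), so there is no orthogonal projection onto it. Second, even for genuine closed subspaces $L\subset L^2$, the implication ``$|g|\le 1 \Rightarrow \Pi_{gL}(x,x)\le\Pi_L(x,x)$'' is false in general: take $L=\C\cdot 1$ on $[0,1]$ and $g(x)=x$, then $\Pi_{gL}(x,x)=3x^2$, which exceeds $\Pi_L(x,x)=1$ near $x=1$. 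The paper proceeds differently: it writes the orthogonal decomposition $L^{(s,N_k,\sigma)}=\sqrt{g^\sigma}L^{(s',(N_k)_s')}\oplus W^{(s,N_k,\sigma)}$ with $\dim W^{(s,N_k,\sigma)}=n_s$, bounds the $W$-contribution to $\int_{|x|\le 1/n^2}x^2K(x,x)\,dx$ trivially by $n_s/n^4$, and for the main piece $\mathcal Q^{(s,N_k,\sigma)}=\Pi_{\sqrt{g^\sigma}L^{(s',(N_k)_s')}}$ uses the operator identity
\[
\mathcal Q^{(s,N_k,\sigma)}=\sqrt{g^\sigma}\,\Pi_{L^{(s',(N_k)_s')}}\bigl(1-\Pi_{L^{(s',(N_k)_s')}}(1-g^\sigma)\Pi_{L^{(s',(N_k)_s')}}\bigr)^{-1}\Pi_{L^{(s',(N_k)_s')}}\sqrt{g^\sigma}.
\]
The middle inverse is then shown to be uniformly bounded in $k$ (via norm convergence of $\sqrt{1-g^\sigma}\Pi_{L^{(s',(N_k)_s')}}\sqrt{1-g^\sigma}$ to a strict contraction, using Grumm's theorem), yielding $\mathcal Q^{(s,N_k,\sigma)}\le C\,\sqrt{g^\sigma}\Pi_{L^{(s',(N_k)_s')}}\sqrt{g^\sigma}$ as positive operators. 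This finally reduces the diagonal integral to $\int_{|x|\le 1/n^2}x^2\Pi_{L^{(s',(N_k)_s')}}(x,x)\,dx$, to which Proposition~\ref{uniformness} applies.
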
 

We postpone the proof of Lemma \ref{last-lem} to the end and continue the proof of Proposition \ref{weak-conv}. The Lemma \ref{last-lem} implies in particular that for any fixed $n$, the random variables $\{ F_n(\omega_k): k \in \N \}$ are uniformly integrable. By virtue of the pointwise convergence, we thus get 
\begin{align*}
F_n(\omega_k) \xrightarrow{ L^1(\mathcal{U},  \mathcal{P})} F_n(\omega_\infty).
\end{align*} 
In particular, we get
\begin{align*}
\E \left| F_n (\omega_\infty) \right| = \lim_{ k \to \infty}   \E| F_n(\omega_k) | \le \sup_{k }\E   \left|  F_n(\omega_k)\right|^2  \lesssim \frac{1}{n^2}
\end{align*}
and 
\begin{align*}
\sum_{n = 1}^{\infty} \E| F_n(\omega_\infty)| < \infty. 
\end{align*}
An application of Borel-Cantelli lemma yields that 
\begin{align*}
\lim_{n \to \infty}   F_n (\omega_\infty) = 0, \, \text{ $\mathcal{P}$-a.s..}
\end{align*}
Or equivalently, 
\begin{align*}
\gamma_1(\omega) =  \lim_{n \to \infty} \sum_{ \ell \in \Z^*} x_\ell(\omega) \phi_n(x_\ell(\omega)), \, \text{$\hat{\nu}$-a.s.}.
\end{align*}
\end{proof}

\begin{proof}[Proof of Lemma \ref{last-lem}] For any $k \in \N$, we have 
\begin{align*}
\gamma_1(\omega) =  \sum_{\ell \in \Z^* } x_\ell(\omega), \, \text{ $\nu^{(s, N_k, \sigma)}$-a.s.,}
\end{align*}
or equivalently 
\begin{align*}
\gamma_1(\omega_k) =  \sum_{\ell \in \Z^* } x_\ell(\omega_k), \, \text{ $\mathcal{P}$-a.s..}
\end{align*}
It follows that 
\begin{align*}
F_n(\omega_k) = \sum_{ \ell \in \Z^*}  x_\ell (\omega_k) [ 1 - \phi_n(x_\ell(\omega_k))]. 
\end{align*}
If we denote $1 - \phi_n(x) $ by $\phi_n^{c}(x) $, then we get 
\begin{align}\label{2-integrals}
\begin{split} 
&\E (F_n(\omega_k)^2) = \E \left(  \sum_{ \ell \in \Z^*} x_\ell (\omega_k)   \phi_n^{c} (x_\ell(\omega_k))\right)^2  \\  = &   \int_{\R} x^2 [\phi_n^{c}(x)]^2 \cdot  \Pi_{L^{(s, N_k, \sigma)}} (x,x)dx  +  \\ & + \iint_{\R^2 } x y \phi_n^{c}(x) \phi_n^{c}(y) \left| \begin{array}{cc} \Pi_{L^{(s, N_k, \sigma)}} (x,x) & \Pi_{L^{(s, N_k, \sigma)}} (x,y) \\ \Pi_{L^{(s, N_k, \sigma)}} (y,x) & \Pi_{L^{(s, N_k, \sigma)}} (y,y)   \end{array} \right| dxdy.
\end{split}
\end{align}
By symmetry, we have $\Pi_{L^{(s, N_k, \sigma)}} (x,x) = \Pi_{L^{(s, N_k, \sigma)}} (-x,-x)$, hence the above double integral equals to 
\begin{align*}
 - \iint_{\R^2} xy \phi_n^c(x) \phi_n^c(y) \Pi_{L^{(s, N_k, \sigma)}} (x,y)  \Pi_{L^{(s, N_k, \sigma)}} (y,x) dxdy.
\end{align*}
An application of Cauchy-Schwarz inequality shows that the modulus of this last double integral is less than then first integral appeared in \eqref{2-integrals}. Hence we get  
\begin{align*}
\E(F_n(\omega_k)^2) \le 2  \int_{\R} x^2 [\phi_n^{c}(x)]^2 \cdot  \Pi_{L^{(s, N_k, \sigma)}} (x,x)dx.
\end{align*}
Since $\textrm{supp} (\phi_n^c)  = [ - 1/n^2, 1/n^2] $ and $0 \le \phi_n^c \le 1$, we have 
\begin{align}\label{reduction-integral}
\E(F_n(\omega_k)^2) \le 2  \int_{| x | \le 1/n^2} x^2 \cdot  \Pi_{L^{(s, N_k, \sigma)}} (x,x)dx.
\end{align}
By definition, 
\begin{align*}
L^{(s, N_k, \sigma)} =   \sqrt{g^{\sigma}} L^{(s', (N_k)_s')} + \sqrt{g^{\sigma}} V^{(s,N_k)},
\end{align*}
since $\dim V^{(s)} = n_s < \infty$, there exists $n_s$-dimensional subspace $W^{(s, N_k, \sigma)}$ of $L^2(\R, \Leb)$ such that we have orthogonal decomposition 
\begin{align*}
L^{(s, N_k, \sigma)} =   \sqrt{g^{\sigma}} L^{(s', (N_k)_s')} \oplus W^{(s, N_k, \sigma)}.
\end{align*}
If we denote the orthogonal projection from $L^2(\R, \Leb)$ onto the subspace $ \sqrt{g^{\sigma}} L^{(s', (N_k)_s')}$ by $\mathcal{Q}^{(s, N_k, \sigma))}$,  then 
\begin{align*}
\Pi_{L^{(s, N_k, \sigma)}} = \mathcal{Q}^{(s, N_k, \sigma)} + \Pi_{W^{(s, N_k, \sigma)}}
\end{align*}
and hence 
\begin{align*}
 & \int_{| x | \le 1/n^2} x^2 \cdot  \Pi_{L^{(s, N_k, \sigma)}} (x,x)dx \\  = &  \int_{| x | \le 1/n^2} x^2 \cdot \mathcal{Q}^{(s, N_k, \sigma)}(x,x)dx  +  \int_{| x | \le 1/n^2} x^2 \cdot  \Pi_{W^{(s, N_k, \sigma)}} (x,x)dx.  
\end{align*}
Note that the second integral can be controlled as follows,
\begin{align*}
&  \int_{| x | \le 1/n^2} x^2 \cdot  \Pi_{W^{(s, N_k, \sigma)}} (x,x)dx  \\ & \le \frac{1}{n^4}  \int_{\R}  \Pi_{W^{(s, N_k, \sigma)}} (x,x)dx \\ &  =   \frac{1}{n^4} \tr(\Pi_{W^{(s, N_k, \sigma)}}) = \frac{n_s}{n^4} \lesssim \frac{1}{n^2}.
\end{align*}
For the first integral, the following expression (see \cite[Cor. 2.5]{Bufetov-inf-det}) will be useful:
\begin{align*}
\mathcal{Q}^{(s, N_k, \sigma)}  =  \sqrt{g^\sigma} \Pi_{ L^{(s', (N_k)_s')} }  ( 1 + ( g^\sigma -1) \Pi_{L^{(s', (N_k)_s')}} )^{-1} \Pi_{L^{(s', (N_k)_s')}}\sqrt{g^{\sigma}}.
 \end{align*}
 By using the following well-known functional calculus identity for bounded operators
 \begin{align*}
 f(ab)a = af(ba), 
 \end{align*}
 we obtain that 
\begin{align*}
& \mathcal{Q}^{(s, N_k, \sigma)}  \\  =  & \sqrt{g^\sigma} \Pi_{ L^{(s', (N_k)_s')} }  ( 1 - \Pi_{L^{(s', (N_k)_s')}} (1- g^\sigma) \Pi_{L^{(s', (N_k)_s')}} )^{-1} \Pi_{L^{(s', (N_k)_s')}} \sqrt{g^{\sigma}}.
 \end{align*}
It is easy to show, by using the results obtained in previous sections, that  as $k \to \infty$, $$  \sqrt{ 1- g^\sigma} \Pi_{L^{(s', (N_k)_s')}} \sqrt{ 1- g^\sigma}  \xrightarrow{SOT } 
\sqrt{ 1- g^\sigma}  \Pi^{(s')}  \sqrt{ 1- g^\sigma} ,$$
$$ \tr \left(\sqrt{ 1- g^\sigma}   \Pi_{L^{(s', (N_k)_s')}}  \sqrt{ 1- g^\sigma}  \right) \longrightarrow   \tr \left(  \sqrt{ 1- g^\sigma}  \Pi^{(s')}   \sqrt{ 1- g^\sigma} \right).$$ 
Since all operators here are positive operators, by the Grumm's convergence theorem for operators  (see, e.g., \cite[Prop. 2.19]{Simon-trace}), we have 
$$  \sqrt{ 1- g^\sigma} \Pi_{L^{(s', (N_k)_s')}} \sqrt{ 1- g^\sigma}  \xrightarrow{\text{Hilbert-Schmidt}} 
\sqrt{ 1- g^\sigma}  \Pi^{(s')} \sqrt{ 1- g^\sigma} ,$$
a fortiori, we have 
$$  \sqrt{ 1- g^\sigma} \Pi_{L^{(s', (N_k)_s')}} \sqrt{ 1- g^\sigma}  \xrightarrow{\text{in norm}} 
\sqrt{ 1- g^\sigma}  \Pi^{(s')} \sqrt{ 1- g^\sigma} .$$
Now we show that the operator $\sqrt{ 1- g^\sigma}  \Pi^{(s')} \sqrt{ 1- g^\sigma} $ is strictly contractive, i.e.,
\begin{align*}
\n \sqrt{ 1- g^\sigma}  \Pi^{(s')} \sqrt{ 1- g^\sigma}     \n  <1. 
\end{align*}
To this end, we first note that 
\begin{align*} 
\n \sqrt{ 1- g^\sigma}  \Pi^{(s')} \sqrt{ 1- g^\sigma}     \n  =   \n  \sqrt{1- g^\sigma}  \Pi^{(s')}    \n^2.
\end{align*}
Since the operator $  \sqrt{1- g^\sigma}  \Pi^{(s')}  $ is Hilbert-Schmidt, it must be norm attaining, i.e., there exists $\xi \in L^2(\R)$ and $\n \xi \n_2 =1$, such that 
\begin{align}\label{norm-attain}
& \n \sqrt{1- g^\sigma}  \Pi^{(s')}  \n =  \n  \sqrt{1- g^\sigma}  \Pi^{(s')}  \xi \n_2.
\end{align}
This last quantity must be strictly less than 1, otherwise, we have 
\begin{align*}
 \int_\R | ( \Pi^{(s')}  \xi)  (x) |^2 dx  \le 1 = \int_\R (1 - g^\sigma(x)) | ( \Pi^{(s')}  \xi)  (x) |^2 dx,
\end{align*}
this would imply that $\Pi^{(s')}  \xi (x)  = 0,$ a.e., contradicts to \eqref{norm-attain}. Hence there exists $C' < 1$ when $k$ large enough, 
\begin{align*}
\n  \Pi_{L^{(s', (N_k)_s')}} (1- g^\sigma) \Pi_{L^{(s', (N_k)_s')}}  \n = \n  \sqrt{ 1- g^\sigma} \Pi_{L^{(s', (N_k)_s')}} \sqrt{ 1- g^\sigma} \n \le C'.
\end{align*}
Thus there exists a constant $C >0$, such that 
\begin{align*}
\n ( 1 - \Pi_{L^{(s', (N_k)_s')}} (1- g^\sigma) \Pi_{L^{(s', (N_k)_s')}} )^{-1}\n \le C, \, \text{for all $k$ large enough}.
\end{align*}
It follows that we have the following inequality (where the  order  is the usual order for positive operators):  
\begin{align*}
\mathcal{Q}^{(s, N_k, \sigma)}  \le C  \sqrt{g^\sigma} \Pi_{ L^{(s', (N_k)_s')} }  \sqrt{g^{\sigma}},
\end{align*}
which in turn implies that 
\begin{align*}
& \int_{| x | \le 1/n^2} x^2 \cdot \mathcal{Q}^{(s, N_k, \sigma)}(x,x)dx \\ \le &  C  \int_{| x | \le 1/n^2} x^2 \cdot \mathcal{Q}^{(s, N_k, \sigma)}(x,x)dx \\  \le &   C  \int_{| x | \le 1/n^2} x^2 \cdot    \Pi_{ L^{(s', (N_k)_s')} } ( x,x) dx.
\end{align*}
Using the notation and results in the previous sections, this last integral is controlled by
\begin{align*}
\int_{| x | \le 1/n^2} x^2 \cdot    \Pi_{ L^{(s', (N_k)_s')} } ( x,x) dx = \int_{| x | \le 1/n^2} x^2 \cdot    K^{(s, \R)}_{ (N_k)_s'} ( x,x) dx \lesssim \frac{1}{n^2}. 
\end{align*}
Combining the above inequalities, the proof of the lemma is complete. 
\end{proof}


\begin{thebibliography}{10}

\bibitem{Badkov}
V.~M. Badkov.
\newblock Approximation of functions in a uniform metric by {F}ourier sums in
  orthogonal polynomials.
\newblock {\em Trudy Mat. Inst. Steklov.}, 145:20--62, 249, 1980.
\newblock Approximation of functions by polynomials and splines.

\bibitem{Billingsley}
Patrick Billingsley.
\newblock {\em Convergence of probability measures}.
\newblock Wiley Series in Probability and Statistics: Probability and
  Statistics. John Wiley \& Sons, Inc., New York, second edition, 1999.
\newblock A Wiley-Interscience Publication.

\bibitem{BO-CMP}
Alexei Borodin and Grigori Olshanski.
\newblock Infinite random matrices and ergodic measures.
\newblock {\em Comm. Math. Phys.}, 223(1):87--123, 2001.

\bibitem{Bourgade-thesis}
Paul Bourgade.
\newblock A propos des matrices al\'eatoires et des fonctions {L}.
\newblock {\em Thesis, ENST Paris (2009)}.

\bibitem{Bourgade-et-al}
Paul Bourgade, Ashkan Nikeghbali, and Alain Rouault.
\newblock Ewens measures on compact groups and hypergeometric kernels.
\newblock In {\em S\'eminaire de {P}robabilit\'es {XLIII}}, volume 2006 of {\em
  Lecture Notes in Math.}, pages 351--377. Springer, Berlin, 2011.

\bibitem{Bufetov-AIF}
Alexander~I. Bufetov.
\newblock Finiteness of ergodic unitarily invariant measures on spaces of
  infinite matrices.
\newblock {\em To appear in Annales de l'institut Fourier}.

\bibitem{Bufetov-inf-det}
Alexander~I. Bufetov.
\newblock Infinite determiantal measures and the ergodic decomposition of
  infinite {P}ickrell measures.
\newblock {\em arXiv:1312.3161, Dec 2013}.

\bibitem{Bufetov-sbo}
Alexander~I. Bufetov.
\newblock Ergodic decomposition for measures quasi-invariant under {B}orel
  actions of inductively compact groups.
\newblock {\em Mat. Sb.}, 205(2):39--70, 2014.

\bibitem{BQ-Pickrell}
Alexander~I. Bufetov and Yanqi Qiu.
\newblock The explicit formulae for scaling limits in the ergodic decomposition
  of infinite {P}ickrell measures.
\newblock {\em arXiv:1402.5230, Feb 2014}.

\bibitem{Erdelyi-vol1}
Arthur Erd{\'e}lyi, Wilhelm Magnus, Fritz Oberhettinger, and Francesco~G.
  Tricomi.
\newblock {\em Higher transcendental functions. {V}ols. {I}}.
\newblock McGraw-Hill Book Company, Inc., New York-Toronto-London, 1953.
\newblock Based, in part, on notes left by Harry Bateman.

\bibitem{Golinskii-SSRS}
B.~L. Golinski{\u\i}.
\newblock Polynomials that are orthogonal on the unit circle with a generalized
  {J}acobi weight.
\newblock {\em Izv. Akad. Nauk Armjan. SSR Ser. Mat.}, 13(2):87--99, 171, 1978.

\bibitem{Macchi-DP}
Odile Macchi.
\newblock The coincidence approach to stochastic point processes.
\newblock {\em Advances in Appl. Probability}, 7:83--122, 1975.

\bibitem{OV-ams96}
Grigori Olshanski and Anatoli Vershik.
\newblock Ergodic unitarily invariant measures on the space of infinite
  {H}ermitian matrices.
\newblock In {\em Contemporary mathematical physics}, volume 175 of {\em Amer.
  Math. Soc. Transl. Ser. 2}, pages 137--175. Amer. Math. Soc., Providence, RI,
  1996.

\bibitem{Pickrell-jfa90}
Doug Pickrell.
\newblock Separable representations for automorphism groups of infinite
  symmetric spaces.
\newblock {\em J. Funct. Anal.}, 90(1):1--26, 1990.

\bibitem{Pickrell-pacific91}
Doug Pickrell.
\newblock Mackey analysis of infinite classical motion groups.
\newblock {\em Pacific J. Math.}, 150(1):139--166, 1991.

\bibitem{Rudin-RC}
Walter Rudin.
\newblock {\em Real and complex analysis}.
\newblock McGraw-Hill Book Co., New York, third edition, 1987.

\bibitem{Simon-trace}
Barry Simon.
\newblock {\em Trace ideals and their applications}, volume 120 of {\em
  Mathematical Surveys and Monographs}.
\newblock American Mathematical Society, Providence, RI, second edition, 2005.

\bibitem{Soshnikov-DP}
A.~Soshnikov.
\newblock Determinantal random point fields.
\newblock {\em Uspekhi Mat. Nauk}, 55(5(335)):107--160, 2000.

\bibitem{Szego-OP}
G{\'a}bor Szeg{\H{o}}.
\newblock {\em Orthogonal polynomials}.
\newblock American Mathematical Society, Providence, R.I., fourth edition,
  1975.
\newblock American Mathematical Society, Colloquium Publications, Vol. XXIII.

\bibitem{Watson-Bessel}
G.~N. Watson.
\newblock {\em A {T}reatise on the {T}heory of {B}essel {F}unctions}.
\newblock Cambridge University Press, Cambridge, England; The Macmillan
  Company, New York, 1944.

\end{thebibliography}

\def\cprime{$'$}

\end{document}